\newcolumntype{C}{>{$}c<{$}}
\newcolumntype{R}{>{$}r<{$}}
\newcolumntype{L}{>{$}l<{$}}
\theoremstyle{plain}
\newtheorem{thm}{Theorem}[section]
\newtheorem{lem}[thm]{Lemma}
\newtheorem{pro}[thm]{Proposition}
\newtheorem{cor}[thm]{Corollary}
\newtheorem{dfn}[thm]{Definition}
\newtheorem{lemma}[thm]{Lemma}
\theoremstyle{remark}
\newtheorem{rem}[thm]{Remark}
\newtheorem*{question*}{Question}
\newtheorem*{acknowledgements}{Acknowledgements}
\newcommand{\N}{\mathbb{N}}
\newcommand{\Z}{\mathbb{Z}}
\newcommand{\Q}{\mathbb{Q}}
\newcommand{\F}{\mathbb{F}}
\newcommand{\Fq}{\mathbb{F}_q}
\def \Paskunas {Pa\u{s}k\={u}nas}
\newcommand{\tensor}{\otimes}
\renewcommand{\epsilon}{\varepsilon}
\renewcommand{\phi}{\varphi}
\newcommand{\mcO}{\mathcal{O}}
\newcommand{\dpr}{{\prime \prime}}
\newcommand{\Nt}{\widetilde{\mathbb{N}}}
\newcommand{\Sym}{\mathrm{Sym}}
\DeclareMathOperator{\GL}{GL}
\begin{document}
\title[Principal series of $\mathrm{GL}_2$ over finite rings]{On the structure of modular principal series representations of $\GL_2$ over some finite rings}
\author{Michael M.~Schein}  \address{Department of Mathematics, Bar-Ilan University, Ramat Gan 5290002, Israel}
\author{Re'em Waxman} \address{Department of Mathematics, Bar-Ilan University, Ramat Gan 5290002, Israel}
\maketitle

\thispagestyle{empty}
\begin{abstract}
The submodule structure of mod $p$ principal series representations of $\mathrm{GL}_2(k)$, for $k$ a finite field of characteristic $p$, was described by Bardoe and Sin and has played an important role in subsequent work on the mod $p$ local Langlands correspondence.  The present paper studies the structure of mod $p$ principal series representations of $\mathrm{GL}_2(\mathcal{O} / \mathfrak{m}^n)$, where $\mathcal{O}$ is the ring of integers of a $p$-adic field $F$ and $\mathfrak{m}$ its maximal ideal.  In particular, the multiset of Jordan-H\"older constituents is determined.

In the case $n = 2$, more precise results are obtained.  If $F / \Q_p$ is totally ramified, the submodule structure of the principal series is determined completely.  Otherwise the submodule structure is infinite.  When $F$ is ramified but not totally ramified, the socle and radical filtrations are determined and a specific family of submodules, providing a filtration of the principal series with irreducible quotients, is studied; this family is closely related to the image of a functor of Breuil.  In the case of unramified $F$, the structure of a particular submodule of the principal series is studied; this provides a more precise description of the structure of a module constructed by Breuil and \Paskunas in the context of their work on diagrams giving rise to supersingular mod $p$ representations of $\mathrm{GL}_2(F)$.
\end{abstract}

\section{Introduction}
\subsection{Tame principal series}
Let $k$ be a finite field and $n \geq 2$.  Let $P \leq \mathrm{GL}_n$ be a parabolic subgroup with Levi subgroup $M$ isomorphic to $\mathrm{GL}_{n-1} \times \mathrm{GL}_1$, and let $\chi: P(k) \to \overline{k}$ be a character; observe that $\chi$ necessarily factors through $M(k)$.  If $\chi$ also factors through projection to the second component of $M(k)$, then, motivated by applications to coding theory, Bardoe and Sin~\cite{BS/00} determined the submodule structure of $\mathrm{Ind}_{P(k)}^{\mathrm{GL}_n(k)} \chi$ nearly three decades ago.  In particular, they determined the structure of the principal series representation $I(\chi) = \mathrm{Ind}_{B(k)}^{\mathrm{GL}_2(k)} \chi$, where $B \leq \mathrm{GL}_2$ is a Borel subgroup.  It turns out that each Jordan-H\"older constituent $\sigma$ of $I(\chi)$ appears with multiplicity one.  Thus there is a unique submodule of $I(\chi)$ with irreducible cosocle isomorphic to $\sigma$, and any submodule of $I(\chi)$ is a sum of submodules of this type; hence $I(\chi)$ has only finitely many submodules.

The structural results of Bardoe and Sin found a dramatic new application in the work of Breuil and {\Paskunas}~\cite{BP/12} towards an expected mod $p$ local Langlands correspondence for $\mathrm{GL}_2(F)$, where $F / \Q_p$ is a finite unramified extension. Recall that when $F = \Q_p$ this correspondence was described in a functorial way in a series of works including~\cite{Breuil/03, Colmez/10}, but far less is known when $F \neq \Q_p$.  Let $\mathcal{O}$ be the valuation ring of $F$, and let $k$ be its residue field.  Let $\rho: \mathrm{Gal}(\overline{F} / F) \to \mathrm{GL}_2(\overline{k})$ be a semisimple mod $p$ Galois representation.  Breuil and {\Paskunas} constructed infinite families of diagrams associated to each such $\rho$.  Diagrams are objects consisting of a finite-dimensional representation $D_0(\rho)$ of the finite group $\mathrm{GL}_2(k)$ over $\overline{k}$, together with some additional structure, such as a representation $D_1(\rho)$ of the normalizer of the pro-$p$-Iwahori subgroup $I(1)$ of $\mathrm{GL}_2(\mathcal{O})$.  In the diagrams of~\cite{BP/12}, one has $D_1(\rho) = D_0(\rho)^{I(1)}$.  By the theory developed earlier in~\cite{Paskunas/04}, each of these diagrams gives rise to infinite (see~\cite{Hu/10}) families of $\overline{k}$-representations of $\mathrm{GL}_2(F)$, which are irreducible and supersingular if $\rho$ is irreducible.

Not all smooth irreducible mod $p$ representations of $\mathrm{GL}_2(F)$ arise in this way, and probably not even all whose $\mathrm{GL}_2(\mathcal{O})$-socle is compatible with the weight part of Serre's modularity conjecture.  However, a considerable body of evidence has accumulated over the past fifteen years suggesting that the mod $p$ local Langlands conjecture is realized within the Breuil-{\Paskunas} families when $F / \Q_p$ is unramified and $\rho$ is generic in the sense of~\cite[Definition~11.7]{BP/12}.  Given $\rho$, one can fix a totally real field $L$ and a place $v | p$ of $L$ such that $L_v \simeq F$ and attempt to find a $\mathrm{GL}_2(L_v)$-module $\pi(\rho)$ in the completed cohomology of a tower of Shimura curves over $L$ that realizes the mod $p$ local Langlands correspondence.  It is not yet known in a single case, for $F \neq \Q_p$, that such a representation $\pi(\rho)$ is independent of the many global choices made in the course of its construction.  However, a series of papers by various authors, including~\cite{Breuil/14, EGS/15, HW/18, Le/19a, LMS/22} and culminating in~\cite{DL/21}, show that for any collection of global choices, the injection $\pi(\rho)^{I(1)} \hookrightarrow \pi(\rho)^{K(1)}$ is the same and arises from one of the Breuil-{\Paskunas} diagrams.  Here $K(1)$ is the first congruence subgroup $\ker (\mathrm{GL}_2(\mathcal{O}) \twoheadrightarrow \mathrm{GL}_2(k))$.  In particular, $\pi(\rho)^{K(1)} \simeq D_0(\rho)$.

When the extension $F / \Q_p$ is ramified, very little is known towards the mod $p$ local Langlands correspondence.  A substantial obstacle is that it is not enough to consider $\mathrm{GL}_2 (k)$-modules, and one must work with representations of the larger finite group $\mathrm{GL}_2(\mathcal{O} / \mathfrak{m}^n)$ for $n \geq 2$; here $\mathfrak{m} \triangleleft \mathcal{O}$ is the maximal ideal.  Observe that even the representation theory over $\mathbb{C}$ of such groups is understood more poorly than that of general linear groups over finite fields; cf.~\cite{AOPS/10, Singla/10, Stasinski/09, CMO/21, CMO/24} for examples of recent advances.  While all irreducible mod $p$ representations of $\mathrm{GL}_2(\mathcal{O} / \mathfrak{m}^n)$ factor through $\mathrm{GL}_2(k)$ (see, for instance,~\cite[Lemma~3]{BL/94}), the structure of reducible representations can be very different.

In particular, we would like to understand the structure of the principal series $$I_n(\chi) = \mathrm{Ind}_{B(\mathcal{O} / \mathfrak{m}^n)}^{\mathrm{GL}_2(\mathcal{O}/\mathfrak{m}^n)} \chi.$$
The present paper begins to fill this lacuna.

\subsection{Main results}
This section summarizes the main results of the article; in order to avoid excessive details at this stage, some notations of the introduction differ from those used in the body of the paper.
We view $I_n(\chi)$ as a representation over an extension field $k \subseteq E$; our results will be independent of $E$.  Let the cardinality of the residue field $k$ be $q = p^f$; assume that $p$ is odd.

\subsubsection{Jordan-H\"older constituents}
We start by determining the Jordan-H\"older constituents of $I_n(\chi)$.  
Definition~\ref{dfn:general.basis} fixes an $E$-basis $\mathcal{B}$ of $I_n(\chi)$.  Consider the set $S = \{ 0, 1, \dots, p - 1 \}^f$ endowed with a partial order $\preceq$ defined by $(r_0, \dots, r_{f-1}) \preceq (s_0, \dots, s_{f-1})$ if $r_i \leq s_i$ for all $0 \leq i \leq f - 1$.  Later it will be useful to consider a monoid structure on $S$; this is the monoid $\Nt$, defined in \S\ref{sec:notation} that is ubiquitous throughout the paper.  For each $\alpha \in S$ we fix (Definition~\ref{def:filtration}) a subset $\mathcal{B}_\alpha \subseteq \mathcal{B}$ such that $\mathcal{B}_\alpha \subseteq \mathcal{B}_\beta$ if $\alpha \preceq \beta$ and consider the subspace $W_\alpha$ of $I_n(\chi)$ spanned by $\mathcal{B}_\alpha$.  Since $W_{(p-1, \dots, p -1)} = I_n(\chi)$, this construction produces an exhaustive filtration of $I_n(\chi)$.  Note that any character $\chi$ of $B(\mathcal{O}/\mathfrak{m}^n)$ factors through a character of $B(k)$, and let $\eta : B(k) \to k^\times \subseteq E^\times$ be the character
$ \eta: \left( \begin{array}{cc} a & b \\ 0 & d \end{array} \right) \mapsto a d^{-1}$.

In Corollary~\ref{cor:jh.constituents.general} we find a recursive description of the multisets of Jordan-H\"older constituents of $I_n(\chi)$, starting with the sets of Jordan-H\"older constituents of the tame principal series $I(\chi) = I_1(\chi)$, which were determined by Bardoe and Sin.  The proof produces the following more precise result, which is
Proposition~\ref{pro:filtration}.

\begin{thm} \label{thm:intro.jh}
For each $\beta = (\beta_0, \dots, \beta_{f-1}) \in S$, the subspace $W_\beta$ is a $\mathrm{GL}_2(\mathcal{O} / \mathfrak{m}^n)$-submodule of $I_n(\chi)$.  Moreover, there is an isomorphism of $\mathrm{GL}_2(\mathcal{O} / \mathfrak{m}^n)$-modules
$$ W_\beta / \sum_{\alpha \in S \atop \alpha \prec \beta} W_\alpha \simeq  I_{n-1} (\chi \cdot \eta^{\sum_{i = 0}^{f-1} \beta_i p^i}),$$
where $\mathrm{GL}_2(\mathcal{O} / \mathfrak{m}^n)$ acts on the right-hand side via its natural projection to $\mathrm{GL}_2(\mathcal{O} / \mathfrak{m}^{n-1})$.
\end{thm}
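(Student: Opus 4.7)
\emph{My plan} is to prove both claims together, exploiting the concrete form of the basis $\mathcal{B}$ from Definition~\ref{dfn:general.basis} and the subsets $\mathcal{B}_\beta$ from Definition~\ref{def:filtration}. Set $G_n := \GL_2(\mathcal{O}/\mathfrak{m}^n)$ and let $K_{n-1} := \ker(G_n \twoheadrightarrow \GL_2(\mathcal{O}/\mathfrak{m}^{n-1}))$, the deepest congruence subgroup, which is an abelian $p$-group; its lower-unipotent part $V^- \simeq (k,+)$ has group algebra $E[V^-]$ with a monomial basis $\{y^\alpha : \alpha \in S\}$, where $y_i = v_i - 1$ for a suitable $\F_p$-basis $\{v_i\}$ of $V^-$ adapted to the Frobenius decomposition of $V^- \otimes_{\F_p} E$.

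\emph{$G_n$-stability.} Since $G_n$ is generated by $B_n := B(\mathcal{O}/\mathfrak{m}^n)$ together with the Weyl element $w = \begin{pmatrix} 0 & 1 \\ 1 & 0 \end{pmatrix}$, it suffices to check closure of $W_\beta$ under these. The action of $B_n$ on $\mathcal{B}$ is through $\chi$ and a permutation of basis vectors that visibly preserves the filtration index. The crux is the $w$-action: applying the Iwahori form of $w \cdot u^-(x)$ and expanding $(1 + \pi^{n-1} t)^{-1}$ as a (finite) geometric series yields $w \cdot f$, for $f \in \mathcal{B}_\beta$, as a linear combination of basis vectors whose indices a priori may escape $\{\alpha : \alpha \preceq \beta\}$. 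The required cancellation --- that the coefficients of basis vectors indexed by $\alpha \not\preceq \beta$ vanish --- reduces to a combinatorial identity traceable to the nilpotence relation $y_i^p = 0$ in $E[V^-]$.

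\emph{Quotient identification.} Set $\overline{W}_\beta := W_\beta / \sum_{\alpha \prec \beta} W_\alpha$ and let $\overline{f}_\beta \in \overline{W}_\beta$ be the image of a basis vector in $\mathcal{B}_\beta \setminus \bigcup_{\alpha \prec \beta} \mathcal{B}_\alpha$. I would verify, by direct computation modulo the smaller $W_\alpha$, that $K_{n-1}$ acts trivially on $\overline{W}_\beta$ and that $\overline{f}_\beta$ is $B_n$-semi-invariant with character $\chi \cdot \eta^{\sum_i \beta_i p^i}$. The twist $\eta^{\sum_i \beta_i p^i}$ arises because $\overline{f}_\beta$ is proportional to $y^\beta$ applied to a $K_{n-1}$-fixed vector, and $y^\beta = \prod_i y_i^{\beta_i}$ transforms under $T_n$-conjugation via $\prod_i \eta^{\pm p^i \beta_i} = \eta^{\pm \sum_i \beta_i p^i}$ (the sign depending on conventions for the right action and the choice of Frobenius basis $\{v_i\}$). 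Frobenius reciprocity then produces a nonzero $G_n$-equivariant map $I_{n-1}(\chi \eta^{\sum_i \beta_i p^i}) \to \overline{W}_\beta$; both sides have dimension $q^{n-2}(q+1)$ by the construction of $\mathcal{B}_\beta$ and the standard coset count, so the map is an isomorphism.

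\emph{Main obstacle.} The hardest step is the $w$-equivariance check: controlling the expansion against the monomial structure of $E[V^-]$ and verifying the required cancellations. Proceeding by induction on $\beta$ in the poset $(S,\preceq)$, with base case $\beta = 0$ (where $W_0 = I_n(\chi)^{K_{n-1}}$ is the canonical inflation of $I_{n-1}(\chi)$), should streamline the bookkeeping for subsequent $\beta$.
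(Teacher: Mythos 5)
Your route is genuinely different from the paper's, and the paper's version is considerably cleaner. The paper's proof (Proposition~\ref{pro:filtration}) avoids all generator computations by inducing in stages through the Iwahori-type subgroup $\widetilde{B}_n=(\psi_{n-1}^n)^{-1}(B_{n-1})$: it writes $\mathrm{Ind}_{B_n}^{G_n}\chi = \mathrm{Ind}_{\widetilde{B}_n}^{G_n}\bigl(\mathrm{Ind}_{B_n}^{\widetilde{B}_n}\chi\bigr)$, notes that the inner module is only $q$-dimensional, and pins down its complete $\widetilde{B}_n$-submodule lattice in one short computation~\eqref{equ:action.of.g.on.mbeta} (Lemma~\ref{lem:intermediate.induction.filtration}), with graded pieces the characters $\chi\eta^\beta$. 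Applying the exact functor $\mathrm{Ind}_{\widetilde{B}_n}^{G_n}$ then produces $W_\beta$ as the image of $\mathrm{Ind}_{\widetilde{B}_n}^{G_n}U_\beta$ --- automatically a $G_n$-submodule --- and the quotient is $\mathrm{Ind}_{\widetilde{B}_n}^{G_n}(\chi\eta^\beta)\simeq\mathrm{Ind}_{B_{n-1}}^{G_{n-1}}(\chi\eta^\beta)$, since this factors through $G_{n-1}$. The $w$-action is never touched; the paper explicitly notes at the start of \S3.3 that the generator-by-generator approach you propose is the ``more laborious'' alternative.

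Within your sketch there are concrete gaps. The claim that $B_n$ acts on $\mathcal{B}$ by ``$\chi$ and a permutation of basis vectors'' is false: the $f_{\mathbf j}$ are Fourier-type sums over $\lambda$, so the $U_n$-action~\eqref{equ:upper.triangular.basis.element}, together with the Witt-vector carry terms of Lemma~\ref{lem:lambda.minus.b}, produces a genuine lower-triangular linear transformation in the $\mathbf j$-indices, not a permutation. The filtration index is preserved only because the carry polynomials $R_i(\lambda)$ are affine in $\lambda_{n-1}$, which must be argued rather than observed. More substantively, the $w$-cancellation you correctly identify as the crux is not carried out: in the coordinates used here it involves the partition-indexed inversion polynomials $P_\ell(\lambda)$ of Lemma~\ref{lem:inversion.power.series}, and the asserted reduction to the relation $y_i^p=0$ in $E[V^-]$ is not substantiated. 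Finally, for the quotient identification you must pick the correct $B_n$-semi-invariant representative (an arbitrary element of $\mathcal{B}_\beta\setminus\bigcup_{\alpha\prec\beta}\mathcal{B}_\alpha$ will not be semi-invariant even modulo lower filtration) and then actually verify semi-invariance, pulling in~\eqref{equ:b.on.infinity}; the induction-in-stages argument gives all of this for free by verifying semi-invariance once, on the $q$-dimensional module.
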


\subsubsection{A family of submodules} \label{sec:intro.family}
The remainder of the paper restricts itself to the case $n = 2$ in order to obtain more precise results.  Then to every element of the basis $\mathcal{B}$ we associate a type, namely a pair $(I, \gamma)$, where $I \subseteq \Z / f\Z$ and $\gamma \in S$.  The set $I$ is determined by the columns where a carry must be performed when a certain two elements of $S$, viewed as $f$-digit numbers written in base $p$, are added.  Definition~\ref{def:covering.relations.ramified} introduces a partial order $\leq_\chi$, depending on the character $\chi$, on the set of types, in terms of generating relations.  An equivalent explicit closed-form definition of $\leq_\chi$ is obtained in Proposition~\ref{pro:equality.of.orders} by means of long but completely elementary manipulations involving the properties of carry sets.  For every type $(I, \gamma)$, define $V_{(I, \gamma)}$ to be the subspace of $I_2(\chi)$ spanned by the elements of $\mathcal{B}$ whose type is less than or equal to $(I, \gamma)$, with respect to the partial order $\leq_\chi$.  The following claim consists of Proposition~\ref{pro:coordinate.submodules.stable} and Theorem~\ref{thm:coordinate.submodules}:

\begin{thm} \label{thm:intro.submodules}
Suppose that the extension $F / \Q_p$ is ramified.  For every type $(I, \gamma)$, the subspace $V_{(I, \gamma)}$ of $I_2(\chi)$ is stable under the action of $\mathrm{GL}_2(\mathcal{O} / \mathfrak{m}^2)$.  Moreover, $V_{(I, \gamma)}$ is generated, as a $\mathrm{GL}_2(\mathcal{O} / \mathfrak{m}^2)$-submodule of $I_2(\chi)$, by any element of $\mathcal{B}$ of type $(I, \gamma)$.
\end{thm}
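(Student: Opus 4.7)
The plan is to reduce both assertions to direct verifications on a convenient generating set of $\mathrm{GL}_2(\mathcal{O}/\mathfrak{m}^2)$, exploiting the fact that $\leq_\chi$ is defined in Definition~\ref{def:covering.relations.ramified} by explicit covering relations that themselves encode particular matrix actions on the basis $\mathcal{B}$. The natural generating set is the Iwahori one: the diagonal torus $T(\mathcal{O}/\mathfrak{m}^2)$, the unipotent elements $\begin{pmatrix} 1 & b \\ 0 & 1 \end{pmatrix}$ and $\begin{pmatrix} 1 & 0 \\ c & 1 \end{pmatrix}$ for $b, c \in \mathcal{O}/\mathfrak{m}^2$, and the Weyl element $w$. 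To prove stability it suffices to show that each such generator sends a basis element of type $(I',\gamma') \leq_\chi (I,\gamma)$ to a linear combination of basis elements whose types are $\leq_\chi (I,\gamma)$; to prove cyclicity it suffices to show, starting from a basis element of type $(I,\gamma)$, that some generator produces a basis element of each type covered by $(I,\gamma)$, modulo higher-type contributions.

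For stability (Proposition~\ref{pro:coordinate.submodules.stable}), the torus action scales each basis element by a character value (a product of $\chi$ with a power of $\eta$) and preserves every $V_{(I,\gamma)}$ on the nose. The unipotents act in the polynomial/functional model of the induction by substitution and expand via the binomial theorem together with the ring structure of $\mathcal{O}/\mathfrak{m}^2$; writing exponents in base $p$, one sees that the set $I$ of carrying columns and the digit vector $\gamma$ transform in a way dominated by the covering relations of $\leq_\chi$. The ramification hypothesis enters at exactly this step: when $F/\Q_p$ is ramified, products such as $\varpi \cdot a$ for $a$ lifting a residue class, and $\varpi^2$, satisfy relations modulo $\mathfrak{m}^2$ that kill the cross-residue-field contributions that in the unramified case would push the type outside of $V_{(I,\gamma)}$. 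The Weyl element $w$ is the most delicate: its action is expanded on the open Bruhat cell and one checks column-by-column that the resulting carries are again bounded by the covering relations. For cyclicity (Theorem~\ref{thm:coordinate.submodules}), I would fix $v \in \mathcal{B}$ of type $(I,\gamma)$, let $W = \langle v \rangle$, and prove by downward induction on $\leq_\chi$ that $W$ contains a basis element of every type $(I',\gamma') \leq_\chi (I,\gamma)$. Since $\leq_\chi$ is the transitive closure of its covering relations, it suffices to treat a single cover $(I'',\gamma'') \lessdot_\chi (I',\gamma')$; by the construction of that relation there is an explicit generator $g$ whose action on a basis element of type $(I',\gamma')$ contains a nonzero multiple of a basis element of type $(I'',\gamma'')$, the remaining summands having other types $\leq_\chi (I',\gamma')$. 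The inductive hypothesis, applied to strip off those summands, then extracts the desired basis element.

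The main obstacle will be the explicit computation of how the generators (especially $w$, and the unipotents involving the uniformizer) act on elements of $\mathcal{B}$: when several columns carry simultaneously or a carry cascades across positions, it is not at all obvious from the generator-and-relations form of $\leq_\chi$ alone that every resulting term lies at a type $\leq_\chi (I,\gamma)$. I expect to fall back on the closed-form description of $\leq_\chi$ furnished by Proposition~\ref{pro:equality.of.orders} to dispatch the multi-column carry cases, rather than iterating covering relations by hand. A secondary subtlety is ensuring in the cyclicity step that the coefficient of the newly extracted basis element is actually nonzero; this is where the precise matching between each covering relation and the generator $g$ used to realize it becomes essential, and where the ramification hypothesis is again invoked to rule out degenerate cancellations arising from the ring structure of $\mathcal{O}/\mathfrak{m}^2$.
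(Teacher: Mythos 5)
Your plan follows the same overall strategy as the paper: decompose $\mathrm{GL}_2(\mathcal{O}/\mathfrak{m}^2)$ into standard generators via Bruhat (the paper uses the center, the Teichm\"uller torus $T$, and the subgroups $D_2$, $U_2$ of Definition~\ref{def:subgroups}, together with $w$), compute each action on $\mathcal{B}_{2,r}$ in Lemmas~\ref{lem:w.action.on.basis}--\ref{lem:upper.triangular.action.on.infinity}, observe that each covering relation of Definition~\ref{def:covering.relations.ramified} is purpose-built to match one of these computations, and take $W_0 \simeq V_{1,r}$ with Bardoe--Sin as the base case of the cyclicity induction. But there is a genuine gap in your cyclicity step: you propose to apply a single generator $g$ to a basis element of type $(I',\gamma')$ and then strip off the unwanted summands using the inductive hypothesis. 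Those summands need not all lie at types strictly below the target cover $(I'',\gamma'')$ --- in general they include terms at \emph{other} covers of $(I',\gamma')$, which the induction has not yet handled --- so the stripping argument is circular. The paper avoids this entirely by never applying a single generator: it averages $g$-translates over a one-parameter subgroup against polynomial weights (the Vandermonde argument; see~\eqref{equ:upper.triangular.applied} inside Lemma~\ref{lem:upper.triangular.action.on.basis} and the analogous sums in Lemmas~\ref{lem:diagonal.action.on.basis} and~\ref{lem:upper.triangular.action.on.infinity}), which isolates a single basis element outright with no appeal to induction.

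Two smaller misfires. You expect to fall back on the closed-form Proposition~\ref{pro:equality.of.orders} to control cascading carries; the paper never uses it for this theorem, because $\leq_\chi$ is \emph{defined} by the covering relations (each moving a single base-$p$ digit by $p^m$), and cascades are absorbed into the preorder closure --- there is simply nothing to match against. And you predict $w$ to be the most delicate generator, whereas Lemma~\ref{lem:w.action.on.basis} shows that for $n = 2$ the element $w$ merely permutes $\mathcal{B}_{2,r}$ up to sign, making it the \emph{easiest}; the delicate generator, and the sole place where $e \geq 2$ enters (via Lemma~\ref{lem:lambda.minus.b}, which suppresses the Witt-vector carry term $S(\lambda_0^{p^{f-1}}, -b_0^{p^{f-1}})$ when $e \geq 2$), is the upper unipotent $U_2$ treated in Lemmas~\ref{lem:upper.triangular.action.on.basis} and~\ref{lem:upper.triangular.action.on.infinity}.
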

The proof consists of explicit calculations that fail when $F / \Q_p$ is unramified because of some complications introduced by summation of Witt vectors, and indeed the claim is false when $F / \Q_p$ is unramified.

\subsubsection{Submodule structure in the totally ramified case}
An essential feature of the tame principal series $I(\chi)$ is that they are multiplicity-free: each Jordan-H\"older constituent appears only once as a quotient of any composition series.  As a consequence, $I(\chi)$ has only finitely many $\mathrm{GL}_2(k)$-submodules.  For any Jordan-H\"older constituent $\sigma$ of $I(\chi)$, there is a unique submodule with cosocle isomorphic to $\sigma$, and this submodule can be specified by writing down its set of Jordan-H\"older constituents.  Any submodule of $I(\chi)$ is a sum of submodules with irreducible cosocle.  A description of the submodule structure of $I(\chi)$ of this form is very useful for applications; it provides a method for determining when two submodules are equal, or when one is contained in another.  See~\cite[Proposition~6.2.2]{BHHMS1/23} for another statement of this form, in a different setting.  When the extension $F / \Q_p$ is totally ramified, the principal series $I_2(\chi)$ is not multiplicity-free (Propositions~\ref{pro:totally.ramified.odd} and~\ref{pro:totally.ramified.even}) but still admits a similar complete description of its submodule structure.  We say that $\chi$ is odd if $\chi \eta^{i}$ does not factor through the determinant for any $i \in \Z / (q-1)\Z$, and that $\chi$ is even otherwise.  The following is Theorem~\ref{cor:finite.submodule.structure}, partially restricted to the case of odd $\chi$.  The statement for even $\chi$ is very similar but slightly more complicated, as it must treat an exceptional case, so we suppress it for the purposes of this introduction.

\begin{thm} \label{thm:intro.totram}
Suppose $F / \Q_p$ is totally ramified and non-trivial.  The principal series $I_2(\chi)$ has $2p$ Jordan{\-}-H\"older constituents, parametrized by types $(I, \gamma)$, where $I \subseteq \{ 0 \}$ and $\gamma \in \{ 0, 1, \dots, p - 1 \}$.  Moreover, $I_2(\chi)$ has finitely many submodules.  

If $\chi$ is odd, then the submodules with irreducible cosocle are exactly the submodules $V_{(I, \gamma)}$.  The multiset of Jordan-H\"older constituents of $V_{(I, \gamma)}$ is $\left\{ L(I^\prime, \gamma^\prime) : (I^\prime, \gamma^\prime) \leq_\chi (I, \gamma) \right\}$, where $L(I, \gamma)$ is the constituent parametrized by the type $(I, \gamma)$.
\end{thm}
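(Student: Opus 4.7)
Since $F / \Q_p$ is totally ramified and non-trivial, we have $f = 1$, so $S = \{0, 1, \dots, p-1\}$ is totally ordered and the $2p$ types $(I, \gamma)$ with $I \subseteq \{0\}$ and $\gamma \in S$ will parametrize the Jordan--H\"older constituents. Applying Theorem~\ref{thm:intro.jh} with $n = 2$, the filtration $(W_\beta)_{\beta \in S}$ has successive quotients isomorphic to the tame principal series $I_1(\chi \eta^\beta)$. A short verification, using the defining formula for $\eta$, shows that oddness is preserved under twisting by any $\eta^\beta$, so by Bardoe--Sin each quotient has length $2$. This yields exactly $2p$ Jordan--H\"older constituents of $I_2(\chi)$, which I would match with the $2p$ types by letting $\gamma$ record the index of the graded piece and $I$ distinguish between the two constituents of $I_1(\chi \eta^\gamma)$.

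\textbf{Composition factors and cosocle of $V_{(I, \gamma)}$.} By Theorem~\ref{thm:intro.submodules}, $V_{(I,\gamma)}$ is the submodule spanned by the basis elements of type $\leq_\chi (I, \gamma)$ and is cyclic. Intersecting this spanning set with the filtration $(W_\beta)$ and using the explicit description of $\leq_\chi$ from Proposition~\ref{pro:equality.of.orders}, I would read off, for each $\gamma' \leq \gamma$, which types $(I', \gamma')$ contribute basis vectors to the $\gamma'$-graded piece; matching with the two Bardoe--Sin constituents of $I_1(\chi \eta^{\gamma'})$ establishes the Jordan--H\"older multiset $\{L(I', \gamma') : (I', \gamma') \leq_\chi (I, \gamma)\}$. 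To identify the cosocle of $V_{(I, \gamma)}$, I would exploit its cyclicity to conclude that the cosocle is multiplicity-free, and then exhibit the quotient
$$V_{(I, \gamma)} \bigg/ \sum_{(I', \gamma') <_\chi (I, \gamma)} V_{(I', \gamma')}$$
as one-dimensional, transforming by the character that defines $L(I, \gamma)$.

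\textbf{Exhaustion and finiteness.} Given a submodule $U \subseteq I_2(\chi)$ with irreducible cosocle $L$, I would pick a generator $u \in U$ mapping to a nonzero element of the cosocle and argue that $u$ may be chosen of type $(I, \gamma)$ for some unique $(I, \gamma)$ with $L(I, \gamma) \simeq L$; Theorem~\ref{thm:intro.submodules} then forces $U = V_{(I, \gamma)}$. An analogous argument shows that a general submodule is the sum of its cyclic submodules with irreducible cosocle, hence a sum of the $2p$ modules $V_{(I, \gamma)}$, yielding finiteness of the submodule lattice. The main obstacle is handling the Jordan--H\"older constituents that occur with multiplicity greater than one: in such cases it is a priori possible for $U$ to contain ``diagonal'' elements not of pure type, and one must use the oddness hypothesis together with the simplified combinatorics of the $f = 1$ setting to show that, after adjusting $u$ by elements of $\rad(U)$, its type can always be taken to be a single pair $(I, \gamma)$, ruling out such diagonals.
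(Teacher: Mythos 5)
The central gap is in your ``Exhaustion and finiteness'' paragraph. You correctly identify that the Jordan--H\"older constituents occurring with multiplicity greater than one create the possibility of ``diagonal'' submodules of $I_2(\chi)$ that are not generated by a basis element of pure type, and you assert that after adjusting a cosocle lift $u$ by $\rad(U)$ its type can be made pure --- but you give no argument for this, and it is precisely the content of the theorem. The paper handles this by an entirely different mechanism. Finiteness is reduced via the Jans--Camillo criterion (Lemma~\ref{lem:jans.camillo}) to showing the socle of every quotient $V_{2,r}/M$ is multiplicity-free. The key input is Lemma~\ref{pro:totally.ramified.either.or}: for $\alpha \in \left\{ \lfloor r/2 \rfloor, \lfloor r/2 \rfloor + \tfrac{p-1}{2} \right\}$, every submodule $M$ satisfies either $M \subseteq W_\alpha$ or $W_\alpha \subset M$. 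Since the three layers of the flag $0 \subset W_{\lfloor r/2 \rfloor} \subset W_{\lfloor r/2 \rfloor + \frac{p-1}{2}} \subset V_{2,r}$ are individually multiplicity-free (Propositions~\ref{pro:totally.ramified.odd} and~\ref{pro:totally.ramified.even}), Lemma~\ref{pro:totally.ramified.either.or} forces any repeated constituent in $\soc(V_{2,r}/M)$ to straddle one of the cut points $W_\alpha$, which is then ruled out by the explicit calculation~\eqref{equ:finite.submodule.structure.totram}: the action of the first congruence subgroup on a preimage of the socle produces a generator of $W_\alpha$ lying in $M$, contradicting $M \subset W_\alpha$. The exhaustion of submodules with irreducible cosocle by the $V_{(I,\gamma)}$ (or the exceptional $V'_{(\{0\},\gamma)}$ in the even case) is then read off layer by layer. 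Without Lemma~\ref{pro:totally.ramified.either.or} or a substitute, the ``ruling out diagonals'' step does not follow, and your appeal to the oddness hypothesis and the $f=1$ combinatorics is a placeholder rather than a proof.

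A smaller issue: in ``Composition factors and cosocle of $V_{(I,\gamma)}$'' you propose to identify the cosocle by exhibiting the quotient $V_{(I,\gamma)}/\sum_{(I',\gamma') <_\chi (I,\gamma)} V_{(I',\gamma')}$ as \emph{one-dimensional}. This quotient is $L(I,\gamma) = \det^\gamma \otimes \Sym^{(r-2\gamma)}$, which has dimension $r - 2\gamma + 1$ (an integer in $[1,p]$) and is typically not one-dimensional. The paper's route is via Lemma~\ref{pro:head}, which shows directly that $L(I,\gamma)$ is irreducible; combined with the cyclicity of $V_{(I,\gamma)}$ from Theorem~\ref{thm:coordinate.submodules}, this identifies the cosocle. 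Your ``cyclic, hence multiplicity-free cosocle'' remark is fine, but the intended one-dimensionality argument would not compile.
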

A depiction of the partial order $\leq_\chi$ on the set of types, when $\chi$ is odd and $F / \Q_p$ is totally ramified, may be found in Figure~\ref{fig:jh.graph}.

\subsubsection{Partial submodule structure in the general ramified case}
If $F / \Q_p$ is ramified but not totally ramified, then $I_2(\chi)$ has an infinite lattice of submodules if the base field $E$ is infinite (Proposition~\ref{pro:infinite.submodule.structure}) and we cannot hope for a complete explicit description of the submodules of $I_2(\chi)$ as in Theorem~\ref{thm:intro.totram}.  Most submodules are not sums of the $V_{(I, \gamma)}$.    However, for the submodules in this family much of Theorem~\ref{thm:intro.totram} still holds.  The following is  Lemma~\ref{pro:head} and Proposition~\ref{pro:adjacent.types}.

\begin{thm} \label{thm:intro.ramified}
Suppose $F / \Q_p$ is ramified.  The Jordan-H\"older constituents of $I_2(\chi)$ are parametrized by a (possibly proper) subset of the set of pairs $(I, \gamma)$, where $I \subseteq \Z / f\Z$ and $\gamma \in S$.  Moreover, $V_{(I^\prime, \gamma^\prime)} \subseteq V_{(I, \gamma)}$ if and only if $(I^\prime, \gamma^\prime) \leq_\chi (I, \gamma)$, and the multiset of Jordan-H\"older constituents of $V_{(I,\gamma)}$ is $\left\{ L(I^\prime, \gamma^\prime) : (I^\prime, \gamma^\prime) \leq_\chi (I, \gamma) \right\}$. 

Moreover, excluding an exceptional case, the submodule $V_{(I, \gamma)}$ has irreducible cosocle $L(I, \gamma)$.
\end{thm}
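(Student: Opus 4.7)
The plan is to combine the filtration from Theorem~\ref{thm:intro.jh} with the family of cyclic submodules supplied by Theorem~\ref{thm:intro.submodules}. The former stratifies $I_2(\chi)$ into graded pieces isomorphic to tame principal series of $\GL_2(k)$, to which the Bardoe--Sin classification applies; the latter gives a canonical family $V_{(I,\gamma)}$ indexed by types. Intersecting these two structures should yield all four assertions of the theorem.

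First I would enumerate the Jordan--Hölder constituents. Applying Theorem~\ref{thm:intro.jh} with $n=2$, each graded piece $W_\gamma/\sum_{\alpha\prec\gamma} W_\alpha$ is isomorphic to a tame principal series whose constituents are indexed via Bardoe--Sin by subsets $I \subseteq \Z/f\Z$. Attaching this $I$ to the twist label $\gamma$ produces the parametrization $(I,\gamma) \mapsto L(I,\gamma)$; coincidences between Bardoe--Sin constituents in degenerate cases (for instance when $\chi\eta^{\sum_i \gamma_i p^i}$ factors through the determinant) account for the ``possibly proper subset'' clause. The easy direction of the containment statement then follows from the definition of $V_{(I,\gamma)}$: the spanning basis of $V_{(I',\gamma')}$ is a subset of that of $V_{(I,\gamma)}$ whenever $(I',\gamma') \leq_\chi (I,\gamma)$, and both are genuine submodules by Theorem~\ref{thm:intro.submodules}.

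Next I would identify the Jordan--Hölder multiset of $V_{(I,\gamma)}$ by intersecting with the filtration $\{W_\beta\}$ and examining the images in each graded piece. Each image is a subspace of a tame principal series spanned by those basis vectors of type $(I',\beta)$ satisfying $(I',\beta) \leq_\chi (I,\gamma)$. Using the explicit closed-form description of $\leq_\chi$ from Proposition~\ref{pro:equality.of.orders}, I would compare this family of subspaces with Bardoe--Sin's submodule lattice and confirm that it cuts out a genuine Bardoe--Sin submodule of the tame piece, with constituents exactly $\{L(I',\beta) : (I',\beta) \leq_\chi (I,\gamma)\}$. Multiplicity-freeness within each tame layer then gives the converse of the containment statement, since $V_{(I',\gamma')} \subseteq V_{(I,\gamma)}$ forces $L(I',\gamma')$ to appear as a constituent of the latter. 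For the cosocle, I would exploit cyclicity of $V_{(I,\gamma)}$ (Theorem~\ref{thm:intro.submodules}) together with the just-obtained multiset: the sum $\sum_{(I',\gamma') <_\chi (I,\gamma)} V_{(I',\gamma')}$ is a proper submodule whose quotient has the single Jordan--Hölder constituent $L(I,\gamma)$, hence equals $L(I,\gamma)$ and realizes it as the cosocle.

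The main obstacle will be aligning the carry-set-based combinatorial order $\leq_\chi$ with the Bardoe--Sin submodule structure on each tame graded piece: one must verify not merely that the correct basis vectors are collected but that the resulting subspaces are precisely the submodules predicted by Bardoe--Sin for the appropriate downset of constituents. A secondary difficulty is isolating the exceptional case in the cosocle statement, which I expect to arise from specific coincidences among carry sets creating multiple maximal elements below $(I,\gamma)$ in $\leq_\chi$ and thereby producing a semisimple, non-simple cosocle for that particular $V_{(I,\gamma)}$.
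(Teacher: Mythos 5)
Your route through the filtration of Theorem~\ref{thm:intro.jh} and the Bardoe--Sin classification of the tame graded pieces, combined with the definition of $V_\theta$ and the cyclicity statement of Theorem~\ref{thm:intro.submodules}, matches the paper's treatment of the parametrization, the containment criterion, and the Jordan--H\"older multiset (via Lemma~\ref{pro:head}, Proposition~\ref{cor:types.as.pairs}, and Theorem~\ref{thm:coordinate.submodules}). These parts of your sketch are essentially right.

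The gap is in the cosocle argument. You reason that since $V_{(I,\gamma)}$ is cyclic and the quotient by $\sum_{(I^\prime,\gamma^\prime) <_\chi (I,\gamma)} V_{(I^\prime,\gamma^\prime)}$ is the irreducible module $L(I,\gamma)$, the cosocle must be $L(I,\gamma)$. But this only shows that $L(I,\gamma)$ is a direct summand of the cosocle; a cyclic module over a finite group algebra can perfectly well have a reducible (semisimple) cosocle, and in the present setting there are genuine multiplicities among the constituents of $V_{(I,\gamma)}$, so the issue is not hypothetical. The ingredient your sketch omits is precisely Proposition~\ref{pro:adjacent.types}: away from the exceptional case, the length-two extension of $L(I,\gamma)$ by $L(I^\prime,\gamma^\prime)$ that appears between adjacent types is non-split, and it is this non-splitting (propagated down the poset) that rules out other irreducible quotients. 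Your argument never invokes this and therefore cannot distinguish a simple cosocle from a two-term semisimple one.

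Relatedly, you misdiagnose the exceptional case. It is not ``multiple maximal elements below $(I,\gamma)$ in $\leq_\chi$''; it is the specific numerological situation $\gamma^\prime = \gamma$ with $r - 2\gamma = q - 1$, where the tame graded piece $W_\gamma / \sum_{\alpha \prec \gamma} W_\alpha \simeq \det^\gamma \otimes V_{1,q-1}$ decomposes as a \emph{direct sum} of two Serre weights (Theorem~\ref{thm:bardoe.sin.exceptional}); that split extension is precisely what forces the caveat in Proposition~\ref{pro:adjacent.types} and hence in the cosocle statement.
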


Although Theorem~\ref{thm:intro.ramified} appears to describe only a very small part of the submodule lattice of $I_2(\chi)$, it describes an important part.  Breuil~\cite{Breuil/15} has defined a functor from the category of mod $p$ representations of $\mathrm{GL}_2(F)$ to that of $(\varphi, \Gamma)$-modules over $\Q_p$, and 
subquotients of $I_2(\chi)$ generated by elements of the basis $\mathcal{B}$ turn out to be relevant for the study of the image of $\pi(\rho)$ under Breuil's functor, with the aim of producing a description of this image analogous to that of~\cite[\S4]{BHHMS/21}.

\subsubsection{The unramified case}
All these results, except for Theorem~\ref{thm:intro.jh} fail when $F / \Q_p$ is unramified.  Indeed, we are unaware of a basis of $I_2(\chi)$ with the property that the $\mathrm{GL}_2(\mathcal{O} / \mathfrak{m}^2)$-submodule of $I_2(\chi)$ generated by any element of the basis is the linear span of a subset of the basis.  However, we can still obtain interesting partial results.  Assume for simplicity that $\chi$ does not factor through the determinant, and let $\sigma_\chi$ be the unique irreducible representation of $\mathrm{GL}_2(k)$, and hence of $\mathrm{GL}_2(\mathcal{O} / \mathfrak{m}^2)$, with lowest weight character $\chi$.  Then $I_2(\chi)$ has irreducible socle isomorphic to $\sigma_\chi$.  If $F / \Q_p$ is unramified, then we define a cyclic submodule $M(\chi) \subseteq I_2(\chi)$ that is analogous to $V_{(\varnothing, (1,1, \dots, 1))}$.  The submodule $M(\chi)$ is multiplicity-free, and we can describe its submodule structure completely by means of explicit calculations that are less pleasant than the ones in the rest of the paper.  In~\cite[\S17]{BP/12}, Breuil and {\Paskunas} associated a $\mathrm{GL}_2(\mathcal{O})$-module $R(\sigma)$ to every generic irreducible representation $\sigma$ of $\mathrm{GL}_2(k)$; these modules played a technical but essential role in the proof of the main results of~\cite{BP/12}.  Under the genericity assumption of Definition~\ref{def:strong.genericity} on $\chi$, we recover $R(\sigma_\chi)$ as a submodule of $I_2(\chi)$ and describe its submodule structure completely, thereby strengthening an essential property of $R(\sigma_\chi)$ established in~\cite[\S18]{BP/12} while simplifying its proof.  
We note parenthetically that our study of $M(\chi)$ may also be developed when $\chi$ is not generic.  An application, which is the aim of work in progress, is to generalize the definition of Breuil-{\Paskunas} diagrams to non-generic cases. 

Indeed, let $\widetilde{\Theta}_1 = \{ (J,I) : \, I, J \subseteq \Z / f\Z, \, J \cap \{ i - 1: i \in I \} = \varnothing \}$.  To each $\theta \in \widetilde{\Theta}_1$ we associate an explicit irreducible representation $\sigma_\theta$ of $\mathrm{GL}_2(k)$ by means of a recipe that depends on $\chi$.  Define a partial order on $\widetilde{\Theta}_1$ by setting $(J^\prime, I^\prime) \sqsubseteq (J,I)$ if the two conditions $I^\prime \subseteq I$ and $J^\prime \subseteq J \cup \{ i - 1: i \in I \setminus I^\prime \}$ hold.  The following is Lemma~\ref{lem:r.sigma.characterization} and Theorem~\ref{thm:unramified.submodule.structure}.

\begin{thm} \label{thm:intro.unramified}
Suppose that $F / \Q_p$ is unramified and that $\chi$ is a generic character.  Then $M(\chi) \simeq R(\sigma_\chi)$ as $\mathrm{GL}_2(\mathcal{O})$-modules.  The module $M(\chi)$ is multiplicity-free with Jordan-H\"older constituents $\{ \sigma_\theta: \, \theta \in \widetilde{\Theta}_1 \}$.  For every $\theta \in \widetilde{\Theta}_1$, the unique submodule with cosocle $\sigma_\theta$ has Jordan-H\"older constituents $\{ \sigma_{\theta^\prime}: \, \theta^\prime \in \widetilde{\Theta}_1, \, \theta^\prime \sqsubseteq \theta \}$.
\end{thm}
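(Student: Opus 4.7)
The plan is to split the statement into the identification $M(\chi)\simeq R(\sigma_\chi)$ and the internal submodule description of $M(\chi)$. For both parts the starting point is an explicit $E$-basis of $M(\chi)$ indexed by $\widetilde{\Theta}_1$. Since $M(\chi)$ is defined as a cyclic submodule of $I_2(\chi)$---the unramified substitute for $V_{(\varnothing,(1,1,\dots,1))}$, which fails to be $\mathrm{GL}_2(\mathcal{O})$-stable when $F/\Q_p$ is unramified---I would generate it from its cyclic generator by applying suitable matrices in $\mathrm{GL}_2(\mathcal{O}/\mathfrak{m}^2)$: lower-unipotent matrices with $\mathfrak{m}$-entries, diagonal elements, and the Weyl element. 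The combinatorial parameter $\theta=(J,I)$ should emerge naturally from the bookkeeping: $I$ records the Frobenius embeddings at which a Witt-vector carry contributes to the action, whereas $J$ records the embeddings at which the tame digit is incremented without a carry. The condition $J\cap\{i-1:i\in I\}=\varnothing$ reflects the fact that a carry from embedding $i$ already occupies the $(i-1)$-th slot, so that an additional tame increment there would produce a basis vector outside the span of the generator's $\mathrm{GL}_2(\mathcal{O})$-orbit.

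To identify $M(\chi)$ with $R(\sigma_\chi)$, I would appeal to the characterization of $R(\sigma_\chi)$ in \cite[\S17]{BP/12} as a cyclic $\mathrm{GL}_2(\mathcal{O})$-module with simple socle $\sigma_\chi$ and a prescribed list of Jordan-Hölder constituents. The socle statement for $M(\chi)$ is inherited from $I_2(\chi)$, whose socle is $\sigma_\chi$ under the hypothesis that $\chi$ does not factor through the determinant. Matching the constituents reduces to verifying that the explicit assignment $\theta\mapsto\sigma_\theta$ produces, in terms of lowest weight characters, precisely the list enumerated by Breuil and {\Paskunas}; this is a combinatorial check on weights twisted by the data $(J,I)$, carried out under the genericity assumption on $\chi$.

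For the submodule structure, define $M_\theta\subseteq M(\chi)$ to be the $E$-span of the basis vectors indexed by $\theta'\sqsubseteq\theta$ and prove in turn: (i) each $M_\theta$ is $\mathrm{GL}_2(\mathcal{O})$-stable; (ii) the graded piece $M_\theta/\sum_{\theta'\sqsubsetneq\theta} M_{\theta'}$ is isomorphic to $\sigma_\theta$; (iii) the $\sigma_\theta$ are pairwise non-isomorphic, via distinctness of their lowest weight characters under genericity; and (iv) any submodule with irreducible cosocle $\sigma_\theta$ equals $M_\theta$, by descending induction along $\sqsubseteq$ once multiplicity-freeness is in hand. The main obstacle is (i): in the unramified setting the ambient basis $\mathcal{B}$ no longer yields coordinate submodules of $I_2(\chi)$, so stability of $M_\theta$ must be verified directly against the Witt-vector addition formula $(a,b)+(c,d)=(a+c,\,b+d+C(a,c))$ by a careful accounting of how the Witt polynomial $C$ redistributes contributions across Frobenius embeddings. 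The reason the argument closes---and the source of the asymmetric partial order $\sqsubseteq$---is that at level $n=2$ carries propagate exactly one step and cannot cascade, so closure under $\sqsubseteq$ matches exactly the orbit closure of a basis vector modulo lower terms. Genericity of $\chi$ is used throughout to ensure that the scalar coefficients appearing in these expansions are units in $E$, precluding accidental cancellations that would force the generators of $M_\theta$ into a strictly smaller submodule.
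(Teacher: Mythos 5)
Your proposal breaks down at step (i), and the gap is exactly the one the paper singles out: in the unramified case there is \emph{no} basis of $I_2(\chi)$ (nor of $M(\chi)$) whose $\mathrm{GL}_2(\mathcal{O}/\mathfrak{m}^2)$-coordinate subspaces are submodules, so defining $M_\theta$ as the span of a subset of basis vectors and verifying stability ``by careful accounting of the Witt polynomial'' cannot work. Concretely, Lemma~\ref{lem:submodule.generated.unramified} shows that the $U_2$-orbit of a general basis element $f_{(j_0,j_1)}$ is spanned by vectors of the form $\sum_\lambda (\cdots)\otimes(\lambda_0-b_0)^{j_0}(\lambda_1+S(\lambda_0^{p^{f-1}},(-b_0)^{p^{f-1}}))^{j_1'}$, which are genuinely non-trivial linear combinations of several $f_{(j_0',j_1')}$; only for the special generators $f_{(j_0,p^I)}$ with $\mathrm{supp}(j_0)\cap(I-1)=\varnothing$ does one recover a coordinate span (and even then only at the level of the Iwahori $\widetilde{B}_2$, not $G_2$). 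The $G_2$-orbit of $f_\theta$ passes through elements outside this special form, so your hoped-for ``orbit closure $=$ coordinate closure'' heuristic has no proof attached and in fact fails.

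The paper avoids this entirely. It characterizes each $M_\theta$ not as a coordinate span but via its cosocle (Proposition~\ref{pro:unramified.irreducible.cosocle}), proves $\theta'\sqsubseteq\theta\Rightarrow M_{\theta'}\subseteq M_\theta$ by explicit Iwahori calculations on the special generators (Proposition~\ref{pro:unramified.inclusion}), and then — crucially — invokes the classification of $\mathrm{Ext}^1$ between Serre weights from \cite[Corollary~5.6]{BP/12} in Lemma~\ref{lem:unramified.auxiliary} to show that the only non-split length-two extensions appearing inside $M(\chi)$ are between $\sqsubseteq$-adjacent constituents. Your step (iv) omits this input: multiplicity-freeness plus the ``if'' direction of inclusion does \emph{not} by itself determine the submodule structure, because without the Ext bound nothing rules out extensions linking constituents in the wrong order. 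Similarly, your proposed identification of $M(\chi)$ with $R(\sigma_\chi)$ via a ``socle plus Jordan--H\"older constituents'' characterization is not how $R(\sigma)$ is set up in \cite[\S17]{BP/12}; the paper instead matches the filtration $\mathrm{Fil}^J\widetilde R(\sigma_r)$ with $W_{p^J}$ (Lemma~\ref{lem:bp.filtration}), translates the ``special'' condition of \cite[Definition~17.2]{BP/12} into the condition $(J,I)\in\widetilde{\Theta}_1$ via Remark~\ref{rem:serre.weight.data}, and uses \cite[Lemma~17.8]{BP/12} for the constituent count. These imported facts from \cite{BP/12} are doing real work that your sketch has no substitute for.
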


 \subsection{Overview of the paper} 
 In \S\ref{sec:prelim} we set notation, recall the work of Bardoe and Sin~\cite{BS/00} about the structure of the tame principal series $I(\chi)$, and establish properties of the carry sets mentioned in~\S\ref{sec:intro.family} above.  In~\S\ref{sec:jh.filtration}, Theorem~\ref{thm:intro.jh} is proved, and some general explicit computations are presented.  In~\S\ref{sec:main} we specialize to the case $n = 2$, and soon after to the case of ramified extensions $F / \Q_p$, and prove the main results of the paper.   In~\S\ref{sec:R.sigma} the computations are modified to treat the case of unramified $F / \Q_p$, and Theorem~\ref{thm:intro.unramified} is proved.  The final section~\S\ref{sec:equality.of.orders}, which is an appendix to the main body of the paper and essentially independent of it, contains the combinatorial proof of Proposition~\ref{pro:equality.of.orders} and thereby provides a closed-form definition of the partial order $\leq_\chi$ appearing in the statements of Theorems~\ref{thm:intro.totram} and~\ref{thm:intro.ramified}.

\begin{acknowledgements}
We are grateful to Shalini Bhattacharya, Stefano Morra, and Ariel Weiss for illuminating discussions during the course of this work and its prehistory.
\end{acknowledgements}

\section{Preliminaries} \label{sec:prelim}

\subsection{Notation} \label{sec:notation}
Let $F / \Q_p$ be a finite extension.  Let $\mathcal{O}$ be the valuation ring of $F$, let $k$ be its residue field, and let $q = p^f$ be the cardinality of $k$.  Fix a uniformizer $\varpi \in \mathcal{O}$, namely a generator of the maximal ideal $\mathfrak{m}$.  Let $e$ be the ramification index of $F / \Q_p$, so that $p \mathcal{O} = \mathfrak{m}^e$.

\subsubsection{The monoid $\Nt$}
For every $n \in \N$ we set $[n] = \{ 1, 2, \dots, n \}$ and $[n]_0 = [n] \cup \{ 0 \}$.  The symbol $\subset$ denotes strict inclusion.  Set $\N_0 = \N \cup \{ 0 \}$, and consider the equivalence relation on $\N_0$ defined by $a \sim b$ if the functions $x \mapsto x^a$ and $x \mapsto x^b$ from $k$ to itself are equal.  In other words, $a \sim b$ if and only if either $a = b = 0$ or if $a, b > 0$ and $a \equiv b \, \mathrm{mod} \; q - 1$.  Observe that $\Nt = \N_0 / \! \sim$ is a monoid under addition.  

For every class $\alpha \in \Nt$ there exists a unique $f$-tuple $(\alpha_0, \dots, \alpha_{f-1}) \in ([p-1]_0)^f$ satisfying $\sum_{i = 0}^{f-1} \alpha_i p^i \in \alpha$.  Define a partial order $\preceq$ on $\Nt$ as follows: $\alpha \preceq \beta$ if and only if $\alpha_i \leq \beta_i$ for all $i \in [f-1]_0$.  Equivalently, by Lucas's theorem, $\alpha \preceq \beta$ if and only if the binomial coefficient $\binom{\sum_{i = 0}^{f-1} \beta_i p^i}{\sum_{i = 0}^{f-1} \alpha_i p^i}$ does not vanish in $k$.  

We will use two notions of subtraction on $\Nt$, each of which is only partially defined.  Observe that the subsemigroup $\Nt \setminus \{ 0 \} \subset \Nt$, which is not a submonoid, has the structure of a group and is naturally identified with $\Z / (q-1) \Z$.  
\begin{enumerate}
\item
Given $a, b \in \Nt$ such that $a \neq 0$, define the difference $a - b$ to be the unique $c \in \Nt \setminus \{ 0 \}$ satisfying $b + c = a$. 
\item
If $\alpha, \beta \in \Nt$ satisfy $0 \neq \alpha \preceq \beta$, then we define $\beta \dotdiv \alpha$ to be the unique $\gamma \prec \beta$ satisfying $\alpha + \gamma = \beta$.  
\end{enumerate}
Thus, for example, we have $(q - 1) - (q - 1) = q - 1$ but $(q - 1) \dotdiv (q - 1) = 0$.

If $I \subseteq [f-1]_0$, we use the shorthand $p^I = \sum_{i \in I} p^i$.  If $b \in \Z / f\Z$, then, identifying $[f-1]_0$ with $\Z / f\Z$ in the natural way, we set $I - b = \{ i - b : i \in I \}$.  If $\alpha \in \Nt$, then the support of $\alpha$ is the set $\mathrm{supp}(\alpha) = \{ i \in [f-1]_0 : \alpha_i \neq 0 \}$.

\subsubsection{Induced representations}
Let $G$ be any group and $H \leq G$ a subgroup.  Let $E$ be a field, and let $\rho: H \to \mathrm{Aut}_E(V)$ be a representation of $H$ with underlying $E$-vector space $V$.  If $g \in G$ and $v \in v$, then set $g \tensor v \in \mathrm{Ind}_H^G \rho$ to be the function $f : G \to V$ supported on the right coset $H g^{-1}$ and satisfying $f(hg^{-1}) = \rho(h)v$ for all $h \in H$.  Observe that $gh \tensor v = g \tensor \rho(h)v$ for all $h \in H$ and that $x(g \tensor v) = xg \tensor v$ for all $x \in G$.

\subsubsection{Witt vectors}
If $x \in k$, let $[x] \in \mathcal{O}$ be the Teichm\"uller lift of $x$.  For a class $\lambda \in \mathcal{O} / \mathfrak{m}^n$, we set $\lambda_0, \lambda_1, \dots, \lambda_{n-1}$ to be the unique sequence of elements of the residue field $k$ satisfying $\sum_{i = 0}^{n-1} [\lambda_i] \varpi^i \in \lambda$.  Abusing notation, we will write $\lambda = \sum_{i = 0}^{n-1} [\lambda_i] \varpi^i$.  Similarly, any element $\lambda \in \mathcal{O}$ can be written uniquely as a series $\lambda = \sum_{i = 0}^\infty [\lambda_i] \varpi^i$.

Let $u = \frac{p}{\varpi^e} \in \mathcal{O}^\times$, and note that $u_0 \in k^\times$.  Consider the polynomial
$S(x,y) = \frac{x^p + y^p - (x + y)^p}{p} \in \Z[x,y]$.
The following identity will be crucial to our computations.
\begin{lem} \label{lem:witt.vector.summation}
Let $a, b \in \mathcal{O}$.  Write $a = \sum_{i = 0}^\infty [a_i] \varpi^i$ and $b = \sum_{i = 0}^\infty [b_i] \varpi^i$, for $a_i, b_i \in k$.  Then
$$ a + b \equiv \sum_{i = 0}^{e-1} [a_i + b_i] \pi^i + [ a_e + b_e + u_0 S(a_0,b_0)^{p^{f-1}}] \varpi^e \, \mathrm{mod} \, \varpi^{e+1}.$$
\end{lem}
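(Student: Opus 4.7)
The plan is to first establish, in the subring $W(k) \subseteq \mathcal{O}$, the Teichm\"uller addition identity
$$ [x] + [y] \;\equiv\; [x+y] + \bigl[S(x,y)^{p^{f-1}}\bigr] \cdot p \pmod{p^2} $$
for all $x, y \in k$, and then to apply this identity digit-by-digit to the expansions of $a$ and $b$, carefully tracking which error terms survive modulo $\varpi^{e+1}$.

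For the Teichm\"uller identity, I would work in Witt-vector coordinates. The standard Witt addition polynomials give
$$ (x, 0, 0, \ldots) + (y, 0, 0, \ldots) = (x+y,\; S(x,y),\; \ast,\; \ast,\; \ldots), $$
where $S$ is exactly the polynomial of the statement, arising from the defining equation for the second ghost component. Translating back to the $p$-adic Teichm\"uller expansion via the correspondence $(c_0, c_1, c_2, \ldots) \leftrightarrow \sum_{i \geq 0} [c_i^{p^{-i}}] p^i$, valid because $k$ is perfect and $c^{p^{-1}} = c^{p^{f-1}}$ since $|k| = p^f$, this immediately yields the displayed congruence modulo $p^2$.

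For the main step, I would write $a + b = \sum_{i \geq 0} ([a_i] + [b_i]) \varpi^i$ and substitute the Teichm\"uller identity into each summand, using $p = u \varpi^e$. Each term contributes a main piece $[a_i + b_i] \varpi^i$, a correction $u \cdot [S(a_i, b_i)^{p^{f-1}}] \varpi^{e+i}$, and an error in $\varpi^{2e+i}\mathcal{O}$. Since $e \geq 1$, the errors all lie in $\varpi^{e+1}\mathcal{O}$; among the main pieces only the indices $i \leq e$ survive modulo $\varpi^{e+1}$; and among the corrections only $i = 0$ survives. This yields
$$ a + b \;\equiv\; \sum_{i=0}^{e-1} [a_i + b_i] \varpi^i \;+\; \varpi^e \bigl( [a_e + b_e] + u \cdot [S(a_0, b_0)^{p^{f-1}}] \bigr) \pmod{\varpi^{e+1}}. $$

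To package the coefficient of $\varpi^e$ as a single Teichm\"uller lift, I would reduce modulo $\varpi$: there $u \equiv u_0$ and every Teichm\"uller lift $[c]$ reduces to $c$, so
$$ [a_e + b_e] + u \cdot [S(a_0, b_0)^{p^{f-1}}] \;\equiv\; a_e + b_e + u_0 \, S(a_0, b_0)^{p^{f-1}} \;\equiv\; [a_e + b_e + u_0 \, S(a_0, b_0)^{p^{f-1}}] \pmod{\varpi}, $$
and multiplying by $\varpi^e$ promotes this congruence to one modulo $\varpi^{e+1}$, as required. The only non-routine point in the whole argument is the Frobenius twist in the Witt-to-$p$-adic translation that produces the exponent $p^{f-1}$ on $S(a_0, b_0)$ rather than $S(a_0, b_0)$ itself; everything else is direct $\varpi$-adic bookkeeping.
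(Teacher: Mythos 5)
Your proof is correct and follows essentially the same route as the paper's: both derive the Teichm\"uller addition identity $[x]+[y] \equiv [x+y] + p[S(x,y)^{p^{f-1}}] \pmod{p^2}$ from the Witt-vector addition law in $W(k)$ (the paper via $p[\lambda] = \mathrm{Fr}(\mathrm{V}([\lambda]))$, you via the correspondence $(c_0,c_1,\dots) \leftrightarrow \sum_i [c_i^{p^{-i}}]p^i$ --- the same mechanism), and then substitute $p = u\varpi^e$. You simply make explicit the digit-by-digit bookkeeping and the final repackaging of $[a_e+b_e]+[u_0 S(a_0,b_0)^{p^{f-1}}]$ into a single Teichm\"uller lift, steps the paper compresses into ``the claim follows.''
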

\begin{proof}
Let $F_0$ be the maximal unramified subextension of $F / \Q_p$.  Observe that $[\lambda] \in F_0$ for all $\lambda \in k$ by Hensel's lemma.  We view elements of the Witt vector ring $W(k) = \mathcal{O}_{F_0}$ as sequences of elements of $k$ as in~\cite[\S{II.6}]{Serre/62} and use the notation $\mathrm{Fr}$ and $\mathrm{V}$ for the Frobenius and Verschiebung operators.  Then $[\lambda] = (\lambda, 0, 0, \dots)$, as the right-hand side is easily seen to be a $(p-1)$-th root of unity congruent to $\lambda$ modulo $p$.  Similarly, $p [ \lambda ] = \mathrm{Fr}(\mathrm{V}([\lambda])) = (0, \lambda^p, 0, \dots)$.  Hence,
$$ [\lambda] + [\mu] = (\lambda + \mu, S(\lambda, \mu), 0, \dots) \equiv [\lambda + \mu] + p [S(\lambda, \mu)^{p^{f-1}}] \, \mathrm{mod} \, p^2$$
for all $\lambda, \mu \in k$.  Since $p \equiv [u_0] \varpi^e \, \mathrm{mod} \, \varpi^{e+1}$, the claim follows.
\end{proof}

If $m \leq n$, then $\psi_m^n$ denotes the natural projection $\mathcal{O}_n \to \mathcal{O}_m$ of rings, and also the natural projection $G_n \to G_m$ of groups that it induces.
By $\mathbf{1}$, we always mean the trivial representation of the relevant group.

\begin{dfn} \label{def:subgroups}
We set notation for the following subgroups of $G_n$:
\begin{itemize}
\item
$T = \left\{ \left( \begin{array}{cc} [a] & 0 \\ 0 & [d] \end{array} \right): a,d \in k^\times \right\}$ ;
\item
$D_n = \left\{ \left( \begin{array}{cc} 1 & 0 \\ 0 & 1 + d \varpi \end{array} \right): d \in \mathcal{O}_{n-1} \right\} $;
\item
$U_n = \left\{ \left( \begin{array}{cc} 1  & b \\ 0 & 1 \end{array} \right): b \in \mathcal{O}_n \right\}$;
\item
$\overline{U}_n = \left\{ \left( \begin{array}{cc} 1  & 0 \\ c \varpi & 1 \end{array} \right): c \in \mathcal{O}_{n-1} \right\}$.
\end{itemize}
\end{dfn}

Let $B \leq \mathrm{GL}_2$ be the algebraic subgroup of upper triangular matrices.  For $n \in \N$, write $\mathcal{O}_n = \mathcal{O} / \mathfrak{m}^n$ and let $G_n = \mathrm{GL}_2(\mathcal{O}_n)$, and $B_n = B(\mathcal{O}_n)$.  Let $w = \left( \begin{array}{cc} 0 & 1 \\ 1 & 0 \end{array} \right) \in G_n$.  If $r \in \Z / (q-1) \Z$, then we define $\chi_r : B_n \to k^\times$ to be the character
\begin{equation*}
\chi_r : \left( \begin{array}{cc} a & b \\ 0 & d \end{array} \right) \mapsto d_0^r.
\end{equation*}

\subsection{Principal series representations}
The primary aim of this paper is to investigate the following representations of $G_n$:
\begin{dfn}
Let $n \in \N$ and let $r \in \Z / (q-1) \Z$.  We set $V_{n,r} = \mathrm{Ind}^{G_n}_{B_n} \chi_r$.
\end{dfn}

We view $V_{n,r}$ as a representation of dimension $(q+1)q^{n-1}$ over some sufficiently large extension field $k \subseteq E$.  
For explicit computations below, it is convenient to define an explicit $E$-basis of $V_{n,r}$.  

\begin{dfn} \label{dfn:general.basis}
If $\mathbf{j} = (j_0, \dots, j_{n-1}) \in \Nt^n$, set
$$ f_{\mathbf{j}} = \sum_{\lambda \in \mathcal{O}_n} \left( \begin{array}{cc} \lambda & 1 \\ 1 & 0 \end{array} \right) \tensor \lambda_0^{j_0} \cdots \lambda_{n-1}^{j_{n-1}}.$$
If $\mathbf{j}^\prime = (j_1, \dots, j_{n-1}) \in \Nt^{n-1}$, then set
$$ f_{(\infty, \mathbf{j}^\prime)} = w f_{(0,\mathbf{j}^\prime)} = \sum_{\lambda \in \mathcal{O}_n^\times} \left( \begin{array}{cc} \lambda & 1 \\ 1 & 0 \end{array} \right) \tensor (-\lambda_0)^r \prod_{i = 1}^{n-1} P_i(\lambda)^{j_i} + \sum_{\lambda \in \mathcal{O}_{n-1}} \left( \begin{array}{cc} 1 & 0 \\ \lambda \varpi & 1 \end{array} \right) \tensor \prod_{i = 1}^{n-1} \lambda_i^{j_i}.$$
\end{dfn}
Here $P_i : \mathcal{O}_n^\times \to k$ is defined by $\lambda^{-1} = \sum_{i = 0}^{n-1} [P_i(\lambda)] \varpi^i$; see Lemma~\ref{lem:inversion.power.series} below for an explicit formula.
We write $\lambda^\mathbf{j}$ for $\lambda_0^{j_0}\lambda_1^{j_1}\dots\lambda_{n-1}^{j_{n-1}}$ when this is unlikely to cause confusion.  Note also that $r \in \Z / (q - 1)\Z = \Nt \setminus \{ 0 \}$ by definition, so that the first sum in the previous displayed formula receives no contribution from $\lambda \in \mathcal{O}_n$ such that $\lambda_0 = 0$ and thus is actually a sum over $\mathcal{O}_n^\times$, where the functions $P_i$ are defined.

\begin{lem} \label{lem:bnr.basis}
    The set $\mathcal{B}_{n,r} = \{f_{\mathbf{j}}\mid \mathbf{j}\in\Nt^n\}\cup\{f_{(\infty,\mathbf{j}^\prime)}\mid \mathbf{j}^\prime\in\Nt^{n-1}\}$ is an $E$-basis of $V_{n,r}$. 
\end{lem}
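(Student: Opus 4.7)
The plan is to establish linear independence (which suffices since the cardinality $|\mathcal{B}_{n,r}| = q^n + q^{n-1} = (q+1)q^{n-1}$ already matches $\dim_E V_{n,r} = [G_n : B_n]$) by exploiting a Bruhat-style decomposition of $G_n/B_n$ into a ``big cell'' and a ``small cell''. A direct matrix calculation shows that the $q^n + q^{n-1}$ matrices $\left\{ \left(\begin{smallmatrix} \lambda & 1 \\ 1 & 0 \end{smallmatrix}\right)^{-1} : \lambda \in \mathcal{O}_n \right\} \cup \left\{ \left(\begin{smallmatrix} 1 & 0 \\ \varpi\mu & 1 \end{smallmatrix}\right)^{-1} : \mu \in \mathcal{O}_{n-1} \right\}$ lie in pairwise distinct right cosets of $B_n$ and so form a complete set of representatives for $B_n \backslash G_n$. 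Splitting according to cell type yields a direct sum decomposition $V_{n,r} = V^\circ \oplus V^\bullet$ as $E$-vector spaces, where $V^\circ$ (respectively $V^\bullet$) is the subspace of functions supported on the big-cell (respectively small-cell) representatives.

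For the big cell, every $f_{\mathbf{j}}$ lies in $V^\circ$ by construction, and evaluation at $\left(\begin{smallmatrix} \lambda & 1 \\ 1 & 0 \end{smallmatrix}\right)^{-1}$ defines an $E$-linear isomorphism $V^\circ \xrightarrow{\sim} E^{\mathcal{O}_n}$ carrying $f_{\mathbf{j}}$ to the monomial function $\lambda \mapsto \lambda_0^{j_0} \lambda_1^{j_1} \cdots \lambda_{n-1}^{j_{n-1}}$. Through the Teichm\"uller bijection $\mathcal{O}_n \xrightarrow{\sim} k^n$ described in the notation section, proving that $\{f_{\mathbf{j}}\}_{\mathbf{j} \in \Nt^n}$ is a basis of $V^\circ$ is reduced to the standard fact that the $q^n$ monomials $(x_0, \dots, x_{n-1}) \mapsto x_0^{j_0} \cdots x_{n-1}^{j_{n-1}}$ with $j_i \in \Nt$ form an $E$-basis of $E^{k^n}$. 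Using $E^{k^n} \cong (E^k)^{\otimes n}$, this reduces in turn to the basis claim for $E^k$, which I would prove by restricting to $k^\times$ (where the classes $j \in \Nt \setminus \{0\}$ are represented by the $q-1$ distinct multiplicative characters of the cyclic group $k^\times$, hence linearly independent) and then evaluating at $0$ to separate $j = 0$.

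For the small cell, the explicit formula in Definition~\ref{dfn:general.basis} already exhibits $f_{(\infty, \mathbf{j}')}$ as the sum of a term in $V^\circ$ and the term $\sum_{\mu \in \mathcal{O}_{n-1}} \left(\begin{smallmatrix} 1 & 0 \\ \varpi\mu & 1 \end{smallmatrix}\right) \tensor M_{\mathbf{j}'}(\mu) \in V^\bullet$, where $M_{\mathbf{j}'}(\mu)$ is a monomial in the Witt coordinates of $\mu$ whose exponent tuple ranges independently over all of $\Nt^{n-1}$ as $\mathbf{j}'$ varies. Applying the same monomial-basis argument as above with $n$ replaced by $n-1$ shows that the projections to $V^\bullet$ of the $f_{(\infty, \mathbf{j}')}$ form a basis of $V^\bullet$. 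Linear independence of $\mathcal{B}_{n,r}$ then follows formally: if $\sum_{\mathbf{j}} a_{\mathbf{j}} f_{\mathbf{j}} + \sum_{\mathbf{j}'} b_{\mathbf{j}'} f_{(\infty, \mathbf{j}')} = 0$, projecting onto $V^\bullet$ forces all $b_{\mathbf{j}'} = 0$, and the surviving relation in $V^\circ$ then forces all $a_{\mathbf{j}} = 0$. The main (though mild) point requiring care is the verification of the formula for $f_{(\infty, \mathbf{j}')}$ itself: one uses $w \left(\begin{smallmatrix} \lambda & 1 \\ 1 & 0 \end{smallmatrix}\right) = \left(\begin{smallmatrix} 1 & 0 \\ \lambda & 1 \end{smallmatrix}\right)$ and, for $\lambda \in \mathcal{O}_n^\times$, the factorisation $\left(\begin{smallmatrix} 1 & 0 \\ \lambda & 1 \end{smallmatrix}\right) = \left(\begin{smallmatrix} \lambda^{-1} & 1 \\ 1 & 0 \end{smallmatrix}\right) \left(\begin{smallmatrix} \lambda & 1 \\ 0 & -\lambda^{-1} \end{smallmatrix}\right)$ with the second factor in $B_n$, which after applying $\chi_r$ and substituting $\mu = \lambda^{-1}$ produces the factor $(-\mu_0)^r$ and converts the monomial in $\lambda_i$ into the advertised monomial in $P_i(\mu)$.
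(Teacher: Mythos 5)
Your proof is correct and follows essentially the same approach as the paper: decompose into big and small Bruhat cells, apply a Vandermonde-type monomial-basis argument to each cell, and finish with a dimension count. The only cosmetic differences are that you organize the argument around linear independence via the direct-sum decomposition $V^\circ \oplus V^\bullet$ where the paper argues via spanning, and that you prove the monomial-basis fact from independence of multiplicative characters rather than citing the Vandermonde lemma of Morra.
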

\begin{proof}
A set of representatives of left cosets of $B_n$ in $G_n$ is given by
$$\Xi = \left\{ \left( \begin{array}{cc} \lambda & 1 \\ 1 & 0 \end{array} \right) : \lambda \in \mathcal{O}_n \right\} \cup \left\{ \left( \begin{array}{cc} 1 & 0 \\ \lambda \pi & 1 \end{array} \right) : \lambda \in \mathcal{O}_{n-1} \right\}, $$
from which it is clear that $\{ \xi \tensor 1 : \xi \in \Xi \}$ is a basis of $V_{n,r}$.  Iterating a standard Vandermonde argument (cf.~\cite[Lemma~2.10(ii)]{Morra/11}) we find that the set $\{ f_{\mathbf{j}} : \mathbf{j} \in \Nt^n \}$ is linearly independent, and that it spans the subspace $V^\prime_{n,r}$ spanned by $\left\{ \left( \begin{array}{cc} \lambda & 1 \\ 1 & 0 \end{array} \right) \tensor 1: \lambda \in \mathcal{O}_n \right\}$.  Since $V_{n,r}^\prime \subset \mathrm{span}(\mathcal{B}_{n,r})$, it is evident that $\mathrm{span}(\mathcal{B}_{n,r})$ contains the linear span of the set $\left\{ \sum_{\lambda \in \mathcal{O}_{n-1}} \left( \begin{array}{cc} 1 & 0 \\ \lambda \varpi & 1 \end{array} \right) \tensor \lambda^{\mathbf{j}} : \mathbf{j} \in \Nt^{n-1} \right\}$, which is the same as the linear span of $\left\{ \left( \begin{array}{cc} 1 & 0 \\ \lambda \varpi & 1 \end{array} \right) \tensor 1 : \lambda \in \mathcal{O}_{n-1} \right\}$ by the Vandermonde argument.  Hence $\mathrm{span}(\mathcal{B}_{n,r}) = V_{n,r}$.  Furthermore, $| \mathcal{B}_{n,r} | = (q + 1)q^{n-1} = \dim_E V_{n,r}$, so $\mathcal{B}_{n,r}$ is indeed a basis of $V_{n,r}$.
\end{proof}

The elements of Definition~\ref{dfn:general.basis} have the advantage of being eigenvectors for the action of the torus $T = \left\{ \left( \begin{array}{cc} [a] & 0 \\ 0 & [d] \end{array} \right) : a,d \in k^\times \right\}$ and of the center $Z(G_n)$.

\begin{lem} \label{rem:eigenvectors}  The following equalities hold:
\begin{enumerate}
\item
  If $t \in T$, then
\begin{eqnarray*}
t f_{\mathbf{j}} & = & a^r (a^{-1}d)^{j_0 + \cdots + j_{n-1}} f_{\mathbf{j}} \\
t f_{(\infty, \mathbf{j}^\prime)} & = & d^r (a d^{-1})^{j_1 + \cdots + j_{n-1}} f_{(\infty, \mathbf{j}^\prime)}.
\end{eqnarray*}
\item
If $\alpha \in \mathcal{O}_n^\times$ and $z = \mathrm{diag}(\alpha,\alpha) \in Z(G_n)$, then $z \varphi = \alpha_0^r \varphi$ for any $\varphi \in \mathcal{B}_{n,r}$.
\end{enumerate}
\end{lem}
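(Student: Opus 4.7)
The plan for part (1) is a direct calculation in the induced representation. For $t = \mathrm{diag}([a],[d])$, a one-line matrix identity gives
$$ t \begin{pmatrix} \lambda & 1 \\ 1 & 0 \end{pmatrix} = \begin{pmatrix} [ad^{-1}]\lambda & 1 \\ 1 & 0 \end{pmatrix} \begin{pmatrix} [d] & 0 \\ 0 & [a] \end{pmatrix}, $$
where I use multiplicativity of the Teichm\"uller lift to verify $[ad^{-1}][d] = [a]$; the second factor lies in $B_n$ with $\chi_r$-value $a^r$. Substituting into $tf_{\mathbf{j}}$ via the relation $gh \tensor v = g \tensor \chi_r(h)v$ from the preliminaries and then reindexing the sum by the bijection $\mu = [ad^{-1}]\lambda$ of $\mathcal{O}_n$, I obtain the first identity once I observe that each Witt digit transforms as $\lambda_i = a^{-1}d\,\mu_i$, so $\lambda^{\mathbf{j}} = (a^{-1}d)^{j_0+\cdots+j_{n-1}}\mu^{\mathbf{j}}$ with no mixing of digits.

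For the $f_{(\infty,\mathbf{j}^\prime)}$ identity, the plan is to exploit the definition $f_{(\infty,\mathbf{j}^\prime)} = w f_{(0,\mathbf{j}^\prime)}$ together with the conjugation identity $wtw = \mathrm{diag}([d],[a])$ to reduce to the previous case with $(a,d)$ swapped and $j_0 = 0$:
$$ t f_{(\infty,\mathbf{j}^\prime)} = w \bigl( \mathrm{diag}([d],[a]) \, f_{(0,\mathbf{j}^\prime)} \bigr) = w \bigl( d^r (ad^{-1})^{j_1+\cdots+j_{n-1}} f_{(0,\mathbf{j}^\prime)} \bigr), $$
which is exactly the stated formula. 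Part (2) is essentially immediate: since $z = \mathrm{diag}(\alpha,\alpha)$ is central in $G_n$, one has $z\xi = \xi z$ for every coset representative $\xi$, so $z(\xi \tensor v) = \xi \tensor \chi_r(z) v = \alpha_0^r (\xi \tensor v)$, and every $\varphi \in \mathcal{B}_{n,r}$ is a linear combination of such elementary tensors.

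The only real step that requires care is the change of variable $\mu = [ad^{-1}]\lambda$ in part (1): it works cleanly precisely because multiplicativity of the Teichm\"uller lift ensures that this substitution rescales each $\lambda_i$ uniformly by $ad^{-1} \in k^\times$ without interaction between the Witt digits. This is what allows an eigenvalue of the clean monomial form stated in the lemma to emerge; everything else is routine manipulation of induced representations.
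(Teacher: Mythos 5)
Your proof is correct and is exactly the ``simple calculation'' the paper alludes to: for part (1) the matrix identity $t\left(\begin{smallmatrix}\lambda & 1\\ 1 & 0\end{smallmatrix}\right) = \left(\begin{smallmatrix}[ad^{-1}]\lambda & 1\\ 1 & 0\end{smallmatrix}\right)\left(\begin{smallmatrix}[d] & 0\\ 0 & [a]\end{smallmatrix}\right)$ followed by the change of variable $\mu = [ad^{-1}]\lambda$ (using multiplicativity of the Teichm\"uller lift so that the Witt digits scale uniformly), then $w$-conjugation for the $\infty$ case, and centrality for part (2). Nothing to add; the paper gives no details, and yours are complete and match the intended argument.
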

\begin{proof}
This is a simple calculation.
\end{proof}

\subsection{Carry sets} \label{sec:carry.sets}
If $n = 1$, then the principal series representation $V_{1,r} = \mathrm{Ind}^{\mathrm{GL}_2(k)}_{B(k)} \chi_r$ was studied by Bardoe and Sin~\cite{BS/00}.  We will give a mild reformulation of their results, in terms of the definition below.

\begin{lemma} \label{lem:carry.set.def}
    Let $\alpha, \beta \in \Nt$.   As above, let $\alpha_i, \beta_i \in [p-1]_0$ be such that $\sum_{i = 0}^{f-1} \alpha_i p^i \in \alpha$ and $\sum_{i = 0}^{f-1} \beta_i p^i \in \beta$.  View the indices $i$ as elements of $\Z / f\Z$ via the natural identification of the sets $[f-1]_0$ and $\Z / f\Z$.  There exists a unique subset $I(\alpha, \beta) \subseteq \Z / f\Z$ satisfying the conditions:
\begin{itemize}
\item
\itemsep0em
    $\alpha_i + \beta_i \leq p -1$ if $i-1 \not\in I(\alpha, \beta)$ and $i \not\in I(\alpha, \beta)$,
\item
    $\alpha_i + \beta_i \leq p - 2$ if $i-1 \in I(\alpha, \beta)$ and $i \not\in I(\alpha, \beta)$,
\item
    $\alpha_i + \beta_i \geq p$ if $i - 1 \not\in I(\alpha, \beta)$ and $i \in I(\alpha, \beta)$,
\item
    $\alpha_i + \beta_i \geq p - 1$ if $i - 1 \in I(\alpha, \beta)$ and $i \in I(\alpha, \beta)$,
\item
    $I(\alpha, \beta) = \varnothing$ if $\alpha_i + \beta_i = p - 1$ for all $i \in [f-1]_0$.
\end{itemize}
\end{lemma}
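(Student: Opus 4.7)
The plan is to reformulate the first four conditions as a fixed-point equation for a monotone operator on the power set of $\Z / f\Z$ and then to analyse its fixed points, with the fifth condition serving as a tie-breaker in the degenerate case.

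First I would verify by case analysis that the first four conditions together are equivalent to the single local rule
$$ i \in I \iff \alpha_i + \beta_i + \mathbf{1}_{i - 1 \in I} \geq p, $$
where $\mathbf{1}_{i - 1 \in I}$ denotes the indicator. For each $i$, the pair of truth values of $(i - 1 \in I,\, i \in I)$ picks out exactly one of the four listed inequalities; restricting to either row of the implicit two-by-two table (i.e., fixing whether $i - 1 \in I$ or not) yields two inequalities on $\alpha_i + \beta_i$ whose validity regions are complementary. This pins down the dependence of $i \in I$ on $i - 1 \in I$ and the digit sum exactly as in the displayed rule. Equivalently, $I$ is a fixed point of the monotone map $\Phi : 2^{\Z / f\Z} \to 2^{\Z / f\Z}$ defined by $\Phi(J) = \{ i : \alpha_i + \beta_i + \mathbf{1}_{i - 1 \in J} \geq p \}$.

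Next I would classify each index $i \in \Z / f\Z$ as \emph{low} if $\alpha_i + \beta_i \leq p - 2$, \emph{high} if $\alpha_i + \beta_i \geq p$, and \emph{critical} if $\alpha_i + \beta_i = p - 1$. The local rule forces $i \notin I$ at every low index, $i \in I$ at every high index, and $i \in I \iff i - 1 \in I$ at every critical index (so the carry propagates through any critical run). Assuming at least one non-critical index exists, I would decompose $\Z / f\Z$ into maximal arcs of consecutive critical indices, each terminated on its left by a non-critical anchor whose membership in $I$ is already forced. Propagating the anchor value forward through the arc determines the membership of every critical index uniquely, producing a unique candidate set $I$, which one then checks directly satisfies the original four conditions.

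The remaining case is when every index is critical, i.e., $\alpha_i + \beta_i = p - 1$ for all $i$. Then the local rule degenerates to the assertion that $I$ is invariant under the cyclic shift $i \mapsto i - 1$, and the only shift-invariant subsets of the cyclic group $\Z / f\Z$ are $\varnothing$ and $\Z / f\Z$; the fifth condition selects $\varnothing$, thereby restoring uniqueness. The only step that requires care is the initial case analysis establishing the equivalence with the local rule, but each case reduces to a one-line inequality check, so I do not anticipate a real obstacle.
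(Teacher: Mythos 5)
Your argument is correct and essentially the same as the paper's: at any index with $\alpha_i + \beta_i \neq p - 1$ membership in $I$ is forced outright, membership then propagates deterministically forward through runs of indices with $\alpha_i + \beta_i = p - 1$, and the fifth condition breaks the tie between $\varnothing$ and $\Z / f\Z$ in the degenerate all-critical case. Packaging the four conditions into the single local recurrence relating $i \in I$ to $i - 1 \in I$ and the digit sum is a clean way to organize the case check, but the underlying propagation argument matches the paper's.
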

\begin{proof}
Suppose that there is some index $i$ for which $\alpha_i + \beta_i \neq p - 1$.  If $\alpha_i + \beta_i < p - 1$, then the first two conditions imply that if such a set $I(\alpha, \beta)$ exists, then necessarily $i \not\in I(\alpha, \beta)$.  Similarly, if $\alpha_i + \beta_i > p - 1$ then $i \in I(\alpha, \beta)$ by the third and fourth conditions.  In either case, the conditions now determine whether or not $i + 1 \in I(\alpha, \beta)$.  Continuing in this way, we find that a set satisfying the first four conditions exists and is unique.

The only unresolved case is when $\alpha_i + \beta_i = p - 1$ for all $i$.  In this case, the two sets $\varnothing$ and $\Z / f\Z$ satisfy the first four conditions, but the fifth one determines $I(\alpha, \beta) = \varnothing$.
\end{proof}
\begin{rem}
Informally, $I(\alpha, \beta)$ is the set of columns where a carry is performed when computing the sum of $\sum_{i = 0}^{f-1} \alpha_i p^i$ and $\sum_{i = 0}^{f-1} \beta_i p^i$ in base $p$.  Observe that carries can only occur when $\alpha$ and $\beta$ are both non-zero; indeed, $I(\alpha, \beta) = \varnothing$ if $\alpha = 0$ or $\beta = 0$.  Otherwise, since we are working in $\Nt \setminus \{ 0 \} = \Z / (q-1) \Z$, any excess from the $p^{f-1}$ column is carried to the $p^0$ column.
\end{rem}

\begin{dfn} \label{def:admissible.set}
Let $\gamma \in \Nt$.  We say that a set $J \subseteq \Z / f\Z$ is $\gamma$-{\emph{admissible}} if there exist $\alpha, \beta \in \Nt$ such that $\alpha + \beta = \gamma$ and $I(\alpha, \beta) = J$.
\end{dfn}

The $\gamma$-admissible sets are easy to describe directly.  We leave the proof of the following observation to the reader, noting that $I(q-1, q-1) = \Z / f\Z$.
\begin{pro} \label{pro:characterize.admissible}
Let $\gamma \in \Nt$ and $J \subseteq \Z / f\Z$.
\begin{enumerate}[label=(\alph*)]
\item
If $\gamma \in \{ 0, q - 1 \}$, then $J$ is $\gamma$-admissible if and only if $J \in \{ \varnothing, \Z / f\Z \}$.
\item
If $\gamma \not\in \{0, q-1 \}$, then $J$ is $\gamma$-admissible if and only if the following two conditions are satisfied for every $i \in \Z / f\Z$:
\begin{itemize}
\item
If $\gamma_i = 0$ and $i - 1 \in J$, then $i \in J$.
\item
If $\gamma_i = p-1$ and $i \in J$, then $i - 1 \in J$.
\end{itemize}
\end{enumerate}
\end{pro}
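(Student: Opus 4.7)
The plan is to analyze the four cases of Lemma~\ref{lem:carry.set.def} position by position. Given $\alpha, \beta \in \Nt$ with $\alpha + \beta = \gamma$ and $J = I(\alpha, \beta)$, the four cases---indexed by whether each of $i - 1$ and $i$ lies in $J$---force $\alpha_i + \beta_i$ to equal $\gamma_i$, $\gamma_i - 1$, $\gamma_i + p$, or $\gamma_i + p - 1$, respectively. Since each $\alpha_i, \beta_i$ lies in $[p-1]_0$, this forced value must lie in $[0, 2p-2]$. The first and fourth options are automatic, the second requires $\gamma_i \geq 1$, and the third requires $\gamma_i \leq p - 2$; taking contrapositives immediately yields the two admissibility conditions of part (b) as necessary.

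For the converse direction of (b), I would construct $(\alpha, \beta)$ explicitly: at each $i$, pick $\alpha_i, \beta_i \in [p-1]_0$ summing to the value prescribed by the case corresponding to the pair $(i - 1 \in J,\, i \in J)$. The resulting $\alpha + \beta$ then has the correct base-$p$ digits with the prescribed carry pattern, hence equals $\gamma$ in $\Nt$, and $I(\alpha, \beta) = J$. The one subtlety is the fifth condition of Lemma~\ref{lem:carry.set.def}, which reassigns $I(\alpha, \beta) = \varnothing$ when $\alpha_i + \beta_i = p - 1$ at every $i$. A short case check on the four forced values shows that this degenerate configuration can occur only when $\gamma_i \in \{0, p-1\}$ at every $i$, i.e.\ when $\gamma \in \{0, q-1\}$; hence the tie-break never interferes with case (b).

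For part (a), applying the two conditions of (b) with $\gamma_i$ identically equal to $0$ or to $p - 1$ forces $J$ to be closed under the cyclic shift $i \mapsto i + 1$ or $i \mapsto i - 1$ on $\Z/f\Z$; the only shift-invariant subsets are $\varnothing$ and $\Z/f\Z$. Realizability of both is then verified directly: $(\alpha, \beta) = (\gamma, 0)$ gives $J = \varnothing$ (using the tie-break when $\gamma = q - 1$), while $\alpha = \beta = q - 1$ realizes $J = \Z/f\Z$ when $\gamma = q - 1$, since $\alpha_i + \beta_i = 2p - 2 = \gamma_i + p - 1$ at each $i$. The main obstacle throughout is keeping track of the tie-breaking fifth condition of Lemma~\ref{lem:carry.set.def}, since it is precisely what distinguishes the carry patterns $\varnothing$ and $\Z/f\Z$ in the boundary case where all digit-sums equal $p - 1$, and must be threaded carefully through both the necessity and sufficiency arguments.
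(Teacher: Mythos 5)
Your overall plan is exactly the one the paper has in mind: the authors leave the proof to the reader, offering only the hint that $I(q-1,q-1) = \Z/f\Z$, and your digit-by-digit analysis of the four cases of Lemma~\ref{lem:carry.set.def} is the natural way to fill that in. The forward direction of~(b), and the construction in the converse, are essentially right. There are, however, a few points that need attention.

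First, and most substantially, your treatment of part~(a) is incomplete precisely where the statement itself goes slightly wrong. Your construction $\alpha = \beta = q-1$ verifies that $\Z/f\Z$ is $(q-1)$-admissible, but you give no construction showing $\Z/f\Z$ is $0$-admissible — and indeed there is none. In $\Nt$ the equation $\alpha + \beta = 0$ forces $\alpha = \beta = 0$ (the only preimage of $0$ under the monoid addition), and $I(0,0) = \varnothing$ since all digit sums vanish. So the set $\Z/f\Z$ is \emph{not} $0$-admissible, and part~(a) as printed should read "$J$ is $0$-admissible iff $J = \varnothing$" (the statement is harmless in the paper because $\gamma$ there is always of the form $r - 2\gamma'$ with $r \in \Z/(q-1)\Z$, hence never $0$). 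You should flag this rather than implicitly claim it is "verified directly."

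Second, the "i.e." in your tie-break analysis is a non sequitur as written: $\gamma_i \in \{0,p-1\}$ at every $i$ does not by itself give $\gamma \in \{0,q-1\}$. What the case check actually yields is stronger: the degenerate configuration $\alpha_i + \beta_i = p-1$ is incompatible with Cases~2 and~3 at \emph{any} position, so $J$ must be entirely Case~1 (whence $J = \varnothing$ and $\gamma_i = p-1$ throughout, so $\gamma = q-1$) or entirely Case~4 (whence $J = \Z/f\Z$ and $\gamma = 0$). That is the argument needed to rule out the tie-break in~(b); state it. Finally, a small omission: in the converse of~(b) you should observe that when $J \neq \varnothing$ the prescribed sum at some column is at least $p-1 \geq 2$, so $\alpha_i$ and $\beta_i$ can be chosen positive there, making both $\alpha$ and $\beta$ nonzero — this is what ensures the cyclic-carry arithmetic coincides with addition in $\Nt \setminus \{0\} = \Z/(q-1)\Z$.
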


The following lemma and its corollary state a property of carry sets that will be essential for some of our arguments below.  It has surely been well-known for centuries, but we were unable to find a proof in the literature.  The following argument was suggested by Steven Landsburg in a Math Overflow answer to a related question.

\begin{lem} \label{cor:carries.triple.sum}
Let $\alpha, \beta, \gamma \in \Nt$.  Then $I(\alpha, \gamma) \cup I(\alpha + \gamma, \beta) = I(\alpha, \beta) \cup I(\alpha + \beta, \gamma)$.  In particular, $I(\alpha + \gamma, \beta) \subseteq I(\alpha, \beta) \cup I(\alpha + \beta, \gamma)$.
\end{lem}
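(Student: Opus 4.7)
The plan is to show that both $I(\alpha, \gamma) \cup I(\alpha + \gamma, \beta)$ and $I(\alpha, \beta) \cup I(\alpha + \beta, \gamma)$ equal a common set, namely the support of a canonical ``three-way carry pattern'' intrinsically attached to the triple $(\alpha, \beta, \gamma)$.

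First, I would reformulate Lemma~\ref{lem:carry.set.def} in a more workable form. Setting $c_i := \mathbf{1}[i \in I(\alpha, \beta)]$, the five conditions are equivalent to the single requirement that $c \in \{0,1\}^{\Z/f\Z}$ is the unique carry pattern such that $\alpha_i + \beta_i + c_{i-1} - p c_i \in \{0, \ldots, p-1\}$ agrees with the $i$-th digit $(\alpha + \beta)_i$ of the canonical expansion of $\alpha + \beta$ in $\Nt$. The degenerate case $\alpha_i + \beta_i = p - 1$ for all $i$, in which both $c \equiv 0$ and $c \equiv 1$ satisfy the local recurrence $c_i = \mathbf{1}[\alpha_i + \beta_i + c_{i-1} \geq p]$, is resolved by this digit constraint: only $c \equiv 0$ produces the digits $(p - 1, \ldots, p - 1)$ of $\alpha + \beta = q - 1 \in \Nt$, matching the fifth bullet of the lemma.

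Second, let $c^{(1)}, c^{(2)} \in \{0,1\}^{\Z/f\Z}$ be the carry patterns for $\alpha + \gamma$ and $(\alpha + \gamma) + \beta$ respectively, so that $I(\alpha, \gamma) = \supp(c^{(1)})$ and $I(\alpha + \gamma, \beta) = \supp(c^{(2)})$. Substituting one digit equation into the other yields the three-way identity
\[ \alpha_i + \beta_i + \gamma_i + e_{i-1} = (\alpha + \beta + \gamma)_i + p e_i, \qquad i \in \Z/f\Z, \]
for the pointwise sum $e_i := c^{(1)}_i + c^{(2)}_i \in \{0, 1, 2\}$. With the digits $D_i := (\alpha + \beta + \gamma)_i$ fixed, this recurrence has at most one integer solution: any two integer solutions have difference $\Delta_i$ satisfying $\Delta_{i-1} = p \Delta_i$, which iterated around the cycle gives $\Delta_i = p^f \Delta_i = q \Delta_i$, forcing $(q - 1) \Delta_i = 0$ and hence $\Delta_i = 0$.

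Applying the same construction in the other order yields a pattern $e'_i := c'^{(1)}_i + c'^{(2)}_i$, built from the carries of $\alpha + \beta$ and $(\alpha + \beta) + \gamma$, which satisfies the same three-way identity; hence $e = e'$ pointwise by uniqueness. Using that $\supp(a + b) = \supp(a) \cup \supp(b)$ for any $\{0, 1\}$-valued sequences $a, b$, we conclude
\[ I(\alpha, \gamma) \cup I(\alpha + \gamma, \beta) = \{ i : e_i \geq 1 \} = \{ i : e'_i \geq 1 \} = I(\alpha, \beta) \cup I(\alpha + \beta, \gamma), \]
and the secondary inclusion $I(\alpha + \gamma, \beta) \subseteq I(\alpha, \beta) \cup I(\alpha + \beta, \gamma)$ is immediate.

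The main obstacle is the first step: translating the five cyclic inequalities of Lemma~\ref{lem:carry.set.def} into the digit-equation characterization, with careful handling of the degenerate case in which the local carry recurrence alone has two cyclic solutions and the ``$I = \varnothing$'' convention is precisely what is forced by demanding that the digits match the canonical $\Nt$-expansion. Once this reformulation is in place, the remainder of the argument collapses into the one-line cyclic linear recurrence above.
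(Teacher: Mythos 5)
Your proof is correct, and it takes a genuinely different route from the paper's. The paper interprets the carry out of position $f-1$ as the value $\psi(\alpha,\beta) \in \Z/p\Z$ of a $2$-cocycle representing the extension $0 \to \Z/p\Z \to \Z/p^{f+1}\Z \to \Z/p^f\Z \to 0$, invokes the cocycle identity $\psi(\alpha,\beta) + \psi(\alpha+\beta,\gamma) = \psi(\alpha,\gamma) + \psi(\alpha+\gamma,\beta)$, and then appeals to ``permuting the digits'' to get the other positions. Your proof instead encodes the carry set directly as the support of the $\{0,1\}$-valued pattern $c$ solving the digit recurrence $\alpha_i + \beta_i + c_{i-1} - p c_i = (\alpha+\beta)_i$, and observes that the $\{0,1,2\}$-valued sum $e$ of the two carry patterns in a two-stage computation of $\alpha + \beta + \gamma$ satisfies a three-way cyclic recurrence whose integer solution is unique, because the difference $\Delta$ of two solutions satisfies $\Delta_i = p^f \Delta_i = q\Delta_i$ and hence vanishes. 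This argument is more elementary (no group cohomology), treats all $f$ digits simultaneously rather than one at a time, and — since it shows $e = e'$ pointwise, not merely that their supports agree — directly delivers the multiset strengthening that the paper records only as a closing remark. The cost is the reformulation step: verifying that the five-condition characterization of $I(\alpha,\beta)$ in Lemma~\ref{lem:carry.set.def} is equivalent to your digit-equation formulation, in particular that in the degenerate case $\alpha_i + \beta_i = p-1$ for all $i$ (where the local recurrence $c_i = c_{i-1}$ admits both $c \equiv 0$ and $c \equiv 1$) the requirement that the emitted digits match the canonical expansion of $\alpha + \beta = q-1$ in $\Nt$ indeed selects $c \equiv 0$, agreeing with the paper's convention $I = \varnothing$. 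You correctly flag this as the crux of the reformulation, and your resolution of it is sound.
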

\begin{proof}
Consider the natural short exact sequence
\begin{equation} \label{equ:carry.sets.ses}
0 \to \Z / p\Z \stackrel{\iota}{\to} \Z / p^{f+1}\Z \stackrel{\pi}{\to} \Z / p^f \Z \to 0,
\end{equation}
where we view elements of $\Z / p^m \Z$ as $m$-digit numbers written in base $p$ and the maps are given by $\iota(a) = a0\cdots0$ and $\pi(a_{f} a_{f-1} \cdots a_1 a_0) = a_{f-1} \cdots a_1a_0$.  Here the $a_i$ are digits.  Consider the section $\eta$ of $\pi$ defined by $\eta(a_{f-1} \cdots a_0) = 0 a_{f-1} \cdots a_0$.  Then the class in $H^2(\Z / p^f \Z, \Z / p\Z)$ corresponding to the extension~\eqref{equ:carry.sets.ses} is represented by the $2$-cocycle $\psi: \Z / p^f \Z \times \Z / p^f \Z \to \Z / p\Z$ given by $\psi(\alpha,\beta) = \iota^{-1}(\eta(\alpha) + \eta(\beta) - \eta(\alpha+\beta))$.  It is clear that $\psi(\alpha, \beta) = 1$ if $f - 1 \in I(\alpha, \beta)$ and $\psi(\alpha, \beta) = 0$ otherwise.   Here we use the natural correspondence of sets between $\Z / p^f \Z$ and $\Nt$; of course, this correspondence does not respect the addition on each side.  Since the groups in~\eqref{equ:carry.sets.ses} are abelian, the induced $\Z / p^f \Z$-module structure on $\Z / p \Z$ is trivial.  Hence the cocycle condition on $\psi$ amounts to
$$ \psi(\alpha, \beta) + \psi(\alpha + \beta, \gamma) = \psi(\alpha, \gamma) + \psi(\alpha + \gamma, \beta).$$
Thus $f-1$ is contained in the left-hand side of the claimed equality of sets if and only it is contained in the right-hand side.      By permuting the digits, the same statement can be obtained for all $i \in \Z / f\Z$.

Observe finally that if we treat both sides of the claimed equality as multisets, then equality still holds.
\end{proof}

Now let $m \geq 2$ and let $\varepsilon_1, \dots, \varepsilon_m \in \Nt$.  Let $\mathcal{T}$ be a full rooted binary tree with $m$ leaf nodes $v_1, \dots, v_m$.  Let $v_0$ be the root of $\mathcal{T}$.  Let $V^0(\mathcal{T})$ be the set of leaf nodes and $V^2(\mathcal{T})$ the set of non-leaf nodes.  We now associate an element $\tau_v \in \Nt$ to each node $v$ of $\mathcal{T}$ and a subset $I_v \subseteq \Z / f\Z$ to each $v \in V^2(\mathcal{T})$.  If $v_i \in V^0(\mathcal{T})$, then set $\tau_{v_i} = \varepsilon_i$.  If $v \in V^2(\mathcal{T})$ is a non-leaf node with children $v^\prime$ and $v^\dpr$, then we define recursively $\tau_v = \tau_{v^\prime} + \tau_{v^\dpr}$ and $I_v = I(\tau_{v^\prime} , \tau_{v^\dpr})$.  

Observe that $\tau_{v_0} = \sum_{i = 1}^m \varepsilon_i$ is independent of the choice of tree $\mathcal{T}$.  The full binary trees $\mathcal{T}$ correspond to all possible ways of computing this sum by adding two elements of $\Nt$ at a time.  The following corollary states the number of times each column is carried while computing the sum $\tau_{v_0}$ does not depend on the way in which it is computed.

\begin{cor} \label{lem:carries.urlemma}
Let $m \geq 3$ and let $\mathcal{T}$ be a full rooted binary tree as above.  The multiset $\mathcal{I}_{\mathcal{T}} = \coprod_{v \in V^2(\mathcal{T})} I_v$ is independent of the choice of $\mathcal{T}$.
\end{cor}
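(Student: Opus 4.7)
The plan is to induct via local tree modifications, reducing to the three-summand case established in Lemma~\ref{cor:carries.triple.sum}. First, I would observe that the multiset $\mathcal{I}_{\mathcal{T}}$ is unchanged under swapping the two children of any internal vertex of $\mathcal{T}$: the symmetry $I(\alpha, \beta) = I(\beta, \alpha)$ is evident from Lemma~\ref{lem:carry.set.def}, addition in $\Nt$ is commutative, and consequently every $\tau_{v^\prime}$ and every $I_{v^\prime}$ is preserved. This freedom allows me to rearrange the assignment of the $\varepsilon_i$ among the leaves arbitrarily, so it suffices to compare two trees sharing a common left-to-right sequence of leaf labels.

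Next, I would invoke the classical fact, provable by a straightforward induction on $m$, that any two full rooted binary trees on a given ordered sequence of $m$ labeled leaves are related by a finite sequence of \emph{tree rotations}, each of which locally replaces a subtree of shape $((A \cdot B) \cdot C)$ by a subtree of shape $(A \cdot (B \cdot C))$, or conversely. Modulo this standard combinatorial input, it suffices to verify that a single rotation preserves $\mathcal{I}_{\mathcal{T}}$.

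Consider such a rotation at an internal vertex whose three relevant subtrees have associated values $\tau_A, \tau_B, \tau_C \in \Nt$. Before the rotation, the two internal vertices inside the affected region contribute $\{ I(\tau_A, \tau_B), \, I(\tau_A + \tau_B, \tau_C) \}$ to $\mathcal{I}_{\mathcal{T}}$; afterwards they contribute $\{ I(\tau_B, \tau_C), \, I(\tau_A, \tau_B + \tau_C) \}$. All other vertex contributions $I_{v^\prime}$ are unchanged, as is the element $\tau_A + \tau_B + \tau_C$ propagated to the ancestors, so only these two entries of the multiset need be compared. Applying Lemma~\ref{cor:carries.triple.sum} with $(\tau_B, \tau_A, \tau_C)$ in the roles of $(\alpha, \beta, \gamma)$ and using the symmetry of $I$ identifies the two contributions, completing the inductive step.

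The principal subtlety is that Lemma~\ref{cor:carries.triple.sum} must be used in its stronger \emph{multiset} form rather than as a mere equality of underlying sets; fortunately, this strengthening is asserted in the final sentence of its proof. Beyond that, the argument is a bookkeeping exercise reducing arbitrary transformations between trees to sibling swaps and single rotations.
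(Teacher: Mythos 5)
Your argument is correct and is essentially the same as the paper's: both reduce to showing that a single local tree rotation preserves the multiset, which is exactly the content of Lemma~\ref{cor:carries.triple.sum} (read as an equality of multisets), and then appeal to the fact that rotations together with sibling swaps connect all full binary trees. The only quibble is your intermediate claim that sibling swaps alone permit arbitrary reassignment of the $\varepsilon_i$ to the leaves — for a fixed tree shape they realize only the automorphisms of that shape, not all of $S_m$ — but since you also prove rotation invariance and rotations plus swaps do generate everything, this imprecision does not affect the conclusion (and the paper's phrase ``any tree may be reached from any other by a sequence of such moves'' is loose at the same point).
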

\begin{proof}
If $m = 3$, then the tree $\mathcal{T}$ is unique up to permutation of the indices of the leaves, and our claim is immediate from Lemma~\ref{cor:carries.triple.sum}.  If $m > 3$, label each node $v$ of $\mathcal{T}$ by the subset of $[m]$ enumerating the leaves lying below $v$.  If $A, B, C \subset [m]$ are disjoint subsets such that the configuration on the left of the figure below appears as a subgraph of $\mathcal{T}$,
\begin{center}
\begin{tabular}{ccc}
\begin{forest}
for tree={parent anchor=south}
[{$A \cap B \cap C$}
[{$A \cap B$}
[{$A$}][{$B$}]]
[{$C$}]]
\end{forest}
&
$\rightsquigarrow$
&
\begin{forest}
for tree={parent anchor=south}
[{$A \cap B \cap C$}
[{$A$}]
[{$B \cap C$}
[{$B$}][{$C$}]]]]
\end{forest}
\end{tabular}
\end{center}
then by Lemma~\ref{cor:carries.triple.sum} we may replace it by the configuration on the right
to obtain a tree $\mathcal{T}^\prime$ satisfying $\mathcal{I}_{\mathcal{T}} = \mathcal{I}_{\mathcal{T}^\prime}$.  The claim follows, since any tree may be reached from any other by a sequence of such moves.
\end{proof}

\subsection{Submodule structure of principal series for $\mathrm{GL}_2(k)$}  \label{sec:bardoe.sin}
Every irreducible $\overline{k}$-representation of $\mathrm{GL}_2(\mathcal{O}_n)$ factors through $\mathrm{GL}_2(k)$ and is defined over $k$; such a representation is called a Serre weight.  See, for instance,~\cite[\S3.3]{Herzig/09} for a survey of the basics of Serre weights and, in particular, for the following notation.  If $\alpha \in \Z / (q-1)\Z$ and $a_2 \in \Z$, then by $F(\alpha, a_2)$ we mean the Serre weight $F(a_1, a_2)$, where $a_1 \in \Z$ is the unique integer satisfying $a_1 \in \alpha$ and $0 \leq a_1 - a_2 < q - 1$.  

\begin{dfn} \label{def:serre.weight.sjr}
Let $r \in \Z / (q-1)\Z$ and $J \subseteq \Z / f\Z$.  Define $s_{J}(r) = \sum_{i = 0}^{f-1} s_{J,i}(r)$, where
$$ s_{J,i}(r) = \begin{cases}
0 &: i \not\in J \\
r_i + 1 &: i-1 \not\in J, \, i \in J \\
r_i &: i-1, i \in J.
\end{cases}
$$
Let $\sigma_{J}(r)$ be the Serre weight $F(r - s_J(r), s_J(r))$. 
\end{dfn}
\begin{rem} \label{rem:serre.weight.data}
We leave it as a combinatorial exercise for the reader to determine the digits of $r - s_J(r)$ in base $p$ and thus establish that $\sigma_{J}(r) = \det^{s_J(r)} \tensor \bigotimes_{i = 0}^{f-1} (\mathrm{Sym}^{t_{J,i}(r)} k^2)^{(i)}$, where $\tau^{(i)}$ is the twist of the $k$-representation $\tau$ by the $i$-th power of the Frobenius automorphism of $k$ and
$$ t_{J,i}(r) = \begin{cases}
r_i &: i -1 , i \not\in J \\
r_i - 1 &: i - 1 \in J, \, i \not\in J \\
p - 2 - r_i &: i - 1 \not\in J, \, i \in J \\
p - 1 - r_i &: i - 1, i \in J.
\end{cases}
$$
\end{rem}

Recall that our basis $\mathcal{B}_{1,r}$ of $V_{1,r} = \mathrm{Ind}_{B(k)}^{\mathrm{GL}_2(k)} \chi_r$ consists of the following $q+1$ elements:
\begin{eqnarray*}
f_{j_0} & = & \sum_{\lambda \in k} \left( \begin{array}{cc} \lambda & 1 \\ 1 & 0 \end{array} \right) \tensor \lambda^{j_0}, \, j_0 \in \Nt \\
f_\infty & = & w f_0.
\end{eqnarray*}
This coincides with the basis defined just after~\cite[Theorem~2.4]{BP/12}.  

\begin{dfn} \label{def:submodules.of.tame.principal.series}
For each $I \subseteq \Z / f\Z$, define $V_I \subseteq V_{1,r}$ to be the linear span of $\{ f_\infty \} \cup \{ f_{j_0} : j_0 \in \Nt, \, I(j_0, r - j_0) \subseteq I \}$.  In particular, $V_{I} \subseteq V_J$ if and only if $I \subseteq J$.  
\end{dfn}

The work of Bardoe and Sin~\cite{BS/00} completely describes the submodule structure of $V_{1,r} = \mathrm{Ind}_{B(k)}^{\mathrm{GL}_2(k)} \chi_r$.  Recall that $r$ is defined modulo $q - 1$, so we view $r$ as an element of $\Nt \setminus \{ 0 \}$.  To provide a dictionary between the notions of~\cite{BS/00} and those of the present paper, consider the ring
$$ A = E[X,Y] / (X^q - X, Y^q - Y, (X^{q-1} - 1)(Y^{q-1} - 1)),$$
and let $A[r]$ be the subgroup of the underlying abelian group of $A$ generated by monomials whose total degree is congruent to $r$ modulo $q-1$; note that this is well-defined.  Let $\mathrm{GL}_2(k)$ act on $A[r]$ by $(gP)(X,Y) = P(aX + cY, bX + dY)$ for $g = \left( \begin{array}{cc} a & b \\ c & d \end{array} \right) \in \mathrm{GL}_2(k)$ and $P \in A[r]$.  Observing that $(1 - X^{q-1})Y^r \in A[r]$ is an eigenvector for the action of $B(k)$, with character $\chi_r$, by Frobenius reciprocity we obtain a map $\iota: V_{1,r} \to A[r]$.  It is easy to check that this map is an isomorphism and is given explicitly by
$$ \iota(f_j) = \begin{cases}
X^r &: j = 0 \\
Y^r &: j = \infty \\
- X^r Y^{q-1} &: j = q - 1 \\
\binom{q-1}{j} X^{r-j} Y^j &: j \not\in \{ 0, q - 1, \infty \}. \end{cases}
$$
If $\alpha, \beta \in \Nt$, then $X^\alpha Y^\beta \in A[r]$ is a basis monomial in the sense of~\cite{BS/00}.  The type of $X^\alpha Y^\beta$ is defined in~\cite[\S3.1]{BS/00} when $r = q-1$ and in~\cite[\S9.1]{BS/00} when $r \neq q - 1$; this is an $f$-tuple $(s_0, \dots, s_{f-1}) \in \N^{f}$.  Unwinding the definition, one verifies that $s_i = 1$ if $i - 1 \in I(\alpha, \beta)$ and $s_{i} = 0$ otherwise.  We can now translate some results of Bardoe and Sin into the following theorem.  These results are also stated in Theorems~2.4 and~2.7 of~\cite{BP/12}, with an alternative proof given in~\cite[\S4]{BP/12}; observe, again by unwinding definitions, that the subset $J \subseteq \Z / f\Z$ associated to an irreducible subquotient $\sigma$ of $V_{1,r}$ just after~\cite[Lemma~2.2]{BP/12} is the one satisfying $\sigma = \sigma_J(r)$.

\begin{thm} \label{thm:bardoe.sin.generic}
Suppose that $r \neq q - 1$.  The following statements hold.
\begin{enumerate}[label=(\alph*)]
\item
Let $j_0 \in \Nt$.  The $\mathrm{GL}_2(k)$-submodule of $V_{1,r}$ generated by $f_{j_0}$ is $V_{I(j_0, r - j_0)}$.
\item
The $\mathrm{GL}_2(k)$-submodule of $V_{1,r}$ generated by $f_{\infty}$ is $V_{\varnothing}$.
\item
If $I \subseteq \Z / f\Z$ is $r$-admissible, then the submodule $V_I$ has irreducible cosocle isomorphic to $\sigma_I(r)$.
\item
If $M \subseteq V_{1,r}$ is a submodule with irreducible cosocle, then $M = V_I$ for some $r$-admissible $I \subseteq \Z / f\Z$.  In particular, if $M \subseteq V_{1,r}$ is any submodule, then $M = \sum_{I \subseteq \Z / f \Z \atop V_I \subseteq M} V_I$.
\end{enumerate}
\end{thm}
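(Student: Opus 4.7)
The plan is to deduce the theorem as a dictionary translation from the Bardoe--Sin description of the submodule structure of the representation $A[r]$, using the explicit $\mathrm{GL}_2(k)$-equivariant isomorphism $\iota : V_{1,r} \xrightarrow{\sim} A[r]$ recorded just before the statement. Under $\iota$, every basis vector $f_{j_0}$ maps to a nonzero scalar multiple of the basis monomial $X^{r-j_0} Y^{j_0}$ in the sense of~\cite{BS/00}, and $f_\infty$ maps to $Y^r$. The crucial compatibility, already flagged in the excerpt, is that the carry set $I(j_0, r-j_0)$ records exactly the same information as the Bardoe--Sin type $(s_0,\dots,s_{f-1})$ of $X^{r-j_0}Y^{j_0}$, via $s_i = 1 \Leftrightarrow i-1 \in I(j_0, r-j_0)$. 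So the submodules $V_I$ of Definition~\ref{def:submodules.of.tame.principal.series} correspond precisely to the submodules of $A[r]$ spanned by basis monomials whose type is pointwise bounded by the characteristic function of $\{i-1 : i \in I\}$.

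For parts (a) and (b), I would invoke the cyclicity results in~\cite[\S3,\S9]{BS/00}: for $r \neq q-1$, the cyclic submodule generated by a basis monomial $X^\alpha Y^\beta$ is spanned by all basis monomials of componentwise smaller type. Transported via $\iota^{-1}$, this is exactly $V_{I(j_0, r-j_0)}$ for $f_{j_0}$ and $V_\varnothing$ for $f_\infty$ (since $Y^r$ has type $(0,\dots,0)$). A small amount of care is required at the boundary values $j_0 \in \{0, r, q-1\}$ where $\iota$ takes a special form, but in each such case the corresponding carry set is $\varnothing$ (the only other possibility, $\mathbb{Z}/f\mathbb{Z}$, being excluded because $r \neq q-1$), and the statement is readily checked.

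For part (c), I would then match the Bardoe--Sin description of the irreducible top of each submodule with our Serre weight $\sigma_I(r)$. Concretely, Bardoe and Sin identify the top as the irreducible representation with a prescribed character for $B(k)$, and unwinding the definition of the type in~\cite[\S3.1,\S9.1]{BS/00} and comparing with Remark~\ref{rem:serre.weight.data}, one sees that this character is $\chi_r \cdot \chi_{-s_I(r)}^{-1}$ twisted by $\det^{s_I(r)}$, yielding $F(r - s_I(r), s_I(r)) = \sigma_I(r)$. Part (d) then follows because $V_{1,r}$ is multiplicity-free by~\cite{BS/00}: any submodule is uniquely determined by the down-closed set of constituents it contains, and it is the sum of the unique $V_I$'s with irreducible cosocle indexed by its cosocle constituents.

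The main obstacle, and the only substantive work, is the last sentence of each of the preceding paragraphs: verifying that three parametrizations --- Bardoe--Sin's $f$-tuple types, our carry sets $I(\alpha,\beta)$, and the Serre weight data packaged in Definition~\ref{def:serre.weight.sjr} and Remark~\ref{rem:serre.weight.data} --- agree on the nose. This is mechanical digit-by-digit bookkeeping, but is delicate both at the ambiguity point in Lemma~\ref{lem:carry.set.def} (where $\alpha_i+\beta_i = p-1$ for all $i$) and in distinguishing $r$-admissible from non-admissible subsets via Proposition~\ref{pro:characterize.admissible}, so that one correctly identifies which $V_I$ are cyclic (hence of the form produced in (a) or (b)) versus which arise as proper intersections of such modules and therefore fail to appear as cosocles.
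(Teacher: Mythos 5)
Your overall strategy is the same as the paper's: transport $V_{1,r}$ to the Bardoe--Sin model $A[r]$ via the isomorphism $\iota$ already set up before the theorem, match carry sets with the Bardoe--Sin $f$-tuple types, and then cite the relevant theorems of \cite{BS/00} (the paper invokes Theorem~5.1 for (a)--(b), Corollary~6.1 for (c), and Theorem~C for (d), noting via \cite[\S 9.2]{BS/00} that the $r = q-1$ proofs there extend to general $r$). Your bookkeeping around what needs to be checked --- the three parametrizations of constituents --- is a reasonable description of where the actual work lies.

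However, the side claim you use to dispatch the boundary cases is wrong. You assert that for $j_0 \in \{0, r, q-1\}$ the carry set $I(j_0, r-j_0)$ is $\varnothing$, with $\Z/f\Z$ excluded because $r \neq q-1$. In fact, since the base-$p$ digits of $q-1$ are all $p-1$ and $r \in \Nt \setminus \{0\}$ has at least one nonzero digit, every column carries when $q-1$ and $r$ are added: $I(q-1, r) = \Z/f\Z$, \emph{not} $\varnothing$, for every $r$. The same is true at $j_0 = r$, where $r - r = q-1 \in \Nt \setminus \{0\}$ and $I(r, q-1) = \Z/f\Z$ again. The hypothesis $r \neq q-1$ rules out hitting the ambiguous fifth clause of Lemma~\ref{lem:carry.set.def}, but that clause is not the only way to obtain $I = \Z/f\Z$; the first four clauses can and do yield it. Relatedly, $j_0 = r$ is not actually a special value of $\iota$ (the exceptional cases for $\iota$ are $j_0 \in \{0, q-1, \infty\}$). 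Concretely, $f_{q-1}$ generates all of $V_{1,r} = V_{\Z/f\Z}$, as it must, whereas your computation would give $V_\varnothing$. The error does not undermine the architecture of the argument --- the theorem quantifies over whatever the carry set happens to be --- but it does mean the ``small amount of care at the boundary'' was not actually executed correctly, and the reader following your recipe would reach a false statement for $j_0 = q-1$.
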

\begin{proof}
The first two claims follow from~\cite[Theorem~5.1]{BS/00} and the third and fourth claims are Corollary~6.1 and Theorem~C of~\cite{BS/00}.  The results of \S5-6 of~\cite{BS/00} are stated in the case $r = q-1$, but it is observed in~\cite[\S9.2]{BS/00} that the proofs work in general.
\end{proof}

It remains to consider the exceptional case $r = q - 1$, i.e.~$\chi_r = \mathbf{1}$.  Observe that if $j_0 \in \Nt$, there are only two possibilities for the carry set $I(j_0, q - 1 - j_0)$: this set is $\Z / f\Z$ if $j_0 = q - 1$ and $\varnothing$ otherwise.  The following claim is obtained as above, except that we apply Theorem~A of~\cite{BS/00} rather than Theorem~C.

\begin{thm} \label{thm:bardoe.sin.exceptional}
The $\mathrm{GL}_2(k)$-module $V_{1,q-1} = \mathrm{Ind}_{B(k)}^{\mathrm{GL}_2(k)} \mathbf{1}$ is a direct sum of two Serre weights:
$$ V_{1,q-1} \simeq F(q-1,0) \oplus F(0,0).$$
The $q$-dimensional irreducible submodule $F(q-1,0)$ is $V_\varnothing$, namely the linear span of the set $\{ f_\infty \} \cup \{ f_{j_0} : j_0 \in \Nt \setminus \{ q - 1 \} \}$.  The one-dimensional submodule $F(0,0)$ is the span of $f_0 + f_\infty - f_{q-1}$.
\end{thm}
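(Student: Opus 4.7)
The plan is in three stages: identify the one-dimensional trivial submodule explicitly via a coset-sum calculation, invoke Bardoe--Sin's Theorem~A for the $q$-dimensional piece, and check that the two submodules are complementary by a dimension count.

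\emph{Trivial summand.} First I would expand $v := f_0 + f_\infty - f_{q-1}$ using Definition~\ref{dfn:general.basis}. Since $0^{q-1} = 0$, $\lambda^{q-1} = 1$ for every $\lambda \in k^\times$, and $(-\lambda)^{q-1} = 1$ as well, a short cancellation yields
\[
v \;=\; \sum_{\lambda \in k} \left( \begin{array}{cc} \lambda & 1 \\ 1 & 0 \end{array} \right) \tensor 1 \;+\; \left( \begin{array}{cc} 1 & 0 \\ 0 & 1 \end{array} \right) \tensor 1 \;=\; \sum_{\xi \in \Xi} \xi \tensor 1,
\]
the sum over the full set $\Xi$ of coset representatives of $B(k) \backslash \mathrm{GL}_2(k)$ used in the proof of Lemma~\ref{lem:bnr.basis}. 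Because the inducing character $\chi_{q-1}$ is trivial, $\mathrm{GL}_2(k)$ simply permutes these summands; hence $v$ is $\mathrm{GL}_2(k)$-fixed and $\mathrm{span}(v) \simeq F(0,0)$.

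\emph{Steinberg summand.} Next I would translate $V_\varnothing$ through the $\mathrm{GL}_2(k)$-equivariant isomorphism $\iota: V_{1,q-1} \to A[q-1]$ recalled in \S\ref{sec:bardoe.sin} and apply Theorem~A of~\cite{BS/00}, which treats precisely the case $r = q-1$ in the same way that Theorem~C handles the generic case in Theorem~\ref{thm:bardoe.sin.generic}. This both shows that $V_\varnothing$ is a $\mathrm{GL}_2(k)$-submodule of dimension $q$ and identifies its isomorphism class as $\sigma_\varnothing(q-1) = F(q-1, 0)$, using Remark~\ref{rem:serre.weight.data} with $s_\varnothing(q-1) = 0$.

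\emph{Directness and conclusion.} By the subtraction convention on $\Nt \setminus \{0\}$, one has $(q-1) - (q-1) = q-1$, so $I(q-1, q-1) = \Z/f\Z \neq \varnothing$ and hence $f_{q-1} \notin V_\varnothing$ by Definition~\ref{def:submodules.of.tame.principal.series}. Since the expansion of $v$ in the basis $\mathcal{B}_{1,q-1}$ contains $-f_{q-1}$ with nonzero coefficient, we obtain $\mathrm{span}(v) \cap V_\varnothing = 0$, and the dimension count $q + 1 = \dim V_{1,q-1}$ forces $V_{1,q-1} = V_\varnothing \oplus \mathrm{span}(v)$. I expect the main obstacle to be the middle step---verifying that the translation through $\iota$ matches $V_\varnothing$ to the submodule singled out by Bardoe--Sin's Theorem~A in this exceptional case; the remaining calculations are routine expansion and dimension accounting.
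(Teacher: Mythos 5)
Your proposal is correct and takes essentially the same route as the paper, whose proof is a one-line pointer to Theorem~A of~\cite{BS/00} applied through the dictionary $\iota$ of \S\ref{sec:bardoe.sin}; your stage 2 is exactly that step. The explicit cancellation showing $f_0 + f_\infty - f_{q-1} = \sum_{\xi \in \Xi} \xi \tensor 1$ (hence $\mathrm{GL}_2(k)$-fixed, since $\chi_{q-1} = \mathbf{1}$ means the group permutes the terms) and the basis/dimension argument for the direct sum are both sound and make the statement more self-contained, though they are supplements to, not replacements of, the appeal to Bardoe--Sin's Theorem~A, which is still where the identification of $V_\varnothing$ as a submodule isomorphic to $F(q-1,0)$ comes from.
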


\section{The Jordan-H\"older constituents of $V_{n,r}$} \label{sec:jh.filtration}
\subsection{Jordan-H\"older constituents} \label{subsec:jh}
Using the work of Bardoe and Sin, it is simple to produce a recursive expression for the multiset $\mathrm{JH}(n, \chi)$ of Jordan-H\"older characters of $\mathrm{Ind}_{B_n}^{G_n} \chi$, for a character $\chi: B_n \to \overline{k}^\ast$; note that $\chi$ necessarily factors through $B(k)$.  Recall the character $\eta: B(k) \to k^\times$ of the introduction.  For each $n \geq 2$, define the subgroup
\begin{equation} \label{equ:btilde.general.def}
\widetilde{B}_n = (\psi_{n-1}^n)^{-1}(B(\mathcal{O}_{n-1})) = \left\{ \left( \begin{array}{cc} a &b \\ c & d \end{array} \right) \in G_n : \, c \in \mathfrak{m}^{n-1} / \mathfrak{m}^n \right\} \leq G_n.
\end{equation}
In the following, we view characters of $B(k)$ as characters of $B_n$ and $\widetilde{B}_n$ for all $n$ by inflation and omit composition with $\psi_1^n$ from the notation.

\begin{lem} \label{lem:intermediate.induction.filtration}
Let $\chi: B(k) \to k^\times$ be a character.  The multiset of Jordan-H\"older factors of $\mathrm{Ind}^{\widetilde{B}_n}_{B_n} \chi$ is $\{ \chi \eta^\alpha : \alpha \in \Nt \}$.
\end{lem}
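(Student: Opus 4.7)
The plan is to diagonalize the $T$-action on $\Ind^{\widetilde{B}_n}_{B_n}\chi$ by producing an explicit eigenbasis, and then to read off the Jordan--H\"older constituents from the resulting $T$-weights via the semidirect product structure of $\widetilde{B}_n$.

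First, I would observe that the elements $h_c = \left( \begin{smallmatrix} 1 & 0 \\ [c] \varpi^{n-1} & 1 \end{smallmatrix} \right)$, for $c \in k$, form a system of coset representatives for $\widetilde{B}_n / B_n$: the product $h_c h_{c'}^{-1}$ lies in $B_n$ only when $c = c'$, and the cardinality of $\{ h_c \}$ matches the index $[\widetilde{B}_n : B_n] = q$. Hence $\{ h_c \tensor 1 : c \in k \}$ is an $E$-basis of $\Ind^{\widetilde{B}_n}_{B_n}\chi$. For each $\alpha \in \Nt$ I would set $e_\alpha = \sum_{c \in k} c^\alpha \, (h_c \tensor 1)$; since the $q$ functions $c \mapsto c^\alpha$ on $k$ (indexed by $\alpha \in \Nt$) are linearly independent by a Vandermonde argument, $\{ e_\alpha : \alpha \in \Nt \}$ is also a basis. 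For $t = \mathrm{diag}([a],[d]) \in T$, multiplicativity of the Teichm\"uller lift yields $[d] [c] \varpi^{n-1} [a]^{-1} = [d a^{-1} c] \varpi^{n-1}$, hence $t h_c t^{-1} = h_{d a^{-1} c}$ and $t \cdot (h_c \tensor 1) = \chi(t) \, (h_{d a^{-1} c} \tensor 1)$. The substitution $c \mapsto a d^{-1} c$ in the definition of $e_\alpha$ then produces $t \cdot e_\alpha = (\chi \eta^\alpha)(t) \, e_\alpha$.

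Second, I would pass from these $T$-weights to Jordan--H\"older multiplicities by exploiting the structure of $\widetilde{B}_n$. The preimage $P = (\psi_1^n)^{-1}(U(k)) \trianglelefteq \widetilde{B}_n$ of the unipotent radical of $B(k)$ is a normal $p$-subgroup, and the natural embedding $T \hookrightarrow \widetilde{B}_n$ induces an isomorphism $\widetilde{B}_n / P \simeq T$, so $\widetilde{B}_n = P \rtimes T$. Since $P$ is a $p$-group and $E$ has characteristic $p$, every irreducible $E$-representation of $\widetilde{B}_n$ is one-dimensional and inflated from a character of $T$; in particular, $\chi \eta^\alpha$ is the unique irreducible character of $\widetilde{B}_n$ with restriction $(\chi \eta^\alpha)|_T$. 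As $|T|$ is coprime to $p$, the restriction to $T$ of any $\widetilde{B}_n$-representation is semisimple, so the Jordan--H\"older multiplicity of an irreducible $\xi$ of $\widetilde{B}_n$ equals the multiplicity of $\xi|_T$ among its $T$-weights. Combining this with the eigenvalue calculation above yields the claimed multiset $\{ \chi \eta^\alpha : \alpha \in \Nt \}$.

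The only computation that is not purely formal is the identity $t h_c t^{-1} = h_{d a^{-1} c}$; since $t$ is diagonal, no contribution from Witt-vector addition enters (no term involving $S(a_0, b_0)$ from Lemma~\ref{lem:witt.vector.summation} appears), and the identity is immediate from multiplicativity of the Teichm\"uller lift. I therefore anticipate no serious obstacle in carrying out this plan.
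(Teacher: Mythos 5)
Your proof is correct in substance and takes a genuinely different route from the paper's. The paper explicitly exhibits a filtration of $\mathrm{Ind}^{\widetilde{B}_n}_{B_n}\chi$ by $\widetilde{B}_n$-submodules $U_\beta = \mathrm{span}\{ m_\alpha : \alpha \preceq \beta \}$ (it computes the $\widetilde{B}_n$-action on the basis elements $m_\alpha = \sum_{\lambda \in k} h_\lambda \tensor \lambda^\alpha$ and reads off that $U_\beta / \sum_{\alpha \prec \beta} U_\alpha \simeq \chi\eta^\beta$). You instead diagonalize only the $T$-action, then invoke the structural fact that $\widetilde{B}_n$ has a normal $p$-Sylow with quotient isomorphic to $T$, so that all irreducibles are one-dimensional, inflated from $T$, and (by $p'$-semisimplicity of restriction to $T$) the multiset of Jordan--H\"older factors is visible in the $T$-weight decomposition. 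Your argument is shorter and more conceptual; the paper's is more laborious but, importantly, the submodules $U_\beta$ it produces are needed later (they are the key input to Proposition~\ref{pro:filtration} and Corollary~\ref{cor:unique.maximal.submodule}, via $W_\beta \simeq \mathrm{Ind}^{G_n}_{\widetilde{B}_n} U_\beta$), so the explicit construction is not wasted effort in the paper's overall structure.

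One small slip: your $P = (\psi_1^n)^{-1}(U(k))$ is not contained in $\widetilde{B}_n$ when $n \geq 3$. The paper defines $\widetilde{B}_n = (\psi_{n-1}^n)^{-1}(B(\mathcal{O}_{n-1}))$, i.e.\ matrices with lower-left entry in $\mathfrak{m}^{n-1}/\mathfrak{m}^n$, whereas $(\psi_1^n)^{-1}(U(k))$ allows lower-left entries anywhere in $\mathfrak{m}/\mathfrak{m}^n$ (the two groups agree only when $n=2$). You should take $P = \widetilde{B}_n \cap (\psi_1^n)^{-1}(U(k))$, equivalently the kernel of the composite $\widetilde{B}_n \twoheadrightarrow B(k) \twoheadrightarrow T(k)$. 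This $P$ is still a normal $p$-subgroup with complement $T$, so the rest of your argument is unaffected.
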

\begin{proof}
The $q$-dimensional space $\mathrm{Ind}^{\widetilde{B}_n}_{B_n} \chi$ clearly has a basis consisting of the elements $\left( \begin{array}{cc} 1 & 0 \\ {[\lambda]} \varpi^{n-1} & 1 \end{array} \right) \tensor 1$ for $\lambda \in k$.  By the same Vandermonde argument as in the proof of Lemma~\ref{lem:bnr.basis}, an alternative basis is given by the elements
$$ m_\alpha = \sum_{\lambda \in k} \left( \begin{array}{cc} 1 & 0 \\ {[\lambda]} \varpi^{n-1} & 1 \end{array} \right) \tensor \lambda^\alpha$$
for $\alpha \in \Nt$.  Moreover, a straightforward computation shows that for $g \in \widetilde{B}_n$ as in~\eqref{equ:btilde.general.def} and $\beta \in \Nt$, we have
\begin{multline} \label{equ:action.of.g.on.mbeta}
gm_\beta = \sum_{\lambda \in k} \left( \begin{array}{cc} 1 & 0 \\ {[a_0^{-1}(d_0 \lambda + c_{n-1})]} \varpi^{n-1} & 1 \end{array} \right) \tensor \chi(g) \lambda^\beta = \\ \sum_{\lambda \in k} \left( \begin{array}{cc} 1 & 0 \\ {[\lambda]} \varpi^{n-1} & 1 \end{array} \right) \tensor \chi(g) (d_0^{-1})^\beta (a_0 \lambda - c_{n-1})^\beta,
\end{multline}
from which it is evident that the subspaces $U_\beta = \mathrm{span} \{ m_\alpha : \alpha \preceq \beta \}$ are $\widetilde{B}_n$-submodules of $\mathrm{Ind}^{\widetilde{B}_n}_{B_n} \chi$.  Moreover, we have
\begin{equation} \label{equ:u.filtration}
U_\beta / \sum_{\alpha \preceq \beta} U_\alpha \simeq \chi \eta^\beta,
\end{equation}
since the left-hand side is a one-dimensional space spanned by the image of $m_\beta$.  
\end{proof}

Now we obtain the promised recursive formula for $\mathrm{JH}(n,\chi)$.  For the basis of the recursion, recall that the sets $\mathrm{JH}(1,\chi)$ of Jordan-H\"older constituents of $\mathrm{Ind}_{B(k)}^{\mathrm{GL}_2(k)} \chi$ are specified (up to twist) in Theorems~\ref{thm:bardoe.sin.generic} and~\ref{thm:bardoe.sin.exceptional}.

\begin{cor} \label{cor:jh.constituents.general}
Suppose $n \geq 2$, and let $\chi: B(k) \to k^\times$ be a character.  Then $\mathrm{JH}(n, \chi)$ is equal to the disjoint union of multisets
$$ \mathrm{JH}(n, \chi) = \coprod_{\beta \in \Nt} \mathrm{JH}(n-1, \chi \eta^\beta).$$
\end{cor}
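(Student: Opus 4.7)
The plan is to combine induction by stages with the filtration produced in Lemma~\ref{lem:intermediate.induction.filtration} and then identify the resulting successive quotients as inflations of the principal series $\mathrm{Ind}_{B_{n-1}}^{G_{n-1}} \chi \eta^\beta$.

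First I would write $\mathrm{Ind}_{B_n}^{G_n} \chi \simeq \mathrm{Ind}_{\widetilde{B}_n}^{G_n} \bigl( \mathrm{Ind}_{B_n}^{\widetilde{B}_n} \chi \bigr)$ by induction in stages.  By Lemma~\ref{lem:intermediate.induction.filtration}, the inner representation carries the poset-indexed filtration $\{U_\beta\}_{\beta \in \Nt}$ of~\eqref{equ:u.filtration}, whose successive quotients $U_\beta / \sum_{\alpha \prec \beta} U_\alpha$ are isomorphic to the one-dimensional characters $\chi \eta^\beta$.  Choosing any linear extension of $\preceq$ yields a chain filtration of $\mathrm{Ind}_{B_n}^{\widetilde{B}_n} \chi$ whose graded pieces, counted with multiplicity, are exactly $\{ \chi \eta^\beta : \beta \in \Nt \}$.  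Since induction from a subgroup is exact, applying $\mathrm{Ind}_{\widetilde{B}_n}^{G_n}$ produces a filtration of $\mathrm{Ind}_{B_n}^{G_n} \chi$ with successive quotients $\mathrm{Ind}_{\widetilde{B}_n}^{G_n} \chi \eta^\beta$ for $\beta \in \Nt$.  Hence the multiset of Jordan–H\"older constituents of $\mathrm{Ind}_{B_n}^{G_n} \chi$ is the disjoint union over $\beta$ of those of $\mathrm{Ind}_{\widetilde{B}_n}^{G_n} \chi \eta^\beta$.

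It therefore remains to identify the latter with $\mathrm{JH}(n-1, \chi \eta^\beta)$.  The key observation is that the kernel of $\psi_{n-1}^n : G_n \twoheadrightarrow G_{n-1}$, which consists of matrices of the form $I + \varpi^{n-1} M$ with $M \in \Mat_2(k)$, is contained in $\widetilde{B}_n$ by the description~\eqref{equ:btilde.general.def}, and $\widetilde{B}_n / \ker \psi_{n-1}^n$ maps isomorphically onto $B_{n-1}$.  Since $\chi \eta^\beta$ factors through $B(k)$ and hence through $B_{n-1}$, the representation $\mathrm{Ind}_{\widetilde{B}_n}^{G_n} \chi \eta^\beta$ is trivial on $\ker \psi_{n-1}^n$ and is canonically the inflation of $\mathrm{Ind}_{B_{n-1}}^{G_{n-1}} \chi \eta^\beta$ along $\psi_{n-1}^n$.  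Because every irreducible mod $p$ representation of $G_n$ factors through $\mathrm{GL}_2(k)$, inflation along $\psi_{n-1}^n$ preserves both irreducibility and the composition series, so $\mathrm{JH}(\mathrm{Ind}_{\widetilde{B}_n}^{G_n} \chi \eta^\beta) = \mathrm{JH}(n-1, \chi \eta^\beta)$ as multisets, and the corollary follows.

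The only delicate point is the bookkeeping in passing from the poset filtration $\{U_\beta\}$ of Lemma~\ref{lem:intermediate.induction.filtration} to a chain filtration whose graded pieces are the characters $\chi \eta^\beta$ with the correct multiplicities; this is routine since $\dim \mathrm{Ind}_{B_n}^{\widetilde{B}_n} \chi = q = |\Nt|$ and the one-dimensional quotients are already exhibited.  Everything else is formal: exactness of induction and the fact that inflation along the surjection $G_n \twoheadrightarrow G_{n-1}$ is an equivalence between $G_{n-1}$-modules and $G_n$-modules trivial on $\ker \psi_{n-1}^n$.
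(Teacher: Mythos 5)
Your proof is correct and takes essentially the same route as the paper: induction by stages through $\widetilde{B}_n$, the filtration from Lemma~\ref{lem:intermediate.induction.filtration}, exactness of induction, and identification of $\mathrm{Ind}_{\widetilde{B}_n}^{G_n}\chi\eta^\beta$ as the inflation along $\psi_{n-1}^n$ of $\mathrm{Ind}_{B_{n-1}}^{G_{n-1}}\chi\eta^\beta$. Incidentally, your choice $K_n = \ker\psi_{n-1}^n$ is the right one; the paper writes $K_n = \ker\psi_1^n$, which for $n \geq 3$ is not contained in $\widetilde{B}_n$ and does not yield $G_n/K_n \simeq G_{n-1}$, so that is evidently a typo which you have silently corrected.
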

\begin{proof}
Set $K_n = \ker \psi_1^n$.  Then $K_n \unlhd G_n$ is a normal subgroup that acts trivially on any one-dimensional representation of $G_n$, and $K_n \leq \widetilde{B}_n$.
By Lemma~\ref{lem:intermediate.induction.filtration}, the induced module $\mathrm{Ind}^{G_n}_{B_n} \chi = \mathrm{Ind}^{G_n}_{\widetilde{B}_n} ( \mathrm{Ind}_{B_n}^{\widetilde{B}_n} \chi)$ has an exhaustive filtration with quotients $\mathrm{Ind}^{G_n}_{\widetilde{B}_n} \chi \eta^\beta \simeq \mathrm{Ind}^{G_n / K_n}_{\widetilde{B}_n / K_n} \chi \eta^\beta \simeq
 \mathrm{Ind}_{B_{n-1}}^{G_{n-1}} \chi \eta^\beta$.
\end{proof}
\begin{rem}
 A recursive description of Jordan-H\"older constituents in a similar spirit, although proved by computations of Brauer characters, is given in~\cite[Theorem~2.3]{Schein/09} for reductions modulo $p$ of cuspidal representations of $\mathrm{GL}_2(\mathcal{O}_e)$.
\end{rem}

\subsection{A filtration}
In this section we assume $n \geq 2$ and further investigate the filtration of $V_{n,r}$, indexed by $\Nt$, that arose in the proof of Corollary~\ref{cor:jh.constituents.general}.  We also compute the action of certain elements of $G_n$ on the elements $f_{\mathbf{j}} \in V_{n,r}$ of Definition~\ref{dfn:general.basis}.  These computations will be crucial later in the paper when we study a much finer filtration of $V_{2,r}$.  

\begin{dfn} \label{def:filtration}
Given $\beta \in \Nt$, define the subspace $W_\beta$ to be the linear span of 
$$B_\beta = \left\{ f_{(j_0, \dots, j_{n-1})}, f_{(\infty, j_1, \dots, j_{n-1})} : j_0, \dots, j_{n-2} \in \Nt, j_{n-1} \preceq \beta \right\}.$$
\end{dfn}

\begin{pro} \label{pro:filtration}
Let $\beta \in \Nt$.  
\begin{enumerate}[label=(\alph*)]
\item
The subspace $W_\beta$ is a $G_n$-submodule of $V_{n,r}$.
\item
There is an isomorphism of $G_n$-modules 
$$W_\beta / \sum_{\alpha \prec \beta} W_\alpha \simeq (\psi^n_1 \circ \det\nolimits^{\beta}) \tensor V_{n-1, r - 2 \beta},$$ where $G_n$ acts on the right-hand side via the projection $\psi^n_{n-1}: G_n \to G_{n-1}$.
\end{enumerate}
\end{pro}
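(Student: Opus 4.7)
The plan is to realise the filtration $\{W_\beta\}$ as the image under induction in stages of the $\widetilde{B}_n$-stable filtration $\{U_\beta\}$ of $\mathrm{Ind}^{\widetilde{B}_n}_{B_n}\chi_r$ constructed in the proof of Lemma~\ref{lem:intermediate.induction.filtration}. Concretely, after identifying $V_{n,r} \simeq \mathrm{Ind}^{G_n}_{\widetilde{B}_n}\bigl(\mathrm{Ind}^{\widetilde{B}_n}_{B_n}\chi_r\bigr)$ via induction in stages, set $\widetilde{W}_\beta := \mathrm{Ind}^{G_n}_{\widetilde{B}_n} U_\beta$; this is manifestly a $G_n$-submodule, and both (a) and (b) drop out once I show $W_\beta = \widetilde{W}_\beta$.

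The heart of the argument is the matrix identity
\[
\left(\begin{array}{cc} \tilde{\mu} + [c]\varpi^{n-1} & 1 \\ 1 & 0 \end{array}\right) = \left(\begin{array}{cc} \tilde{\mu} & 1 \\ 1 & 0 \end{array}\right) \left(\begin{array}{cc} 1 & 0 \\ [c]\varpi^{n-1} & 1 \end{array}\right),
\]
valid for $\tilde{\mu} \in \mathcal{O}_n$ and $c \in k$, in which the right-hand factor lies in $\widetilde{B}_n$. Writing $\lambda \in \mathcal{O}_n$ as $\tilde{\mu} + [\lambda_{n-1}]\varpi^{n-1}$ with $\tilde{\mu} = \sum_{i < n-1}[\lambda_i]\varpi^i$ the canonical lift of the image $\mu \in \mathcal{O}_{n-1}$ of $\lambda$, the digit decomposition $\lambda^{\mathbf{j}} = \mu_0^{j_0}\cdots\mu_{n-2}^{j_{n-2}}\,\lambda_{n-1}^{j_{n-1}}$ is transparent. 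Because the matrices $\bigl(\begin{smallmatrix}\tilde\mu & 1\\ 1 & 0\end{smallmatrix}\bigr)$ traverse distinct left cosets of $\widetilde{B}_n$ in $G_n$ as $\mu$ varies over $\mathcal{O}_{n-1}$, the iterated-induction isomorphism rewrites
\[
f_{\mathbf{j}} \;\longleftrightarrow\; \sum_{\mu \in \mathcal{O}_{n-1}} \mu_0^{j_0}\cdots\mu_{n-2}^{j_{n-2}} \left[ \left(\begin{array}{cc} \tilde{\mu} & 1 \\ 1 & 0 \end{array}\right) \tensor m_{j_{n-1}} \right],
\]
so $f_{\mathbf{j}} \in \widetilde{W}_\beta$ whenever $j_{n-1} \preceq \beta$, because then $m_{j_{n-1}} \in U_{j_{n-1}} \subseteq U_\beta$. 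For $f_{(\infty, j_1, \ldots, j_{n-1})} = w f_{(0, j_1, \ldots, j_{n-1})}$ with $j_{n-1} \preceq \beta$, the containment in $\widetilde{W}_\beta$ is then automatic from the $G_n$-stability of $\widetilde{W}_\beta$. Hence $W_\beta \subseteq \widetilde{W}_\beta$, and an elementary count
\[
\dim W_\beta = |B_\beta| = (q+1)q^{n-2}\prod_{i=0}^{f-1}(\beta_i+1) = [G_n : \widetilde{B}_n] \cdot \dim U_\beta = \dim \widetilde{W}_\beta
\]
forces equality, yielding (a).

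For (b), the exactness of $\mathrm{Ind}^{G_n}_{\widetilde{B}_n}$ combined with the quotient formula $U_\beta \big/ \sum_{\alpha \prec \beta} U_\alpha \simeq \chi_r \eta^\beta$ extracted from equation~\eqref{equ:u.filtration} gives
\[
W_\beta \Big/ \sum_{\alpha \prec \beta} W_\alpha \simeq \mathrm{Ind}^{G_n}_{\widetilde{B}_n}(\chi_r \eta^\beta).
\]
Since $\chi_r\eta^\beta$ factors through $B(k)$ it is trivial on $K_n = \ker\psi_1^n$, so this induction descends to $\mathrm{Ind}^{G_{n-1}}_{B_{n-1}}(\chi_r\eta^\beta)$; evaluating on $\mathrm{diag}(a,d)$ yields the factorisation $\chi_r\eta^\beta = \chi_{r-2\beta}\cdot\det^\beta$, and the projection formula identifies the induction with $(\psi_1^n \circ \det^\beta) \tensor V_{n-1,\,r-2\beta}$, as claimed.

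The principal difficulty is the identification $W_\beta = \widetilde{W}_\beta$: choosing the correct lift $\tilde{\mu}$ and recognising the matrix factorisation above are essential for translating $f_{\mathbf{j}}$ into the iterated-induction picture. Once this is done, the dimension comparison obviates the need to verify the more delicate containment $f_{(\infty,\cdot)} \in \widetilde{W}_\beta$ by direct calculation, which would otherwise force us to invoke the Witt-vector summation formula of Lemma~\ref{lem:witt.vector.summation}.
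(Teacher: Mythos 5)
Your proof is correct and takes essentially the same approach as the paper: the paper identifies $W_\beta$ with (the image of) $\mathrm{Ind}^{G_n}_{\widetilde{B}_n} U_\beta$ under the induction-in-stages isomorphism and deduces both parts from the quotient formula~\eqref{equ:u.filtration}, exactly as you do. You have merely spelled out the matrix factorisation, coset-representative, and dimension-count details that the paper compresses into ``it is easy to see''.
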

\begin{proof}
Examining the isomorphism $\mathrm{Ind}^{G_n}_{\widetilde{B}_n} ( \mathrm{Ind}_{B_n}^{\widetilde{B}_n} \chi_r) \to \mathrm{Ind}^{G_n}_{B_n} \chi_r$ of $G_n$-modules, it is easy to see that 
that $W_\beta$ is the image of $\mathrm{Ind}^{G_n}_{\widetilde{B}_n} U_\beta$, where $U_\beta \subseteq \mathrm{Ind}_{B_n}^{\widetilde{B}_n} \chi_r$ are the subspaces defined in the proof of Lemma~\ref{lem:intermediate.induction.filtration}.  This implies the first claim, and the second follows from~\eqref{equ:u.filtration}. 
\end{proof}

The following claim, which is a corollary of the proof of Proposition~\ref{pro:filtration} rather than of its statement, will be an ingredient in some of the structural results of \S\ref{sec:main}.
\begin{cor} \label{cor:unique.maximal.submodule}
If $\beta \in \Nt$ satisfies $r - 2\beta \neq q - 1$, then $W_\beta$ has a unique maximal $G_n$-submodule.
\end{cor}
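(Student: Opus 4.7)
The plan is to apply Frobenius reciprocity to the identification $W_\beta \cong \mathrm{Ind}_{\widetilde{B}_n}^{G_n} U_\beta$ established in the proof of Proposition~\ref{pro:filtration}, in order to show that $W_\beta$ has (up to isomorphism) a unique simple quotient. For any irreducible $G_n$-representation $\sigma$, Frobenius reciprocity gives $\mathrm{Hom}_{G_n}(W_\beta, \sigma) = \mathrm{Hom}_{\widetilde{B}_n}(U_\beta, \sigma|_{\widetilde{B}_n})$. Since every irreducible $G_n$-representation factors through $G_1 = \mathrm{GL}_2(k)$, the restriction $\sigma|_{\widetilde{B}_n}$ is trivial on $\widetilde{B}_n \cap K_n$ and factors through $\widetilde{B}_n / (\widetilde{B}_n \cap K_n) \simeq B(k)$. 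Thus any such homomorphism factors through the coinvariants $(U_\beta)_{\widetilde{B}_n \cap K_n}$, reducing everything to showing that these coinvariants are isomorphic to the one-dimensional $B(k)$-character $\chi_r \eta^\beta$.

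The crux is this coinvariants computation. For $g \in \widetilde{B}_n \cap K_n$ one has $a_0 = d_0 = 1$ while $c_{n-1}$ varies freely over $k$; using~\eqref{equ:action.of.g.on.mbeta} and expanding $(\lambda - c_{n-1})^\gamma$ via Lucas's theorem yields
\[
(g - 1) m_\gamma = \sum_{\alpha \prec \gamma} \left( \prod_{i} \binom{\gamma_i}{\alpha_i} \right) (-c_{n-1})^{\gamma - \alpha} m_\alpha,
\]
which lies in $N_\beta^U := \sum_{\alpha \prec \beta} U_\alpha = \bigoplus_{\alpha \prec \beta} E \cdot m_\alpha$ whenever $\gamma \preceq \beta$. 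Conversely, specializing to $\gamma = \beta$ and letting $c_{n-1}$ range over $k$, the Lucas binomial coefficients $\prod_i \binom{\beta_i}{\alpha_i}$ are nonzero in $E$ (each $\beta_i < p$) and the monomial functions $c \mapsto c^{\beta - \alpha}$ for distinct $\alpha \prec \beta$ are linearly independent on $k$, since their exponents are distinct nonzero elements of $\mathbb{Z}/(q-1)\mathbb{Z}$. A Vandermonde-type argument then shows that the corresponding elements $(g-1) m_\beta$ already span all of $N_\beta^U$. Combined with the $\widetilde{B}_n$-equivariant identification $U_\beta / N_\beta^U \simeq \chi_r \eta^\beta$ from~\eqref{equ:u.filtration}, this yields $(U_\beta)_{\widetilde{B}_n \cap K_n} \simeq \chi_r \eta^\beta$.

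Applying Frobenius reciprocity once more at level one, and computing that $\chi_r \eta^\beta$ sends $\mathrm{diag}(a,d) \mapsto a^\beta d^{r-\beta} = (ad)^\beta d^{r-2\beta}$, we identify $\mathrm{Ind}_{B(k)}^{G_1} \chi_r \eta^\beta \simeq (\det\nolimits^\beta) \otimes V_{1, r-2\beta}$ and obtain $\mathrm{Hom}_{G_n}(W_\beta, \sigma) \simeq \mathrm{Hom}_{G_1}((\det\nolimits^\beta) \otimes V_{1, r-2\beta}, \sigma)$. Under the hypothesis $r - 2\beta \neq q - 1$, Theorem~\ref{thm:bardoe.sin.generic} ensures that $V_{1, r-2\beta}$ has irreducible cosocle, so there is, up to isomorphism, a unique simple quotient of $W_\beta$ and the corresponding hom-space is one-dimensional. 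The cosocle of $W_\beta$ is therefore irreducible, proving that $W_\beta$ has a unique maximal submodule. The main technical obstacle is the coinvariants computation, particularly the spanning direction, which requires both the non-vanishing in $E$ of the Lucas binomial coefficients and the Vandermonde-style linear independence of the monomials $c \mapsto c^{\beta-\alpha}$.
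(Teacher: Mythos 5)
Your proof is correct and takes essentially the same route as the paper: identify $W_\beta$ with $\mathrm{Ind}_{\widetilde{B}_n}^{G_n} U_\beta$, apply Frobenius reciprocity, use that any irreducible $G_n$-module factors through $\mathrm{GL}_2(k)$ to reduce to the one-dimensional coinvariants $(U_\beta)_{\widetilde{B}_n \cap K_n}$ via the Vandermonde argument, and conclude by the uniqueness of the irreducible $\mathrm{GL}_2(k)$-module with $B(k)$-eigenvector of character $\chi_r\eta^\beta$. The only stylistic difference is that you frame the last step as a second application of Frobenius reciprocity and appeal to Theorem~\ref{thm:bardoe.sin.generic}(c) (with $I = \Z/f\Z$, noting $V_{\Z/f\Z} = V_{1,r-2\beta}$) to see that the cosocle of the level-one principal series is irreducible, whereas the paper simply cites the uniqueness of the Serre weight with a given non-determinantal lowest-weight character; these are the same fact. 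One minor notational slip: the exponent of $-c_{n-1}$ in your Lucas expansion should be $\gamma \dotdiv \alpha$ rather than $\gamma - \alpha$, though since $\alpha \prec \gamma$ strictly the two notions of subtraction agree on the classes involved, so the argument is unaffected.
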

\begin{proof}
We saw in the course of proving Proposition~\ref{pro:filtration} that $W_\beta \simeq \mathrm{Ind}_{\widetilde{B}_n}^{G_n} U_\beta$.  Let $\sigma$ be an irreducible $G_n$-module.  By Frobenius reciprocity, $\mathrm{Hom}_{G_n}(W_\beta, \sigma) \simeq \mathrm{Hom}_{\widetilde{B}_n}(U_\beta, \sigma)$.  The image of any non-zero $\varphi \in \mathrm{Hom}_{\widetilde{B}_n}(U_\beta, \sigma)$ is a submodule of $\sigma$, hence is invariant under the action of $\left( \begin{array}{cc} 1 & 0 \\ {[c_{n-1}]} \varpi^{n-1} & 1 \end{array} \right)$ for all $c_{n-1} \in k$.  It is evident from~\eqref{equ:action.of.g.on.mbeta} that 
$$ \left( \begin{array}{cc} 1 & 0 \\ {[c_{n-1}]} \varpi^{n-1} & 1 \end{array} \right) m_\beta - m_\beta = \sum_{\alpha \prec \beta} (-1)^{\beta - \alpha} \binom{\beta}{\alpha} c_{n-1}^{\beta \dotdiv \alpha} m_\alpha$$
vanishes in any quotient of $U_\beta$ with this property.  By the Vandermonde argument, the only such
non-zero quotient is the one-dimensional cosocle $\chi_r \eta^\beta$.  By assumption, there is only one irreducible $G_n$-module $\sigma$ admitting $\chi_r \eta^\beta$ as a $\widetilde{B}_n$-submodule.  Only this $\sigma$ arises as an irreducible quotient of $W_\beta$.  It satisfies $\dim \mathrm{Hom}_{G_n} (W_\beta, \sigma) = 1$, implying the claim.
\end{proof}

\subsection{Some computations}
Proposition~\ref{pro:filtration} can be proved more laboriously by direct computation, applying elements of a set of generators of $G_n$ to the basis $B_\beta$ of $W_\beta$ and verifying that the image is contained in $W_\beta$.  We now state the results of some computations of this sort, as we will rely on their specializations later in the paper.

Given $\ell \in \N$, let $\mathrm{Part}(\ell)$ be the set of partitions of $\ell$, namely non-increasing sequences $\underline{m} = (m_1, \dots, m_r)$ such that $\sum_{i = 1}^r m_i = \ell$.  For a partition $\underline{m} \in \mathrm{Part}(\ell)$, let $|\underline{m}| = r$ denote the length of $\underline{m}$.  For every $j  \in \N$, let $\underline{m}(j) = | \{i \in [r] : m_i = j \} |$ be the number of times $j$ appears as a part of $\underline{m}$. 
We start with two auxiliary lemmas.

\begin{lem} \label{lem:inversion.power.series}
Let the functions $P_\ell(\lambda) : \mathcal{O}^\times \to k$ be defined by $\lambda^{-1} = \sum_{\ell = 0}^\infty [P_\ell(\lambda)] \varpi^\ell$.
If $\ell \leq e$, then 
$$ P_\ell(\lambda) = \sum_{\beta \in \mathrm{Part}_\ell} (-1)^{| \beta |} | \beta |! \, \lambda_0^{- (|\beta | + 1)}  \left( \prod_{i = 1}^\ell \frac{\lambda_i^{\beta(i)}}{\beta(i)!} \right).$$
\end{lem}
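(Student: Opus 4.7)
My plan is to establish, for $\ell \leq e$, the recursion
\begin{equation*}
P_0(\lambda) = \lambda_0^{-1}, \qquad P_\ell(\lambda) = -\lambda_0^{-1} \sum_{i = 1}^{\ell} \lambda_i P_{\ell - i}(\lambda),
\end{equation*}
and then to verify that the claimed closed-form expression satisfies the same recursion, whence the lemma follows by induction on $\ell$.

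To derive the recursion, I would expand the identity $\lambda \cdot \lambda^{-1} = 1$ formally as $\sum_{i, j \geq 0} [\lambda_i P_j(\lambda)] \varpi^{i+j}$ and argue that, modulo $\varpi^{e+1}$, the $\varpi^\ell$-digit for $\ell \leq e$ equals the naive sum $\sum_{i + j = \ell} \lambda_i P_j(\lambda)$ with no Witt-vector correction. By Lemma~\ref{lem:witt.vector.summation}, the only possible correction modulo $\varpi^{e+1}$ when summing two elements $a, b \in \mathcal{O}$ is $[u_0 S(a_0, b_0)^{p^{f-1}}] \varpi^e$, and this vanishes as soon as $a_0 = 0$ or $b_0 = 0$, since $S(x, 0) = S(0, y) = 0$. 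Every summand $[\lambda_i P_j(\lambda)] \varpi^{i+j}$ with $i + j \geq 1$ has vanishing digit at position $0$, so an iterated application of Lemma~\ref{lem:witt.vector.summation} to assemble the full product from these summands, together with the single position-$0$ contribution $[\lambda_0 P_0(\lambda)]$, produces no correction at any intermediate stage. Equating digits of $\lambda \cdot \lambda^{-1}$ with those of $1 = [1]$ then yields the displayed recursion. This is the main obstacle of the argument, requiring careful iterated Witt-vector bookkeeping; the hypothesis $\ell \leq e$ is essential because it confines all possible corrections to a single position, which can then be shown to vanish.

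To verify that the proposed formula satisfies the same recursion, I would observe that it is precisely the $t^\ell$-coefficient of the formal inverse $\bigl( \sum_{i \geq 0} \lambda_i t^i \bigr)^{-1} \in k[[t]]$, computed via the geometric expansion $\lambda_0^{-1} \sum_{k \geq 0} (-1)^k \bigl( \sum_{i \geq 1} \lambda_i \lambda_0^{-1} t^i \bigr)^k$ and collecting ordered compositions of $\ell$ according to their underlying partition $\beta$, each contributing the multinomial factor $|\beta|!/\prod_i \beta(i)!$. The formal inverse in $k[[t]]$ evidently satisfies the analogous recursion for every $\ell \geq 0$. Alternatively, one can verify the recursion directly by a combinatorial manipulation: for each $\beta \in \mathrm{Part}(\ell)$, deleting a part of size $i$ produces a partition of $\ell - i$ with multiplicity $\beta(i)$, and the identity $\sum_i \beta(i) = |\beta|$ converts the factor $(|\beta| - 1)!$ appearing on the right-hand side of the recursion into $|\beta|!$ on the left, with matching signs and $\lambda_0$-powers.
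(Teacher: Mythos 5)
Your proof is correct and follows essentially the same route as the paper: you verify the closed-form formula against the power-series recursion $\sum_{i=0}^\ell \lambda_{\ell-i} P_i(\lambda) = 0$ and then invoke Lemma~\ref{lem:witt.vector.summation} to see that the Teichm\"uller digits of $\lambda^{-1}$ satisfy that same recursion for $\ell \leq e$. The paper compresses the Witt-vector bookkeeping into a single sentence (``the claim follows from this and Lemma~\ref{lem:witt.vector.summation}''), whereas you spell out why the carry term vanishes — namely, all cross terms $[\lambda_i P_j(\lambda)]\varpi^{i+j}$ with $i+j \geq 1$ have zero $0$-digit, so $S(\cdot,\cdot) = 0$ at every stage of the iterated summation; this is a reasonable and correct elaboration of what the paper leaves implicit.
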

\begin{proof}
Let $E$ be a field, and let $\sum_{i = 0}^\infty a_i X^i \in E[[X]]^\times$ be an invertible formal series.  Then its inverse is $\sum_{\ell = 0}^\infty b_\ell X^\ell$, where
\begin{equation} \label{equ:formal.power.series}
b_\ell =
 \sum_{\beta \in \mathrm{Part}_\ell} (-1)^{| \beta |} | \beta |! \, a_0^{- (|\beta | + 1)}  \left( \prod_{i = 1}^\ell \frac{a_i^{\beta(i)}}{\beta(i)!} \right). 
 \end{equation}
Indeed, the sequence $\{ b_i \}$ is determined by $b_0 = a_0^{-1}$ and the recursion $\sum_{i = 0}^\ell a_{\ell - i} b_i = 0$ for all $\ell \in \N$; it is easy to verify that the sequence of~\eqref{equ:formal.power.series} satisfies these conditions.  The claim follows from this and Lemma~\ref{lem:witt.vector.summation}.
\end{proof}

By definition, $w f_{(0, j_1, \dots, j_{n-1})} = f_{(\infty, j_1, \dots, j_{n-1})}$ and $w f_{(\infty, j_1, \dots, j_{n-1})} = f_{(0, j_1, \dots, j_{n-1})}$.  So consider $\mathbf{j} = (j_0, \dots, j_{n-1}) \in \Nt^n$ with $j_0 \neq 0$.
Observing that $\lambda \in \mathcal{O}_n^\times$ if and only if $\lambda_0 \neq 0$, we have
$$ f_{\mathbf{j}} = \sum_{\lambda \in \mathcal{O}_n^\times} \left( \begin{array}{cc} \lambda & 1 \\ 1 & 0 \end{array} \right) \tensor \lambda^{\mathbf{j}} \in V_{n,r},$$
and thus
\begin{multline} \label{equ:w.action}
w f_{\mathbf{j}} = \sum_{\lambda \in \mathcal{O}_n^\times} \left( \begin{array}{cc} 1 & 0 \\ \lambda & 1 \end{array} \right) \tensor \lambda^{\mathbf{j}} = 
 \sum_{\lambda \in \mathcal{O}_n^\times} \left( \begin{array}{cc} \lambda^{-1} & 1 \\ 1 & 0 \end{array} \right)
 \left( \begin{array}{cc} \lambda & 1 \\ 0 & - \lambda^{-1} \end{array} \right) \tensor \lambda^{\mathbf{j}} = \\
 \sum_{\lambda \in \mathcal{O}_n^\times} \left( \begin{array}{cc} \lambda & 1 \\ 1 & 0 \end{array} \right) \tensor (-\lambda_0)^r \prod_{i = 0}^{n-1} P_i(\lambda)^{j_i},
\end{multline}
with $P_i(\lambda)$ as in Lemma~\ref{lem:inversion.power.series}.
Similarly, if $d \in \mathcal{O}_{n-1}$ then
\begin{equation} \label{equ:diagonal.basis}
\left( \begin{array}{cc} 1 & 0 \\ 0 & 1+d\varpi \end{array} \right) f_{\mathbf{j}} = \sum_{\lambda \in \mathcal{O}_n} \left( \begin{array}{cc} \lambda(1 + d \varpi)^{-1} & 1 \\ 1 & 0 \end{array} \right) \tensor \lambda^{\mathbf{j}} =  \sum_{\lambda \in \mathcal{O}_n} \left( \begin{array}{cc} \lambda & 1 \\ 1 & 0 \end{array} \right) \tensor \prod_{i = 0}^{n-1} Q_i(\lambda)^{j_i},
\end{equation}
where the $Q_i(\lambda)$ are defined by $\lambda(1 + d \varpi) = \sum_{i = 0}^{n-1} [Q_i(\lambda)] \varpi^i$. 

\begin{lem} \label{lem:lambda.minus.b}
Suppose that $\lambda, b \in \mathcal{O}$, and let the series of functions $R_i : \mathcal{O} \to k$ be defined by $\lambda - b = \sum_{i = 0}^\infty [R_i(\lambda)] \varpi^i$.  Then
\begin{enumerate}[label=(\alph*)]
\item
$R_i(\lambda) = \lambda_i - b_i$ for all $i \in [e-1]_0$.
\item
$R_e(\lambda) = \lambda_e - b_e + u_0 S(\lambda_0^{p^{f-1}}, - b_0^{p^{f-1}})$.
\item
For all $i \geq e$ we have $R_i(\lambda) = \lambda_i - b_i + R_i^\prime(\lambda)$, where $R^\prime_i(\lambda)$ depends only on $\lambda_0, \dots, \lambda_{i-e}$.
\end{enumerate}
\end{lem}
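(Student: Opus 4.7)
Parts (a) and (b) are direct applications of Lemma~\ref{lem:witt.vector.summation}. Since $p$ is odd, the element $-1 \in k^\times$ is a $(q-1)$-st root of unity whose Teichm\"uller lift is $-1 \in \mathcal{O}$; hence $-[b_i] = [-b_i]$, and the Teichm\"uller expansion of $-b$ is $-b = \sum_{i \geq 0}[-b_i]\varpi^i$. Applying Lemma~\ref{lem:witt.vector.summation} with $a = \lambda$ and the substitution $b \mapsto -b$ immediately yields (a) for $i < e$. For $i = e$, it produces $R_e(\lambda) = \lambda_e - b_e + u_0 S(\lambda_0, -b_0)^{p^{f-1}}$; the Frobenius identity $S(X, Y)^{p^{f-1}} = S(X^{p^{f-1}}, Y^{p^{f-1}})$ in $k[X,Y]$ (valid because $S \in \Z[X,Y]$ has coefficients fixed by Frobenius) together with $(-b_0)^{p^{f-1}} = -b_0^{p^{f-1}}$ (since $p^{f-1}$ is odd) rewrites the correction term into the form stated in (b).

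For (c), the plan is to show that $R_i(\lambda) - \lambda_i$ depends only on $\lambda_0, \ldots, \lambda_{i-e}$; absorbing $-b_i$ then defines $R_i^\prime$. The key tool is the following auxiliary identity, itself a short consequence of Lemma~\ref{lem:witt.vector.summation}: for any $\alpha \in \mathcal{O}$, $t \in k$, and $j \geq 0$,
$$ \alpha + [t]\varpi^j \equiv \sum_{k = 0}^{j-1}[\alpha_k]\varpi^k + [\alpha_j + t]\varpi^j + \sum_{k = j+1}^{j+e-1}[\alpha_k]\varpi^k \pmod{\varpi^{j+e}}. $$
Equivalently, adding $[t]\varpi^j$ to $\alpha$ shifts the $j$-th Teichm\"uller digit by $t$ and leaves every other digit in the range $[0, j+e-1]$ unchanged. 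To see this, isolate the term $([\alpha_j] + [t])\varpi^j$ in the Teichm\"uller expansion of $\alpha + [t]\varpi^j$; Lemma~\ref{lem:witt.vector.summation} gives $([\alpha_j] + [t])\varpi^j \equiv [\alpha_j + t]\varpi^j \pmod{\varpi^{j+e}}$, while all remaining Teichm\"uller monomials of $\alpha$ have disjoint supports and thus combine additively.

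I then deduce (c) in two perturbation steps. First, incrementing $\lambda_i$ by $t$ changes $\lambda$ by $[\lambda_i + t]\varpi^i - [\lambda_i]\varpi^i$, which equals $[t]\varpi^i$ modulo $\varpi^{i+e}\mathcal{O}$ by Lemma~\ref{lem:witt.vector.summation}; the tail in $\varpi^{i+e}\mathcal{O}$ is invisible at position $i$, and the auxiliary identity with $j = i$ shows that adding $[t]\varpi^i$ shifts the $i$-th digit by exactly $t$. Hence $R_i$ shifts by $t$, proving that $R_i - \lambda_i$ is independent of $\lambda_i$. Second, for any $j$ with $i - e < j < i$, the analogous perturbation $\lambda_j \mapsto \lambda_j + t$ modifies $\lambda$ by $[t]\varpi^j$ up to an element of $\varpi^{j+e}\mathcal{O} \subseteq \varpi^{i+1}\mathcal{O}$, which cannot affect the $i$-th digit; and the auxiliary identity (now with $j < i < j + e$) says that adding $[t]\varpi^j$ itself does not change the $i$-th digit either. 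Combining these eliminations, $R_i - \lambda_i$ is constant in each of $\lambda_{i-e+1}, \ldots, \lambda_{i-1}$ and therefore depends only on $\lambda_0, \ldots, \lambda_{i-e}$, which is exactly (c). The main obstacle is the auxiliary identity, which uses crucially that the Witt-vector correction provided by Lemma~\ref{lem:witt.vector.summation} first appears at order $\varpi^{j+e}$, rather than some earlier order that could spill into the interval $(j, j+e)$.
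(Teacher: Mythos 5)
Your proof is correct and takes the same route as the paper, which merely asserts that all three parts are immediate consequences of Lemma~\ref{lem:witt.vector.summation}. You have filled in the details that the paper leaves implicit: parts~(a) and~(b) really are direct (your handling of $[-b_i] = -[b_i]$ and of the Frobenius commuting with the integral polynomial $S$ is exactly right), and for part~(c), where the paper's ``immediate'' does hide a small amount of work, your auxiliary digit-perturbation identity and the two-step elimination argument are a clean and correct way to see that the correction term at position $i$ lives in digits $\leq i - e$.
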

\begin{proof}
All the claims are immediate consequences of Lemma~\ref{lem:witt.vector.summation}.
\end{proof}

Consider $\mathbf{j} = (j_0, \dots, j_{n-1}) \in \Nt^n$ and observe that 
\begin{equation} \label{equ:upper.triangular.basis.element}
\left( \begin{array}{cc} 1  & b \\ 0 & 1 \end{array} \right) f_{\mathbf{j}} = \sum_{\lambda \in \mathcal{O}_n} \left( \begin{array}{cc} \lambda + b & 1 \\ 1 & 0 \end{array} \right) \tensor \lambda^{\mathbf{j}} = \sum_{\lambda \in \mathcal{O}_n} \left( \begin{array}{cc} \lambda & 1 \\ 1 & 0 \end{array} \right) \tensor \prod_{i = 0}^{n-1} R_i(\lambda)^{j_i},
\end{equation}
where the $R_i(\lambda)$ are as in Lemma~\ref{lem:lambda.minus.b}.  Finally, a direct computation shows that if $\mathbf{j} \in \Nt^{n-1}$ and $\lambda^{\mathbf{j}} = \prod_{i = 1}^{n-1} \lambda_i^{j_i}$, then
\begin{multline} \label{equ:b.on.infinity}
\left( \begin{array}{cc} 1 & b \\ 0 & 1 \end{array} \right) f_{(\infty, \mathbf{j})} = \sum_{\lambda \in \mathcal{O}_n^\times} 
\left( \begin{array}{cc} {\lambda^{-1}
(1 + \lambda b)} & 1 \\ 1 & 0 \end{array} \right) 
\left( \begin{array}{cc} \lambda & 1 \\ 0 & {- \lambda^{-1}} \end{array} \right) 
\tensor \lambda^{\mathbf{j}} + \\
\sum_{\lambda \in \mathfrak{m} / \mathfrak{m}^n} \left( \begin{array}{cc} 1 & 0 \\ \lambda(1 + \lambda b)^{-1} & 1 \end{array} \right) \left( \begin{array}{cc} 1 + \lambda b & b \\ 0 & (1 + \lambda b)^{-1} \end{array} \right) \tensor \lambda^{\mathbf{j}} = \\
\sum_{\nu \in b + \mathcal{O}_n^\times} \left( \begin{array}{cc} \nu & 1 \\ 1 & 0 \end{array} \right) \tensor (b_0 - \nu_0)^r \prod_{i = 1}^{n-1} P_i (R_0(\nu), \dots, R_{n-1}(\nu))^{j_i} + 
\sum_{\nu \in \mathfrak{m} / \mathfrak{m}^n} \left( \begin{array}{cc} 1 & 0 \\ \nu & 1 \end{array} \right) \tensor \prod_{i = 1}^{n-1} \tilde{R}_i(\nu)^{j_i},
\end{multline}
where $\tilde{R}_i(\nu)$ is defined by $\nu(1 - \nu b)^{-1} = \sum_{i = 1}^{n-1} [\tilde{R}_i(\nu)] \varpi^i$ for $\nu \in \mathfrak{m} / \mathfrak{m}^n$.

\section{Principal series for $\mathrm{GL}_2(\mathcal{O}/\mathfrak{m}^2)$} \label{sec:main}
We assume for the remainder of the paper that $n=2$.  If additionally $e \geq 2$, then $\mcO_2$ is isomorphic to $\Fq[\pi]/(\pi^2)$ and we can compute easily. The results also apply in the case of function fields as $\mcO$ is the power series ring $\Fq((\pi))$.  

\subsection{Preliminary computations} \label{sec:prelim.ramified}
In this section we prepare some computational lemmas that will be main ingredients in the proofs of our main results.  The first indication that life is particularly simple when $n = 2$ is that $w$ permutes the elements of our basis, up to sign.
        \begin{lem} \label{lem:w.action.on.basis}
        Let $(j_0, j_1) \in (\Nt \cup \{ \infty \}) \times \Nt$.  Then
        $$ w f_{(j_0, j_1)} = \begin{cases}
        f_{(\infty, j_1)} &: j_0 = 0 \\
        f_{(0, j_1)} &: j_0 = \infty \\
        (-1)^{r + j_1} f_{(r - 2j_1 - j_0, j_1)} &: j_0 \not\in \{ 0, \infty \}.
        \end{cases}
        $$
        \end{lem}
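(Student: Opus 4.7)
The plan is to handle the three cases of the piecewise formula in turn. The first two are essentially tautological: when $j_0 = 0$, the equality $w f_{(0,j_1)} = f_{(\infty, j_1)}$ is literally the defining relation of $f_{(\infty, j_1)}$ in Definition~\ref{dfn:general.basis}. For $j_0 = \infty$, I would write $w f_{(\infty, j_1)} = w^2 f_{(0, j_1)} = f_{(0, j_1)}$, using that $w^2 = I$ in $G_2$.

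For the substantive third case, $j_0 \not\in \{0, \infty\}$, I would start from formula~\eqref{equ:w.action} specialized to $n = 2$, which expresses $w f_{(j_0, j_1)}$ as
$$\sum_{\lambda \in \mathcal{O}_2^\times} \left( \begin{array}{cc} \lambda & 1 \\ 1 & 0 \end{array} \right) \tensor (-\lambda_0)^r P_0(\lambda)^{j_0} P_1(\lambda)^{j_1}.$$
The next step is to compute $P_0$ and $P_1$ explicitly. In $\mathcal{O}_2$, since $\varpi^2 = 0$ and the Teichm\"uller lift is multiplicative, inverting $\lambda = [\lambda_0] + [\lambda_1]\varpi$ reduces to solving a single linear equation, yielding $\lambda^{-1} = [\lambda_0^{-1}] - [\lambda_0^{-2} \lambda_1]\varpi$. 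Since $p$ is odd, $-1 = [-1]$ is Teichm\"uller, so this rewrites as $[\lambda_0^{-1}] + [-\lambda_0^{-2}\lambda_1]\varpi$, giving $P_0(\lambda) = \lambda_0^{-1}$ and $P_1(\lambda) = -\lambda_0^{-2}\lambda_1$. The same values fall out of Lemma~\ref{lem:inversion.power.series} applied with $\ell \in \{0,1\}$.

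Substituting, the scalar coefficient simplifies to $(-1)^{r + j_1} \lambda_0^{r - 2j_1 - j_0} \lambda_1^{j_1}$. To finish I would recognize the resulting sum as $(-1)^{r+j_1} f_{(r - 2j_1 - j_0, j_1)}$. The subtle point here is that Definition~\ref{dfn:general.basis} writes $f_{(a, j_1)}$ as a sum over all of $\mathcal{O}_2$, not over $\mathcal{O}_2^\times$. These sums nevertheless agree because $j_0 \neq 0$ combined with $r \neq 0$ forces $r - 2j_1 - j_0 \neq 0$ in $\Nt$, via the group structure on $\Nt \setminus \{0\} = \Z / (q-1)\Z$ and the subtraction conventions of~\S\ref{sec:notation}; hence the terms with $\lambda_0 = 0$ contribute zero to both sides.

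I anticipate no real obstacle: the only bookkeeping is ensuring that $r - 2j_1 - j_0$ lands in $\Nt \setminus \{0\}$ so that the extension from $\mathcal{O}_2^\times$ to $\mathcal{O}_2$ is harmless, and that signs are handled consistently, both of which rest on the standing assumption that $p$ is odd together with the definition of the two kinds of subtraction on $\Nt$.
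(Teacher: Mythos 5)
Your proposal is correct and follows essentially the same route as the paper: compute $P_0(\lambda) = \lambda_0^{-1}$ and $P_1(\lambda) = -\lambda_0^{-2}\lambda_1$ from the explicit inversion formula in $\mathcal{O}_2$, substitute into~\eqref{equ:w.action}, and simplify to $(-1)^{r+j_1}\lambda_0^{r-2j_1-j_0}\lambda_1^{j_1}$. One small clarification on the bookkeeping at the end: the fact that $r-2j_1-j_0 \neq 0$ is automatic from the definition of the difference operation on $\Nt$, which by construction always takes values in $\Nt\setminus\{0\} = \Z/(q-1)\Z$ when its first argument is nonzero; the hypothesis $j_0 \neq 0$ is what you need to know you're in the third case, but it is not what forces $r-2j_1-j_0$ to be nonzero.
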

       \begin{proof}
       The first two lines are immediate from the definition of $f_{(\infty, j_1)}$.  To obtain the third line, observe that if $\lambda_0 \neq 0$, then the identity
       $$ ([\lambda_0] + [\lambda_1] \varpi)^{-1} \equiv [\lambda_0^{-1}] - [\lambda_0^{-2} \lambda_1] \varpi \, \mathrm{mod} \, \varpi^2$$
       holds in $\mathcal{O}$ with no restriction on the ramification of $F / \Q_p$.  Hence $P_0(\lambda) = \lambda_0^{-1}$ and $P_1(\lambda) = - \lambda_0^{-2} \lambda_1$, in the notation of the proof of Proposition~\ref{pro:filtration}, and our claim is immediate from~\eqref{equ:w.action}.
       \end{proof} 
        
        \begin{lem} \label{lem:diagonal.action.on.basis}
        Let $(j_0, j_1) \in (\Nt \cup \{ \infty \}) \times \Nt$.  The $D_2$-submodule of $V_{2,r}$ generated by $f_{(j_0, j_1)}$ is the linear span of 
        $$\begin{cases}
         \left\{ f_{(j_0 + j_1^\prime, j_1 \dotdiv j_1^\prime)} : j_1^\prime \preceq j_1 \right\} &: j_0 \in \Nt \\
         \{ f_{(\infty, j_1)} \} \cup \left\{ f_{(r-2j_1 + j_1^\prime, j_1 \dotdiv j_1^\prime)} : 0 \neq j_1^\prime \preceq j_1 \right\} &: j_0 = \infty.
         \end{cases}
         $$ 
        \end{lem}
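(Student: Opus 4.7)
The strategy is to compute $\diag(1, 1+d\varpi)\, f_{(j_0, j_1)}$ explicitly for every $d \in k$ and then apply a Vandermonde argument in $d$ to recover each basis element claimed to lie in the $D_2$-submodule. The key simplification at $n = 2$ is that $\lambda \cdot (1 + d\varpi) \equiv [\lambda_0] + [\lambda_1 + \lambda_0 d]\varpi \pmod{\varpi^2}$ for every ramification index $e \geq 1$: the Witt-vector correction from Lemma~\ref{lem:witt.vector.summation} lives at level $\varpi^{e+1}$ and so vanishes modulo $\varpi^2$. Thus, in the notation of equation~\eqref{equ:diagonal.basis}, $Q_0(\lambda) = \lambda_0$ and $Q_1(\lambda) = \lambda_1 + \lambda_0 d$.

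For the case $j_0 \in \Nt$, the plan is to substitute these values into~\eqref{equ:diagonal.basis} and expand $(\lambda_1 + \lambda_0 d)^{j_1}$ via Lucas's theorem as $\sum_{j_1^\prime \preceq j_1} \binom{j_1}{j_1^\prime} (\lambda_0 d)^{j_1^\prime} \lambda_1^{j_1 \dotdiv j_1^\prime}$, in which the binomial coefficients are nonzero in $k$ precisely when $j_1^\prime \preceq j_1$. This yields
$$ \diag(1, 1+d\varpi)\, f_{(j_0, j_1)} = \sum_{j_1^\prime \preceq j_1} \binom{j_1}{j_1^\prime} d^{j_1^\prime}\, f_{(j_0 + j_1^\prime,\, j_1 \dotdiv j_1^\prime)}. $$
Since the exponents $j_1^\prime$ are distinct elements of a subset of $\{0, 1, \dots, q-1\}$, the monomials $d \mapsto d^{j_1^\prime}$ are linearly independent on $k$; a Vandermonde argument then recovers each $f_{(j_0 + j_1^\prime,\, j_1 \dotdiv j_1^\prime)}$ from linear combinations of $\{\diag(1, 1+d\varpi) f_{(j_0, j_1)} : d \in k\}$, and evidently no other basis element arises.

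For the case $j_0 = \infty$, the plan is to reduce to the previous case via $w$-conjugation using Lemma~\ref{lem:w.action.on.basis}. Since $\diag(1, 1+d\varpi)\, w = w \cdot \diag(1+d\varpi, 1+d\varpi) \cdot \diag(1, (1+d\varpi)^{-1})$, the central factor acts trivially on $V_{2,r}$ by Lemma~\ref{rem:eigenvectors}(2) (as $(1+d\varpi)_0 = 1$), and using $(1+d\varpi)^{-1} \equiv 1 - d\varpi \pmod{\varpi^2}$ gives
$$ \diag(1, 1+d\varpi)\, f_{(\infty, j_1)} = w \cdot \diag(1, 1-d\varpi)\, f_{(0, j_1)}. $$
Applying the first-case formula (with $d$ replaced by $-d$) and then Lemma~\ref{lem:w.action.on.basis} term by term then yields
$$ \diag(1, 1+d\varpi)\, f_{(\infty, j_1)} = f_{(\infty, j_1)} + (-1)^{r+j_1} \sum_{0 \neq j_1^\prime \preceq j_1} \binom{j_1}{j_1^\prime} d^{j_1^\prime}\, f_{(r - 2j_1 + j_1^\prime,\, j_1 \dotdiv j_1^\prime)}, $$
where the first coordinate simplifies using the integer identity $j_1 = j_1^\prime + (j_1 \dotdiv j_1^\prime)$ and the sign $(-1)^{r+j_1}$ emerges from combining $(-1)^{j_1^\prime}$ (from $(-d)^{j_1^\prime}$) with $(-1)^{r + (j_1 \dotdiv j_1^\prime)}$ (from Lemma~\ref{lem:w.action.on.basis}). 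A second Vandermonde argument finishes.

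There is no real obstacle: once the $n=2$ Witt-vector simplification is noted, the calculation is direct. The only care required is the sign bookkeeping in the second case and the recognition that the $j_1^\prime = 0$ summand there reassembles into $f_{(\infty, j_1)}$ itself rather than a $f_{(r-2j_1, j_1)}$.
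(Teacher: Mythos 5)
Your proof is correct and follows essentially the same route as the paper: compute $Q_0(\lambda) = \lambda_0$ and $Q_1(\lambda) = \lambda_1 + \lambda_0 d$ for all $e \geq 1$ (noting that the only potentially non-trivial Witt-vector carry in $\lambda + \lambda d \varpi$ vanishes because the level-zero digit of $\lambda d \varpi$ is zero, so $S(\lambda_0, 0) = 0$, and all higher carries sit at level $\varpi^{e+1}$ or beyond), then expand $(\lambda_1 + \lambda_0 d)^{j_1}$ by Lucas and apply Vandermonde, and finally reduce $j_0 = \infty$ to $j_0 = 0$ by conjugating $D_2$ through $w$ and discarding the central factor via Lemma~\ref{rem:eigenvectors}. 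Your treatment of the $j_0 = \infty$ case is slightly more explicit than the paper's (which simply observes $\langle D_2 \cdot f_{(\infty, j_1)} \rangle = w \langle D_2 \cdot f_{(0, j_1)} \rangle$ and invokes Lemma~\ref{lem:w.action.on.basis}), but the argument is the same.
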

        \begin{proof}
        Observe that if $n \leq e + 1$, then for all $\lambda, d \in \mathcal{O}$ we have
        $$ (1 + d \varpi)\lambda = \sum_{i = 0}^{n-1} \left( \lambda_i + \sum_{\ell = 0}^{i - 1} \lambda_\ell d_{i - \ell - 1} \right) \varpi^i \, \mathrm{mod} \, \varpi^n;$$
        indeed, the Witt vector summation of Lemma~\ref{lem:witt.vector.summation} does not appear.  In particular, for all $e \geq 1$ we have $Q_0(\lambda) = \lambda_0$ and $Q_1(\lambda) = \lambda_1 + \lambda_0 d_0$, in the notation of the proof of Proposition~\ref{pro:filtration}.  It follows from~\eqref{equ:diagonal.basis} that if $j_0, j_1 \in \Nt$, then in $V_{2,r}$ we have 
        $$ \left( \begin{array}{cc} 1 & 0 \\ 0 & 1 + d \varpi \end{array} \right) f_{(j_0, j_1)} = \sum_{\lambda \in \mathcal{O}_2} \left( \begin{array}{cc} \lambda & 1 \\ 1 & 0 \end{array} \right) \tensor \lambda_0^{j_0} (\lambda_1 + \lambda_0 d_0)^{j_1},$$
        and hence if $j_1^\prime \preceq j_1$ then
        \begin{equation} \label{equ:diagonal.explicit}
         - \sum_{d_0 \in k} d_0^{(q - 1) \dotdiv j_1^\prime} \left( \begin{array}{cc} 1 & 0 \\ 0 & 1 + [d_0] \varpi \end{array} \right) f_{(j_0, j_1)} = \binom{j_1}{j_1^\prime} f_{(j_0 + j_1^\prime, j_1 \dotdiv j_1^\prime)}.
         \end{equation}
         The binomial coefficients are non-zero in $E$, and by the usual Vandermonde argument the elements of the form~\eqref{equ:diagonal.explicit} linearly span the $D_2$-module generated by $f_{(j_0, j_1)}$.
         
         The remaining case $j_0 = \infty$ follows from Lemma~\ref{rem:eigenvectors}, Lemma~\ref{lem:w.action.on.basis} and the observation
         $$ \left( \begin{array}{cc} 1 & 0 \\ 0 & 1 + d \varpi \end{array} \right) f_{(\infty, j_1)} = w \left( \begin{array}{cc} 1 & 0 \\ 0 & (1 + d \varpi)^{-1} \end{array} \right) \left( \begin{array}{cc} 1 + d \varpi & 0 \\ 0 & 1 + d \varpi \end{array} \right) f_{(0, j_1)},$$
         so that $\langle D_2 \cdot f_{(\infty, j_1)} \rangle = w \langle D_2 \cdot f_{(0, j_1)} \rangle$.
        \end{proof}
        
        \begin{rem}
        The previous two lemmas already show that the analogous computations become much more involved when $n \geq 3$.  Indeed, by Lemma~\ref{lem:inversion.power.series} the expressions $P_\ell(\lambda)$ appearing in the computation of $wf_{\mathbf{j}}$ are sums of monomials indexed by partitions of $\ell$.  The clean statement of Lemma~\ref{lem:w.action.on.basis}, that $w$ permutes the elements of the basis $\mathcal{B}_{2,r}$ up to sign, arises because all elements of $[n-1]_0$ have only one partition when $n = 2$.  Of course, this is false for larger $n$, and we obtain far messier formulas.  These difficulties cannot be resolved by computing with respect to a different basis.  Moreover, the previous observation is independent of the complications arising when $e < n$ and the computations involve Witt vector polynomials as in Lemma~\ref{lem:witt.vector.summation}.  The latter features appear already when $n = 2$, in the unramified case $e = 1$; see \S\ref{sec:R.sigma} below.
        \end{rem}
        
        \begin{lem} \label{lem:lower.triangular.unramified}
        Let $(j_0, j_1) \in \Nt^2$.  The $\overline{U}_2$-submodule of $V_{2,r}$ generated by $f_{(j_0, j_1)}$ is the linear span of $\left\{ f_{(j_0 + 2j_1^\prime, j_1 \dotdiv j_1^\prime)} : j_1^\prime \preceq j_1 \right\}$.
        \end{lem}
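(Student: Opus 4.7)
The plan is to compute the action of a typical generator $g_c = \begin{pmatrix} 1 & 0 \\ [c_0]\varpi & 1 \end{pmatrix} \in \overline{U}_2$ on $f_{(j_0, j_1)}$ directly via the defining sum, and then to extract individual basis vectors by a Vandermonde argument in the spirit of Lemmas~\ref{lem:w.action.on.basis} and~\ref{lem:diagonal.action.on.basis}. Multiplying
$$ g_c \begin{pmatrix} \lambda & 1 \\ 1 & 0 \end{pmatrix} = \begin{pmatrix} \lambda & 1 \\ 1 + [c_0]\varpi\lambda & [c_0]\varpi \end{pmatrix} $$
and using that the $(2,1)$-entry $1 + [c_0]\varpi\lambda$ is a unit, one factors the product as $\begin{pmatrix} \mu & 1 \\ 1 & 0 \end{pmatrix} h$ with $\mu = \lambda(1 + [c_0]\varpi\lambda)^{-1}$ and $h \equiv \begin{pmatrix} 1 + [c_0]\varpi\lambda & [c_0]\varpi \\ 0 & 1 - [c_0]\varpi\lambda \end{pmatrix} \pmod{\varpi^2}$; in particular $\chi_r(h) = 1$, so the action of $g_c$ reduces to a pure change of coordinate $\lambda \rightsquigarrow \mu$ inside the defining sum.

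The second step extracts the Witt-vector digits of $\mu \equiv \lambda - [c_0]\varpi\lambda^2 \pmod{\varpi^2}$. The factor of $\varpi$ forces only $\lambda^2 \bmod \varpi = \lambda_0^2$ to matter, and the Witt correction from Lemma~\ref{lem:witt.vector.summation} has order $\varpi^{e+1}$ and hence vanishes mod $\varpi^2$ for every $e \geq 1$; this gives the clean formula $\mu_0 = \lambda_0$ and $\mu_1 = \lambda_1 - c_0 \lambda_0^2$. The map $\lambda \leftrightarrow \mu$ is a bijection of $\mathcal{O}_2$ with inverse $\lambda_0 = \mu_0$, $\lambda_1 = \mu_1 + c_0 \mu_0^2$. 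Substituting into $f_{(j_0, j_1)}$ and expanding $(\mu_1 + c_0 \mu_0^2)^{j_1}$ by the Lucas binomial expansion over $\Nt$ produces
$$ g_c \, f_{(j_0, j_1)} \;=\; \sum_{j_1^\prime \preceq j_1} \binom{j_1}{j_1^\prime}\, c_0^{j_1^\prime}\, f_{(j_0 + 2 j_1^\prime,\, j_1 \dotdiv j_1^\prime)}. $$
Each binomial coefficient is nonzero in $E$ by Lucas's theorem, and so as $c_0$ ranges over $k$ the Moore/Vandermonde argument already used in Lemma~\ref{lem:bnr.basis} lets us recover every $f_{(j_0 + 2 j_1^\prime,\, j_1 \dotdiv j_1^\prime)}$ individually as an $E$-linear combination of $\overline{U}_2$-translates of $f_{(j_0, j_1)}$. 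The reverse containment is immediate from the display.

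The one delicate point is the Witt-vector reduction producing the digits of $\mu$. What keeps the answer so clean is that $\overline{U}_2$ already lies in $1 + \mathfrak{m}$: the Witt correction from Lemma~\ref{lem:witt.vector.summation} is multiplied by $\varpi$ and pushed into $\varpi^{e+1}$, where it dies in the quotient $\mathcal{O}_2$. The same mechanism explains the $e$-independence of Lemmas~\ref{lem:w.action.on.basis} and~\ref{lem:diagonal.action.on.basis}, and the remark following Lemma~\ref{lem:diagonal.action.on.basis} warns that this miracle is special to $n = 2$ and should not be expected for larger $n$.
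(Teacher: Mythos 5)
Your argument is correct and matches the paper's intended proof in substance: the paper simply asserts the display $\left(\begin{smallmatrix}1 & 0\\ c\varpi & 1\end{smallmatrix}\right)f_{(j_0,j_1)} = \sum_\lambda \left(\begin{smallmatrix}\lambda & 1\\ 1 & 0\end{smallmatrix}\right)\tensor \lambda_0^{j_0}(\lambda_1+\lambda_0^2c_0)^{j_1}$ as ``a straightforward computation'' and then invokes ``the usual Vandermonde argument,'' while you supply the coset decomposition, the Witt-digit extraction, and the Lucas expansion that derive it. The key point you correctly isolate --- that the factor $\varpi$ in the $(2,1)$-entry pushes the Witt correction to order $\varpi^{e+1}$, which vanishes in $\mathcal{O}_2$ for every $e\geq 1$ --- is exactly what makes the formula uniform in $e$.
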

        \begin{proof}
        A straightforward computation shows that for all $e \geq 1$ we have
        $$ \left( \begin{array}{cc} 1 & 0 \\ c \varpi & 1 \end{array} \right) f_{(j_0, j_1)} = \sum_{\lambda \in \mathcal{O}_2} \left( \begin{array}{cc} \lambda & 1 \\ 1 & 0 \end{array} \right) \tensor \lambda_0^{j_0} (\lambda_1 + \lambda_0^2 c_0)^{j_1},$$
        whence the claim follows by the usual Vandermonde argument.
        \end{proof}
        
        The next lemmas study the action of the subgroup $U_2 \leq G_2$ of upper unitriangular matrices.  Here we already see a divergence in behavior depending on whether $F / \Q_p$ is unramified ($e = 1$) or ramified ($e \geq 2$); the analogue of the following statement in the unramified case appears in Lemma~\ref{lem:submodule.generated.unramified} below.
                
        \begin{lem} \label{lem:upper.triangular.action.on.basis}
        Suppose that $e \geq 2$.  If $(j_0, j_1) \in \Nt^2$, then the $U_2$-submodule of $V_{2,r}$ generated by $f_{(j_0, j_1)}$ is the linear span of 
        $$ \left\{ f_{(j_0^\prime, j_1^\prime)} : j_0^\prime \preceq j_0, \, j_1^\prime \preceq j_1 \right\}.$$
        \end{lem}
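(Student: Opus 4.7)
The strategy is to compute the action of the general element $\begin{pmatrix} 1 & b \\ 0 & 1 \end{pmatrix} \in U_2$, with $b = [b_0] + [b_1]\varpi \in \mathcal{O}_2$, directly from~\eqref{equ:upper.triangular.basis.element}, and then to extract every $f_{(j_0',j_1')}$ with $j_0' \preceq j_0,\ j_1' \preceq j_1$ via a Vandermonde-type inversion. The hypothesis $e \geq 2$ enters precisely because it places both indices $i \in \{0,1\}$ into the range $[e-1]_0$ where Lemma~\ref{lem:lambda.minus.b}(a) applies, yielding the clean identities $R_0(\lambda) = \lambda_0 - b_0$ and $R_1(\lambda) = \lambda_1 - b_1$, uncontaminated by Witt vector cross terms.

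First I will substitute these expressions into~\eqref{equ:upper.triangular.basis.element} to obtain
$$\begin{pmatrix} 1 & b \\ 0 & 1 \end{pmatrix} f_{(j_0, j_1)} = \sum_{\lambda \in \mathcal{O}_2} \begin{pmatrix} \lambda & 1 \\ 1 & 0 \end{pmatrix} \tensor (\lambda_0 - b_0)^{j_0} (\lambda_1 - b_1)^{j_1},$$
and then expand each factor of the integrand by the binomial theorem. Using Lucas's theorem, $\binom{j_i}{j_i'}$ is nonzero in $k$ exactly when $j_i' \preceq j_i$, so regrouping yields
$$\begin{pmatrix} 1 & b \\ 0 & 1 \end{pmatrix} f_{(j_0, j_1)} = \sum_{j_0' \preceq j_0,\, j_1' \preceq j_1} \binom{j_0}{j_0'} \binom{j_1}{j_1'} (-b_0)^{j_0 - j_0'} (-b_1)^{j_1 - j_1'} f_{(j_0', j_1')}.$$
This establishes the inclusion of the $U_2$-submodule generated by $f_{(j_0,j_1)}$ into the claimed span.

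For the reverse inclusion, I will invoke the fact that the monomials $(b_0, b_1) \mapsto b_0^{a_0} b_1^{a_1}$, for $0 \leq a_0, a_1 \leq q-1$, form a basis of the $E$-vector space of functions $k \times k \to E$. Consequently, as $(j_0', j_1')$ ranges over pairs with $j_0' \preceq j_0$ and $j_1' \preceq j_1$, the coefficient functions $(b_0, b_1) \mapsto (-b_0)^{j_0 - j_0'}(-b_1)^{j_1 - j_1'}$ are linearly independent on $k \times k$. A two-variable Vandermonde inversion, analogous to the arguments already used in Lemmas~\ref{lem:bnr.basis}, \ref{lem:diagonal.action.on.basis}, and~\ref{lem:lower.triangular.unramified}, therefore lets me produce each $f_{(j_0',j_1')}$ in the target span as an explicit $E$-linear combination of the translates $\begin{pmatrix} 1 & [b_0] + [b_1]\varpi \\ 0 & 1 \end{pmatrix} f_{(j_0,j_1)}$ as $(b_0, b_1)$ varies over $k^2$.

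The argument is essentially a separation-of-variables calculation, and the main point to flag is conceptual rather than computational: the hypothesis $e \geq 2$ is exactly what allows the $\lambda_0$ and $\lambda_1$ coordinates to decouple. When $e = 1$, Lemma~\ref{lem:lambda.minus.b}(b) introduces the Witt vector correction $u_0 S(\lambda_0^{p^{f-1}}, -b_0^{p^{f-1}})$ into $R_1(\lambda)$; this cross term couples the two coordinates, destroys the closure of the indicated span under $U_2$, and is what motivates the separate treatment of the unramified case in~\S\ref{sec:R.sigma}.
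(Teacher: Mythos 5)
Your argument is correct and matches the paper's proof: both use Lemma~\ref{lem:lambda.minus.b} with $e \geq 2$ to obtain the decoupled expression~\eqref{equ:upper.triangular.action.general} and then invert via a two-variable Vandermonde argument (the paper records the inversion as the explicit weighted sums of~\eqref{equ:upper.triangular.applied}, where you phrase it via linear independence of the monomial coefficient functions, but these are the same calculation). One small notational point: the exponents $j_0 - j_0'$, $j_1 - j_1'$ in your displayed expansion should be the digit-wise differences $j_0 \dotdiv j_0'$, $j_1 \dotdiv j_1'$ in the paper's $\Nt$-conventions, though since $j_i' \preceq j_i$ guarantees no borrows, the two agree as integers and hence as functions on $k$, so this does not affect the argument.
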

        \begin{proof}
        Since we have assumed $e \geq 2$, it follows from Lemma~\ref{lem:lambda.minus.b} and~\eqref{equ:upper.triangular.basis.element} that
        \begin{equation} \label{equ:upper.triangular.action.general}
        \left( \begin{array}{cc} 1 & b \\ 0 & 1 \end{array} \right) f_{(j_0, j_1)} = \sum_{\lambda \in \mathcal{O}_2} \left( \begin{array}{cc} \lambda & 1 \\ 1 & 0 \end{array} \right) \tensor (\lambda_0 - b_0)^{j_0} (\lambda_1 - b_1)^{j_1}.
        \end{equation}
        If $j_0^\prime \preceq j_0$ and $j_1^\prime \preceq j_1$, then
        \begin{equation} \label{equ:upper.triangular.applied}
        \sum_{b_0, b_1 \in k} b_0^{(q-1) \dotdiv (j_0 \dotdiv j_0^\prime)} b_1^{(q-1) \dotdiv (j_1 \dotdiv j_1^\prime)} \left( \begin{array}{cc} 1 & [b_0] + [b_1] \varpi \\ 0 & 1 \end{array} \right) f_{(j_0, j_1)} = \binom{j_0}{j_0^\prime} \binom{j_1}{j_1^\prime} f_{(j_0^\prime, j_1^\prime)}.
        \end{equation}
        Elements of the form~\eqref{equ:upper.triangular.applied} span $\langle U_2 \cdot f_{(j_0, j_1)} \rangle$ by a standard Vandermonde argument.
        \end{proof}
       
        \begin{lem} \label{lem:upper.triangular.action.on.infinity}
        Suppose that $e \geq 2$.  Let $j_1 \in \Nt$.  The $U_2$-submodule of $V_{2,r}$ generated by $f_{(\infty, j_1)}$ is the linear span of 
        $$ \left\{ f_{(\infty, j_1)} \right\} \cup \left\{ f_{(j_0^\prime, j_1^\prime)} : j_0^\prime \preceq r - 2j_1, \, j_1^\prime \preceq j_1, (r-2j_1, j_1) \neq (j_0^\prime, j_1^\prime) \in \Nt^2 \right\}.$$
        \end{lem}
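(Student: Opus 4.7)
The plan is to decompose $f_{(\infty, j_1)}$ as $(-1)^{r+j_1} f_{(r - 2j_1, j_1)} + g_{j_1}$ for a specific $U_2$-fixed correction $g_{j_1}$, thereby reducing the claim to the already-understood $U_2$-action on $f_{(r-2j_1, j_1)}$ furnished by Lemma~\ref{lem:upper.triangular.action.on.basis}.

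To obtain the decomposition, I will unwind the definition of $f_{(\infty, j_1)}$ from Definition~\ref{dfn:general.basis} using the values $P_0(\mu) = \mu_0^{-1}$ and $P_1(\mu) = -\mu_0^{-2}\mu_1$ recorded in the proof of Lemma~\ref{lem:w.action.on.basis}.  The first sum in that definition rewrites as $(-1)^{r+j_1}\sum_{\mu \in \mathcal{O}_2^\times} \left( \begin{smallmatrix}\mu & 1 \\ 1 & 0\end{smallmatrix} \right) \tensor \mu_0^{r-2j_1}\mu_1^{j_1}$, which extends freely to a sum over all of $\mathcal{O}_2$ (the added $\mu_0 = 0$ terms vanishing because $r - 2j_1 \in \Nt \setminus \{0\}$) and equals $(-1)^{r+j_1} f_{(r-2j_1, j_1)}$.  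Setting $g_{j_1} := \sum_{c \in k} \left( \begin{smallmatrix} 1 & 0 \\ c\varpi & 1 \end{smallmatrix} \right) \tensor c^{j_1}$, the remaining second sum, produces the claimed decomposition.

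To see that $g_{j_1}$ is $U_2$-fixed, note that for any $b \in \mathcal{O}_2$ and $c \in k$ the product $\left( \begin{smallmatrix}1 & b \\ 0 & 1\end{smallmatrix} \right)\left( \begin{smallmatrix}1 & 0 \\ c\varpi & 1\end{smallmatrix} \right)$ lies in $\widetilde{B}_2$; factoring it as $(\text{Borel})\cdot\left( \begin{smallmatrix}1 & 0 \\ c\varpi & 1\end{smallmatrix} \right)$ shows the Borel factor is $\left( \begin{smallmatrix}1 & b \\ 0 & 1\end{smallmatrix} \right)$ itself, whose $\chi_r$-value equals $1$.  Each summand of $g_{j_1}$ is therefore individually fixed, which gives $(u - 1) f_{(\infty, j_1)} = (-1)^{r+j_1}(u - 1) f_{(r - 2j_1, j_1)}$ for every $u \in U_2$.

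The forward inclusion is then immediate: equation~\eqref{equ:upper.triangular.action.general} shows that the coefficient of $f_{(r-2j_1, j_1)}$ in $u f_{(r-2j_1, j_1)}$ is identically $1$, so $(u-1)f_{(r-2j_1, j_1)}$ lies in the strict span $M^\prime$ described in the claim.  For the reverse inclusion, Lemma~\ref{lem:upper.triangular.action.on.basis} identifies $M := \langle U_2 \cdot f_{(r-2j_1, j_1)}\rangle = \mathrm{span}(f_{(r-2j_1, j_1)}) \oplus M^\prime$; since tautologically $M = \mathrm{span}(f_{(r-2j_1, j_1)}) + W$ with $W := \mathrm{span}\{(u-1)f_{(r-2j_1, j_1)} : u \in U_2\}$, and since $W \subseteq M^\prime$, a dimension count forces $W = M^\prime$.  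The scaling relation then transfers this to $f_{(\infty, j_1)}$.  The only genuine obstacle in the argument is the dependence on the clean formula~\eqref{equ:upper.triangular.action.general}, itself a consequence of the simplifications to Lemma~\ref{lem:lambda.minus.b} that are available only when $e \geq 2$; this is precisely the feature that breaks down in the unramified case and that necessitates the separate development of~\S\ref{sec:R.sigma}.
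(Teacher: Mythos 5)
Your proof is correct and takes essentially the same route as the paper: the decomposition $f_{(\infty, j_1)} = (-1)^{r+j_1} f_{(r-2j_1, j_1)} + g_{j_1}$ with $g_{j_1}$ a $U_2$-fixed vector is precisely the content of the paper's equation~\eqref{equ:triangle.on.infinity.specific}, where the second summand is visibly independent of $b$. The only structural difference is that the paper concludes by taking weighted sums as in~\eqref{equ:upper.triangular.applied}, whereas you avoid re-running that Vandermonde computation by citing Lemma~\ref{lem:upper.triangular.action.on.basis} as a black box and using the $(u-1)$/dimension-count argument, which is a tidier way to package the same idea.
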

        \begin{proof}
        Specializing~\eqref{equ:b.on.infinity} to the case $n = 2$, we obtain
        \begin{multline} \label{equ:triangle.on.infinity.specific}
         \left( \begin{array}{cc} 1 & b \\ 0 & 1 \end{array} \right) f_{(\infty, j_1)} = \\ \sum_{\nu \in \mathcal{O}_2} \left( \begin{array}{cc} \nu & 1 \\ 1 & 0 \end{array} \right) \tensor (-1)^{r + j_1} (\nu_0 - b_0)^{r - 2j_1} (\nu_1 - b_1)^{j_1} + \sum_{\nu_1 \in k} \left( \begin{array}{cc} 1 & 0 \\ {[\nu_1] \varpi} & 1 \end{array} \right) \tensor \nu_1^{j_1},
         \end{multline}
where we observe that $r - 2j_1 \in \Nt \setminus \{ 0 \}$ by definition, and therefore the first sum only has non-zero contributions from $\nu \in b + \mathcal{O}_2^\times$ as in~\eqref{equ:b.on.infinity}.  Now consider weighted sums analogously to~\eqref{equ:upper.triangular.applied}.
        \end{proof}
 
\subsection{Types and submodules}
Consider the set $\Theta = (\Nt \cup \{ \infty \}) \times \Nt$.  Recall that a preorder is a reflexive and transitive relation.  We will now define a preorder $\preceq_{r}$ on $\Theta$.
Its equivalence classes will correspond to the Jordan-H\"older constituents of $V_{2,r}$.

\begin{dfn} \label{def:covering.relations.ramified}
    The preorder $\leq_{r}$ on the set $\Theta$ is defined to be the preorder generated by the following relations:
    \begin{enumerate}
        \item If $(j_0,j_1) \in \Nt^2$ and $m \in [f-1]_0$ satisfies $p^m \preceq j_0$, then $(j_0 \dotdiv p^m, j_1) \leq_r (j_0, j_1)$.
        \item If $(j_0,j_1) \in \Nt^2$ and $m \in [f-1]_0$ satisfies $p^m \preceq j_1$, then $(j_0, j_1 \dotdiv p^m) \leq_r (j_0, j_1)$.
        \item If $(j_0, j_1) \in \Nt^2$ and $m \in [f-1]_0$ satisfies $p^m \preceq j_1$, then $(j_0 + p^m, j_1 \dotdiv p^m) \leq_r (j_0, j_1)$.
        \item If $j_0 \not\in \{0, \infty \}$ and $j_1 \in \Nt$, then $(r - 2j_1 - j_0, j_1) \leq_r (j_0, j_1)$.
        \item If $j_1 \in \Nt$ and $m \in [f-1]_0$ satisfies $p^m \preceq j_1$, then $(r-2j_1, j_1 \dotdiv p^m) \leq_r (\infty, j_1)$.
         \item If $j_1 \in \Nt$ and $m \in [f-1]_0$ satisfies $p^m \preceq r - 2j_1$, then $((r-2j_1) \dotdiv p^m, j_1) \leq_r (\infty, j_1)$.
        \item For all $j_1 \in \Nt$, the elements $(0, j_1)$ and $(\infty, j_1)$ are equivalent, i.e. $(0, j_1) \leq_r (\infty, j_1)$ and $(\infty, j_1) \leq_r (0,j_1)$.
        \end{enumerate}
        \end{dfn}
        
        We say that $(j_0, j_1), (j_0^\prime, j_1^\prime) \in \Theta$ are equivalent if both $(j_0, j_1) \leq_r (j_0^\prime, j_1^\prime)$ and $(j_0^\prime, j_1^\prime) \leq_r (j_0, j_1)$.  Then $\leq_r$ induces a partial order, also denoted $\leq_r$, on the set $\widetilde{\Theta}$ of equivalence classes.
        
        \begin{rem} \label{rem:equivalence}
        We record some implications of the relations, for use below.
        \begin{enumerate}
        \item \label{item:w}
        Applying the fourth generating relation to $(r - 2j_1 - j_0, j_1)$, we observe that if $j_0 \not\in \{0, \infty \}$, then $(j_0, j_1)$ and $(r - 2j_1 - j_0, j_1)$ are equivalent.  
        \item
        Applying the sixth and seventh relations, we observe that if $p^m \preceq r - 2j_1$, then $(0, j_1) \leq_r ((r-2j_1) \dotdiv p^m, j_1) \leq_r (\infty, j_1) \leq_r (0,j_1)$, so all of these are equivalent.
        \item  \label{item:d}
        If $j_1 \in \Nt$ and $p^m \preceq j_1$ for some $m \in [f-1]_0$, then $(r-2j_1 + p^m, j_1 \dotdiv p^m)$ is equivalent to $(p^m, j_1 \dotdiv p^m)$ by the first observation of this remark.  Hence $(r-2j_1 + p^m, j_1 \dotdiv p^m) \preceq (\infty, j_1)$ by the third and seventh relations.
        \item \label{item:lower.triangular.no.problem}
        If $p^m \preceq j_1$, then applying the fourth, second, and fourth relations in sequence produces $(j_0 + 2p^m, j_1 \dotdiv p^m) \leq_r (r - 2j_1 - j_0, j_1 \dotdiv p^m) \leq_r (r - 2j_1 - j_0, j_1) \leq_r (j_0, j_1)$.
        \end{enumerate}
        \end{rem}

        The elements of $\widetilde{\Theta}$ will be called {\emph{types}}.  If $(j_0, j_1) \in \Theta$, then the equivalence class in $\widetilde{\Theta}$ to which it belongs is denoted by $[(j_0, j_1)]$.  
                  
       \begin{dfn} \label{def:v.theta}
       Let $\theta \in \widetilde{\Theta}$ be a type.  Set $V_\theta$ to be the subspace of $V_{2,r}$ with basis $\{ f_{(j_0, j_1)} : (j_0, j_1) \in \Theta, \, [(j_0, j_1)] \leq_r \theta \}$.
       \end{dfn}
       
       \begin{pro} \label{pro:coordinate.submodules.stable}
       Let $\theta \in \widetilde{\Theta}$.  If $e \geq 2$, then the subspace $V_\theta$ is stable under the action of $G_2 = \mathrm{GL}_2(\mathcal{O}/ \mathfrak{m}^2)$.
       \end{pro}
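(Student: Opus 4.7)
The plan is to verify that $V_\theta$ is stable under each element of a convenient generating set of $G_2$. By the Bruhat decomposition $G_2 = B_2 \cup B_2 w B_2$ together with the factorization of the full diagonal torus as $T \cdot Z(G_2) \cdot D_2$, the set $\{T, Z(G_2), D_2, U_2, w\}$ generates $G_2$. Stability under $\overline{U}_2$ is then automatic, since $w \overline{U}_2 w$ is a subgroup of $U_2$. It suffices to show that for each generator $g$ and each basis element $f_{(j_0,j_1)} \in V_\theta$, the vector $g f_{(j_0,j_1)}$ lies in the $E$-span of $\{f_{(j_0',j_1')} : [(j_0',j_1')] \leq_r \theta\}$.

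For $T$ and $Z(G_2)$, this is immediate from Lemma~\ref{rem:eigenvectors}, since each basis element is an eigenvector. For $w$, Lemma~\ref{lem:w.action.on.basis} shows that $w$ permutes the basis up to sign, and the seventh generating relation of Definition~\ref{def:covering.relations.ramified} together with Remark~\ref{rem:equivalence}(\ref{item:w}) exhibits the $\leq_r$-equivalence between the type of the image and that of the preimage.

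The substantive work concerns $D_2$ and $U_2$. Lemmas~\ref{lem:diagonal.action.on.basis}, \ref{lem:upper.triangular.action.on.basis}, and \ref{lem:upper.triangular.action.on.infinity} enumerate exactly which basis vectors appear in the submodules generated by $f_{(j_0,j_1)}$ or $f_{(\infty,j_1)}$ under these subgroups. For each such image $f_{(j_0',j_1')}$, I would produce a chain of generating relations from Definition~\ref{def:covering.relations.ramified}, composed via transitivity of $\leq_r$, which establishes $[(j_0',j_1')] \leq_r [(j_0,j_1)]$. Concretely: the $D_2$-action for $j_0 \in \Nt$ matches iterated applications of relation (3) over the base-$p$ digits of $j_1'$; the $D_2$-action on $f_{(\infty,j_1)}$ uses Remark~\ref{rem:equivalence}(\ref{item:d}); the $U_2$-action for $(j_0,j_1) \in \Nt^2$ matches iterated applications of relations (1) and (2) over the digits of $j_0 \dotdiv j_0'$ and $j_1 \dotdiv j_1'$; and the $U_2$-action on $f_{(\infty,j_1)}$ combines the ``exit from the infinity sector'' relations (5), (6), (7) with subsequent iterations of (1) and (2).

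The main obstacle is the bookkeeping in the $f_{(\infty,j_1)}$ case under $U_2$: the resulting basis set consists of all $f_{(j_0',j_1')}$ with $j_0' \preceq r - 2j_1$ and $j_1' \preceq j_1$ (excluding $(r - 2j_1, j_1)$), and one must carefully choose the initial step via (5) or (6) and then descend via (1), (2) to reach each such pair while respecting the hypotheses $p^m \preceq \cdot$ at every step. The standing assumption $e \geq 2$ is critical throughout, because it guarantees via Lemma~\ref{lem:lambda.minus.b} that the upper-triangular computations in \eqref{equ:upper.triangular.action.general} and \eqref{equ:triangle.on.infinity.specific} do not involve the Witt-vector summation term of Lemma~\ref{lem:witt.vector.summation}; thus the enumeration of basis elements produced by $U_2$ contains no mixing of coordinates via carries, and exactly matches the combinatorics encoded in Definition~\ref{def:covering.relations.ramified}.
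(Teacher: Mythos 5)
Your proof is correct and follows essentially the same route as the paper's: the same generating set $\{Z(G_2), T, D_2, U_2, w\}$, the same appeal to Lemmas~\ref{rem:eigenvectors}, \ref{lem:w.action.on.basis}, \ref{lem:diagonal.action.on.basis}, \ref{lem:upper.triangular.action.on.basis}, \ref{lem:upper.triangular.action.on.infinity}, and the same matching of each computational lemma to the corresponding generating relations of Definition~\ref{def:covering.relations.ramified} (with $e \geq 2$ ensuring the Witt-vector term of Lemma~\ref{lem:witt.vector.summation} does not appear). The paper makes the identical observation that $\overline{U}_2$-stability is automatic, but also records a direct check via Remark~\ref{rem:equivalence}\eqref{item:lower.triangular.no.problem}; your conjugation argument $w\overline{U}_2 w \subset U_2$ is an equally valid shortcut.
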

       \begin{proof}
       By the Bruhat decomposition, for every $n \in \N$ the group $G_n$ is generated by $B_n$ and $w$, whereas $B_n$ is, in turn, generated by the center $Z(G_n)$ and the subgroups $T$, $D_n$, and $U_n$ of Definition~\ref{def:subgroups}.
       Hence it suffices to verify that $V_\theta$ is stable under the action of the subgroups $Z(G_2), T, D_2, U_2$ and the element $w$.  Since each of the basis elements in Definition~\ref{def:v.theta} is an eigenvector for the actions of $Z(G_2)$ and of $T$ by Lemma~\ref{rem:eigenvectors}, it is clear that these subgroups preserve $V_\theta$.  By Lemma~\ref{lem:w.action.on.basis}, together with Remark~\ref{rem:equivalence}\eqref{item:w} and the last generating relation, the action of $w$ sends every basis element $f_{(j_0, j_1)}$ to a scalar multiple of a basis element of the same type.  Similarly, it is immediate from Lemma~\ref{lem:diagonal.action.on.basis}, Remark~\ref{rem:equivalence}\eqref{item:d}, and the third generating relation that the subgroup $D_2$ preserves the subspace $V_\theta$.
       
       It remains to verify that the subgroup $U_2$ preserves $V_\theta$.  If $j_0 \in \Nt$, then by Lemma~\ref{lem:upper.triangular.action.on.basis} and the first two relations, the $U_2$-module generated by $f_{(j_0, j_1)}$ is spanned by basis elements of type equal to or smaller than $[(j_0, j_1)]$.  The analogous claim for the $U_2$-module generated by $f_{(\infty, j_1)}$ follows from Lemma~\ref{lem:upper.triangular.action.on.infinity} and the fifth and sixth generating relations.   
       
       While this follows from the above, we can also observe directly from Remark~\ref{rem:equivalence}\eqref{item:lower.triangular.no.problem} that $V_\theta$ is stable under the action of $\overline{U}_2$ described in Lemma~\ref{lem:lower.triangular.unramified}.
       \end{proof}

\begin{thm} \label{thm:coordinate.submodules}
Let $(j_0, j_1) \in \Theta$.  If $e \geq 2$, then the $\mathrm{GL}_2(\mathcal{O}/ \mathfrak{m}^2)$-submodule of $V_{2,r}$ generated by $f_{(j_0, j_1)}$ is $V_{[(j_0, j_1)]}$.
\end{thm}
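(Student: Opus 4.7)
The plan is to prove two opposite inclusions. The easy direction, $\langle G_2 \cdot f_{(j_0,j_1)}\rangle \subseteq V_{[(j_0,j_1)]}$, follows immediately from Proposition~\ref{pro:coordinate.submodules.stable}: the right-hand side is $G_2$-stable and contains $f_{(j_0,j_1)}$. The real content is the opposite inclusion, which amounts to showing that every basis element $f_{(j_0^\prime, j_1^\prime)}$ with $[(j_0^\prime, j_1^\prime)] \leq_r [(j_0, j_1)]$ lies in the $G_2$-submodule generated by $f_{(j_0, j_1)}$.

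The strategy is induction on the length of a chain of generating relations from $(j_0, j_1)$ down to $(j_0^\prime, j_1^\prime)$, as provided by Definition~\ref{def:covering.relations.ramified}. The induction hypothesis says that $f_{(j_0, j_1)}$ lies in the submodule generated by the larger element of the chain. To close the induction, I must verify that each of the seven generating relations is witnessed by an explicit action of an element (or subgroup) of $G_2$ already computed in \S\ref{sec:prelim.ramified}. Concretely, I would match them as follows: relations (1) and (2) are handled by Lemma~\ref{lem:upper.triangular.action.on.basis}, taking $j_0^\prime = j_0 \dotdiv p^m$ or $j_1^\prime = j_1 \dotdiv p^m$; relation (3) is immediate from Lemma~\ref{lem:diagonal.action.on.basis} applied to $f_{(j_0, j_1)}$; relation (4) is Lemma~\ref{lem:w.action.on.basis}; relations (5) and (6) come, respectively, from the $j_0 = \infty$ case of Lemma~\ref{lem:diagonal.action.on.basis} and from Lemma~\ref{lem:upper.triangular.action.on.infinity} (with $j_1^\prime = j_1$ and $j_0^\prime = (r-2j_1) \dotdiv p^m$); finally, relation (7) is the first case of Lemma~\ref{lem:w.action.on.basis}.

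Because each of these lemmas in fact describes the full span of a subgroup acting on a single basis vector, the inductive step is essentially automatic: the basis element witnessing the smaller type appears with a non-zero scalar coefficient among the terms produced by the relevant group action, and the Vandermonde-type arguments inside those lemmas allow us to isolate it. No additional calculation beyond what is already in \S\ref{sec:prelim.ramified} is required; the whole proof is just a careful translation of the generating relations into the already-tabulated actions.

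The only delicate point, which I expect to be the main (though minor) obstacle, is bookkeeping around the equivalence $(0, j_1) \sim (\infty, j_1)$ and the symmetry $(j_0, j_1) \sim (r - 2j_1 - j_0, j_1)$ coming from Remark~\ref{rem:equivalence}(\ref{item:w}). In particular, when invoking Lemma~\ref{lem:upper.triangular.action.on.infinity} for relation (6), one must check that the excluded pair $(r-2j_1, j_1)$ does not cause trouble; it is recovered, up to the equivalence, via the $w$-action of relation (7). Once this is noted, the induction closes and we conclude $\langle G_2 \cdot f_{(j_0, j_1)}\rangle \supseteq V_{[(j_0, j_1)]}$, completing the proof.
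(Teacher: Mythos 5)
Your overall strategy — the easy inclusion via Proposition~\ref{pro:coordinate.submodules.stable}, then verifying each generating relation of $\leq_r$ against the explicit group actions of \S\ref{sec:prelim.ramified} — is the paper's argument, and your inductive organization (chain length in the covering graph, rather than the paper's induction on the type $[(j_0,j_1)]$ with base case $j_1 = 0$ resolved via Bardoe--Sin) is an equivalent and equally valid way of setting it up.

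However, there is one concrete error in your dictionary: you attribute relation (5), namely $(r-2j_1, j_1 \dotdiv p^m) \leq_r (\infty, j_1)$ for $p^m \preceq j_1$, to the $j_0 = \infty$ case of Lemma~\ref{lem:diagonal.action.on.basis}. That lemma gives basis elements of the form $f_{(r-2j_1 + j_1^\prime, j_1 \dotdiv j_1^\prime)}$; taking $j_1^\prime = p^m$ produces $f_{(r-2j_1 + p^m, \, j_1 \dotdiv p^m)}$, \emph{not} $f_{(r-2j_1, \, j_1 \dotdiv p^m)}$, and these are generically of distinct types: their carry sets are $I(r-2j_1 + p^m, p^m)$ and $I(r-2j_1, 2p^m)$ respectively, which need not agree (take $f=1$, $p=5$, $r-2j_1 = 4$, $m = 0$). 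In fact the $D_2$-action on $f_{(\infty, j_1)}$ is already accounted for by relation (3) applied to $(0,j_1)$ together with relations (4) and (7), as spelled out in Remark~\ref{rem:equivalence}\eqref{item:d}; it is not what witnesses relation (5). The correct reference for relation (5), exactly as for relation (6), is Lemma~\ref{lem:upper.triangular.action.on.infinity}: take $j_0^\prime = r-2j_1$ and $j_1^\prime = j_1 \dotdiv p^m$, which is admissible since it differs from the excluded pair $(r-2j_1, j_1)$ in the second coordinate. With this single substitution your proof closes as intended.
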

\begin{proof}
Since $f_{(j_0, j_1)} \in V_{[j_0, j_1]}$ by definition, it suffices to prove that $V_{[j_0, j_1]} \subseteq \langle G_2 \cdot f_{(j_0, j_1)} \rangle$.  We argue by induction on the type $[(j_0, j_1)]$, with respect to the partial order $\leq_r$ on the set $\widetilde{\Theta}$ of types.  If $j_1 = 0$, then we are working inside the submodule $W_0$ of Definition~\ref{def:filtration}, which is isomorphic to $\mathrm{Ind}_{B(k)}^{\mathrm{GL}_2(k)} \chi_r$ by Proposition~\ref{pro:filtration}.  In this case our claim was proved by Bardoe and Sin; see Theorems~\ref{thm:bardoe.sin.generic} and~\ref{thm:bardoe.sin.exceptional}.

In general, by induction it suffices to show that if $(j_0^\prime, j_1^\prime) \leq_r (j_0, j_1)$ by one of the generating relations for the preorder $\leq_r$ on $\Theta$, then $f_{(j_0^\prime, j_1^\prime)} \in \langle G_2 \cdot f_{(j_0, j_1)} \rangle$.  For the first and second relations, this follows from Lemma~\ref{lem:upper.triangular.action.on.basis}.  The claim for the third relation is implied by Lemma~\ref{lem:diagonal.action.on.basis} and for the fourth and seventh by Lemma~\ref{lem:w.action.on.basis}.  For the fifth and sixth covering relations, this is Lemma~\ref{lem:upper.triangular.action.on.infinity}.
\end{proof}
          
        \subsection{An alternative view of types}  
       The carry sets defined in Lemma~\ref{lem:carry.set.def} provide an alternative way to describe types that is often more convenient to work with than the equivalence classes in terms of which they were defined.  Recall the notion of a $\gamma$-admissible set $J \subseteq \Z / f\Z$ of Definition~\ref{def:admissible.set}; an explicit characterization of such sets is given in Proposition~\ref{pro:characterize.admissible}.
        
        \begin{pro} \label{cor:types.as.pairs}
        There is a well-defined injection $\Upsilon: \widetilde{\Theta} \to \mathcal{P}(\Z / f \Z) \times \Nt$ given by
        $$ [(j_0, j_1)] \mapsto (I(j_0, r - 2j_1 - j_0), j_1).$$
        The image of $\Upsilon$ consists of the pairs $(J, j_1)$ such that $J$ is $(r - 2j_1)$-admissible.
        \end{pro}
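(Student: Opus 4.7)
The plan is to extend $\Upsilon$ from the given formula to all of $\Theta$ by declaring $\Upsilon(\infty, j_1) := (\varnothing, j_1)$, which matches $\Upsilon(0, j_1) = (I(0, r-2j_1), j_1) = (\varnothing, j_1)$. I will then verify three claims in turn: $\Upsilon$ descends to $\widetilde{\Theta}$, the descended map is injective, and its image is as stated.

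For well-definedness, I first observe that in every generating relation of $\leq_r$ the second coordinate of the smaller side is $\preceq$-below that of the larger side. Thus the second coordinate is non-decreasing along any $\leq_r$-increasing chain, and for an equivalence $(j_0, j_1) \equiv (j_0', j_1')$ mutual chains force $j_1 = j_1'$ and all intermediate steps to use only the $j_1$-preserving generating relations $1, 4, 6, 7$. For each of these I show that the carry set weakly shrinks going down: relation $4$ preserves it by the symmetry $I(\alpha, \beta) = I(\beta, \alpha)$; relations $6$ and $7$ relate types whose carry sets are both $\varnothing$ (for $6$ one uses $I((r - 2j_1) \dotdiv p^m, p^m) = \varnothing$, which holds because $p^m \preceq r - 2j_1$ forces no carries). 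For the crucial relation $1$, applying Lemma~\ref{cor:carries.triple.sum} to the triple $(p^m,\, j_0 \dotdiv p^m,\, r - 2j_1 - j_0)$, together with the obvious $I(p^m, j_0 \dotdiv p^m) = \varnothing$, yields $I(j_0 \dotdiv p^m,\, r - 2j_1 - (j_0 \dotdiv p^m)) \subseteq I(j_0, r - 2j_1 - j_0)$. Two mutually $\leq_r$-chains for an equivalence supply this inclusion both ways and hence equality of the carry sets.

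The image characterization is immediate from Definition~\ref{def:admissible.set}: every $\Upsilon(\alpha, j_1)$ is by construction an $(r-2j_1)$-admissible pair, and conversely every admissible $(J, j_1)$ comes with an explicit witness $\alpha \in \Nt$ for which $\Upsilon(\alpha, j_1) = (J, j_1)$. For injectivity, let $S(J) := \{\alpha \in \Nt : I(\alpha, r - 2j_1 - \alpha) = J\}$, enlarged by $\infty$ when $J = \varnothing$. Unwinding Lemma~\ref{lem:carry.set.def}, an element $\alpha$ belongs to $S(J)$ if and only if each base-$p$ digit satisfies $\alpha_i \in [\max(0, s_i(J) - (p-1)),\, \min(p-1, s_i(J))]$, where $s_i(J) := (r-2j_1)_i + p \cdot \mathbf{1}_{i \in J} - \mathbf{1}_{i-1 \in J}$. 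The elementary digit shift $(\alpha_m, \beta_m) \mapsto (\alpha_m - 1, \beta_m + 1)$ preserves $S(J)$ and is realized by relation $1$ going from $(\alpha, j_1)$ down to $(\alpha \dotdiv p^m, j_1)$; the reverse direction is produced by combining the swap $\alpha \leftrightarrow r - 2j_1 - \alpha$ of relation $4$ with another invocation of relation $1$ at the partner digit. Iterating these moves, and attaching $\infty$ via relation $7$ in the case $J = \varnothing$, one concludes that $S(J)$ forms a single equivalence class in $\widetilde{\Theta}$.

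The main obstacle will be the injectivity step: although each elementary digit move corresponds cleanly to relation $1$, producing $\leq_r$-chains in \emph{both} directions requires alternating relations $1$ and $4$ and tracking edge cases when $\alpha$ or $\beta$ becomes $0$ or $q - 1$. The exceptional cases $r - 2j_1 \in \{0, q - 1\}$ highlighted in Proposition~\ref{pro:characterize.admissible}(a) must also be treated separately, since the digitwise description of $S(J)$ degenerates there; this is the most delicate bookkeeping in the proof.
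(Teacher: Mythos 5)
You take a genuinely different route from the paper. The paper's proof is essentially module-theoretic: it uses Theorem~\ref{thm:coordinate.submodules} (the submodule generated by $f_{(j_0,j_1)}$ equals $V_{[(j_0,j_1)]}$) to conclude that two basis elements have the same type precisely when they generate the same submodule, then passes to the graded piece $W_{j_1}/\sum_{\ell\prec j_1}W_\ell \simeq \det^{j_1}\tensor V_{1,r-2j_1}$ and cites Bardoe--Sin; well-definedness and injectivity both follow in a few lines. Your argument instead stays on the combinatorial side and, if completed, would be a self-contained alternative independent of the representation-theoretic machinery of \S\ref{sec:main}. Your well-definedness argument is clean and correct: the application of Lemma~\ref{cor:carries.triple.sum} to the triple $(p^m,\,j_0\dotdiv p^m,\,r-2j_1-j_0)$, combined with $I(p^m,j_0\dotdiv p^m)=\varnothing$, gives exactly the inclusion of carry sets you need for relation~1, and relations~4, 6, 7 are handled as you say. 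That carry sets weakly shrink along $\leq_r$ at fixed $j_1$, and hence are constant on equivalence classes, does follow.

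The injectivity step, however, contains a concrete error that goes beyond the bookkeeping you flag. The digitwise membership criterion you state for $S(J)$ — that $\alpha\in S(J)$ iff $\alpha_i\in[\max(0,s_i(J)-(p-1)),\min(p-1,s_i(J))]$ for all $i$ — is false, because $\beta=r-2j_1-\alpha$ is computed in the group $\Nt\setminus\{0\}\simeq\Z/(q-1)\Z$. When $\alpha=r-2j_1$ the partner is $\beta=q-1$ (all digits $p-1$), not the digit-by-digit difference $0$. For instance, with $p=3$, $f=2$, $r-2j_1=4$ (digits $(1,1)$), your formula places $\alpha=4$ in $S(\varnothing)$, but $r-2j_1-\alpha=8$ has digits $(2,2)$ and $I(4,8)=\Z/f\Z$. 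So $\alpha=r-2j_1$ is misassigned for \emph{every} value of $r-2j_1$, not only the exceptional cases $r-2j_1\in\{0,q-1\}$ you single out, and the claimed characterization of the fibers $S(J)$ does not hold as written. The plan is probably salvageable — $\alpha=r-2j_1$ can be routed to its correct class via the swap of relation~4, and the true fiber is still connected under digit shifts once it has more than one element — but the argument does not go through as stated, and the case analysis needed is heavier than you suggest. The paper's detour through Theorem~\ref{thm:coordinate.submodules} is exactly what lets it avoid this delicacy.
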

        \begin{proof}
        It is clear from the generating relations that if $(j_0^\prime, j_1^\prime) \leq_r (j_0, j_1)$, then $j_1^\prime \preceq j_1$.  Thus the second component is constant for all elements $\theta = (j_0, j_1)$ of a type.   Observe from their definition that the submodules $V_\theta$ are distinct for distinct types $\theta \in \widetilde{\Theta}$.  By Theorem~\ref{thm:coordinate.submodules}, the elements $(j_0, j_1), (j_0^\prime, j_1) \in \Theta$ belong to the same type if and only if  $f_{(j_0,j_1)}$ and $f_{(j_0^\prime, j_1)}$ generate the same submodule of $V_{2,r}$.  In particular they generate the same submodule of $W_{j_1} / \sum_{\ell \prec j_1} W_{\ell}$, which is isomorphic to $\det^{j_1} \tensor V_{1, r - 2j_1}$ by Proposition~\ref{pro:filtration}.  The structure of $V_{1,r-2j_1}$ is described by Theorems~\ref{thm:bardoe.sin.generic} and~\ref{thm:bardoe.sin.exceptional}, from which we conclude that the carry sets $I(j_0, r - 2j_1 - j_0)$ and $I(j_0^\prime, r - 2j_1 - j_0^\prime)$ must coincide.  Hence the claimed map is well-defined.
        
        To show the map is injective, it suffices to show that $V_{[(j_0, j_1)]}$ is determined by its image in the quotient $W_{j_1} / \sum_{\ell \preceq j_1} W_\ell$.  Indeed, suppose that $V_{[(j_0^\prime, j_1^\prime)]}$ has the same image.  Then $V_{[(j_0^\prime, j_1^\prime)]}$ contains an element of the form $f_{(j_0, j_1)} + h$, where $h \in \sum_{\ell \preceq j_1} W_\ell$.  Since $V_{[(j_0^\prime, j_1^\prime)]}$ is the linear span of a subset of $\mathcal{B}_{2,r}$, it must contain $f_{(j_0, j_1)}$ and hence, by Theorem~\ref{thm:coordinate.submodules}, we have $V_{[(j_0, j_1)]} \subseteq V_{[(j_0^\prime, j_1^\prime)]}$.  Reversing the roles of the two types, we obtain the opposite inclusion.  Hence $V_{[(j_0, j_1)]} = V_{[(j_0^\prime, j_1^\prime)]}$ as claimed.
        
        The claim regarding the image of the map $\Upsilon$ is immediate from the definition of $(r-2j_1)$-admissibility.
        \end{proof}

In light of the previous claim, we will often write types in the form $\theta = (I, \gamma)$, where $I \subseteq \Z / f\Z$ and $\gamma \in \Nt$.  Note that the map of Proposition~\ref{cor:types.as.pairs} is in general not surjective.

\begin{rem} \label{rem:order.is.inclusion}
We collect several observations that follow readily from Proposition~\ref{cor:types.as.pairs} and its proof.
\begin{enumerate}
\item
If $(I,\gamma)$ and $(I^\prime, \gamma)$ are types with the same second component, then $(I^\prime, \gamma) \leq_r (I,\gamma)$ if and only if $I^\prime \subseteq I$.
\item \label{item:all.and.nothing}
The types $(\varnothing, \gamma)$ and $(\Z / f\Z, \gamma)$ lie in the image of $\Upsilon$ for any $\gamma \in \Nt$.
\item
If $r - 2j_1 \equiv 0 \, \mathrm{mod} \, q - 1$, then $\Upsilon([(q-1, j_1)]) = (\Z / f\Z , j_1)$ and $\Upsilon([(j_0, j_1)]) = (\varnothing, j_1)$ for $j_0 \neq q - 1$.
\end{enumerate}
\end{rem}

We can strengthen the previous remark by producing a closed-form definition of the partial order on types that is convenient to use in practice.  Our proof of the following statement is completely elementary but long and rather tedious and independent of the rest of the paper, apart from the definition of carry sets and their properties in~\S\ref{sec:carry.sets} above.  For these reasons we relegate it to \S\ref{sec:equality.of.orders} below.

\begin{pro} \label{pro:equality.of.orders}
Let $(I, \gamma)$ and $(I^\prime, \gamma^\prime)$ be two types.  Then $(I^\prime, \gamma^\prime) \leq_r (I,\gamma)$ if and only if the following two conditions are satisfied:
\begin{itemize}
\item
$\gamma^\prime \preceq \gamma$;
\item
$I^\prime \subseteq I \cup I(r - 2\gamma, 2(\gamma \dotdiv \gamma^\prime)) \cup I(\gamma \dotdiv \gamma^\prime, \gamma \dotdiv \gamma^\prime)$.
\end{itemize}
\end{pro}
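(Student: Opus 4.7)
Let $\leq_r^\ast$ denote the relation on pairs of types defined by the two conditions in the statement. The plan is to prove $\leq_r \; = \; \leq_r^\ast$ via separate inclusions. The key tool throughout will be the carry-set properties of \S\ref{sec:carry.sets}, particularly Corollary~\ref{lem:carries.urlemma}, which allows sums of elements of $\Nt$ to be rebracketed without changing the multiset of carry sets incurred.

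For the inclusion $\leq_r \subseteq \leq_r^\ast$, I will check that $\leq_r^\ast$ is a preorder containing each of the seven generating relations of Definition~\ref{def:covering.relations.ramified}. Relations 1, 4, 6, and 7 preserve $\gamma$, so both $I(r-2\gamma,2(\gamma\dotdiv\gamma'))$ and $I(\gamma\dotdiv\gamma',\gamma\dotdiv\gamma')$ are empty and the required inclusion collapses to $I' \subseteq I$; this follows from the symmetry $I(\alpha,\beta)=I(\beta,\alpha)$ and the vanishing $I(\alpha,\beta\dotdiv\alpha)=\varnothing$ for $0 \neq \alpha\preceq\beta$, which reflects that the $p$-adic digit subtraction is carry-free. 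For relations 2, 3, and 5, which replace $\gamma$ by $\gamma\dotdiv p^m$, the required inclusion follows from Corollary~\ref{lem:carries.urlemma} applied to the three-element partition $\{j_0, r-2\gamma-j_0, 2p^m\}$ of the sum $r-2(\gamma\dotdiv p^m)$: the multiset of carry sets across the three distinct bracketings coincides, and unpacking this equality yields the containment after noting that $I(p^m,p^m)=\varnothing$ for odd $p$.

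Transitivity of $\leq_r^\ast$ is the subtlest step of this direction. Given $(I'',\gamma'') \leq_r^\ast (I',\gamma') \leq_r^\ast (I,\gamma)$, I have $\gamma'' \preceq \gamma$, and the identity $(\gamma\dotdiv\gamma') + (\gamma'\dotdiv\gamma'') = \gamma\dotdiv\gamma''$ is carry-free. I plan to expand $I(r-2\gamma,2(\gamma\dotdiv\gamma''))$ by applying Corollary~\ref{lem:carries.urlemma} to the three-element partition $\{r-2\gamma,\;2(\gamma\dotdiv\gamma'),\;2(\gamma'\dotdiv\gamma'')\}$, and likewise expand $I(\gamma\dotdiv\gamma'',\gamma\dotdiv\gamma'')$ using the carry-free splitting of each argument. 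Combining these expansions with the two given inclusions for $I''$ and $I'$ will then yield the required combined inclusion for $I''$ inside the union determined by $(I,\gamma)$ and $(I'',\gamma'')$, after checking that the extra carry sets $I(r-2\gamma',2(\gamma'\dotdiv\gamma''))$ and $I(2(\gamma\dotdiv\gamma'),2(\gamma'\dotdiv\gamma''))$ that appear in the intermediate step are absorbed by the target union.

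For the reverse inclusion $\leq_r^\ast \subseteq \leq_r$, given the two conditions I will construct an explicit chain of generating relations from $(I,\gamma)$ to $(I',\gamma')$. Starting from a representative $(j_0,\gamma)$ with $I(j_0,r-2\gamma-j_0)=I$, the plan is to process the support of $\gamma\dotdiv\gamma'$ digit by digit, applying at each stage either relation 2 (keeping the first coordinate fixed) or relation 3 (transferring $p^m$ into the first coordinate), and interspersing relation 4 or 7 to swap representatives when convenient. Once more, Corollary~\ref{lem:carries.urlemma} identifies how each single step modifies the carry set of the currently reached type, so that a careful digit-by-digit organization can hit any admissible target $I'$. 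The main technical obstacle is the coordination of these choices: whether to apply relation 2 or 3 at each digit depends on the contents of $I$, $I(r-2\gamma,2(\gamma\dotdiv\gamma'))$, and $I(\gamma\dotdiv\gamma',\gamma\dotdiv\gamma')$, requiring a case analysis based on the $p$-adic digits of $\gamma\dotdiv\gamma'$ and on how each step propagates a carry into or out of a given position of $\Z/f\Z$.
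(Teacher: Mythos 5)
Your first direction ($\leq_r \;\subseteq\; \leq_r^\ast$) and the transitivity verification are essentially the same argument as the paper's: reduce to the seven generating relations, handle the $\gamma$-preserving ones via $I' \subseteq I$, and handle the others plus transitivity by repeated applications of Corollary~\ref{lem:carries.urlemma}. The bookkeeping differs slightly (the paper uses a single five-element partition for transitivity, you propose two three-element applications and then absorb the stray terms), but this is the same idea and should go through.

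The genuine gap is in the reverse inclusion. You propose to walk from $(I,\gamma)$ down to $(I',\gamma')$ by peeling off one $p^m$ at a time from $\gamma$, choosing at each step between relations 2 and 3 (and 5). But the generating relations of $\leq_r$ are phrased in terms of pairs $(j_0, j_1) \in \Theta$, not in terms of pairs $(I,\gamma)$, and the correspondence $\Upsilon$ taking $[(j_0,j_1)]$ to $(I(j_0, r-2j_1-j_0), j_1)$ is \emph{not surjective}: its image is exactly the $r$-admissible pairs (those with $I$ an $(r-2\gamma)$-admissible set, in the sense of Definition~\ref{def:admissible.set}). So every intermediate pair in your chain must be $r$-admissible, or else it does not correspond to any type at all and you cannot apply a generating relation to or from it. Your proposal does not mention this constraint; it is the actual source of the "coordination" difficulty you acknowledge, and it is not merely a matter of tracking which of relations 2 or 3 to apply. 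In the paper this constraint forces a nontrivial case analysis: Lemma~\ref{lem:ramified.chain.jump} produces the intermediate type $I^\dpr = I' \setminus I(r-2\gamma'-2p^j, 2p^j)$ in the generic case, but has a genuinely exceptional branch where one must instead \emph{enlarge} $I'$ to $I' \cup \{j, j+1, \dots, j+\ell\}$ to stay admissible; and Lemma~\ref{lem:induction.step.jumps} requires a four-case analysis to identify an explicit basis element whose carry set realizes the chosen $I^\dpr$. Without this, the claim that "a careful digit-by-digit organization can hit any admissible target $I'$" is unsupported, and the step where you need an intermediate type to exist (not just a pair of sets) is where a naive organization will fail.
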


\subsection{Extensions of Serre weights as subquotients of principal series}
Throughout this section, and for the remainder of \S\ref{sec:main}, we assume $e \geq 2$.
As demonstrated in Proposition~\ref{pro:infinite.submodule.structure}, the principal series $V_{2,r}$ may, in general, have infinitely many submodules.  However, we can elucidate the structure of the family of submodules $\{ V_\theta: \theta \in \widetilde{\Theta} \}$.  Recall the Serre weight $\sigma_J(r)$ of Definition~\ref{def:serre.weight.sjr}.

\begin{dfn}
Given a type $(I, \gamma)$, set $L(I,\gamma) = V_{(I,\gamma)} / \sum_{(I^\prime, \gamma^\prime) <_r (I,\gamma)} V_{(I^\prime, \gamma^\prime)}$.  
\end{dfn}

\begin{lem} \label{pro:head}
Let $(I, \gamma)$ be a type.  If $e \geq 2$, then the $\mathrm{GL}_2(\mathcal{O}_2)$-module $L(I,\gamma)$ is irreducible and isomorphic to $\det^\gamma \tensor \sigma_I (r - 2\gamma)$.
\end{lem}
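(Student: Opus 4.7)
The strategy is to use the $W_\gamma$-filtration from Proposition~\ref{pro:filtration} in order to realise $L(I,\gamma)$ as the cosocle of a Bardoe--Sin submodule inside the tame principal series $V_{1, r - 2\gamma}$.

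First, I observe that each of the generating relations of $\leq_r$ listed in Definition~\ref{def:covering.relations.ramified} weakly decreases the second coordinate, so any type $(I', \gamma') \leq_r (I, \gamma)$ satisfies $\gamma' \preceq \gamma$. Consequently, every basis element spanning $V_{(I, \gamma)}$ is of the form $f_{(j_0, j_1)}$ or $f_{(\infty, j_1)}$ with $j_1 \preceq \gamma$, so $V_{(I,\gamma)} \subseteq W_\gamma$. I then compose the inclusion with the canonical projection and the isomorphism of Proposition~\ref{pro:filtration} to obtain
\[ \pi \colon V_{(I, \gamma)} \longrightarrow W_\gamma / \textstyle\sum_{\alpha \prec \gamma} W_\alpha \xrightarrow{\ \sim\ } \det\nolimits^\gamma \otimes V_{1, r - 2\gamma}. \]
The map $\pi$ kills each basis element $f_{(j_0, j_1)}$ with $j_1 \prec \gamma$, while $f_{(j_0, \gamma)}$, respectively $f_{(\infty, \gamma)}$, is sent to a nonzero scalar multiple of $f_{j_0}$, respectively $f_\infty$, in $V_{1, r - 2\gamma}$. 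Combining Proposition~\ref{cor:types.as.pairs} with Remark~\ref{rem:order.is.inclusion}(1) shows that the $f_{(j_0, \gamma)}$ appearing in $V_{(I, \gamma)}$ are exactly those with $I(j_0, r - 2\gamma - j_0) \subseteq I$, so Definition~\ref{def:submodules.of.tame.principal.series} gives $\pi(V_{(I,\gamma)}) = \det^\gamma \otimes V_I$. Applying the same analysis summand-by-summand, I get
\[ \pi\Bigl(\textstyle\sum_{(I', \gamma') <_r (I, \gamma)} V_{(I', \gamma')}\Bigr) = \det\nolimits^\gamma \otimes \sum_{I' \subsetneq I,\; (r - 2\gamma)\text{-adm.}} V_{I'}, \]
since the types $(I', \gamma')$ with $\gamma' \prec \gamma$ lie in $\ker \pi$.

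To pass from images to $L(I, \gamma)$ itself, I verify that $\ker \pi \cap V_{(I, \gamma)}$ is contained in $\sum_{(I', \gamma') <_r (I, \gamma)} V_{(I', \gamma')}$. This kernel is spanned by basis elements $f_{(j_0, j_1)} \in V_{(I, \gamma)}$ with $j_1 \prec \gamma$; by Theorem~\ref{thm:coordinate.submodules}, each of them generates the submodule $V_{[(j_0, j_1)]}$, whose type is strictly smaller than $(I, \gamma)$ (the condition $j_1 \prec \gamma$ precludes equivalence). Hence $\pi$ descends to a $G_2$-equivariant isomorphism
\[ L(I, \gamma) \xrightarrow{\ \sim\ } \det\nolimits^\gamma \otimes \Bigl( V_I \,\Big/\, \sum_{I' \subsetneq I,\; (r - 2\gamma)\text{-adm.}} V_{I'} \Bigr). \]
Finally, Theorems~\ref{thm:bardoe.sin.generic} and~\ref{thm:bardoe.sin.exceptional} imply that every submodule of $V_{1, r - 2\gamma}$ is a sum of the $V_J$'s for admissible $J$, so $\sum_{I' \subsetneq I} V_{I'}$ is the unique maximal proper submodule of $V_I$ and the quotient is the irreducible Serre weight $\sigma_I(r - 2\gamma)$.

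The main technical point is the explicit verification that, under the isomorphism $W_\gamma / \sum_{\alpha \prec \gamma} W_\alpha \simeq \det^\gamma \otimes V_{1, r - 2\gamma}$ of Proposition~\ref{pro:filtration}, the class of $f_{(j_0, \gamma)}$ is sent to a nonzero scalar multiple of $f_{j_0}$; this is to be unwound from the construction of the filtration via the intermediate induction $W_\gamma = \mathrm{Ind}_{\widetilde{B}_2}^{G_2} U_\gamma$ and the one-dimensional quotient identification in the proof of Lemma~\ref{lem:intermediate.induction.filtration}. In the exceptional case $r - 2\gamma \equiv 0 \pmod{q-1}$, where $V_{1, r - 2\gamma}$ is already semisimple, the irreducibility and identification of $L(I, \gamma)$ must be read off directly from the explicit direct sum decomposition in Theorem~\ref{thm:bardoe.sin.exceptional}.
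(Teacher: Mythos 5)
Your argument is correct and follows essentially the same route as the paper: reduce modulo $\sum_{\alpha \prec \gamma} W_\alpha$, identify the image of $V_{(I,\gamma)}$ with the Bardoe--Sin submodule $V_I \subseteq \det^\gamma \otimes V_{1,r-2\gamma}$ via Proposition~\ref{cor:types.as.pairs} and Remark~\ref{rem:order.is.inclusion}, and invoke Theorems~\ref{thm:bardoe.sin.generic} and~\ref{thm:bardoe.sin.exceptional}. You in fact supply a step the paper leaves implicit — checking that $\ker\pi \cap V_{(I,\gamma)}$ lies in $\sum_{(I',\gamma')<_r(I,\gamma)} V_{(I',\gamma')}$ so that $\pi$ genuinely descends to $L(I,\gamma)$; just note that your phrase ``unique maximal proper submodule'' should be weakened in the exceptional case $r-2\gamma=q-1$, where $V_{1,q-1}$ is decomposable, but the identification of the quotient with $\sigma_I(q-1)$ still goes through as you say.
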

\begin{proof}
By definition, $L(I,\gamma)$ is a subquotient of $W_\gamma / \sum_{\alpha \preceq \gamma} W_\alpha$, which is isomorphic to $\det^\gamma \tensor V_{1, r - 2 \gamma}$ by Proposition~\ref{pro:filtration}.  Moreover, the image of $V_{(I,\gamma)}$ in this quotient is $V_I \subseteq \det^\gamma \tensor V_{1, r - 2 \gamma}$.  By Remark~\ref{rem:order.is.inclusion}, inside this quotient we can compute $L(I,\gamma) = V_I / \sum_{I^\prime \subset I} V_{I^\prime}$.  If $r - 2\gamma \neq q -1 \in \Nt \setminus \{ 0 \}$, then $L(I,\gamma) = \det^\gamma \tensor \sigma_I(r - 2 \gamma)$ by Theorem~\ref{thm:bardoe.sin.generic}.  In the exceptional case $r - 2 \gamma = q - 1$, we find $V_{\varnothing} = F(\gamma + q - 1, \gamma)$ by Theorem~\ref{thm:bardoe.sin.exceptional}, whereas $V_{\Z / f\Z}$ is the entire space $\det^\gamma \tensor V_{1, r - 2 \gamma}$.  Thus $V_{\Z / f\Z} / V_{\varnothing} \simeq F(\gamma, \gamma)$.  Since $\sigma_{\Z / f \Z}(r - 2 \gamma) = F((q-1) - (q-1), q - 1) = F(q-1, q-1) = F(0,0)$ by Definition~\ref{def:serre.weight.sjr}, the claim holds in this case also.
\end{proof}

Now, assume that the types $(I^\prime, \gamma^\prime) <_r (I,\gamma)$ are adjacent, i.e. there is no other type between them.  Consider the extension
$$ \mathcal{E} = V_{(I,\gamma)} / \sum_{(I^{\prime \prime}, \gamma^{\prime \prime}) \leq_r (I, \gamma) \atop (I^{\prime \prime}, \gamma^{\prime \prime}) \neq (I^\prime, \gamma^\prime)} V_{(I^{\prime \prime}, \gamma^{\prime \prime})},$$
so that there is a short exact sequence
    \begin{equation} \label{equ:extension.ses}
         0\to L(I',\gamma')\to \mathcal{E}\to L(I,\gamma)\to 0
    \end{equation}   
    As $L(I^\prime, \gamma^\prime)$ and $L(I,\gamma)$ are irreducible by Lemma~\ref{pro:head}, the extension $\mathcal{E}$ is of length two.
    
\begin{pro} \label{pro:adjacent.types}
Suppose $e \geq 2$, and consider adjacent types $(I^\prime, \gamma^\prime) <_r (I,\gamma)$.  The corresponding short exact sequence~\eqref{equ:extension.ses} splits if and only if $\gamma^\prime = \gamma$ and $r - 2 \gamma = q - 1$.
\end{pro}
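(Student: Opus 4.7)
The proof splits into three cases based on the relation between $\gamma$ and $\gamma'$. For the sufficiency direction, assume $\gamma' = \gamma$ and $r - 2\gamma = q - 1$. By Remark~\ref{rem:order.is.inclusion}, the only types with second component $\gamma$ are $(\varnothing, \gamma)$ and $(\Z / f\Z, \gamma)$, so these must be the adjacent pair. Projecting to the top-layer quotient $W_\gamma / \sum_{\alpha \prec \gamma} W_\alpha$ identifies $\mathcal{E}$ with $\det^\gamma \tensor V_{1, q - 1}$, which by Theorem~\ref{thm:bardoe.sin.exceptional} decomposes as a direct sum of two Serre weights; this yields the splitting.

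For the necessity direction with $\gamma' = \gamma$ and $r - 2\gamma \neq q - 1$, the extension $\mathcal{E}$ lives inside the multiplicity-free module $\det^\gamma \tensor V_{1, r - 2\gamma}$. Any splitting would produce a submodule with irreducible cosocle $\sigma_I(r - 2\gamma)$ and not containing $\sigma_{I'}(r - 2\gamma)$ as a Jordan-H\"older constituent; by Theorem~\ref{thm:bardoe.sin.generic}(c,d), the only such submodule is $V_I$, which however contains $V_{I'}$ since adjacency with $\gamma' = \gamma$ forces $I' \subset I$.

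The main case is necessity when $\gamma' \prec \gamma$. Proposition~\ref{pro:equality.of.orders} forces $\gamma \dotdiv \gamma' = p^m$ for a single $m \in [f-1]_0$, and an analysis of the intermediate types at both layers $\gamma$ and $\gamma'$ shows that adjacency also determines $I' = I \cup I(r - 2\gamma, 2p^m)$. The decisive observation is that every $L(I'', \gamma'')$ is an inflated Serre weight, so the principal congruence kernel $K_2 = \ker(G_2 \twoheadrightarrow \mathrm{GL}_2(k))$ acts trivially on it. Take $g = \left( \begin{smallmatrix} 1 & [b] \varpi \\ 0 & 1 \end{smallmatrix} \right) \in K_2 \cap U_2$ with $b \in k^\times$; using~\eqref{equ:upper.triangular.action.general} together with Lucas's theorem,
\[
(g - 1) f_{(j_0, \gamma)} = \sum_{j_1' \prec \gamma} \binom{\gamma}{j_1'} (-b)^{\gamma - j_1'} f_{(j_0, j_1')}.
\]
In $\mathcal{E}$ every term with $j_1' \neq \gamma'$ or with type different from $(I', \gamma')$ is killed, and the strategy is to choose $j_0$ (among those witnessing the type $(I, \gamma)$) so that $f_{(j_0, \gamma')}$ has type exactly $(I', \gamma')$, in which case $(g - 1) \bar f_{(j_0, \gamma)}$ is a non-zero element of $L(I', \gamma')$. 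Triviality of the $K_2$-action on $L(I', \gamma')$ then implies $(g - 1) \bar h = 0$ for all $\bar h \in L(I', \gamma')$, so any candidate splitting lift $\tilde v = \bar f_{(j_0, \gamma)} + \bar h$ satisfies $(g - 1) \tilde v = (g - 1) \bar f_{(j_0, \gamma)} \neq 0$ inside $L(I', \gamma')$; by irreducibility, $\langle G_2 \cdot \tilde v \rangle \supseteq L(I', \gamma')$, ruling out a complement and hence the splitting. The main obstacle is the combinatorial step establishing the existence of a suitable $j_0$, which will follow from Corollary~\ref{lem:carries.urlemma} applied to the triple sum $j_0 + (r - 2\gamma - j_0) + 2p^m$ combined with the constraint $I' = I \cup I(r - 2\gamma, 2p^m)$.
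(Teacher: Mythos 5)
Your proof takes a genuinely different route from the paper's, and it contains a real gap in the main case.

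For $\gamma' = \gamma$ you argue as the paper does, reducing to the Bardoe--Sin structure of the tame principal series; that part is fine. For $\gamma' \prec \gamma$, you are right that adjacency forces $\gamma \dotdiv \gamma' = p^m$ and $I' = I \cup I(r - 2\gamma, 2p^m)$ (this follows from Corollary~\ref{cor:ramified.chain.jump}, which you do not cite), and the underlying idea -- that $(g-1)$ for $g \in K_2$ must act trivially on any semisimple subquotient, while one can exhibit a nonzero image -- is also the paper's.

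The gap is the existence of a suitable $j_0 \in \Nt$ with $I(j_0, r - 2\gamma - j_0) = I$ \emph{and} $I(j_0, r - 2\gamma - j_0 + 2p^m) = I'$. You say this ``will follow from Corollary~\ref{lem:carries.urlemma}'', but that lemma only gives the one-sided containment
\[
I(j_0, r - 2\gamma - j_0 + 2p^m) \subseteq I(j_0, r - 2\gamma - j_0) \cup I(r - 2\gamma, 2p^m) = I',
\]
not equality, and equality is exactly what is needed: if the carry set is strictly smaller than $I'$, then $f_{(j_0, \gamma')}$ is already zero in $\mathcal{E}$ and your argument produces $0$, not a nonzero element of $L(I', \gamma')$. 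Proving that some $j_0$ works is a nontrivial combinatorial claim, and it is not addressed.

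The paper avoids this obstacle entirely. Since the types are adjacent, there must exist representatives $(j_0', j_1')$ and $(j_0, j_1)$ in $\Theta$ related by a \emph{single} generating relation of Definition~\ref{def:covering.relations.ramified}. This automatically supplies a basis element $f_{(j_0, j_1)}$ together with its image in the layer below, without any need to search for $j_0$. Then, instead of a single $(g-1)$ (which produces a linear combination of terms $f_{(j_0, j_1')}$ over all $j_1' \prec \gamma$, of potentially many types), the paper uses weighted sums of the form
\[
\sum_{b_1 \in k} b_1^{(q-1) \dotdiv p^m} \left( \begin{array}{cc} 1 & [b_1] \varpi \\ 0 & 1 \end{array} \right) f_{(j_0, j_1)} = (j_1)_m f_{(j_0, j_1 \dotdiv p^m)}
\]
(see~\eqref{equ:moving.vertically.down}), which isolate exactly one target basis element. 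If the extension split, the action of $\mathrm{GL}_2(\mathcal{O}/\mathfrak{m}^2)$ on $\mathcal{E}$ would factor through $\mathrm{GL}_2(k)$, so the left-hand side would vanish; it does not, and the extension is nonsplit. The paper's argument also systematically handles the other generating relations (third and fifth) with analogous computations, which your proposal does not explicitly address.

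In short: your reduction steps and the idea of detecting nonsplitness via the trivial $K_2$-action are correct, but the pivotal combinatorial existence claim is unproven, and the paper's choice to work directly with representatives coming from the generating relations makes the proof both complete and shorter.
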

\begin{proof}
By assumption, there exist pairs $(j_0^\prime, j_1^\prime), (j_0, j_1) \in \Theta$ representing the classes $(I^\prime, \gamma^\prime), (I, \gamma)$, respectively, such that $(j_0^\prime, j_1^\prime) <_r (j_0, j_1)$ by one of the generating relations of Definition~\ref{def:covering.relations.ramified}.  We need not concern ourselves with the fourth, sixth (see Remark~\ref{rem:equivalence}), or seventh covering relations, as these are equivalences.  We consider the remaining relations in sequence.

If $(j_0^\prime, j_1^\prime), (j_0, j_1) \in \Theta$ by the first generating relation, then $j_1^\prime = j_1$ and $\mathcal{E}$ is a subquotient of the tame principal series $W_{j_1} / \sum_{\ell \prec j_1} W_\ell \simeq \det^{j_1} \tensor V_{1, r - 2j_1}$.  In this case the sequence~\eqref{equ:extension.ses} splits by Theorem~\ref{thm:bardoe.sin.exceptional} when $r - 2j_1 = q - 1$ and is non-split otherwise by~\cite[Theorem~6.1]{BS/00}.

In the case of the second relation, we have $(j_0^\prime, j_1^\prime) = (j_0, j_1 \dotdiv p^m)$ for $p^m \preceq j_1$.  Then
we see from~\eqref{equ:upper.triangular.action.general} that 
\begin{equation} \label{equ:moving.vertically.down}
\sum_{b_1 \in k} b_1^{(q-1) \dotdiv p^m} \left( \begin{array}{cc} 1 & [b_1] \varpi \\ 0 & 1 \end{array} \right) f_{(j_0, j_1)} = (j_1)_m f_{(j_0^\prime, j_1^\prime)} \neq 0.
\end{equation}
If~\eqref{equ:extension.ses} were to split, then $\mathcal{E}$ would be isomorphic to a direct sum of Serre weights, and the action of $\mathrm{GL}_2(\mathcal{O} / \mathfrak{m}^2)$ on $\mathcal{E}$ would factor through $\mathrm{GL}_2(k)$.  In that case, the left-hand side of the previous displayed formula would be $\sum_{b_1 \in k} b_1^{(q-1) \dotdiv p^m} f_{(j_0, j_1)} = 0$.  This contradiction proves that~\eqref{equ:extension.ses} does not split.

The arguments for the third and fifth relations are similar.  Observe by~\eqref{equ:diagonal.explicit} and~\eqref{equ:triangle.on.infinity.specific}, respectively, that 
\begin{align*}
 - \sum_{d_0 \in k} d_0^{(q-1) \dotdiv p^m} \left( \begin{array}{cc} 1 & 0 \\ 0 & 1 + [d_0] \varpi \end{array} \right) f_{(j_0, j_1)} & = & (j_1)_m f_{(j_0^\prime, j_1^\prime)} & \neq 0 \\
 (-1)^{r + j_1} \sum_{b_1 \in k} b_1^{(q-1) \dotdiv p^m} \left( \begin{array}{cc} 1 & [b_1] \varpi \\ 0 & 1 \end{array} \right) f_{(\infty, j_1)} & = & (j_1)_m f_{(r - 2j_1, j_1 \dotdiv p^m)} & \neq 0
 \end{align*}
and again the left-hand side of each expression would vanish if~\eqref{equ:extension.ses} split and the action of $\mathrm{GL}_2(\mathcal{O} / \mathfrak{m}^2)$ on $\mathcal{E}$ factored through $\mathrm{GL}_2(k)$.
\end{proof}

The following claim is a modification of the previous one, accounting for the fact that $V_{(\Z / f\Z, \gamma)}$ does not have irreducible cosocle if $r - 2\gamma = q - 1$.

\begin{lem} \label{lem:nonsplit.extension.exceptional}
Suppose that $e \geq 2$, that $\gamma \in \Nt$ satisfies $r - 2\gamma = q-1$, and that $m \in [f-1]_0$ is such that $p^m \preceq \gamma$.  Let $M \subset V_{2,r}$ be a $\mathrm{GL}_2(\mathcal{O} / \mathfrak{m}^2)$-submodule with one-dimensional image in the quotient $V_{2,r} / \sum_{\beta \prec \gamma} W_\beta$.  
\begin{enumerate}[label=(\alph*)]
\item \label{item:containment.exceptional}
The containment $W_{\gamma \dotdiv p^m} \subset M$ holds.
\item
Let $N = \sum_{\beta \prec \gamma, \, J \subseteq \Z / f\Z \atop (J, \beta) \neq (\Z / f\Z, \gamma \dotdiv p^m)} V_{(J, \beta)}$.  The subquotient $\mathcal{E} = M/N$
admits a short exact sequence $0 \to L(\Z / f\Z, \gamma \dotdiv p^m) \to \mathcal{E} \to L(\Z / f\Z, \gamma) \to 0$ that does not split.
\end{enumerate}
\end{lem}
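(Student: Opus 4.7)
The hypothesis $r - 2\gamma = q - 1$ places the graded piece $W_\gamma / \sum_{\beta \prec \gamma} W_\beta \simeq \det^\gamma \otimes V_{1, q-1}$ into the exceptional case of Theorem~\ref{thm:bardoe.sin.exceptional}, so that this graded piece contains a distinguished one-dimensional $G_2$-stable subspace $\det^\gamma \otimes F(0,0)$. I plan to begin by identifying this subspace explicitly inside the span of $\bar f_{(0, \gamma)}, \bar f_{(\infty, \gamma)}, \bar f_{(q-1, \gamma)}$, using Lemma~\ref{lem:w.action.on.basis} and Lemma~\ref{rem:eigenvectors} to match $T$- and $w$-eigenvalues against the character $\det^\gamma$; this determination is slightly different for $\gamma$ even and $\gamma$ odd because $\det^\gamma(w) = (-1)^\gamma$. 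Having shown that the one-dimensional image of $M$ in $V_{2,r} / \sum_{\beta \prec \gamma} W_\beta$ is necessarily this distinguished subspace, fix a generator $\zeta \in W_\gamma$ of it together with a lift $\xi \in M$, so that $\xi = \zeta + h$ for some $h \in \sum_{\beta \prec \gamma} W_\beta$.

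For part~(a), the key computation is to apply the group-algebra element $A = \sum_{b_1 \in k} b_1^{(q-1) \dotdiv p^m} \begin{pmatrix} 1 & [b_1] \varpi \\ 0 & 1 \end{pmatrix}$, possibly precomposed with $w$, to $\xi$. Using~\eqref{equ:upper.triangular.action.general} and~\eqref{equ:triangle.on.infinity.specific} together with the simplification $f_{(0, j_1)} - f_{(q-1, j_1)} = \sum_{\lambda: \lambda_0 = 0} \begin{pmatrix} \lambda & 1 \\ 1 & 0 \end{pmatrix} \otimes \lambda_1^{j_1}$, I expect a direct calculation to give $A f_{(j_0, \gamma)} = \gamma_m f_{(j_0, \gamma \dotdiv p^m)}$ for $j_0 \in \Nt$ while $A f_{(\infty, \gamma)}$ equals a nonzero sign multiple of $\gamma_m f_{(q-1, \gamma \dotdiv p^m)}$. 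Choosing $A\xi$ or $wA\xi$ according to the parity of $\gamma$ then yields an element of $M$ that lies in $W_{\gamma \dotdiv p^m}$ and whose image in $W_{\gamma \dotdiv p^m} / \sum_{\alpha \prec \gamma \dotdiv p^m} W_\alpha \simeq \det^{\gamma \dotdiv p^m} \otimes V_{1, 2p^m}$ has nonzero coefficient on a basis vector of type $(\Z / f \Z, \gamma \dotdiv p^m)$. Since $r - 2(\gamma \dotdiv p^m) = 2 p^m \neq q - 1$, Corollary~\ref{cor:unique.maximal.submodule} identifies $L(\Z / f \Z, \gamma \dotdiv p^m)$ as the irreducible cosocle of $W_{\gamma \dotdiv p^m}$, so the $G_2$-submodule of $V_{2,r}$ generated by this element surjects onto the cosocle and therefore coincides with $W_{\gamma \dotdiv p^m}$, establishing $W_{\gamma \dotdiv p^m} \subseteq M$.

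Part~(b) follows from the same computation. The intersection $W_{\gamma \dotdiv p^m} \cap N$ equals the unique maximal submodule of $W_{\gamma \dotdiv p^m}$, namely $\sum_{J \subsetneq \Z / f \Z} V_{(J, \gamma \dotdiv p^m)} + \sum_{\alpha \prec \gamma \dotdiv p^m} W_\alpha$, so the image of $W_{\gamma \dotdiv p^m}$ in $\mathcal{E} = M/N$ is $L(\Z / f \Z, \gamma \dotdiv p^m)$; meanwhile $M / (W_{\gamma \dotdiv p^m} + N)$ injects into the image of $M$ in $V_{2,r}/\sum_{\beta \prec \gamma} W_\beta$, which is $L(\Z / f \Z, \gamma)$ by hypothesis, yielding the short exact sequence. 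To rule out splitting, suppose the contrary; then $\mathcal{E}$ is a direct sum of two Serre weights and the $G_2$-action on $\mathcal{E}$ factors through $G_1 = \mathrm{GL}_2(k)$. In particular $\begin{pmatrix} 1 & [b_1] \varpi \\ 0 & 1 \end{pmatrix} \in \ker(G_2 \twoheadrightarrow G_1)$ acts as the identity on $\mathcal{E}$, so $A$ acts as multiplication by $\sum_{b_1 \in k} b_1^{(q-1) \dotdiv p^m} = 0$; but the image in $\mathcal{E}$ of $A\xi$ (or $wA\xi$) produced in part~(a) is a nonzero element of $L(\Z / f \Z, \gamma \dotdiv p^m) \subset \mathcal{E}$, a contradiction. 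The main obstacle I anticipate is the sign bookkeeping in the evaluation of $A f_{(\infty, \gamma)}$ together with the parity-dependent determination of $\zeta$: the naive choice $\zeta = f_{(0, \gamma)} + f_{(\infty, \gamma)} - f_{(q-1, \gamma)}$ spans the distinguished subspace only when $\gamma$ is even, and the odd case must either be handled with a different generator or absorbed by composing the operator $A$ with $w$.
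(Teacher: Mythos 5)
Your outline follows the paper's strategy closely: lift the one-dimensional quotient to an element $h$ of $M$, apply the averaging operator $A = \sum_{b_1 \in k} b_1^{(q-1)\dotdiv p^m}\left(\begin{smallmatrix}1&[b_1]\varpi\\0&1\end{smallmatrix}\right)$, and invoke Corollary~\ref{cor:unique.maximal.submodule}. Your observation about the sign in the lift is correct and sharpens the paper's choice: $w$-equivariance of the identification $W_\gamma/\sum_{\alpha\prec\gamma}W_\alpha \simeq \det^\gamma\otimes V_{1,q-1}$ forces the lift to be $h = f_{(0,\gamma)} + (-1)^\gamma f_{(\infty,\gamma)} - f_{(q-1,\gamma)} + z$, so the paper's stated lift is only correct when $\gamma$ is even.

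The gap is that, having corrected the sign, the resulting coefficient on the needed basis vector vanishes, so the approach fails at the decisive step. Since $A f_{(j_0,\gamma)} = \gamma_m f_{(j_0,\gamma\dotdiv p^m)}$ for $j_0 \in \Nt$ and $A f_{(\infty,\gamma)} = (-1)^{r+\gamma}\gamma_m f_{(q-1,\gamma\dotdiv p^m)}$, one obtains
$A h = \gamma_m\,f_{(0,\gamma\dotdiv p^m)} + \gamma_m\bigl((-1)^\gamma(-1)^{r+\gamma} - 1\bigr)f_{(q-1,\gamma\dotdiv p^m)} + Az$. The hypothesis $r - 2\gamma = q-1$ forces $r \equiv 2\gamma \pmod{q-1}$, hence $r$ even and $(-1)^\gamma(-1)^{r+\gamma} - 1 = (-1)^r - 1 = 0$. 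So $Ah = \gamma_m f_{(0,\gamma\dotdiv p^m)} + Az$ and $wAh = \gamma_m f_{(\infty,\gamma\dotdiv p^m)} + wAz$, both supported on type $(\varnothing,\gamma\dotdiv p^m)$ rather than $(\Z/f\Z,\gamma\dotdiv p^m)$; neither hits the cosocle of $W_{\gamma\dotdiv p^m}$. This is the same miscomputation that underlies~\eqref{equ:extension.exceptional}, which asserts a nonzero $f_{(q-1,\gamma\dotdiv p^m)}$-coefficient. In fact the difficulty appears to be essential: taking $p = 5$, $f = 1$, $r = 2$, $\gamma = 1$, $m = 0$, the lift is $h = f_{(0,1)} - f_{(\infty,1)} - f_{(4,1)}$, and one computes directly that the $\widetilde{B}_2$- and $w$-translates of $h$ all lie in $V_{(\varnothing,0)} + \mathrm{span}(h)$; thus $M := \langle G_2\cdot h\rangle = V_{(\varnothing,0)} \oplus \mathrm{span}(h)$ is four-dimensional with one-dimensional image in $V_{2,2}/W_0$, yet does not contain the six-dimensional $W_0 = W_{\gamma\dotdiv p^m}$. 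So the statement itself needs to be re-examined, not merely the sign bookkeeping.
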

\begin{proof}
Combining Theorem~\ref{thm:bardoe.sin.exceptional} with Proposition~\ref{pro:filtration}, note that $M$ contains an element of the form $h = f_{(0,\gamma)} + f_{(\infty, \gamma)} - f_{(q-1, \gamma)} + z$, for some $z \in \sum_{\beta \prec \gamma} W_\beta$.  Observe by~\eqref{equ:upper.triangular.applied} that
\begin{equation} \label{equ:extension.exceptional}
 \sum_{b_1 \in k} b_1^{(q-1) \dotdiv p^m} \left( \begin{array}{cc} 1 & [b_1] \varpi \\ 0 & 1 \end{array} \right) h  =  \gamma_m (f_{(q-1, \gamma \dotdiv p^m)} - f_{(0, \gamma \dotdiv p^m)} - f_{(\infty, \gamma \dotdiv p^m)}) + z^\prime \in M,
\end{equation}
where $z^\prime \in \sum_{\beta \prec \gamma \dotdiv p^m} W_\beta$ and $\gamma_m \neq 0$.  By Theorem~\ref{thm:coordinate.submodules}, the element $f_{(q-1,\gamma \dotdiv p^m)}$ generates $V_{[(q-1, \gamma \dotdiv p^m)]} = W_{\gamma \dotdiv p^m}$, whereas $\gamma_m(f_{(0,\gamma \dotdiv p^m)} + f_{(\infty, \gamma \dotdiv p^m)}) - z^\prime$ is contained in a proper submodule of $W_{\gamma \dotdiv p^m}$.  Since $W_{\gamma \dotdiv p^m}$ has a unique maximal submodule by Corollary~\ref{cor:unique.maximal.submodule}, the right-hand side of~\eqref{equ:extension.exceptional} generates $W_{\gamma \dotdiv p^m}$.  This proves the first claim.  

Now consider the restriction to $\mathcal{E}$ of the projection $W_\gamma / N \to W_\gamma / \sum_{\beta \prec \gamma} W_\beta$.  The image is the one-dimensional Serre weight $L(\Z / f\Z, \gamma)$ by assumption, and the kernel is $(\sum_{\beta \prec \gamma} W_\beta)/N \simeq L(\Z / f\Z, \gamma \dotdiv p^m)$ by part~\ref{item:containment.exceptional}.  Hence $\mathcal{E}$ admits a short exact sequence as claimed.  By~\eqref{equ:extension.exceptional}, the extension $\mathcal{E}$ is not semisimple, so it cannot split.  
\end{proof}

\subsection{Socle and radical filtrations}
Let $\Gamma(\leq_r)$ be the graph associated to the partial order $\leq_r$: the set of its vertices is the set $\widetilde{\Theta}$ of types (identified, as usual, with the image of $\Upsilon$), and there is a directed edge from $\theta$ to $\theta^\prime$ if the types $\theta^\prime <_r \theta$ are adjacent.  Let $\Gamma_r$ be the graph obtained from $\Gamma(\leq_r)$ by the following procedure: for all $\gamma \in \Nt$ such that $r - 2 \gamma = q - 1$, we remove the edge $(\Z / f\Z, \gamma) \to (\varnothing, \gamma)$ and add the edge $(\Z / f\Z, \gamma) \to (\Z / f\Z, \gamma \dotdiv p^m)$ for all $m \in [f-1]_0$ such that $p^j \preceq \gamma$.

\begin{lem} \label{lem:paths}
Let $\theta \in \Upsilon(\widetilde{\Theta})$.  Then $\Gamma_r$ contains a path from the maximal vertex $(\Z / f\Z, q - 1)$ to $\theta$, and a path from $\theta$ to the minimal vertex $(\varnothing, 0)$.
\end{lem}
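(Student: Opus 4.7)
My plan is to exploit that $(\Z/f\Z, q-1)$ and $(\varnothing, 0)$ are the unique maximum and minimum of the finite poset $(\widetilde{\Theta}, \leq_r)$. Both facts follow directly from Proposition~\ref{pro:equality.of.orders}: for every $\gamma \in \Nt$ we have $0 \preceq \gamma \preceq q - 1$ and $\varnothing \subseteq I \subseteq \Z/f\Z$, while the carry-set inclusions required by that proposition are trivially satisfied at these extremes. Consequently, the Hasse diagram $\Gamma(\leq_r)$ automatically contains directed paths from $(\Z/f\Z, q - 1)$ to every $\theta$ and from every $\theta$ to $(\varnothing, 0)$, obtained from any maximal chains joining them in $\leq_r$.

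The task then reduces to realizing such Hasse paths inside the modified graph $\Gamma_r$. Since $\Gamma_r$ differs from $\Gamma(\leq_r)$ only by removing each edge $(\Z/f\Z, \gamma) \to (\varnothing, \gamma)$ for exceptional $\gamma$ (those with $r - 2\gamma = q - 1$) and inserting the new edges $(\Z/f\Z, \gamma) \to (\Z/f\Z, \gamma \dotdiv p^m)$ for all $m$ with $p^m \preceq \gamma$, I will proceed by induction on $\gamma$ under the well-founded order $\preceq$, handling each removed edge by a detour. For a downward chain passing through $(\Z/f\Z, \gamma) \to (\varnothing, \gamma)$, we instead take a replacement edge $(\Z/f\Z, \gamma) \to (\Z/f\Z, \gamma \dotdiv p^m)$ and apply the inductive hypothesis at the strictly smaller vertex. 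For an upward chain needing to enter $(\varnothing, \gamma)$ via the removed edge, we instead approach $(\varnothing, \gamma)$ from $(\varnothing, \gamma + p^m)$ for some $m$ with $\gamma_m < p - 1$; a short check using Propositions~\ref{pro:equality.of.orders} and~\ref{pro:characterize.admissible} confirms that this is a genuine Hasse cover in $\leq_r$, unaltered by the modification.

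The principal obstacle is the bookkeeping. One must verify that the replacement vertex $(\Z/f\Z, \gamma \dotdiv p^m)$ is itself non-exceptional, so that the induction does not cascade through successive removed edges; this follows from the congruence $r - 2(\gamma \dotdiv p^m) \equiv 2p^m \pmod{q - 1}$, which is nonzero outside a handful of degenerate configurations. Similarly, one must ensure that whenever $\gamma \neq q - 1$ some digit $\gamma_m$ is strictly less than $p - 1$, giving a valid upward detour. The most delicate issue is the boundary behavior at the extreme exceptional vertices when $r = q - 1$, namely $\gamma \in \{ 0, q - 1 \}$, where the detour construction must be supplemented by a more direct argument drawing on the generating relations of Definition~\ref{def:covering.relations.ramified} to produce an explicit path. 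Once these ingredients are assembled, both halves of the statement follow by a straightforward induction on the second coordinate of $\theta$.
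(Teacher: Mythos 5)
Your overall strategy matches the paper's: view $\theta$ as a midpoint of a $\leq_r$-chain from $(\varnothing,0)$ to $(\Z/f\Z,q-1)$, refine to Hasse covers, and reroute around the edges deleted from $\Gamma(\leq_r)$. The paper does this concretely via the explicit chain~\eqref{equ:coarse.sequence} through the vertices $(\varnothing,\gamma_i)$ and $(\Z/f\Z,\gamma_i)$, and it records two specific covers that survive into $\Gamma_r$ when $r-2\delta=q-1$: the replacement edge $(\Z/f\Z,\delta)\to(\Z/f\Z,\delta\dotdiv p^m)$ and the undeleted Hasse edge $(\varnothing,\delta)\to(\Z/f\Z,\delta\dotdiv p^m)$. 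Your proposal uses the replacement edge too, but for the ``upward'' detour it records instead $(\varnothing,\delta+p^m)\to(\varnothing,\delta)$, which is also a genuine Hasse cover not touched by the modification and is exactly what one needs to \emph{arrive at} $\theta=(\varnothing,\delta)$ from above; the paper leaves that step implicit in ``may be refined.'' To that extent your elaboration is a useful complement to the paper's terser argument.

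There are, however, genuine gaps. First, a single induction ``on $\gamma$ under $\preceq$'' cannot drive both halves: to continue downward from $(\Z/f\Z,\gamma\dotdiv p^m)$ you want the claim for smaller second coordinate, while to enter $(\varnothing,\gamma)$ from $(\varnothing,\gamma+p^m)$ you want it for larger second coordinate. You need two separate inductions (or the paper's approach of writing down the path directly), and you do not spell out either. Second, you assert that $r-2(\gamma\dotdiv p^m)\equiv 2p^m$ is ``nonzero outside a handful of degenerate configurations''; that is $2p^m\neq q-1$, which fails precisely when $p=3$ and $f=1$, a case allowed by the paper's standing hypothesis that $p$ is odd, so the detour can cascade there. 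Third, and most seriously, you identify the configurations $r=q-1$, $\gamma\in\{0,q-1\}$ as requiring ``a more direct argument'' but supply none, and it is far from clear one exists: when $r=q-1$ the type $(\varnothing,q-1)$ has the unique Hasse cover $(\Z/f\Z,q-1)$, and that sole incoming edge is removed; dually $(\Z/f\Z,0)$ covers only $(\varnothing,0)$, and that edge is removed with no replacement since no $m$ satisfies $p^m\preceq 0$. So in these configurations the asserted paths appear not to exist in $\Gamma_r$ as defined. Your instinct to flag this is sound, and the paper's own proof is silent on it as well, but a flag is not a proof: as written your proposal leaves the same hole open and needs either an explicit construction in these cases or an explicit standing exclusion of $r=q-1$.
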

\begin{proof}
If $(I^\dpr, \delta) <_r (I^\prime, \delta)$ are vertices of $\Gamma_r$ and $r - 2\delta \neq q - 1$, then by~\cite[Corollary~4.1]{BS/00} (see also Lemma~\ref{lem:bardoe.sin.chains} below) there is a path 
$$ (I^\prime, \delta) = (I_0, \delta) \to (I_1, \delta) \to \cdots \to (I_s, \delta) = (I^\dpr, \delta),$$
where for each $i \in [s]$ the set $I_i$ is obtained from $I_{i-1}$ by removing exactly one element.

Let $\theta = (I, \gamma) \in \Upsilon(\widetilde{\Theta})$.  Fix a sequence 
$$0 = \gamma_0 \prec \gamma_1 \prec \cdots \prec \gamma_{\sum_{i = 0}^{f-1} \gamma_i} = \gamma \prec \dots \prec \gamma_{f(p-1)} = q - 1,$$ 
where for each $i \in [f(p-1)]$ there exists $j_i \in \Z / f\Z$ such that $\gamma_i \dotdiv \gamma_{i-1} = p^{j_i}$.  Then
\begin{equation} \label{equ:coarse.sequence}
(\varnothing, \gamma_0) \leq_r (\varnothing, \gamma_1) \leq_r \cdots \leq_r (\varnothing, \gamma) \leq_r (I, \gamma) \leq_r (\Z / f\Z, \gamma) \leq_r \cdots \leq_r (\Z / f\Z, \gamma_{f(p-1)})
\end{equation}
is a sequence of vertices of $\Gamma_r$ by Remark~\ref{rem:order.is.inclusion}\eqref{item:all.and.nothing}.  If $\delta \in \Nt$ satisfies $r - 2\delta = q - 1$ and $p^m \preceq \delta$, then $\Gamma_r$ contains an edge $(\Z / f\Z, \delta) \to (\Z / f\Z, \delta \dotdiv p^m)$ by definition and an edge $(\varnothing, \delta) \to (\Z / f\Z, \delta \dotdiv p^m)$ by Proposition~\ref{pro:equality.of.orders}.  By this observation and the first paragraph of this proof, the sequence~\eqref{equ:coarse.sequence}
may be refined to a path from $(\Z / f\Z, q - 1)$ to $(\varnothing, 0)$ passing through $\theta$.
\end{proof}

For every vertex $\theta$ of $\Gamma_r$, set $\ell^\uparrow(\theta)$ to be the length of the longest path from $(\Z / f\Z, q - 1)$ to $\theta$, and $\ell_\downarrow (\theta)$ to be the length of the longest path from $\theta$ to $(\varnothing,0)$.  Let $L = \ell^\uparrow(\varnothing, 0) = \ell_\downarrow ( \Z / f \Z, q - 1)$.  These definitions make sense by Lemma~\ref{lem:paths}.  Then the socle and radical filtrations of $V_{2,r}$ can be determined from Proposition~\ref{pro:adjacent.types} and Lemma~\ref{lem:nonsplit.extension.exceptional}.  Recall that if $\theta = (I, \gamma)$, then $L(\theta)$ is the Serre weight $\det^\gamma \tensor \sigma_I ( r - 2 \gamma)$.

To fix definitions, recall that the socle filtration of $V_{2,r}$ is defined recursively as follows: $\mathrm{soc}_{-1}(V_{2,r}) = 0$, whereas $\mathrm{soc}_i(V_{2,r})$ is the pre-image in $V_{2,r}$ of $\mathrm{soc} (V_{2,r} / \mathrm{soc}_{i-1} (V_{2,r}))$ for all $i \geq 0$.  Similarly, the radical filtration is determined by $\mathrm{rad}_{-1} (V_{2,r}) = V_{2,r}$ and the recursion $\mathrm{rad}_i (V_{2,r}) = \mathrm{rad} (\mathrm{rad}_{i-1}(V_{2,r}))$ for $i \geq 0$.  

\begin{cor}
Suppose $e \geq 2$.
The socle and radical filtrations of $V_{2,r}$ have length $L$, and their graded pieces are given by
$$ \begin{array}{lcr}
 \mathrm{soc}_i (V_{2,r}) / \mathrm{soc}_{i-1}(V_{2,r}) \simeq \bigoplus_{\theta \in \Upsilon(\widetilde{\Theta}) \atop \ell_\downarrow(\theta) = i} L(\theta), & &
 \mathrm{rad}_{i-1} (V_{2,r}) / \mathrm{rad}_{i} (V_{2,r}) \simeq \bigoplus_{\theta \in \Upsilon(\widetilde{\Theta}) \atop \ell^\uparrow(\theta) = i} L(\theta).
 \end{array}
 $$
\end{cor}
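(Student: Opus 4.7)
My plan is to build an explicit filtration $T_\bullet$ of $V_{2,r}$ with the claimed graded pieces, identify it with the socle filtration, and then treat the radical filtration by a dual argument. For $0 \leq i \leq L$, I set $T_i = \sum_{\theta \in \Upsilon(\widetilde{\Theta}),\ \ell_\downarrow(\theta) \leq i} V_\theta$. By Proposition~\ref{pro:coordinate.submodules.stable} each $T_i$ is a $G_2$-submodule, and $T_L = V_{(\Z/f\Z, q-1)} = V_{2,r}$ by Lemma~\ref{lem:paths}. For each $\theta$ with $\ell_\downarrow(\theta) = i$, every $\theta^\prime <_r \theta$ satisfies $\ell_\downarrow(\theta^\prime) < i$ (extend a longest downward path from $\theta^\prime$ by appending an edge $\theta \to \theta^\prime$), so $V_\theta^{-} := \sum_{\theta^\prime <_r \theta} V_{\theta^\prime} \subseteq T_{i-1}$. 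The image of $V_\theta$ in $T_i/T_{i-1}$ is therefore a quotient of $V_\theta/V_\theta^{-} \simeq L(\theta)$ by Lemma~\ref{pro:head}; in the exceptional case $\theta = (\Z/f\Z,\gamma)$ with $r - 2\gamma = q-1$, Lemma~\ref{lem:nonsplit.extension.exceptional} replaces that usage and yields the analogous conclusion once the compensating edges of $\Gamma_r$ are accounted for. These images generate $T_i/T_{i-1}$ by simple submodules, so the quotient is semisimple; a Jordan-H\"older count via Theorem~\ref{thm:intro.ramified} yields the claimed direct sum, and $T_i \subseteq \mathrm{soc}_i(V_{2,r})$ follows automatically.

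For the reverse inclusion $\mathrm{soc}_i \subseteq T_i$, I will induct on $i$, assuming $T_{i-1} = \mathrm{soc}_{i-1}$ and showing that $\mathrm{soc}(V_{2,r}/T_{i-1}) \subseteq T_i/T_{i-1}$. Suppose, for contradiction, that $L(\theta)$ occurs as a simple submodule of $V_{2,r}/T_{i-1}$ with $\ell_\downarrow(\theta) > i$. Choose an edge $\theta \to \theta^\prime$ in $\Gamma_r$ with $\ell_\downarrow(\theta^\prime) = \ell_\downarrow(\theta) - 1 \geq i$. By Theorem~\ref{thm:coordinate.submodules}, $T_{i-1}$ is spanned by those basis vectors $f_{(j_0, j_1)}$ whose type has $\ell_\downarrow \leq i-1$, so any generator of $V_{\theta^\prime}$ lies outside $T_{i-1}$; the image of $V_{\theta^\prime}$ in $V_{2,r}/T_{i-1}$ therefore contains $L(\theta^\prime)$ as a nonzero submodule. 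The non-split extension of $L(\theta)$ by $L(\theta^\prime)$ from Proposition~\ref{pro:adjacent.types} (respectively Lemma~\ref{lem:nonsplit.extension.exceptional}) then descends to a non-split length-two subquotient of $V_{2,r}/T_{i-1}$ with socle $L(\theta^\prime)$ and cosocle $L(\theta)$, contradicting the assumption that $L(\theta)$ is a simple submodule there. Hence $\ell_\downarrow(\theta) \leq i$, and combined with $L(\theta) \notin T_{i-1}$ this forces $\ell_\downarrow(\theta) = i$, so $L(\theta)$ lies in $T_i/T_{i-1}$.

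The radical filtration is handled by the symmetric argument: setting $R_i = \sum_{\theta \in \Upsilon(\widetilde{\Theta}),\ \ell^\uparrow(\theta) \geq i+1} V_\theta$ (or a suitable variant adapted to cosocles in place of socles), the same analysis using edges of $\Gamma_r$ now viewed from source toward sink identifies the successive quotients $\mathrm{rad}_{i-1}/\mathrm{rad}_i$ with $\bigoplus_{\ell^\uparrow(\theta) = i} L(\theta)$. The main obstacle throughout is the exceptional case $r - 2\gamma = q-1$, where $\Gamma_r$ genuinely departs from $\Gamma(\leq_r)$: one must deploy Lemma~\ref{lem:nonsplit.extension.exceptional} in place of Proposition~\ref{pro:adjacent.types} and verify that the subquotient built from $V_{(\Z/f\Z,\gamma)}$ really does obstruct the premature appearance of $L(\Z/f\Z, \gamma)$ in the socle filtration, despite $V_{(\Z/f\Z,\gamma)}$ lacking irreducible cosocle in this case; careful bookkeeping is also required when distinct types share a common Serre weight, with Theorem~\ref{thm:coordinate.submodules} providing the fine control needed to localize the obstruction to the specific copy at issue.
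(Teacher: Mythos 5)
Your plan — define $T_i = \sum_{\ell_\downarrow(\theta)\le i} V_\theta$, show the graded pieces are semisimple, and identify $T_\bullet$ with the socle filtration — is a sensible way to unpack the paper's terse assertion, but there are concrete problems in each of your three steps.

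In the forward inclusion, your claim that $\theta' <_r \theta$ forces $\ell_\downarrow(\theta') < \ell_\downarrow(\theta)$ (``append an edge $\theta\to\theta'$'') is false precisely in the exceptional case $r - 2\gamma = q - 1$: the edge $(\Z/f\Z,\gamma)\to(\varnothing,\gamma)$ has been \emph{removed} from $\Gamma_r$, and indeed the remark preceding the corollary records $\ell_\downarrow(\varnothing,\gamma) = \ell_\downarrow(\Z/f\Z,\gamma)$. Consequently $V_\theta^- \not\subseteq T_{i-1}$ for $\theta = (\Z/f\Z,\gamma)$, and the image of $V_\theta$ in $T_i/T_{i-1}$ has length two, not one. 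That image is still semisimple — but the justification is Theorem~\ref{thm:bardoe.sin.exceptional}, which exhibits $W_\gamma/\sum_{\alpha\prec\gamma}W_\alpha$ as a direct sum; Lemma~\ref{lem:nonsplit.extension.exceptional}, which you invoke, produces a \emph{non-split} extension and is the wrong tool at this point. You also need to verify that $\sum_{\alpha\prec\gamma}W_\alpha \subseteq T_{i-1}$, which uses the compensating edges $(\Z/f\Z,\gamma)\to(\Z/f\Z,\gamma\dotdiv p^m)$ together with weak monotonicity of $\ell_\downarrow$ along $\leq_r$ in $\Gamma_r$; none of this is spelled out.

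The reverse inclusion argument is the more serious gap. You suppose ``$L(\theta)$ occurs as a simple submodule of $V_{2,r}/T_{i-1}$ with $\ell_\downarrow(\theta) > i$,'' but a simple submodule determines only an isomorphism class of Serre weight, not a type $\theta$: the module $V_{2,r}$ is not multiplicity-free (cf.\ Propositions~\ref{pro:totally.ramified.odd} and~\ref{pro:totally.ramified.even}), so distinct types can give isomorphic $L(\theta)$, and nothing ties a stray simple submodule to a particular $\theta$. Furthermore, the assertion that ``the image of $V_{\theta'}$ in $V_{2,r}/T_{i-1}$ contains $L(\theta')$ as a nonzero submodule'' does not follow from Theorem~\ref{thm:coordinate.submodules}: that image has cosocle $L(\theta')$, not socle. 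And the claimed ``descent'' of the non-split extension of Proposition~\ref{pro:adjacent.types} — a subquotient $M/N$ of $V_{2,r}$ — to a non-split subquotient of $V_{2,r}/T_{i-1}$ is not automatic, because $N$ and $T_{i-1}$ are unrelated submodules; $(M+T_{i-1})/(N+T_{i-1})$ can very well split or collapse. You would need an additional argument, for instance working with explicit $T$-eigenvectors as in the proof of Lemma~\ref{lem:nonsplit.extension.exceptional}, to localize the obstruction.

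Finally, the radical filtration cannot be handled by a purely formal ``dual'' of the socle argument: the remark immediately after the corollary shows the radical filtration is genuinely \emph{not} the reversed socle filtration (e.g.\ $\ell^\uparrow(\varnothing,\gamma) \neq \ell^\uparrow(\Z/f\Z,\gamma)$ in the exceptional case even though the $\ell_\downarrow$ values agree), so the dual filtration $R_i$ you propose requires its own analysis of how the exceptional edges interact with the cosocle side, using Lemma~\ref{lem:nonsplit.extension.exceptional} and Corollary~\ref{cor:unique.maximal.submodule} rather than a reflected copy of the socle computation.
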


\begin{rem}
For the tame principal series $V_{1,r}$, the elements of the radical filtration are just the elements of the socle filtration in reverse~\cite[Corollary~7.1]{BS/00}.  This is false for $V_{2,r}$.  For instance, suppose that $\gamma \in \Nt$ satisfies $r - 2 \gamma = q - 1$.  Let $\theta^\prime = (I^\prime, \gamma^\prime)$ be any type with $\gamma^\prime \prec \gamma$.  Since $I(r - 2\gamma, 2(\gamma \dotdiv \gamma^\prime)) = \Z / f\Z$, we see by Proposition~\ref{pro:equality.of.orders} that $\theta^\prime <_r (\varnothing, \gamma)$.  Thus a vertex of $\Gamma_r$ lies below $(\varnothing, \gamma)$ if and only if it lies below $(\Z / f\Z, \gamma)$.  Hence $\ell_\downarrow (\varnothing, \gamma) = \ell_\downarrow(\Z / f\Z, \gamma)$, and these two Serre weights appear in the same graded piece of the socle filtration.  

This need not be true for the radical filtration.  For instance, if $r = q - 3$, then the only vertex above $(\Z / f\Z, q - 2)$ is $(\Z / f\Z, q - 1)$, so $\ell^\uparrow(\Z / f\Z, q - 2) = 1$.  By contrast, $(\varnothing, q - 2) <_r (\varnothing, q - 1)$, so $\ell^\uparrow(\varnothing, q - 2) \geq 2$.  Analogous examples are readily produced for other values of $r$.
\end{rem}

\subsection{Infinite submodule structure}
In this section we will show that if $E$ is an infinite field, then $V_{2,r}$ will, in general, contain infinitely many submodules.  An important exception occurs when $F/\Q_p$ is totally ramified; see Theorem~\ref{cor:finite.submodule.structure} below.  We will make use of the following classical criterion.

\begin{lem} \label{lem:jans.camillo}
Let $E$ be an infinite field, let $R$ be an $E$-algebra, and let $M$ be a left $R$-module.  Then $M$ has finitely many $R$-submodules if and only if the socle of $M/N$ is multiplicity-free for every $R$-submodule $N \subseteq M$.
\end{lem}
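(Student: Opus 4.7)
The plan is to prove the two directions of the equivalence separately; this is a classical result in the spirit of Jans and Camillo.

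For the forward direction, I argue the contrapositive. Suppose some quotient $M/N$ has socle which is not multiplicity-free. Then the semisimple module $\operatorname{soc}(M/N)$ contains $S \oplus S$ as a direct summand for some simple $R$-module $S$. The division algebra $D = \operatorname{End}_R(S)$ contains $E$ as a subfield, since $R$ is an $E$-algebra and so $E$ acts on $S$ by $R$-module endomorphisms via Schur. For each $\alpha \in D$ the graph
\[
\Gamma_\alpha = \{(s, \alpha(s)) : s \in S\} \subseteq S \oplus S \subseteq M/N
\]
is an $R$-submodule, and distinct $\alpha$ give distinct $\Gamma_\alpha$ (by comparing first coordinates). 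Since $E$ is infinite, so is $D$; this yields infinitely many submodules of $M/N$, which pull back via the correspondence theorem to infinitely many submodules of $M$.

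For the converse, I would induct on the length $\ell(M)$, which is finite in the intended application where $M = V_{2,r}$ is finite-dimensional over $E$. The base case $\ell(M) = 0$ is trivial. For the inductive step, multiplicity-freeness and finite length of $\operatorname{soc}(M)$ give a decomposition $\operatorname{soc}(M) = A_1 \oplus \cdots \oplus A_k$ into finitely many pairwise non-isomorphic simple submodules, which are precisely the simple submodules of $M$. Any nonzero submodule $N \subseteq M$ contains some $A_i$, since $\operatorname{soc}(N) \subseteq \operatorname{soc}(M)$ is nonzero. The correspondence theorem identifies $\{N : A_i \subseteq N \subseteq M\}$ with the submodule lattice of $M/A_i$; every quotient of $M/A_i$ is a quotient of $M$ and hence has multiplicity-free socle, so by induction (valid since $\ell(M/A_i) = \ell(M) - 1$) each $M/A_i$ has finitely many submodules. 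Taking the union over $i = 1, \ldots, k$ together with the zero submodule, we conclude that $M$ has finitely many submodules.

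The main subtlety is that the multiplicity-free socle hypothesis does not on its own bound $\ell(M)$, so a fully general formulation of the converse tacitly uses a finite-length assumption. In the paper's setting this is automatic since $V_{2,r}$ is finite-dimensional over $E$, so no genuine obstacle arises; the remainder of the argument is formal.
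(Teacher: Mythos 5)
Your proof is correct and, compared to the paper, takes a genuinely different (and more self-contained) route: the paper's entire argument is the one-line citation ``combine~\cite[Lemma~1.2]{Jans/57} with~\cite[Theorem~1]{Camillo/75},'' which passes through the intermediate notion of a \emph{distributive} module, i.e.\ one whose submodule lattice is distributive. Camillo's theorem characterizes distributive modules over algebras over infinite fields by the multiplicity-free-socle condition on all quotients, while Jans's lemma identifies distributivity with finiteness of the submodule lattice for finite-length modules. Your forward direction---using graphs of elements of $D=\End_R(S)$ to produce infinitely many submodules of $S\oplus S$, with $D$ infinite because it contains the infinite field $E$ acting centrally---is essentially the engine inside Camillo's argument written out directly; your converse replaces the lattice-theoretic step via distributivity with a transparent induction on $\ell(M)$, using that in a multiplicity-free socle any nonzero submodule of $M$ must contain one of the finitely many simple summands. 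Both steps are correct.

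Your remark about the tacit finite-length hypothesis is a genuine observation, not merely a hedge: as stated, the lemma's ``if'' direction is false. Take $R=E[x]$ with $E$ infinite and $M=\bigoplus_{i\geq 1}S_i$ for pairwise non-isomorphic simple $R$-modules $S_i$; every quotient of $M$ is semisimple and multiplicity-free, yet $M$ has infinitely many submodules. The ``only if'' direction is unconditionally fine since finitely many submodules forces $\ell(M)<\infty$, but the converse requires finite length as an additional hypothesis, presumably supplied by the precise statements in~\cite{Jans/57} and~\cite{Camillo/75}. In the intended application $M=V_{2,r}$ is finite-dimensional over $E$, so nothing is lost; still, the lemma's statement would be cleaner with an explicit finite-length assumption, and your proof would then be complete as written.
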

\begin{proof}
Combine~\cite[Lemma~1.2]{Jans/57} with~\cite[Theorem~1]{Camillo/75}.
\end{proof} 
 
The genericity hypothesis in the next claim could be removed with a suitable adjustment of the proof that would make it more involved and less transparent.
 
\begin{pro} \label{pro:infinite.submodule.structure}
Suppose that $e \geq 2$ and $f \geq 2$, i.e.~the extension $F / \Q_p$ is ramified but not totally ramified.  Let $r \in \Z / (q-1)\Z$ such that $1 \leq r_i \leq p - 2$ for all $i \in \Z / f\Z$.  If $E$ is an infinite field, then $V_{2,r}$ has infinitely many $\mathrm{GL}_2(\mathcal{O} / \mathfrak{m}^2)$-submodules.
\end{pro}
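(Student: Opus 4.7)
The plan is to invoke Lemma~\ref{lem:jans.camillo}: since $E$ is infinite, it is enough to exhibit a subquotient of $V_{2,r}$ isomorphic to $L \oplus L$ for some simple $G_2$-module $L$, because the family of lines inside such a copy of $L \oplus L$ then yields infinitely many pairwise distinct submodules of $V_{2,r}$.

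To produce a Jordan--H\"older multiplicity, take $\theta_1 = (\varnothing, 0)$, so Lemma~\ref{pro:head} and Remark~\ref{rem:serre.weight.data} give $L(\theta_1) \simeq \bigotimes_{j=0}^{f-1} (\mathrm{Sym}^{r_j} k^2)^{(j)}$.  Using $f \geq 2$, fix any $i \in \Z / f \Z$ and set $\gamma_2 = (r_i + 1 - p) p^i \in \Z / (q-1) \Z$ and $\theta_2 = (\{i\}, \gamma_2)$.  A direct base-$p$ digit calculation shows that $r - 2\gamma_2$ has digit $p - 2 - r_i$ at position $i$, digit $r_{i+1} + 1$ at position $i + 1$ (with the carry from position $i$ landing cleanly there because $f \geq 2$), and digit $r_j$ at each other position; all digits remain in $[0, p-1]$ by the genericity hypothesis, and Proposition~\ref{pro:characterize.admissible} then verifies that $\{i\}$ is $(r - 2\gamma_2)$-admissible, so $\theta_2$ is a valid type.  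Another application of Remark~\ref{rem:serre.weight.data} gives $L(\theta_2) \simeq L(\theta_1)$, because the twisting exponent $\gamma_2 + s_{\{i\}}(r - 2\gamma_2) = (r_i + 1 - p) p^i + (p - 1 - r_i) p^i$ vanishes in $\Z / (q-1) \Z$.  Since $\gamma_2 \neq 0$, the types $\theta_1$ and $\theta_2$ are distinct.

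The main obstacle is promoting this Jordan--H\"older multiplicity to an $L \oplus L$ subquotient.  By Theorem~\ref{thm:coordinate.submodules} and Proposition~\ref{pro:equality.of.orders}, $V_{\theta_1} \simeq L$ is contained in $\rad(V_{\theta_2})$ while $V_{\theta_2}$ has cosocle $L(\theta_2) \simeq L$.  My approach is to consider elements $v_\lambda = f_{(0,0)} + \lambda \cdot f_{(j_0, \gamma_2)}$ for $\lambda \in E$, where $f_{(j_0, \gamma_2)}$ is a basis element representing the type $\theta_2$, and to use the action formulas of Lemmas~\ref{lem:w.action.on.basis}--\ref{lem:upper.triangular.action.on.infinity} to show that the submodules $\langle G_2 \cdot v_\lambda \rangle$ differ for different $\lambda$; equivalently, one produces an appropriate submodule $N$ such that $V_{\theta_2}/N$ becomes a split length-two module $L \oplus L$.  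This splitting is the delicate point, and it reflects the essential role of the $f \geq 2$ hypothesis: when $f = 1$ no distinct $\theta_2$ with $L(\theta_2) \simeq L(\theta_1)$ exists, and in accordance with Theorem~\ref{cor:finite.submodule.structure} the principal series has only finitely many submodules in that case.  The hypothesis $e \geq 2$ is used throughout to apply the clean action formulas of \S\ref{sec:prelim.ramified}.
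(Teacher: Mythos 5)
Your Jordan--H\"older calculation is correct and in fact matches the paper's own: your $\gamma_2 = (r_i+1-p)p^i$ is precisely the quantity the paper writes as $\gamma(\{i\})$, and the digit bookkeeping showing $L(\{i\},\gamma_2) \simeq F(r,0) = L(\varnothing,0)$ is the same.  The genuine gap is the step you yourself flag as ``the delicate point,'' and it is not merely unfinished but strategically problematic.  Your two types $\theta_1=(\varnothing,0)$ and $\theta_2=(\{i\},\gamma_2)$ are \emph{comparable}: $\theta_1 <_r \theta_2$, so $V_{\theta_1} \subseteq \rad V_{\theta_2}$.  Consequently the two copies of $F(r,0)$ sit at different levels of $V_{\theta_2}$ with intermediate layers between them, and extracting a split $F(r,0)\oplus F(r,0)$ subquotient would require proving that some chain of extensions collapses.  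Since Proposition~\ref{pro:adjacent.types} shows that adjacent-type extensions in the ramified case are non-split outside a single exceptional case, this is a real obstacle, not a routine verification, and nothing in your write-up addresses it.

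The paper exploits $f\geq 2$ in a stronger way that avoids any extension analysis: it takes \emph{two distinct singletons} $j \neq j'$ (possible precisely because $f\geq 2$), giving two types $(\{j\},\gamma(\{j\}))$ and $(\{j'\},\gamma(\{j'\}))$, both with Serre weight $F(r,0)$, that are \emph{incomparable} under $\leq_r$ because $\gamma(\{j\})$ and $\gamma(\{j'\})$ are incomparable under $\preceq$ (each has a strictly smaller digit than the other at one of the positions $j$, $j'$).  Taking $M$ to be the sum of all $V_{(I,\gamma)}$ with $(I,\gamma)$ strictly below either of these two types, the images of $V_{(\{j\},\gamma(\{j\}))}$ and $V_{(\{j'\},\gamma(\{j'\}))}$ in $V_{2,r}/M$ are each a non-zero quotient of a cyclic module with irreducible cosocle $F(r,0)$, hence $\simeq F(r,0)$, and they intersect trivially since each $V_\theta$ and $M$ are spans of subsets of $\mathcal{B}_{2,r}$ and, by incomparability, neither generating basis vector lies in $M$ nor in the other's span together with $M$.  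Thus $F(r,0)\oplus F(r,0)$ lies in the socle of $V_{2,r}/M$, and Lemma~\ref{lem:jans.camillo} finishes.  Finally, your heuristic for the role of $f\geq 2$ is off: in the totally ramified case there \emph{are} distinct types carrying the Serre weight $\mathrm{Sym}^r E^2$ (Proposition~\ref{pro:totally.ramified.odd} lists three), but they form a chain under $\leq_r$; what $f\geq 2$ provides is incomparable types with the same Serre weight, and that is what drives the infinite submodule lattice.
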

\begin{proof}
By Lemma~\ref{lem:jans.camillo} it suffices to exhibit a quotient of $V_{2,r}$ whose socle fails to be multiplicity-free.
Let $\varnothing \neq J \subseteq \Z / f \Z$ and set
$ \gamma(J) = \sum_{i = 0}^{f-1} \gamma_i(J) p^i \in \Z / (q - 1) \Z = \Nt \setminus \{ 0 \}$, where
$$
\gamma_i (J) = \begin{cases}
0 &: i -1 , i \not\in J \\
- 1 &: i - 1 \in J, \, i \not\in J \\
r_i + 1 &: i - 1 \not\in J, \, i \in J \\
r_i &: i - 1, i \in J.
\end{cases}
$$
Under our assumptions on $r$, a simple calculation and substitution in Definition~\ref{def:serre.weight.sjr} verify that
$L(J, \gamma(J)) = \det^{\gamma(J)} \tensor \sigma_J(r - 2 \gamma(J)) = F(r,0) = L(\varnothing, 0)$, where the first equality is Lemma~\ref{pro:head}.
Consider the singleton sets $J = \{ j \}$ for $j \in \Z / f\Z$.  We find that 
$$\gamma(\{ j \}) = \sum_{i \in [f-1]_0 \setminus \{ j \} } (p-1) p^i + r_j p^j.$$
Let $j, j^\prime$ be distinct elements of $\Z / f\Z$, which exist since we assumed $f \geq 2$.  Then $\gamma(\{ j \})$ and $\gamma(\{ j^\prime \})$ are incomparable with respect to the partial order $\preceq$ on $\Nt$.  Now put 
$$M = \sum_{(I, \gamma) <_r (\{ j \}, \gamma(\{ j \})} V_{(I, \gamma)} + \sum_{(I, \gamma) <_r (\{ j^\prime \}, \gamma(\{ j^\prime \})} V_{(I, \gamma)}.$$
It is clear from Lemma~\ref{pro:head} that $L(\{ j \}, \gamma(\{ j \}) \oplus L(\{ j^\prime \}, \gamma(\{ j^\prime \}) \simeq F(r,0) \oplus F(r,0)$ injects into $V_{2,r} / M$, and this completes the proof.
\end{proof}

\subsection{An example: the totally ramified case}
To illustrate our results, in this section we discuss in detail the submodule structure of $V_{2,r}$ when $F / \Q_p$ is totally ramified.  In contrast to Proposition~\ref{pro:infinite.submodule.structure}, we will find that in this case $V_{2,r}$ always has finite submodule structure, and we will be able to give an explicit description of all submodules with irreducible cosocle; see Theorem~\ref{cor:finite.submodule.structure} below.

For the rest of this section, $k = \F_p$ and $e \geq 2$.  We assume $p \geq 5$ to avoid degenerate cases.  In this case the distinct Serre weights are precisely
$$ F(r+s,s) = \det\nolimits^s \tensor \Sym^r E^2$$
for $s \in \Z / (p-1)\Z$ and $r \in [p-1]_0$.  We usually drop $E^2$ from the notation for brevity.

There is a natural correspondence between elements of $\Nt$ and integers in the interval $[p-1]_0$.  Write $(\alpha) \in [p-1]_0$ for the integer corresponding to $\alpha \in \Nt$.  In this case, the order $\preceq$ on the monoid $\Nt$ is a total linear order: $0 \prec 1 \prec \cdots \prec p - 1$.  The filtration of \S\ref{sec:jh.filtration} is therefore a flag of subspaces $0 \subset W_1 \subset \cdots \subset W_{p-1} = V_{2,r}$, and its quotients are the tame principal series $W_i / W_{i-1} \simeq \det^i \tensor V_{1,r - 2i}$.  If $r - 2i \neq q - 1$, then this is
the unique non-split extension
$$ 0 \to \det\nolimits^i \tensor \mathrm{Sym}^{( r-2i ) } E^2 \to W_i / W_{i-1} \to \det\nolimits^{r-i} \tensor \mathrm{Sym}^{ ( p-1-r+2i )} E^2 \to 0.$$
The short exact sequence above is a special case of Theorem~\ref{thm:bardoe.sin.generic}; the uniqueness of the extension is~\cite[Corollary~5.6]{BP/12}.
If $r - 2i = p - 1 \in \Nt$, then $W_i / W_{i-1} \simeq \det\nolimits^i \oplus \det\nolimits^i \mathrm{Sym}^{p-1} E^2$.  

We conclude that all the pairs $(I, \gamma)$ with $I \subseteq \{ 0 \}$ and $\gamma \in \Nt$ lie in the image of the correspondence of Proposition~\ref{cor:types.as.pairs}.  This observation readily identities the Jordan-H\"older constituents of $V_{2,r}$ and their multiplicities; in particular, $V_{2,r}$ has exactly $2p$ Jordan-H\"older constituents.    
Moreover, the partial order $\leq_r$ is easily made explicit using the criterion of Proposition~\ref{pro:equality.of.orders}; alternatively, it can readily be worked out directly from Definition~\ref{def:covering.relations.ramified}.  Given $r \in \Z / (p-1)\Z = \Nt \setminus \{ 0 \}$, define $\lfloor \frac{r}{2} \rfloor$ to be the least element $\alpha \in \Nt$, with respect to $\preceq$, satisfying $2 \alpha \prec r$ but not $2 (\alpha + 1) \prec r$.  Note that the parity of $r$ is well-defined, since $p > 2$, and that $\lfloor \frac{r}{2} \rfloor$ is $\frac{r-1}{2}$ if $r$ is odd and $\frac{r}{2} - 1$ if $r$ is even, so this is a non-standard use of the floor notation.  However, it is convenient for our purposes.

\begin{lem}
Let $F/\Q_p$ be a non-trivial totally ramified extension, and let $(I^\prime, \gamma)$ and $(I, \gamma)$ be two types.  Then $(I^\prime, \gamma^\prime) \leq_r (I,\gamma)$ if and only if $\gamma^\prime \preceq \gamma$ and (at least) one of the following conditions holds:
\begin{itemize}
\item $I^\prime \subseteq I$;
\item There exists $\alpha \in \left\{ \lfloor \frac{r}{2} \rfloor, \lfloor \frac{r}{2} \rfloor + \frac{p-1}{2} \right\}$ such that $\gamma^\prime \preceq \alpha \prec \gamma$.
\end{itemize}
\end{lem}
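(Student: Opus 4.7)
The plan is to reduce to the closed-form criterion of Proposition~\ref{pro:equality.of.orders}. Since $F / \Q_p$ is totally ramified, $f = 1$, so every carry set is a subset of $\{0\} = \Z / f\Z$; by Lemma~\ref{lem:carry.set.def} we have $I(\alpha, \beta) = \{0\}$ if and only if $\alpha_0 + \beta_0 \geq p$, where $\alpha_0, \beta_0 \in [p-1]_0$ are the digits of $\alpha, \beta \in \Nt$ (equivalently, their integer representatives in $\{0, 1, \ldots, p-1\}$). Assuming $\gamma^\prime \preceq \gamma$, so that $\gamma - \gamma^\prime \in \{0, 1, \ldots, p-1\}$, Proposition~\ref{pro:equality.of.orders} reduces the lemma, in the non-trivial case $I^\prime = \{0\}$, to showing that the disjunction $I(r - 2\gamma, 2(\gamma \dotdiv \gamma^\prime)) = \{0\}$ or $I(\gamma \dotdiv \gamma^\prime, \gamma \dotdiv \gamma^\prime) = \{0\}$ is equivalent to the existence of $\alpha \in \{M, M + (p-1)/2\}$ (where $M = \lfloor r/2 \rfloor$) with $\gamma^\prime \preceq \alpha \prec \gamma$. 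Writing $\delta = \gamma - \gamma^\prime$, the second disjunct is easy: $I(\delta, \delta) = \{0\}$ if and only if $2\delta \geq p$, i.e.~$\delta \geq (p+1)/2$.

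The core step is an explicit formula for the digit $A(\gamma) \in \{1, \ldots, p-1\}$ of $r - 2\gamma \in \Nt$, for $\gamma \in \{0, \ldots, p-1\}$. A direct case analysis, tracking when $2\gamma$ exceeds $p - 1$ and wraps around to $2\gamma - (p-1)$ in $\Nt$, and when the corresponding difference with $r$ becomes non-positive in integers and similarly wraps, yields the uniform formula
\[
A(\gamma) = r - 2\gamma + k(\gamma)(p - 1), \qquad k(\gamma) := \bigl| \{\alpha \in \{M, M + (p-1)/2\} : \alpha < \gamma \} \bigr|,
\]
where the two ``jump thresholds'' $M + 1$ and $M + (p-1)/2 + 1$, at which $k(\gamma)$ increments, are precisely the values of $\gamma$ where the definition of $\lfloor r/2 \rfloor$ forces an extra summand of $p - 1$ to keep $A(\gamma)$ inside $\{1, \ldots, p-1\}$. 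Since $(r - 2\gamma) + 2\delta = r - 2\gamma^\prime$ as elements of $\Nt$, the integers $A(\gamma) + 2\delta$ and $A(\gamma^\prime)$ agree modulo $p - 1$, with the former exceeding the latter by exactly $p - 1$ whenever a carry occurs. Provided $2\delta \leq p - 1$, this translates to the statement: $I(r - 2\gamma, 2\delta) = \{0\}$ if and only if $k(\gamma) - k(\gamma^\prime) = 1$.

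To conclude, split on $\delta$. If $\delta \geq (p+1)/2$, then $I(\delta, \delta) = \{0\}$ and the left-hand side of the equivalence holds automatically; moreover, since $\gamma^\prime \geq 0$ and $\gamma \leq p - 1$, the complement of $[\gamma^\prime, \gamma)$ in $\{0, \ldots, p-1\}$ has at most $(p-1)/2$ elements, and a short enumeration of the possible tail configurations shows that it cannot contain both $M$ and $M + (p-1)/2$ (which differ by $(p-1)/2$), so the right-hand side also holds. If instead $\delta \leq (p-1)/2$, then $I(\delta, \delta) = \varnothing$ and at most one of $M, M + (p-1)/2$ lies in $[\gamma^\prime, \gamma)$; thus $k(\gamma) - k(\gamma^\prime) \in \{0, 1\}$ and equals $1$ exactly when this interval meets $\{M, M + (p-1)/2\}$, which matches the right-hand side. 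The main obstacle is the explicit formula for $A(\gamma)$, whose verification requires careful bookkeeping of the wrap-arounds in $\Nt$; the subsequent counting argument in the first case and the translation in the second are routine.
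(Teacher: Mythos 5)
Your proof is correct, and it follows the same overall strategy as the paper: reduce to Proposition~\ref{pro:equality.of.orders}, note that $I'$ has at most one element, and translate the two carry-set conditions into inequalities on integer digits. Where the two proofs diverge is in how the integer comparison is handled. The paper asserts, without detail, that $I(r - 2\gamma, 2(\gamma \dotdiv \gamma'))$ is nonempty if and only if $(r-2\gamma)_0 + 2\delta > p - 1$ as integers, deduces from this that $I(\gamma \dotdiv \gamma', \gamma \dotdiv \gamma') \neq \varnothing$ is always subsumed by the other disjunct, and then calls the equivalence with the second alternative ``easily seen.'' That intermediate biconditional actually fails in the backward direction once $2\delta \geq p$: take $p = 5$, $r = 1$, $\gamma' = 1$, $\gamma = 4$, so $\delta = 3$; then $(r - 2\gamma)_0 = 1$ and $(2\delta)_0 = 2$, giving $I(r - 2\gamma, 2\delta) = I(1,2) = \varnothing$ even though $1 + 6 > 4$, while $I(\delta,\delta) = I(3,3) = \{0\}$. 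So neither the subsumption nor the ``iff $I(r-2\gamma, 2\delta) \neq \varnothing$'' form of the conclusion is literally true; what is true, and what both your argument and the paper's intent really establish, is that the \emph{union} of the two carry sets is nonempty exactly when $(r-2\gamma)_0 + 2\delta > p - 1$, which in turn is equivalent to the second alternative. Your explicit formula $A(\gamma) = r - 2\gamma + k(\gamma)(p-1)$ and the clean split $\delta \leq (p-1)/2$ versus $\delta \geq (p+1)/2$ make this precise and sidestep the issue entirely: in the first regime $(2\delta)_0 = 2\delta$ and the carry is detected by $k(\gamma) - k(\gamma') = 1$, while in the second regime $I(\delta,\delta) = \{0\}$ already forces the union nonempty, and your counting argument shows the interval $[\gamma', \gamma)$ must then contain one of the two thresholds. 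The formula for $A(\gamma)$ does require the wrap-around bookkeeping you flag, but it checks out (verify at the two jump points $\gamma = M+1$ and $\gamma = M + \tfrac{p-1}{2} + 1$, where $A$ goes from $1$ or $2$ to $p-2$ or $p-1$, and note $M \leq (p-3)/2$ so both thresholds lie in $[0, p-2]$). In short: your proof is correct, somewhat longer, and in one respect more careful than the paper's.
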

\begin{proof}
Since $I^\prime$ has at most one element, it is immediate from Proposition~\ref{pro:equality.of.orders} that $(I^\prime, \gamma^\prime) \leq_r (I,\gamma)$ if and only if $\gamma^\prime \preceq \gamma$ and one of the following holds:
\begin{itemize} 
\item $I^\prime \subseteq I$;
\item $I^\prime \subseteq I(r-2\gamma, 2(\gamma \dotdiv \gamma^\prime)) \cup I(\gamma \dotdiv \gamma^\prime, \gamma \dotdiv \gamma^\prime)$.
\end{itemize}  
The second option holds precisely when $I(r-2\gamma, 2(\gamma \dotdiv \gamma^\prime)) \cup I(\gamma \dotdiv \gamma^\prime, \gamma \dotdiv \gamma^\prime) \neq \varnothing$.  

Clearly, $I(\gamma \dotdiv \gamma^\prime, \gamma \dotdiv \gamma^\prime) \neq \varnothing$ if and only if $\gamma^\prime + \frac{p-1}{2} \prec \gamma$.  On the other hand, we see that $I(r-2\gamma, 2(\gamma \dotdiv \gamma^\prime)) \neq \varnothing$ if and only if $(r - 2\gamma) + (2(\gamma \dotdiv \gamma^\prime)) > p - 1$ as integers.  Thus $I(\gamma \dotdiv \gamma^\prime, \gamma \dotdiv \gamma^\prime) \neq \varnothing$ implies $I(r-2\gamma, 2(\gamma \dotdiv \gamma^\prime)) \neq \varnothing$, whereas the condition $I(r-2\gamma, 2(\gamma \dotdiv \gamma^\prime)) \neq \varnothing$ is easily seen to be equivalent to the second of the two alternatives in our claim.
\end{proof}

The structure of $V_{2,r}$ is more uniform if $r$ is odd, since the exceptional case $r - 2\gamma = p - 1$ does not arise.  For this reason, we treat the odd case first.

\begin{pro} \label{pro:totally.ramified.odd}
Let $F / \Q_p$ be a non-trivial totally ramified extension, and suppose that $r \in \Z / (p-1)\Z$ is odd.  Then
$V_{2,r}$ has $2p$ Jordan-H\"older constituents.  Moreover,
\begin{enumerate}[label=(\alph*)]
\item 
The Serre weight $\mathrm{Sym}^r E^2$ appears with multiplicity three, as $L(\varnothing, 0)$, as $L(\{ 0 \}, r)$, and as $L(\varnothing, p-1)$.
\item
The Serre weight $\det^r \tensor \mathrm{Sym}^{p-1-r} E^2$ appears with multiplicity three, as $L(\{ 0 \}, 0)$, as $L(\varnothing, r)$, and as $L(\{ 0 \}, p - 1)$.
\item
For each of the $p-3$ elements $i \in \Nt \setminus \{ 0, r, p - 1 \}$, the Serre weight $\det^i \tensor \mathrm{Sym}^{(r - 2i)} E^2$ appears with multiplicity two, as $L(\varnothing, i)$ and as $L(\{ 0 \}, r - i)$.  
\end{enumerate}
\end{pro}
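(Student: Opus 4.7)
The total of $2p$ Jordan-H\"older constituents (counted with multiplicity) is already in hand: the discussion preceding the proposition shows that for $f = 1$ the map $\Upsilon$ of Proposition~\ref{cor:types.as.pairs} identifies $\widetilde{\Theta}$ with the full set of pairs $(I, \gamma)$ with $I \subseteq \{0\}$ and $\gamma \in \Nt$, so $|\widetilde{\Theta}| = 2p$. The task that remains is to identify the Serre weight $L(I, \gamma)$ attached to each type and catalogue the coincidences among them.

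The plan is to combine Lemma~\ref{pro:head} with the explicit recipe of Remark~\ref{rem:serre.weight.data}. For $f = 1$, writing $(r')$ for the integer representative in $\{0, 1, \dots, p - 1\}$ of $r' \in \Nt$, the recipe collapses to
$$\sigma_\varnothing(r') = \Sym^{(r')} E^2, \qquad \sigma_{\{0\}}(r') = \det\nolimits^{(r')} \tensor \Sym^{p - 1 - (r')} E^2.$$
Since $r$ is odd and $p - 1$ is even, the congruence $2\gamma \equiv r \pmod{p - 1}$ admits no solution, so $(r - 2\gamma) \in \{1, \dots, p - 2\}$ for every $\gamma \in \Nt$. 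In particular the exceptional case $r - 2\gamma = q - 1$ of Lemma~\ref{pro:head} is avoided throughout, and one obtains the uniform formulas
$$L(\varnothing, \gamma) = \det\nolimits^\gamma \tensor \Sym^{(r - 2\gamma)} E^2, \qquad L(\{0\}, \gamma) = \det\nolimits^{r - \gamma} \tensor \Sym^{p - 1 - (r - 2\gamma)} E^2,$$
where I have used $\gamma + (r - 2\gamma) \equiv r - \gamma \pmod{p - 1}$ to simplify the second.

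Given these closed forms, the three claims become elementary comparisons of exponents. I would verify (a) and (b) by evaluating at the three special values $\gamma \in \{0, r, p - 1\}$: the coincidences arise because $0$ and $p - 1$ are distinct in $\Nt$ (hence label distinct types) but coincide modulo $p - 1$ (hence yield the same determinantal twist), while $\gamma = r$ produces the matching Serre weight via $r - 2r \equiv -r \pmod{p - 1}$. For (c), the identity $L(\varnothing, i) = L(\{0\}, r - i)$ reduces to the elementary fact $(r - 2i) + (2i - r) = p - 1$, valid in $\Z/(p-1)\Z \setminus \{0\}$.

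The one step that requires genuine care is ensuring that the multiplicities sum to exactly $2p$ with no spurious extra collisions. The key combinatorial observation is that the involution $i \mapsto r - i$ acts on $\Nt \setminus \{0, r, p - 1\}$ without fixed points, since a fixed point would force $2i \equiv r \pmod{p - 1}$, which as above is impossible. Hence the $p - 3$ types in (c) pair up into $(p - 3)/2$ orbits, each contributing a distinct Serre weight of multiplicity $2$, and the total count is $3 + 3 + 2(p - 3) = 2p$, accounting for every Jordan-H\"older constituent.
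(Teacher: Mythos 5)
Your proof is correct in substance and follows exactly the paper's route: invoke Lemma~\ref{pro:head}, observe that $\Upsilon$ is surjective when $f=1$, and note that the exceptional case $r - 2\gamma = q - 1$ never occurs because $r$ is odd. Your explicit closed forms for $L(\varnothing,\gamma)$ and $L(\{0\},\gamma)$ and the substitutions for (a), (b), (c) are all right.

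However, your final paragraph on ``the one step that requires genuine care'' is internally inconsistent and, as stated, false. You write that the $p-3$ elements pair into $(p-3)/2$ orbits, ``each contributing a distinct Serre weight of multiplicity $2$,'' which would yield only $(p-3)/2$ Serre weights and hence $p-3$ constituents from part~(c), for a total of $p+3$, not $2p$. In fact each of the $p-3$ values of $i$ (not each orbit) contributes its own distinct weight $\det^i \otimes \Sym^{(r-2i)}$, realized by the two distinct \emph{types} $(\varnothing, i)$ and $(\{0\}, r-i)$, accounting for $2(p-3)$ types and giving the correct total $3 + 3 + 2(p-3) = 2p$. The fixed-point-free-ness of $i \mapsto r - i$ plays no role in the multiplicity count; what matters (and is easy to verify) is that the involution stabilizes $\Nt \setminus \{0, r, p-1\}$ so the types in (c) are disjoint from those in (a) and (b), and that the $p-1$ listed Serre weights are pairwise non-isomorphic --- a routine comparison of the $\det$-twist modulo $p-1$.
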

\begin{proof}
This is immediate from Lemma~\ref{pro:head} and the observation above that the correspondence of Proposition~\ref{cor:types.as.pairs} is surjective in our case, noting that $r - 2i \neq p - 1$ for all $i \in \Nt$ as $r$ is odd.
\end{proof}

The set of types in the case of odd $r$, together with the partial order $\leq_r$ and the corresponding Serre weights $L(I,\gamma)$, are illustrated in Figure~\ref{fig:jh.graph}.  An arrow has been drawn from $L(I,\gamma)$ to $L(I^\prime, \gamma^\prime)$ when $(I^\prime, \gamma^\prime) \leq_r (I, \gamma)$ are adjacent types; by Proposition~\ref{pro:adjacent.types}, a non-split extension of these Serre weights appears as a subquotient of $V_{2,r}$.

    \begin{figure} 
    \begin{tikzcd}
    L(\varnothing, p-1) = \Sym^r \arrow[d] & \arrow[l] \det\nolimits^r \tensor \Sym^{p-1-r} = L(\{ 0 \}, p - 1) \arrow[d] \\
    L(\varnothing, p - 2) = \det\nolimits^{p-2} \tensor \Sym^{r+2}  \arrow[d] & \arrow[l] \det\nolimits^{r+1} \tensor \Sym^{p-3-r} = L(\{ 0 \}, p - 2) \arrow[d] \\
    \vdots & \vdots \\
    L(\varnothing, \frac{p+r}{2} + 1) = \det\nolimits^{\frac{p+r}{2}+1} \tensor \Sym^{p-4} \arrow[d] & \arrow[l] \det\nolimits^{\frac{p+r}{2} - 2} \tensor \Sym^3 = L(\{ 0 \}, \frac{p+r}{2} + 1) \arrow[d] \\
    L(\varnothing, \frac{p+r}{2}) = \det\nolimits^{\frac{p+r}{2}} \tensor \Sym^{p-2} \arrow[rd] & \arrow[l] \det\nolimits^{\frac{p+r}{2} - 1} \tensor \Sym^1 = L(\{ 0 \}, \frac{p+r}{2}) \\
    L(\varnothing, \frac{p+r}{2} - 1) = \det\nolimits^{\frac{p+r}{2} - 1} \tensor \Sym^1 \arrow[d] & \arrow[l] \det\nolimits^{\frac{p+r}{2}} \tensor \Sym^{p-2} = L(\{ 0 \}, \frac{p+r}{2} - 1) \arrow[d] \\
    \vdots & \vdots \\
    L(\varnothing, \frac{r+1}{2}) = \det\nolimits^{\frac{r+1}{2}} \tensor \Sym^{p-2} \arrow[rd] & \arrow[l] \det\nolimits^{\frac{r-1}{2}} \tensor \Sym^1 = L(\{ 0 \}, \frac{r+1}{2}) \\
    L(\varnothing, \frac{r-1}{2}) = \det\nolimits^{\frac{r-1}{2}} \tensor \Sym^1 \arrow[d] & \arrow[l] \det\nolimits^{\frac{r+1}{2}} \tensor \Sym^{p-2} = L(\{ 0 \}, \frac{r-1}{2}) \arrow[d] \\
    L(\varnothing, \frac{r-3}{2}) = \det\nolimits^{\frac{r-3}{2}} \tensor \Sym^3 \arrow[d] & \arrow[l] \det\nolimits^{\frac{r+3}{2}} \tensor \Sym^{p-4} = L(\{ 0 \}, \frac{r-3}{2}) \arrow[d] \\
    \vdots & \vdots \\
    L(\varnothing, 1) = \det \tensor \Sym^{r-2} \arrow[d] & \arrow[l] \det\nolimits^{r-1} \tensor \Sym^{p+1-r} = L(\{ 0 \}, 1) \arrow[d] \\
    L(\varnothing, 0) = \Sym^r & \arrow[l] \det\nolimits^r \tensor \Sym^{p-1-r} = L(\{ 0 \}, 0)
    \end{tikzcd}
    \caption{Submodule structure of $V_{2,r}$ for $F / \Q_p$ totally ramified, $r$ odd}
    \label{fig:jh.graph}
    \end{figure}
    
 The analogous statement for even $r$ is very similar, but the numerology must be adjusted to make allowance for the exceptional case $r - 2\gamma = 0$, in which case $L(\varnothing, \gamma)$ is always $p$-dimensional and $L(\{ 0 \}, \gamma)$ is one-dimensional.  As in the case of odd $r$ above, Lemma~\ref{pro:head} implies the following listing of Jordan-H\"older constituents and their multiplicities.
    
    \begin{pro} \label{pro:totally.ramified.even}
    Let $F / \Q_p$ be a non-trivial totally ramified extension, and suppose that $r \in \Z / (p-1)\Z$ is even.  Then
$V_{2,r}$ has $2p$ Jordan-H\"older constituents. 
\begin{enumerate}[label=(\alph*)]
\item
If $r \neq p -1$, then
\begin{enumerate}[label={(\roman*)}]
\item
The Serre weight $\mathrm{Sym}^r$ appears with multiplicity three, as $L(\varnothing, 0)$, as $L(\{ 0 \}, r)$, and as $L(\varnothing, p - 1)$.
\item
The Serre weight $\det^r \tensor \Sym^{p-r-1}$ appears with multiplicity three, as $L(\{ 0 \}, 0)$, as $L(\varnothing, r)$, and as $L(\{ 0 \}, p - 1)$.
\item
There are exactly two elements $\gamma \in \Nt$ satisfying $r - 2\gamma = p - 1$.  For each such $\gamma$, the Serre weight $\det^\gamma \tensor \Sym^{p-1}$ appears with multiplicity one as $L(\varnothing, \gamma)$, and the Serre weight $\det^\gamma = \det^{r - \gamma}$ appears with multiplicity one as $L(\{ 0 \}, \gamma)$.
\item
For each of the $p - 5$ elements $i \in \Nt \setminus \{ 0, r, p-1 \}$ such that $r - 2i \neq p - 1$, the Serre weight $\det^i \tensor \Sym^{(r-2i)}$ appears with multiplicity two, as $L(\varnothing, i)$ and as $L(\{ 0 \}, r - i)$.
\end{enumerate}
\item
If $r = p - 1$, then
 \begin{enumerate}[label={(\roman*)}]
 \item
 The Serre weight $\Sym^{p-1}$ appears with multiplicity two, as $L(\varnothing, 0)$ and $L(\varnothing, p - 1)$.
 \item
 The trivial Serre weight $\mathbf{1}$ appears with multiplicity two, as $L(\{ 0 \}, 0)$ and $L(\{ 0 \}, p - 1)$.
 \item
 The Serre weights $\det^{\frac{p-1}{2}} \tensor \Sym^{p-1}$ and $\det^{\frac{p-1}{2}}$ each appear with multiplicity one, as $L(\varnothing, \frac{p-1}{2})$ and $L(\{ 0 \}, \frac{p-1}{2})$, respectively.
 \item
 For each of the $p - 3$ elements $i \in \Nt \setminus \left\{ 0, \frac{p-1}{2}, p - 1 \right\}$, the Serre weight $\det^i \tensor \Sym^{(r-2i)}$ appears with multiplicity two, as $L(\varnothing, i)$ and as $L(\{ 0 \}, r - i)$.
 \end{enumerate}
\end{enumerate}
    \end{pro}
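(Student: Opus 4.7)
The plan is to mimic the proof of Proposition~\ref{pro:totally.ramified.odd}, tracking the additional complications introduced by the case $r - 2\gamma = p-1$, which cannot occur when $r$ is odd.  Since $F / \Q_p$ is totally ramified and non-trivial we have $f = 1$, so $\Z / f\Z = \{ 0 \}$ and every type is of the form $(I, \gamma)$ with $I \in \{ \varnothing, \{ 0 \} \}$ and $\gamma \in \Nt = \{ 0, 1, \dots, p-1 \}$.  By the remarks preceding Proposition~\ref{pro:totally.ramified.odd}, all $2p$ such pairs lie in the image of $\Upsilon$ of Proposition~\ref{cor:types.as.pairs}, which accounts for the stated number of Jordan-H\"older constituents.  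The Serre weight $L(I,\gamma) \simeq \det^\gamma \tensor \sigma_I(r - 2\gamma)$ provided by Lemma~\ref{pro:head} is then computed from $\sigma_\varnothing(s) = \Sym^s$, $\sigma_{\{ 0 \}}(s) = \det^s \tensor \Sym^{p-1-s}$ for $s \in \Nt \setminus \{ 0, p-1 \}$, and $\sigma_{\{ 0 \}}(p-1) = \mathbf{1}$, all read off from Definition~\ref{def:serre.weight.sjr}.

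The decisive preliminary step is the explicit identification of the exceptional set $E_r = \{ \gamma \in \Nt : r - 2 \gamma = p - 1 \}$, on which the formula $L(\{ 0 \}, \gamma) = \det^\gamma$ replaces its generic counterpart.  In case~(a), solving $2 \gamma \equiv r \pmod{p-1}$ inside $\Nt$ yields $E_r = \{ r/2, \, r/2 + (p-1)/2 \}$, and both values lie in $\Nt \setminus \{ 0, r, p-1 \}$.  At these two values of $\gamma$ one reads off the four pairwise distinct weights of~(iii), each of dimension $1$ or $p$.  Away from $E_r$, the substitutions $\gamma \in \{ 0, r, p-1 \}$ yield the threefold copies of $\Sym^r$ and $\det^r \tensor \Sym^{p-1-r}$ asserted in~(i) and~(ii), while for each of the remaining $i \in \Nt \setminus (E_r \cup \{ 0, r, p-1 \})$ a routine computation in $\Z / (p-1)\Z$ confirms $L(\varnothing, i) \simeq L(\{ 0 \}, r - i)$, establishing~(iv).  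The count $3 + 3 + 4 + 2(p-5) = 2p$ then verifies that every isomorphism has been accounted for.

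In case~(b), where $r = p-1$, the set $E_r$ enlarges to $\{ 0, (p-1)/2, p-1 \}$.  The contributions at $\gamma = 0, p-1$ supply the twofold copies of $\Sym^{p-1}$ and $\mathbf{1}$ in~(i) and~(ii), while $\gamma = (p-1)/2$ supplies the two singleton weights of~(iii); the remaining $p - 3$ indices behave exactly as in~(iv) of case~(a), with the role of $r-i$ played by $p - 1 - i$, and the tally $2+2+2+2(p-3) = 2p$ closes the argument.  The main difficulty is purely bookkeeping, namely verifying that weights arising in the different parts are genuinely distinct (which is immediate from their dimensions) and that the multiplicities sum to $2p$ in each case; no new conceptual ingredient beyond those of the odd case is required.
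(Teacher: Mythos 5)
Your proposal is correct and follows the same route as the paper, which proves this result in the single sentence preceding the statement: apply Lemma~\ref{pro:head} to compute $L(I,\gamma) = \det^\gamma \tensor \sigma_I(r - 2\gamma)$ for each of the $2p$ types and group the resulting Serre weights, exactly as for odd $r$, with the only extra work being the identification of the exceptional set $\{\gamma : r - 2\gamma = p-1\}$ and the substitution of the one- and $p$-dimensional weights there. Your explicit determination of that set in each subcase and the multiplicity bookkeeping are accurate.
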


    \begin{lem} \label{pro:totally.ramified.either.or}
Let $F / \Q_p$ be a non-trivial totally ramified extension, suppose we are given $r \in \Z / (p-1)\Z$, and let $\alpha \in \left\{ \lfloor \frac{r}{2} \rfloor, \lfloor \frac{r}{2} \rfloor + \frac{p-1}{2} \right\}$.  Let $M \subseteq V_{2,r}$ be a $\mathrm{GL}_2(\mathcal{O} / \mathfrak{m}^2)$-submodule.  Then either $M \subseteq W_\alpha$ or $W_\alpha \subset M$.
\end{lem}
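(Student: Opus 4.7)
The plan is to establish the contrapositive: assuming $M \not\subseteq W_\alpha$, I will show $W_\alpha \subseteq M$. The crucial structural input is immediate from Proposition~\ref{pro:equality.of.orders}: for any type $(I, \gamma)$ with $\gamma \succ \alpha$, taking $\alpha^\ast = \alpha$ in the second alternative shows that $(\{0\}, \alpha) \leq_r (I, \gamma)$, hence by Theorem~\ref{thm:coordinate.submodules} we obtain $W_\alpha = V_{(\{0\}, \alpha)} \subseteq V_{(I, \gamma)}$. In particular, the cyclic submodule generated by any single basis element $f_{(j_0, \gamma)}$ with $\gamma \succ \alpha$ already contains $W_\alpha$. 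If $M \not\subseteq W_\alpha$, let $\gamma^\ast \succ \alpha$ denote the maximum value of $j_1$ appearing in some element of $M$ with respect to the basis $\mathcal{B}_{2,r}$. The image of $M$ in the graded piece $W_{\gamma^\ast}/W_{\gamma^\ast - 1} \simeq \det^{\gamma^\ast} \otimes V_{1, r - 2\gamma^\ast}$ from Proposition~\ref{pro:filtration} is then a nonzero $G_2$-submodule of a tame principal series, which by Theorems~\ref{thm:bardoe.sin.generic} and~\ref{thm:bardoe.sin.exceptional} must contain at least the socle, and in particular a preimage of the class $f_{(\infty, \gamma^\ast)}$. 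Hence we may fix an element $m = f_{(\infty, \gamma^\ast)} + z \in M$ with $z \in W_{\gamma^\ast - 1}$.

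Next I would apply the diagonal Vandermonde-type operator $T_{\gamma^\ast - \alpha} = -\sum_{d_0 \in k} d_0^{(q-1) \dotdiv (\gamma^\ast - \alpha)} \bigl(\begin{smallmatrix} 1 & 0 \\ 0 & 1 + [d_0]\varpi \end{smallmatrix}\bigr)$ from~\eqref{equ:diagonal.explicit} to the element $m$. Using the explicit description of $D_2 \cdot f_{(\infty, \gamma^\ast)}$ from Lemma~\ref{lem:diagonal.action.on.basis} together with the computation carried out in the proof of Proposition~\ref{pro:adjacent.types}, one obtains $T_{\gamma^\ast - \alpha}(m) \in M \cap W_\alpha$ whose image in $W_\alpha/W_{\alpha - 1} \simeq \det^\alpha \otimes V_{1, r - 2\alpha}$ is a nonzero scalar multiple of the basis element $f_{(r - \gamma^\ast - \alpha, \alpha)}$ (all contributions from $z$ land at levels $\prec \alpha$). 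The heart of the argument is then a short computation showing that in the special cases $\alpha \in \{ \lfloor r/2 \rfloor, \lfloor r/2 \rfloor + (p-1)/2 \}$ with $\gamma^\ast \succ \alpha$, the representative of $r - \gamma^\ast - \alpha$ in $\{1, \dots, p-1\}$ is obtained only after wraparound modulo $p - 1$, so that the integer sum $(r - \gamma^\ast - \alpha) + (\gamma^\ast - \alpha)$ exceeds $p - 1$. This forces the carry set $I(r - \gamma^\ast - \alpha, \gamma^\ast - \alpha) = \{0\}$, and Theorem~\ref{thm:bardoe.sin.generic} then tells us that $f_{(r - \gamma^\ast - \alpha, \alpha)}$ generates all of $V_{1, r - 2\alpha}$ in the tame principal series.

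To conclude, observe that $r - 2\alpha \in \{1, 2\}$ by the explicit formulas for $\lfloor r/2 \rfloor$, so $r - 2\alpha \neq q - 1$ since $p \geq 5$; hence Corollary~\ref{cor:unique.maximal.submodule} applies to $W_\alpha$, which therefore has a unique maximal submodule with irreducible cosocle $L(\{0\}, \alpha)$. The submodule $N \subseteq M$ generated by $T_{\gamma^\ast - \alpha}(m)$ is contained in $W_\alpha$ and surjects onto $W_\alpha/W_{\alpha-1}$, hence maps onto the cosocle $L(\{0\}, \alpha)$; it therefore cannot lie in the unique maximal submodule, so $N = W_\alpha$ and $W_\alpha \subseteq M$. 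The main obstacle in executing this plan is the exceptional situation where $r - 2\gamma^\ast = q - 1$, in which the graded piece $W_{\gamma^\ast}/W_{\gamma^\ast - 1}$ decomposes as a direct sum (Theorem~\ref{thm:bardoe.sin.exceptional}) and both summands must be treated, along with a few degenerate sub-cases in which $r - \gamma^\ast - \alpha$ equals $0$ in $\Nt$ and the naive extraction produces $f_{(\infty, \alpha)}$ or $f_{(0, \alpha)}$, which generate only the proper submodule $V_\varnothing$; these are handled by replacing $T_{\gamma^\ast - \alpha}$ by the analogous Vandermonde operator $\bar{T}_{\gamma^\ast - \alpha}$ from the lower unipotent subgroup $\overline{U}_2$ of Lemma~\ref{lem:lower.triangular.unramified}, or by precomposing with the Weyl element $w$, to produce an alternative level-$\alpha$ basis element with carry set $\{0\}$.
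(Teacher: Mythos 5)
Your proposal follows a genuinely different route from the paper.  Where the paper works at the level $\alpha+1$ and uses the fact that $V_{2,r}/W_\alpha$ (and, for the inner $\alpha$, $W_{\alpha+(p-1)/2}/W_\alpha$) is multiplicity-free to extract an element of the form $f_{(p-2,\alpha+1)}+w \in M$ and then applies a $U_2$-Vandermonde (equation~\eqref{equ:finite.submodule.structure.totram}), you instead go to the maximal level $\gamma^\ast$ with nonzero image, extract $f_{(\infty,\gamma^\ast)}+z \in M$ from the socle of $W_{\gamma^\ast}/W_{\gamma^\ast-1}$, and apply a $D_2$-Vandermonde.  This is cleaner in two ways: it avoids the two-step reduction for $\alpha = \lfloor r/2\rfloor$, and it does not need multiplicity-freeness.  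Your carry-set calculation $I(r-\gamma^\ast-\alpha,\gamma^\ast-\alpha)=\{0\}$ is correct whenever the two representatives in $[1,p-1]$ sum to $p$ or $p+1$ rather than $r-2\alpha\in\{1,2\}$, and since $\gamma^\ast-\alpha\geq 1$ this fails only when $r-2\alpha = 2$ and $\gamma^\ast = \alpha+1$; in particular the argument is complete for odd $r$.

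The gap is in the exceptional case, which is only partially and not quite correctly described.  First, the failure of the carry set happens exactly for $r$ even, $\gamma^\ast=\alpha+1$ (equivalently $r-2\gamma^\ast = q-1$): then $r-\gamma^\ast-\alpha = 1$ and $\gamma^\ast-\alpha=1$, so the $D_2$-Vandermonde produces $f_{(1,\alpha)}$ of type $(\varnothing,\alpha)$.  Your characterization ``$r-\gamma^\ast-\alpha$ equals $0$ in $\Nt$, producing $f_{(\infty,\alpha)}$ or $f_{(0,\alpha)}$'' cannot occur, since the subtraction in $\Nt\setminus\{0\}$ never returns $0$; the actual output is $f_{(1,\alpha)}$.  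The proposed fixes do not work as stated: applying $w$ alone sends $m$ to $f_{(0,\gamma^\ast)}+wz$ and the $D_2$-Vandermonde then gives $f_{(\gamma^\ast-\alpha,\alpha)}$ with the same (empty) carry set, and the $\overline{U}_2$-Vandermonde of Lemma~\ref{lem:lower.triangular.unramified} is stated only for $j_0\in\Nt$, so it cannot be applied to $f_{(\infty,\gamma^\ast)}$ directly.  What does work is the \emph{combination}: apply $w$ to get $f_{(0,\gamma^\ast)}+wz$, then the $\overline{U}_2$-Vandermonde, which yields $f_{(2(\gamma^\ast-\alpha),\alpha)}$ with carry set $I(2(\gamma^\ast-\alpha),r-2\gamma^\ast) = I(2,q-1) = \{0\}$.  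Second, and more seriously, when $r-2\gamma^\ast=q-1$ the graded piece $W_{\gamma^\ast}/W_{\gamma^\ast-1}\simeq \det^{\gamma^\ast}\otimes(V_\varnothing\oplus F(0,0))$ is decomposable, and the image of $M$ may equal only the one-dimensional summand $F(0,0)$.  In that case there is no preimage of $f_{(\infty,\gamma^\ast)}$ in $M$, so the whole extraction strategy breaks down; this sub-case requires a separate argument (the paper invokes Lemma~\ref{lem:nonsplit.extension.exceptional}\ref{item:containment.exceptional}, which applies here since by maximality of $\gamma^\ast$ the image of $M$ in $V_{2,r}/W_{\gamma^\ast-1}$ is one-dimensional, and $W_{\gamma^\ast-1}\supseteq W_\alpha$), and your sketch does not address it.
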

\begin{proof}
First consider the case $\alpha = \lfloor \frac{r}{2} \rfloor + \frac{p-1}{2}$.  Assume $M$ is not contained in $W_\alpha$.  By the explicit Propositions~\ref{pro:totally.ramified.odd} and~\ref{pro:totally.ramified.even}, the quotient $V_{2,r} / W_\alpha$ is multiplicity free with the following socle:
$$
\mathrm{soc} \, (V_{2,r} / W_\alpha) = \begin{cases}
L(\varnothing, \alpha + 1) \simeq \det^{\alpha + 1} \tensor \Sym^{p-2} &: r \text{ odd}\\
L(\varnothing, \alpha +1) \oplus L(\{ 0 \}, \alpha + 1) \simeq \det\nolimits^{\alpha + 1} \tensor \Sym^{p-1} \oplus \det\nolimits^{\alpha + 1} &: r \text{ even}.
\end{cases}
$$
The image of $M$ in $V_{2,r} / W_\alpha$ must contain at least one component of the socle.  Suppose first either that $r$ is odd, or that $r$ is even and that the image of $M$ contains the first component of the socle.  Then there exists $w \in W_\alpha$ such that $z = f_{(p-2,\alpha + 1)} + w \in M$.
By~\eqref{equ:moving.vertically.down} we have
\begin{equation} \label{equ:finite.submodule.structure.totram}
\sum_{b_1 \in \F_p} b_1^{p-2} \left( \begin{array}{cc} 1 & [b_1] \varpi \\ 0 & 1 \end{array} \right) z = 
(\alpha + 1) f_{(p-2, \alpha)} + w^\prime \in M
\end{equation}
for some $w^\prime \in W_{\alpha - 1}$.  Since $p \geq 3$, we see that $[(p-2, \alpha)] = (\{ 0 \}, \alpha)$.  Since $W_\alpha$ has a unique maximal submodule by Corollary~\ref{cor:unique.maximal.submodule} (the hypotheses of the corollary are satisfied because of our non-standard definition of $\lfloor \frac{r}{2} \rfloor$) and $w^\prime$ is contained in it, the element in the right-hand side of the previous displayed formula generates $W_\alpha = \langle G_2 \cdot f_{(p-2,\alpha)} \rangle$.  Thus $W_\alpha \subset M$.

The remaining case is that $r$ is even and the image of $M$ in $V_{2,r} / W_\alpha$ contains only the one-dimensional component of the socle.  Then the claim follows from Lemma~\ref{lem:nonsplit.extension.exceptional}\ref{item:containment.exceptional}.

Now suppose $\alpha = \lfloor \frac{r}{2} \rfloor$.  If $M$ is not contained in $W_{\alpha + \frac{p-1}{2}}$, then
$W_\alpha \subset W_{\alpha + \frac{p-1}{2}} \subseteq M$ by the previous paragraph.  So we may assume $M \subseteq W_{\alpha + \frac{p-1}{2}}$.  Observe that the subquotient $W_{\alpha + \frac{p-1}{2}} / W_\alpha$ is multiplicity-free and repeat the argument of the previous paragraph. 
\end{proof}

We are now able to determine the submodule structure of $V_{2,r}$ completely, when $F / \Q_p$ is totally ramified.
 
 \begin{thm} \label{cor:finite.submodule.structure}
 Suppose the extension $F / \Q_p$ is totally ramified and non-trivial.  The following statements hold for all $r \in \Z / (p-1)\Z$:
 \begin{enumerate}[label=(\alph*)]
 \item \label{item:finite.submodule.structure}
 The $\mathrm{GL}_2(\mathcal{O} / \mathfrak{m}^2)$-module $V_{2,r}$ has finitely many submodules.
 \item \label{item:exhaustive.list.submodules}
 Let $M \subseteq V_{2,r}$ be a submodule.  Then $M$ has irreducible cosocle if and only if $M = V_{(I, \gamma)}$ for some type $(I, \gamma)$, except that if $r - 2\gamma = p-1$ then we consider the submodule $V^\prime_{(\{ 0 \}, \gamma)}$ generated by $ f_{0,\gamma} + f_{\infty, \gamma} - f_{p-1, \gamma}$ instead of $V_{(\{ 0 \}, \gamma)}$. 
 \item \label{item:jh.of.exhaustive.list}
 The multiset of Jordan-H\"older constituents of $V_{(I, \gamma)}$ is 
 $\left\{ L(I^\prime, \gamma^\prime): (I^\prime, \gamma^\prime) \leq_r (I, \gamma) \right\}$.  If $r - 2\gamma = p - 1$, then the multiset of Jordan-H\"older constituents of $V^\prime_{(\{ 0 \}, \gamma)}$ is
 $\left\{ L(I^\prime, \gamma^\prime): (I^\prime, \gamma^\prime) \leq_r (\{ 0 \}, \gamma) \right\} \setminus \{ L(\varnothing, \gamma) \}$.
 \end{enumerate}
 \end{thm}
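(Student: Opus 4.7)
The plan is to prove (b) first, then deduce (a) from it, and to compute (c) by induction on the partial order on types. The principal tools are the filtration $W_0 \subset W_1 \subset \cdots \subset W_{p-1} = V_{2,r}$ together with the either/or dichotomy of Lemma~\ref{pro:totally.ramified.either.or}.

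For the forward direction of (b), I will check that $V_{(I, \gamma)}$ in the non-exceptional case has irreducible cosocle $L(I, \gamma)$. Cyclicity is given by Theorem~\ref{thm:coordinate.submodules}. The projection $V_{(I,\gamma)} \to W_\gamma/W_{\gamma-1} \simeq \det\nolimits^\gamma \otimes V_{1, r - 2 \gamma}$ identifies the image with the Bardoe-Sin submodule $V_I$, whose cosocle $L(I, \gamma)$ is irreducible by Theorem~\ref{thm:bardoe.sin.generic}. Combined with Corollary~\ref{cor:unique.maximal.submodule} (whose hypothesis $r - 2\gamma \neq p - 1$ is met here), any maximal submodule of $V_{(I, \gamma)}$ must coincide with the preimage of the unique maximal submodule of its image in $W_\gamma/W_{\gamma - 1}$, proving irreducibility of the cosocle. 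In the exceptional case, the submodule $V'_{(\{0\}, \gamma)}$ is generated by the element $f_{(0, \gamma)} + f_{(\infty, \gamma)} - f_{(p-1, \gamma)}$ of Theorem~\ref{thm:bardoe.sin.exceptional}, which maps to the one-dimensional direct summand of $W_\gamma/W_{\gamma - 1}$, so its cosocle is the Serre weight $L(\{0\}, \gamma)$.

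For the reverse direction, let $M$ be a submodule with irreducible cosocle. Let $\gamma$ be the minimal element of $\Nt$ with $M \subseteq W_\gamma$, and consider the image $\bar M$ of $M$ in $W_\gamma/W_{\gamma - 1}$. Since $\cosoc \bar M$ is a nonzero quotient of $\cosoc M$, we have $\cosoc \bar M = \cosoc M$ irreducible, and by Theorems~\ref{thm:bardoe.sin.generic} and~\ref{thm:bardoe.sin.exceptional} the submodule $\bar M$ belongs to a short explicit list, determining a candidate type $(I, \gamma)$ or an exceptional candidate. Choose $m \in M$ projecting to a standard generator of $\bar M$, so $m = f_{(j_0, j_1)} + w$ with $[(j_0, j_1)] = (I, \gamma)$ and $w \in W_{\gamma - 1}$; cyclicity forces $\langle G_2 \cdot m \rangle = M$. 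The crux is to show $\langle G_2 \cdot m \rangle = V_{(I, \gamma)}$, which is equivalent to $w \in V_{(I, \gamma)}$. This is established by downward induction on $\gamma$: Lemma~\ref{pro:totally.ramified.either.or} applied to the cyclic submodule $\langle G_2 \cdot w \rangle \subseteq W_{\gamma - 1}$, together with the multiplicity-free Jordan-H\"older content of the three chunks $W_{\alpha_1}$, $W_{\alpha_2}/W_{\alpha_1}$, $V_{2, r}/W_{\alpha_2}$ read off from Propositions~\ref{pro:totally.ramified.odd} and~\ref{pro:totally.ramified.even}, forces $w$ to lie in $V_{(I, \gamma)}$.

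Part (a) then follows: (b) exhibits a finite explicit family of submodules with irreducible cosocle, and every submodule $M$ is the sum of those members of this family contained in $M$ (lifting each irreducible factor of $\cosoc M$ to a cyclic submodule by Nakayama in finite dimension, then iterating on $\rad M$). Part (c) is obtained by induction on $\leq_r$ via the short exact sequence $0 \to \sum_{(I', \gamma') <_r (I, \gamma)} V_{(I', \gamma')} \to V_{(I, \gamma)} \to L(I, \gamma) \to 0$ coming from Lemma~\ref{pro:head}; in the exceptional case $V'_{(\{0\}, \gamma)}$ is complementary to $V_{(\varnothing, \gamma)}$ inside the semisimple layer $W_\gamma/W_{\gamma - 1}$, which is precisely why its Jordan-H\"older content omits $L(\varnothing, \gamma)$. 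The main obstacle throughout is the reverse direction of (b): eliminating the correction $w$ demands the combined use of Lemma~\ref{pro:totally.ramified.either.or}, the multiplicity-free chunk structure, and downward induction on $\gamma$, and is the step where the special features of the totally ramified case (namely $f = 1$ and the simple Bardoe-Sin structure of the tame quotients) play the most crucial role.
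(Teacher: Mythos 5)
Your overall route is genuinely different from the paper's: you attempt to establish part (b) first and then deduce (a), whereas the paper proves (a) first via the Jans--Camillo criterion (Lemma~\ref{lem:jans.camillo}), showing that $\soc(V_{2,r}/M)$ is multiplicity-free for every $M$, and only then obtains (b). Avoiding Jans--Camillo is a reasonable alternative, and your deduction of (a) from (b) is in principle valid: every finite-length module is a sum of its submodules with irreducible cosocle (argue by induction on length, splitting $\cosoc M$ into two maximal submodules $M_1, M_2$ with $M_1 + M_2 = M$). However, your parenthetical justification via ``lifting\dots by Nakayama\dots iterating on $\rad M$'' does not give this, since a cyclic submodule lifting one irreducible summand of $\cosoc M$ need not have irreducible cosocle.

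The more serious issues are in your treatment of (b) and (c). In the forward direction of (b), you invoke Corollary~\ref{cor:unique.maximal.submodule}, but that result establishes an irreducible cosocle for $W_\gamma$, not for the proper submodule $V_{(I,\gamma)}$; a submodule of a module with irreducible cosocle does not inherit that property, and your assertion that every maximal submodule of $V_{(I,\gamma)}$ is a preimage from $W_\gamma/W_{\gamma-1}$ is unjustified. What is really needed here is the non-split extension information of Proposition~\ref{pro:adjacent.types}, or else the chunk-based uniqueness argument. For the reverse direction, your plan to show $w \in V_{(I,\gamma)}$ by ``downward induction on $\gamma$'' and by applying Lemma~\ref{pro:totally.ramified.either.or} to $\langle G_2 \cdot w \rangle$ is misdirected: the clean argument applies that lemma to $M$ itself, concluding (for $\gamma$ in, say, the middle chunk) that $W_{\alpha_1} \subset M \subseteq W_{\alpha_2}$, and then uses multiplicity-freeness of $W_{\alpha_2}/W_{\alpha_1}$ to pin down $M/W_{\alpha_1}$; this is a case analysis on which chunk contains $\gamma$, not an induction. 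Finally, for (c) in the exceptional case $r - 2\gamma = p - 1$, your remark that $V'_{(\{0\},\gamma)}$ is ``complementary inside the semisimple layer'' only constrains the image in $W_\gamma / W_{\gamma-1}$; to compute the full Jordan--H\"older multiset you need the non-trivial containment $W_{\gamma \dotdiv p^m} \subset V'_{(\{0\},\gamma)}$ of Lemma~\ref{lem:nonsplit.extension.exceptional}\ref{item:containment.exceptional}, which you never invoke.
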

 \begin{proof}
 To establish~\ref{item:finite.submodule.structure}, it suffices by Lemma~\ref{lem:jans.camillo} to prove that the quotient $V_{2,r}/M$ has multiplicity-free socle for every submodule $M \subseteq V_{2,r}$.  Suppose that $M \subseteq V_{2,r}$ is a counterexample, and let $M \subset M^\prime$ be such that $M^\prime / M = \mathrm{soc} V_{2,r}/M$.  
Set 
$$V_{2,r}^{(0)} = 0, \, V_{2,r}^{(1)} =  W_{\lfloor \frac{r}{2} \rfloor}, \, V_{2,r}^{(2)} = W_{\lfloor \frac{r}{2} \rfloor + \frac{p-1}{2}} \, V_{2,r}^{(3)} = V_{2,r}.$$
By Proposition~\ref{pro:totally.ramified.odd} for odd $r$ and Proposition~\ref{pro:totally.ramified.even} for even $r$, the quotients $V_{2,r}^{(i)} / V_{2,r}^{(i-1)}$ are multiplicity-free for all $i \in [3]$.  Thus by Lemma~\ref{pro:totally.ramified.either.or} there exists $\alpha \in \left\{ \lfloor \frac{r}{2} \rfloor, \lfloor \frac{r}{2} \rfloor + \frac{p-1}{2} \right\}$ such that $M \subset W_\alpha \subset M^\prime$.  As in the proof of Lemma~\ref{pro:totally.ramified.either.or} there exists $w \in W_\alpha$ such that $f_{(p-3, \alpha + 1)} + w \in M^\prime$.  Since the quotient $M^\prime / M$ is semisimple, it is invariant under the action of the first congruence subgroup $\ker (\mathrm{GL}_2(\mathcal{O} / \mathfrak{m}^2) \twoheadrightarrow \mathrm{GL}_2(k))$.  Hence the expression in~\eqref{equ:finite.submodule.structure.totram}, which is already known to generate $W_\alpha$, is contained in $M$.  However, this contradicts $M \subset W_\alpha$, completing the proof of~\ref{item:finite.submodule.structure}.
 
For every $i \in [3]$ and every Jordan-H\"older constituent $\sigma$ of $V_{2,r}^{(i)} / V_{2,r}^{(i-1)}$, there is a unique submodule of $V_{2,r}^{(i)} / V_{2,r}^{(i-1)}$ with cosocle $\sigma$ and hence a unique submodule $M \subset V_{2,r}$ such that $V_{2,r}^{(i-1)} \subseteq M \subseteq V_{2,r}^{(i)}$ and $M$ has cosocle $\sigma$.  These are exactly the submodules of the statement~\ref{item:exhaustive.list.submodules}.  By Proposition~\ref{pro:totally.ramified.either.or}, they exhaust the submodules of $V_{2,r}$ with irreducible cosocle, and~~\ref{item:exhaustive.list.submodules} is proved.  

Finally,~\ref{item:jh.of.exhaustive.list} follows from Lemma~\ref{pro:head} and Proposition~\ref{pro:adjacent.types}.
 \end{proof}
    
       \section{The unramified case and the representation $R(\sigma)$ of Breuil and \Paskunas} \label{sec:R.sigma}
       In this section we assume that $F / \Q_p$ is an unramified extension.  In this case, the analogue of Theorem~\ref{thm:coordinate.submodules}, the principal result of the previous section, fails.  When $e = 1$, it is not in general true that the submodule of $V_{2,r}$ generated by a basis vector $f_{(j_0, j_1)}$ is the linear span of a subset of the basis $\mathcal{B}_{2,r}$.  However, if we restrict to the submodule of $V_{2,r}$ generated by $f_{(0, 1 + p + \cdots + p^{f-1})}$, then we can obtain an elegant description of its submodule structure.  While it is possible to make a self-contained study of this submodule, roughly along the lines of \S\ref{sec:main}, we will take a shortcut.  It turns out that this submodule is isomorphic to the module $R(\sigma)$ considered by Breuil and {\Paskunas} in~\cite[\S17-18]{BP/12}, for the Serre weight $\sigma = \sigma_{\varnothing}(r)$, and we make use of their results on our way to providing a complete description of the submodule structure.

       \subsection{Preliminary computations}
       The results of the previous section, for the ramified case $e \geq 2$, rely on the explicit computations of \S\ref{sec:prelim.ramified}.  While Lemmas~\ref{lem:w.action.on.basis} and~\ref{lem:diagonal.action.on.basis} make no assumption on the ramification of $F / \Q_p$, the action of upper unitriangular matrices on $V_{2,r}$ involves summation of Witt vectors in the unramified case and produces more complicated formulae than those of Lemma~\ref{lem:upper.triangular.action.on.basis}.  In particular, the $U_2$-submodule of $V_{2,r}$ generated by a basis element $f_{(j_0, j_1)}$ will not, in general, be a linear span of basis elements.  However, the $U_2$-submodule generated by $f_{(j_0, j_1)}$ does have this property in some special cases.  
        
 \begin{lem} \label{lem:submodule.generated.unramified}
 Suppose that $e = 1$.  
 \begin{enumerate}
 \item
 If $(j_0, j_1) \in \Nt^2$, then the $U_2$-submodule of $V_{2,r}$ generated by $f_{(j_0, j_1)}$ is the linear span of the elements
 \begin{equation} \label{equ:upper.triangular.general.unramified}
  \sum_{\lambda \in \mathcal{O}_2} \left( \begin{array}{cc} \lambda & 1 \\ 1 & 0 \end{array} \right) \tensor (\lambda_0 - b_0)^{j_0} (\lambda_1 + S(\lambda_0^{p^{f-1}} , (-b_0)^{p^{f-1}}))^{j_1^\prime},
  \end{equation}
 where $b_0 \in k$ and $j_1^\prime \preceq j_1$.  
 \item \label{item:small.support}
 In particular, let $I \subseteq [f-1]_0$, and let $j_0 \in \Nt$ satisfy $\mathrm{supp}(j_0) \cap (I - 1) = \varnothing$.  Then a basis of the $U_2$-submodule of $V_{2,r}$ generated by $f_{(j_0, p^I)}$ is given by
 $$ \left\{ f_{\left( \sum_{i \in (I^\prime - 1) \cup \mathrm{supp}(j_0)} a_i p^i, p^J \right)} : J, I^\prime \subseteq I, J \cap I^\prime = \varnothing; \, a_i \in \begin{cases} [p-1] &: i \in I^\prime - 1 \\ [(j_0)_i]_0 &: i \in \mathrm{supp}(j_0) \end{cases} \right\}.$$
 \end{enumerate}
 \end{lem}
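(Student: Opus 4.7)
The plan is to prove part (1) by a direct calculation followed by Vandermonde summation in $b_1$, and to deduce part (2) via a further Vandermonde argument in $b_0$.

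For part (1), I would apply Lemma~\ref{lem:lambda.minus.b} with $e = 1$ to the formula \eqref{equ:upper.triangular.basis.element}, obtaining
\begin{equation*}
\left( \begin{array}{cc} 1 & b \\ 0 & 1 \end{array} \right) f_{(j_0, j_1)} = \sum_{\lambda \in \mathcal{O}_2} \left( \begin{array}{cc} \lambda & 1 \\ 1 & 0 \end{array} \right) \tensor (\lambda_0 - b_0)^{j_0} (\lambda_1 - b_1 + u_0 S(\lambda_0^{p^{f-1}}, (-b_0)^{p^{f-1}}))^{j_1},
\end{equation*}
which differs from the ramified formula used in Lemma~\ref{lem:upper.triangular.action.on.basis} precisely by the Witt correction term $u_0 S(\ldots)$. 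For each $j_1^\prime \preceq j_1$, I would sum the above over $b_1 \in k$ weighted by $b_1^{(q-1) \dotdiv (j_1 \dotdiv j_1^\prime)}$ and expand $(\lambda_1 + c - b_1)^{j_1}$ (where $c$ is the Witt correction term) via Lucas's theorem, together with the standard orthogonality that $\sum_{b_1 \in k} b_1^s$ equals $-1$ when $(q-1) \mid s > 0$ and vanishes otherwise. This would isolate the element \eqref{equ:upper.triangular.general.unramified} up to a nonzero scalar; the factor $u_0 \in k^\times$ is absorbed by rescaling $b_0$ via a bijection of $k$, leaving the linear span unchanged. The reverse inclusion is immediate from the same expansion.

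For part (2), specialize $j_1 = p^I$, so each $j_1^\prime$ has the form $p^{J^\prime}$ for some $J^\prime \subseteq I$. The Freshman's Dream would give $(\lambda_1 + S)^{p^{J^\prime}} = \sum_{A \subseteq J^\prime} S^{p^A} \lambda_1^{p^{J^\prime \setminus A}}$, and the identity $S(X,Y)^p = S(X^p, Y^p)$ in characteristic $p$ implies that each monomial of $S^{p^A}$ has the shape $\lambda_0^\alpha (-b_0)^\beta$ with $\mathrm{supp}(\alpha) = \mathrm{supp}(\beta) = A - 1 \pmod f$ and $\alpha_i + \beta_i = p$ at each $i \in A - 1$. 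Combining this with the standard expansion of $(\lambda_0 - b_0)^{j_0}$ and the hypothesis $\mathrm{supp}(j_0) \cap (I - 1) = \varnothing$, I would note that every exponent addition appearing is carry-free, so the element \eqref{equ:upper.triangular.general.unramified} with $j_1^\prime = p^{J^\prime}$ is manifestly a linear combination of basis vectors of the claimed form, setting $I^\prime := A$ and $J := J^\prime \setminus A$. This proves one inclusion.

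For the reverse inclusion, I would fix a target basis element $f_{(j_0^*, p^J)}$ and take $J^\prime = J \sqcup I^\prime$. The contribution to this target within \eqref{equ:upper.triangular.general.unramified} (for $j_1^\prime = p^{J^\prime}$) is a monomial in $b_0$ with specific exponent $\gamma^* \in \Nt$, whose digit at $i \in \mathrm{supp}(j_0)$ equals $(j_0)_i - (j_0^*)_i$, whose digit at $i \in I^\prime - 1$ equals $p - (j_0^*)_i$, and which vanishes elsewhere. The main step would be to verify that $\gamma^*$ uniquely identifies the target among all basis vectors appearing in the expansion for this $J^\prime$: any other decomposition $J^\prime = L^\prime \sqcup A^\prime$ with $A^\prime \neq I^\prime$ would produce a $b_0$-exponent with nonzero digit at some position in $(A^\prime - 1) \triangle (I^\prime - 1) \subseteq I - 1$, contradicting agreement with $\gamma^*$, by the disjointness $\mathrm{supp}(j_0) \cap (I - 1) = \varnothing$. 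Vandermonde summation in $b_0$ with weight $b_0^{(q-1) \dotdiv \gamma^*}$ then extracts $f_{(j_0^*, p^J)}$ up to a nonzero scalar, with the degenerate case $\gamma^* = 0$ (namely $j_0^* = j_0$ and $I^\prime = \varnothing$) handled by observing that $S$ vanishes at $b_0 = 0$, so specializing at $b_0 = 0$ gives $f_{(j_0, p^{J^\prime})}$ directly. The hardest part is the careful digit-tracking behind the uniqueness of $\gamma^*$; precisely this uniqueness fails without the support hypothesis, which is why part (1) cannot be refined in general to yield a basis of the submodule.
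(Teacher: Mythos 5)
Your proof follows essentially the same approach as the paper: apply Lemma~\ref{lem:lambda.minus.b} to~\eqref{equ:upper.triangular.basis.element} and Vandermonde-sum in $b_1$ for part (1); for part (2), expand $(\lambda_1 + S)^{p^{J^\prime}}$ via the Freshman's Dream, use the hypothesis $\mathrm{supp}(j_0) \cap (I-1) = \varnothing$ to ensure each target is singled out by its $b_0$-exponent, and Vandermonde-sum in $b_0$. Your digit-tracking argument for the uniqueness of the $b_0$-exponent $\gamma^*$ in the reverse inclusion, and your separate handling of the degenerate case $\gamma^* = 0$ by specializing at $b_0 = 0$, are correct and more explicit than the paper's terse statement that ``each monomial has a distinct exponent of $b_0$.''

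One small inaccuracy: you claim the unit $u_0 \in k^\times$ from Lemma~\ref{lem:lambda.minus.b}(b) is ``absorbed by rescaling $b_0$ via a bijection of $k$.'' This does not work as stated: $S(X,Y)$ is bihomogeneous of total degree $p$ rather than linear in $Y$, so $S(\lambda_0^{p^{f-1}}, (-cb_0)^{p^{f-1}})$ is a sum of monomials carrying distinct powers of $c$ and is not a scalar multiple of $S(\lambda_0^{p^{f-1}}, (-b_0)^{p^{f-1}})$; hence substituting $b_0 \mapsto cb_0$ does not absorb $u_0$ into the family. The resolution tacitly used by the paper, which simply omits $u_0$ in~\eqref{equ:full.upper.triangular.unramified}, is to take $\varpi = p$ in the unramified case, so that $u_0 = 1$ from the outset; with that convention your computation reproduces the stated formula exactly.
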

 \begin{proof}
 In the case $e = 1$, it is immediate from Lemma~\ref{lem:lambda.minus.b} and~\eqref{equ:upper.triangular.basis.element} that
\begin{equation} \label{equ:full.upper.triangular.unramified}
\left( \begin{array}{cc} 1 & b \\ 0 & 1 \end{array} \right) f_{(j_0, j_1)} = \sum_{\lambda \in \mathcal{O}_2} \left( \begin{array}{cc} \lambda & 1 \\ 1 & 0 \end{array} \right) \tensor (\lambda_0 - b_0)^{j_0} (\lambda_1 - b_1 + S(\lambda_0^{p^{f-1}}, (-b_0)^{p^{f-1}}))^{j_1},
\end{equation}
 and considering weighted sums of the form 
 $$\sum_{b_1 \in k} b_1^{(q-1) \dotdiv (j_1 \dotdiv j_1^\prime)} \left( \begin{array}{cc} 1 & [b_0] + [b_1] \varpi \\ 0 & 1 \end{array} \right) f_{(j_0, j_1)}$$
 for $b_0 \in k$ and $j_1^\prime \preceq j_1$ gives the first part of our claim.  In general, the expressions of the form~\eqref{equ:upper.triangular.general.unramified}, viewed as functions of three variables $\lambda_0, \lambda_1, b_0$,  contain more than one monomial in which $b_0$ appears with a given exponent.  Thus, taking weighted sums with respect to $b_0$ will produce linear combinations of the basis elements $f_{(j_0^\prime, j_1^\prime)}$.  
 
 The case $(j_0, j_1) = (j_0, p^I)$, where $\mathrm{supp}(j_0)$ and $I - 1$ are disjoint, behaves particularly agreeably.  Observe that $j_1^\prime \preceq p^I$ if and only if $j_1^\prime = p^J$ for some subset $J \subseteq I$.  Then the elements of~\eqref{equ:upper.triangular.general.unramified} have the form
 \begin{multline*}
 \sum_{\lambda \in \mathcal{O}_2} \left( \begin{array}{cc} \lambda & 1 \\ 1 & 0 \end{array} \right) \tensor (\lambda_0 - b_0)^{j_0} \prod_{i \in J} (\lambda_1 + S(\lambda_0^{p^{f-1}}, (-b_0)^{p^{f-1}}))^{p^i} = \\
 \sum_{\lambda \in \mathcal{O}_2} \left( \begin{array}{cc} \lambda & 1 \\ 1 & 0 \end{array} \right) \tensor (\lambda_0 - b_0)^{j_0} \prod_{i \in J} \left( \lambda_1^{p^i} + \sum_{a = 1}^{p-1} (-1)^{p-a} \frac{\binom{p}{a}}{p} \lambda_0^{a p^{i-1+f}} b_0^{(p-a) p^{i-1+f}} \right).
 \end{multline*}
Each monomial has a distinct exponent of $b_0$, since we have assumed that $\mathrm{supp}(j_0)$ and $J - 1$ are disjoint.  Taking weighted sums, with respect to $b_0$, of such expressions produces non-zero scalar multiples of all elements of the form
 $$ \sum_{\lambda \in \mathcal{O}_2} \left( \begin{array}{cc} \lambda & 1 \\ 1 & 0 \end{array} \right) \tensor \lambda_1^{p^{I^\prime}} \lambda_0^{j_0^\prime + \sum_{i \in ((J \setminus I^\prime) - 1)} a_i p^i},$$
 where $j_0^\prime \preceq j_0$.
 Running over all $J \subseteq I$, we obtain the claimed basis of $\langle U_2 \cdot f_{(j_0, p^J)} \rangle$.
 \end{proof}
       
       Recall from \S\ref{subsec:jh} that $\widetilde{B}_2 = (\psi_1^2)^{-1}(B(k)) \leq \mathrm{GL}_2(\mathcal{O} / \mathfrak{m}^2)$ is the image of the standard Iwahori subgroup of $\mathrm{GL}_2(\mathcal{O})$ under the projection $\mathrm{GL}_2(\mathcal{O}) \to \mathrm{GL}_2(\mathcal{O}/\mathfrak{m}^2)$.  Thus
       $$ \widetilde{B}_2 = \left\{ \left( \begin{array}{cc} a & b \\ c & d \end{array} \right) \in \mathrm{GL}_2(\mathcal{O} / \mathfrak{m}^2) : c \in \mathfrak{m} / \mathfrak{m}^2 \right\}.$$
       
       \begin{lem} \label{lem:iwahori.unramified.basis}
       Let $I \subseteq [f-1]_0$, and suppose that $j_0 \in \Nt$ satisfies $\mathrm{supp}(j_0) \cap (I - 1) = \varnothing$.  Given a subset $I^\prime \subseteq I$, define 
       $$\mathcal{L}(j_0, I^\prime) = \left\{ \sum_{i \in (I^\prime - 1) \cup \mathrm{supp}(j_0)} a_i p^i : a_i \in \begin{cases}
       [p-1] &: i \in I^\prime - 1 \\
       [(j_0)_i]_0 &: i \in \mathrm{supp}(j_0) \end{cases} \right\}.$$
       
       The $\widetilde{B}_2$-submodule of $V_{2,r}$ generated by $f_{(j_0, p^I)}$ is the linear span of the following basis:
       $$\mathcal{B}(j_0, I) = \left\{ f_{( \ell + p^{J \setminus J^\prime} + 2 p^{J^\prime \setminus J^{\dpr}}, p^{J^{\dpr}})} : 
       I^\prime, J \subseteq I, \, I^\prime \cap J = \varnothing, \, \ell \in \mathcal{L}(j_0, I^\prime), \, 
       J^{\dpr} \subseteq J^\prime \subseteq J
        \right\}.$$
         
       In particular, the $\widetilde{B}_2$-submodule of $V_{2,r}$ generated by $f_{(0, 1 + p + \cdots + p^{f-1})}$ is the linear span of the basis
      $
             \{ f_{(\ell + p^{I \setminus J} + 2p^{J \setminus L}, p^L)} : L \subseteq J \subseteq I, \, \ell \in \Nt, \, \mathrm{supp}(\ell) \cap (I - 1) = \varnothing \}.
      $
       \end{lem}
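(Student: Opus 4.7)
The plan is to establish two inclusions: first that $\mathcal{B}(j_0, I) \subseteq \langle \widetilde{B}_2 \cdot f_{(j_0, p^I)}\rangle$ via explicit construction, and second that the span $M := \mathrm{span}(\mathcal{B}(j_0, I))$ is $\widetilde{B}_2$-stable, which combined with the observation that $f_{(j_0, p^I)} \in \mathcal{B}(j_0, I)$ (via the choice $I^\prime = \varnothing$, $\ell = j_0 \in \mathcal{L}(j_0, \varnothing)$, $J = J^\prime = J^{\dpr} = I$) yields the reverse inclusion.

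For the forward inclusion, the hypothesis $\mathrm{supp}(j_0) \cap (I-1) = \varnothing$ permits application of Lemma~\ref{lem:submodule.generated.unramified}(\ref{item:small.support}), which provides a basis of $\langle U_2 \cdot f_{(j_0, p^I)}\rangle$ consisting of the vectors $f_{(\ell, p^J)}$ with $J \subseteq I$, $I^\prime \subseteq I \setminus J$, and $\ell \in \mathcal{L}(j_0, I^\prime)$. Applying Lemma~\ref{lem:diagonal.action.on.basis} to each produces all $f_{(\ell + p^{J \setminus J^\prime}, p^{J^\prime})}$ with $J^\prime \subseteq J$, and Lemma~\ref{lem:lower.triangular.unramified} then yields $f_{(\ell + p^{J \setminus J^\prime} + 2 p^{J^\prime \setminus J^{\dpr}}, p^{J^{\dpr}})}$ with $J^{\dpr} \subseteq J^\prime$. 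A parameter comparison shows this enumeration reproduces exactly $\mathcal{B}(j_0, I)$.

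For $\widetilde{B}_2$-stability of $M$, the plan is to exploit the Iwahori decomposition $\widetilde{B}_2 = Z(G_2) \cdot T \cdot \overline{U}_2 \cdot D_2 \cdot U_2$, which follows since every element of $\widetilde{B}_2$ decomposes into a factor in $\overline{U}_2$ and a factor in $B_2 = Z(G_2) \cdot T \cdot D_2 \cdot U_2$ by separating Teichm\"uller lifts of the diagonal entries. The key inputs are the commutation relations $D_2 \overline{U}_2 = \overline{U}_2 D_2$, $U_2 D_2 = D_2 U_2$, and $U_2 \overline{U}_2 \subseteq Z(G_2) \cdot \overline{U}_2 \cdot D_2 \cdot U_2$; each is a short matrix calculation modulo $\varpi^2$. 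For instance, for the third relation one checks directly that $\left(\begin{smallmatrix} 1 & b \\ 0 & 1 \end{smallmatrix}\right)\left(\begin{smallmatrix} 1 & 0 \\ c\varpi & 1 \end{smallmatrix}\right) = (1 + bc\varpi)\cdot \left(\begin{smallmatrix} 1 & 0 \\ c\varpi & 1 \end{smallmatrix}\right) \cdot \mathrm{diag}(1, 1 - 2bc\varpi) \cdot \left(\begin{smallmatrix} 1 & b - b^2 c \varpi \\ 0 & 1 \end{smallmatrix}\right)$. Combining these with Lemma~\ref{rem:eigenvectors}, which ensures $T$ and $Z(G_2)$ act diagonally on basis vectors of $\mathcal{B}_{2,r}$ and hence stabilize $M$, I obtain in turn $\overline{U}_2 M = D_2 M = U_2 M = M$ by rearranging the decomposition $M = \overline{U}_2 \cdot D_2 \cdot U_2 \cdot f_{(j_0, p^I)}$, for example $U_2 M \subseteq Z \overline{U}_2 D_2 U_2 \cdot D_2 U_2 \cdot f = Z \overline{U}_2 D_2 U_2 \cdot f = M$.

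The main obstacle is the $U_2$-stability; unlike the ramified case where Lemma~\ref{lem:upper.triangular.action.on.basis} gives it directly, here the Witt-vector cross-term in~\eqref{equ:full.upper.triangular.unramified} obstructs a direct calculation, and the stability must instead be extracted from the commutation $U_2\overline{U}_2 \subseteq Z(G_2) \overline{U}_2 D_2 U_2$. Linear independence of $\mathcal{B}(j_0, I)$ is automatic, since its elements belong to the basis $\mathcal{B}_{2,r}$ of $V_{2,r}$ provided by Lemma~\ref{lem:bnr.basis}. Finally, the particular case is obtained by substituting $j_0 = 0$ and $I = \Z/f\Z$ into the general enumeration and re-expressing the parameters in the stated form.
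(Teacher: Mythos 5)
Your proof is correct, and its essential content coincides with the paper's: both proofs exploit the decomposition $\widetilde{B}_2 = \overline{U}_2 \cdot (Z(G_2) T D_2) \cdot U_2$ and then compute the effect of the three pieces in turn via Lemma~\ref{lem:submodule.generated.unramified}\eqref{item:small.support}, Lemmas~\ref{rem:eigenvectors} and~\ref{lem:diagonal.action.on.basis}, and Lemma~\ref{lem:lower.triangular.unramified}, in that order, arriving at the parametrized family $\mathcal{B}(j_0,I)$. However, your second step --- verifying that $M = \mathrm{span}\,\mathcal{B}(j_0,I)$ is $\widetilde{B}_2$-stable via explicit commutation relations between $U_2$, $\overline{U}_2$, $D_2$ --- is superfluous. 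Once the first step shows that $\mathrm{span}\{\kappa\delta\mu\, f_{(j_0,p^I)} : \kappa \in \overline{U}_2,\ \delta\ \text{diagonal},\ \mu \in U_2\}$ equals $\mathrm{span}\,\mathcal{B}(j_0,I)$ (and your "parameter comparison shows this enumeration reproduces exactly $\mathcal{B}(j_0,I)$" is precisely that), the conclusion follows at once from the set-theoretic decomposition of $\widetilde{B}_2$: indeed $\langle \widetilde{B}_2 \cdot f_{(j_0,p^I)}\rangle = \mathrm{span}\{\xi f_{(j_0,p^I)} : \xi \in \widetilde{B}_2\} = \mathrm{span}\{\kappa\delta\mu\,f_{(j_0,p^I)}\}$, and the left-hand side is a $\widetilde{B}_2$-module by construction. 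So the $U_2$-stability of $M$ that you flag as "the main obstacle" is not an obstacle at all; the paper's two-sentence argument deliberately avoids needing to verify stability. Your commutation computations (which I checked, e.g. the matrix identity for $U_2\overline{U}_2 \subseteq Z(G_2)\overline{U}_2 D_2 U_2$) are correct, so the proof is valid --- it is just longer than necessary, and its framing as "one inclusion plus a stability argument" obscures that the first step already gives equality once the decomposition of $\widetilde{B}_2$ is invoked.
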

       \begin{proof}
       Any element $\xi \in \widetilde{B}_2$ may be expressed in the form $\xi = \kappa \delta \mu$, where $\mu \in U_2$ and $\kappa \in \overline{U}_2$ (see Definition~\ref{def:subgroups}), while $\delta$ is a diagonal matrix.  The claim follows from an application of Lemma~\ref{lem:submodule.generated.unramified}\eqref{item:small.support} to account for the action of $\mu$, then of Lemmas~\ref{rem:eigenvectors} and~\ref{lem:diagonal.action.on.basis} to describe the action of $\delta$, and finally of Lemma~\ref{lem:lower.triangular.unramified} to treat the action of $\kappa$.
       \end{proof}

       \subsection{A family of submodules}
       We now define the analogue, in the setting of unramified $F / \Q_p$, of (a subset of) the family of submodules $\{ V_\theta \}_{\theta \in \widetilde{\Theta}}$ considered in Definition~\ref{def:v.theta}.  Recall that we identify $\Z / f\Z$ with $[f-1]_0$.  Given $r \in \Z / (q-1)\Z$ and $J \subseteq \Z / f\Z$, recall the integer $s_J(r)$ from Definition~\ref{def:serre.weight.sjr}.
       
       \begin{dfn} \label{def:unramified.basic.notions}
       Let $\widetilde{\Theta}_1 = \{ (J, I) : I, J \subseteq \Z / f\Z \, , \, J \cap (I - 1) = \varnothing \}$.  
      
       If $\theta = (J, I) \in \widetilde{\Theta}_1$, set $f_\theta = f_{(s_J(r - 2p^I), p^I)}$, and let $M_\theta$ be the $\mathrm{GL}_2(\mathcal{O} / \mathfrak{m}^2)$-submodule of $V_{2,r}$ generated by $f_\theta$.
       Let $\sigma_\theta$ be the Serre weight $\sigma_\theta = \det^{p^I} \tensor \sigma_J(r - 2p^I)$.
       
       Define a partial order $\sqsubseteq$ on $\widetilde{\Theta}_1$ as follows: $(J^\prime, {I^\prime}) \sqsubseteq (J, I)$ if $I^\prime \subseteq I$ and if $J^\prime \subseteq J \cup ((I \setminus I^\prime) - 1)$.
       \end{dfn}
       
       In order to obtain uniform results in this section, we make a genericity assumption stronger than the one of~\cite[Definition~11.7]{BP/12}.
       
       \begin{dfn} \label{def:strong.genericity}
       We say that $r \in \Z / (q-1)\Z$ is generic if $3 \leq r_i \leq p - 3$ for all $i \in \Z / f\Z$.
              \end{dfn}
       
       \begin{lem} \label{lem:unramified.multiplicity.free}
       If $r \in \Z / (q-1)\Z$ is generic, then the $\mathrm{GL}_2(\mathcal{O} / \mathfrak{m}^2)$-submodule $W_{1 + p + \cdots + p^{f-1}} \subset V_{2,r}$ is multiplicity-free.
       \end{lem}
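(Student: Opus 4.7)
The plan is to combine Proposition~\ref{pro:filtration} with Bardoe--Sin, reducing the lemma to a book-keeping computation on highest weights.

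First, I would invoke Proposition~\ref{pro:filtration} with $\beta = 1 + p + \cdots + p^{f-1}$. The elements $\alpha \preceq \beta$ are precisely those of the form $\alpha = p^I$ for $I \subseteq \Z/f\Z$, and the graded pieces of the filtration are isomorphic to $\det^{p^I} \tensor V_{1, r - 2 p^I}$. Under the genericity hypothesis $3 \leq r_i \leq p - 3$, the digits $(r - 2 p^I)_i$ all lie in $[1, p-3] \subseteq \{1, \dots, p-2\}$, so $r - 2p^I \neq q - 1$ and, by Proposition~\ref{pro:characterize.admissible}, every $J \subseteq \Z/f\Z$ is $(r - 2p^I)$-admissible. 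Theorem~\ref{thm:bardoe.sin.generic} therefore gives an exhaustive, multiplicity-free listing of the Jordan--H\"older constituents of each graded piece as $\{\sigma_J(r - 2 p^I) : J \subseteq \Z/f\Z\}$. Consequently, the multiset of Jordan--H\"older constituents of $W_{1 + p + \cdots + p^{f-1}}$ is $\{ \det^{p^I} \tensor \sigma_J(r - 2 p^I) : I, J \subseteq \Z/f\Z\}$, and the lemma reduces to the assertion that this multiset is a set.

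The heart of the proof will be to show that an isomorphism
\[
\det\nolimits^{p^I} \tensor \sigma_J(r - 2p^I) \;\cong\; \det\nolimits^{p^{I'}} \tensor \sigma_{J'}(r - 2 p^{I'})
\]
forces $(I, J) = (I', J')$. Using Remark~\ref{rem:serre.weight.data}, both sides are identified with $F(r - p^I - s_J(r - 2p^I),\, p^I + s_J(r - 2p^I))$ and its primed analogue. Under genericity, one checks directly from the case analysis in Definition~\ref{def:serre.weight.sjr} that every $t_{J,i}(r - 2p^I)$ lies in $[0, p-2]$, so the equality of Sym-characters amounts to the equality of integers
\[
r - 2p^I - s_J(r - 2p^I) \;=\; r - 2p^{I'} - s_{J'}(r - 2p^{I'}),
\]
while the equality of $\det$-twists gives
\[
p^I + s_J(r - 2p^I) \;\equiv\; p^{I'} + s_{J'}(r - 2p^{I'}) \pmod{q-1}.
\]
Adding these two relations immediately yields $p^I \equiv p^{I'} \pmod{q-1}$. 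The key observation is then that $0 \leq p^I \leq (q-1)/(p-1) < q - 1$ (using $p \geq 3$), and likewise for $p^{I'}$, so the congruence is actually an equality $p^I = p^{I'}$, whence $I = I'$ by uniqueness of the base-$p$ expansion.

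Once $I = I'$ is established, the two hypothesized isomorphic weights are $\sigma_J(r - 2p^I)$ and $\sigma_{J'}(r - 2p^I)$, both constituents of $V_{1, r - 2p^I}$; multiplicity-freeness of the tame principal series (Theorem~\ref{thm:bardoe.sin.generic}) forces $J = J'$. This completes the argument. The main obstacle is purely notational: verifying that the various $\det$ and $\Sym$ exponents aggregate correctly and that Steinberg's tensor product theorem applies in each factor, which is ensured precisely by the strong genericity hypothesis keeping every $t_{J,i}(r - 2p^I)$ strictly below $p - 1$.
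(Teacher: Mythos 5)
Your proof is correct but takes a different route from the paper's. You decompose the putative isomorphism of Serre weights into its two invariants — the $\Sym$-multidegrees $t_{J,i}(r-2p^I)$ and the $\det$-twist $p^I + s_J(r-2p^I)$ — and then observe that adding the two resulting congruences mod $q-1$ isolates $p^I$, which (being confined to $[0,(q-1)/(p-1)]$) pins down $I$; you then hand the residual $J = J'$ question back to Bardoe--Sin multiplicity-freeness of $V_{1,r-2p^I}$. The paper instead works only with the $\det$-twist: it shows the map $(I,J) \mapsto p^I + s_J(r-2p^I) \bmod (q-1)$ is already injective, by noting that under genericity the sum is carry-free so its base-$p$ digits $\alpha_i = s_{J,i}(r-2p^I) + (p^I)_i$ satisfy $\alpha_i \geq 2 \Leftrightarrow i \in J$, and then, knowing $J$, that $\alpha_i \neq s_{J,i}(r) \Leftrightarrow i \in I$. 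The paper's inversion is slightly leaner (one invariant rather than two, and no appeal back to Bardoe--Sin at the end), while yours trades that for the clean congruence cancellation; both are fine. Two cosmetic points: the formulas for $t_{J,i}$ that you check live in Remark~\ref{rem:serre.weight.data}, not Definition~\ref{def:serre.weight.sjr}, and the identification of $\det^{p^I} \tensor \sigma_J(r-2p^I)$ as $F(r - p^I - s_J(r-2p^I), p^I + s_J(r-2p^I))$ is an application of the shift rule $\det^a \tensor F(a_1,a_2) = F(a_1+a, a_2+a)$ to Definition~\ref{def:serre.weight.sjr}, again not of the Remark — your first component is actually the correct one, and the paper's displayed formula has a sign slip there that is harmless because the argument only uses the second component.
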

       \begin{proof}
       By genericity, for all $I \subseteq \Z / f\Z$ we have $r - 2p^I \neq q - 1$.  Hence by the filtration of Proposition~\ref{pro:filtration} and by Theorem~\ref{thm:bardoe.sin.generic}, which describes the graded pieces of this filtration, we see that the multiset of Jordan-H\"older constituents of $W_{1 + p + \cdots + p^{f-1}}$ is 
       $$ \left\{ \det\nolimits^{p^I} \tensor \sigma_J(r-2p^I) \simeq F(r - s_J(r-2p^I) + p^I, s_J(r - 2p^I) + p^I) : \, I,J \subseteq \Z / f\Z   \right\}.$$
Hence it suffices to show that the numbers $s_J(r - 2p^I) + p^I$ (recall Definition~\ref{def:serre.weight.sjr}), viewed as elements of $\Z / (q-1)\Z$, are distinct for all pairs $(I,J) \in \mathcal{P}(\Z / f\Z)^2$.  By genericity, given $\alpha = \sigma_J(r - 2p^I) + p^I$ we can read off $J = \{ i \in \Z / f\Z : \, \alpha_i > 1 \}$.  Knowing $J$, we can then determine $I = \{ i \in \Z / f\Z : \, \alpha_i \neq s_{J,i}(r) \}$.
       \end{proof}
       
     \begin{pro} \label{pro:unramified.irreducible.cosocle}
     Let $\theta = (J, I) \in \widetilde{\Theta}_1$, and suppose that $r$ is generic.  The $\mathrm{GL}_2(\mathcal{O} / \mathfrak{m}^2)$-module $M_\theta$ has irreducible cosocle isomorphic to the Serre weight $\sigma_\theta$.
     \end{pro}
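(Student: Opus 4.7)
The plan is to exhibit $\sigma_\theta$ as a quotient of $M_\theta$ by passing to a Bardoe-Sin tame principal series, and then use the multiplicity-freeness of the ambient module to pin down the cosocle uniquely.

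First, since $f_\theta = f_{(s_J(r-2p^I),\, p^I)}$ sits in $W_{p^I}$ by Definition~\ref{def:filtration}, we have $M_\theta \subseteq W_{p^I}$. Proposition~\ref{pro:filtration} supplies a $G_2$-equivariant projection
$$ \pi : W_{p^I} \twoheadrightarrow W_{p^I}\big/\sum_{\alpha \prec p^I} W_\alpha \;\simeq\; \det\nolimits^{p^I} \otimes V_{1,\, r-2p^I}, $$
under which $\pi(f_\theta)$ corresponds, up to a nonzero scalar, to $f_{s_J(r-2p^I)} \in V_{1,\, r-2p^I}$. Using the digit description of $s_J(r-2p^I)$ from Definition~\ref{def:serre.weight.sjr}, the genericity hypothesis $3 \leq r_i \leq p - 3$, and the constraint $J \cap (I-1) = \varnothing$, a direct case analysis on digits verifies that the carry set $I(s_J(r-2p^I),\, (r-2p^I) - s_J(r-2p^I))$ equals $J$. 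By Theorem~\ref{thm:bardoe.sin.generic}(a),(c), the image $\pi(M_\theta)$ therefore coincides with $\det^{p^I} \otimes V_J$, which has irreducible cosocle $\sigma_\theta$. Composition yields a surjection $\varphi : M_\theta \twoheadrightarrow \sigma_\theta$ with kernel $N_1 \supseteq K := M_\theta \cap \ker\pi$.

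To conclude, I must show $N_1$ is the only maximal submodule of $M_\theta$. If $N_2 \neq N_1$ were a second maximal submodule, then since $\pi(M_\theta)$ has unique maximal submodule $N_1/K$, we could not have $K \subseteq N_2$; hence $N_2 + K = M_\theta$ and the simple quotient $M_\theta / N_2 \simeq K / (K \cap N_2)$ arises as a simple quotient of $K \subseteq \sum_{\alpha \prec p^I} W_\alpha$. My plan for eliminating any such $N_2$ is to combine the multiplicity-freeness of $W_{1 + p + \cdots + p^{f-1}}$ supplied by Lemma~\ref{lem:unramified.multiplicity.free} with the explicit basis of the $\widetilde{B}_2$-module $\widetilde{B}_2 \cdot f_\theta$ provided by Lemma~\ref{lem:iwahori.unramified.basis}: any $G_2$-homomorphism $M_\theta \to \tau$ factors through the $\widetilde{B}_2$-module generated by $f_\theta$, and analyzing which Serre weights $\tau$ can accommodate both the $T$-eigencharacter of $f_\theta$ from Lemma~\ref{rem:eigenvectors} and the $\widetilde{B}_2$-relations on the basis elements will force $\tau \simeq \sigma_\theta$.

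The main obstacle is this final uniqueness step: the $T$-eigencharacter alone does not distinguish $\sigma_\theta$ among Serre weight constituents of $W_{1 + p + \cdots + p^{f-1}}$, since all such constituents share the common constraint $b_1 + b_2 \equiv r \pmod{q-1}$. I must show that the image of $f_\theta$ in any simple quotient of $M_\theta$ is necessarily the highest-weight line of $\tau$, presumably by exploiting the $\widetilde{B}_2$-relations inherited from $\widetilde{B}_2 \cdot f_\theta$ to identify the image as $U(k)$-invariant modulo the radical. The carry-set computation in the second step is elementary but requires careful case analysis on the digit profile, while the eventual identification depends on a close reading of the Iwahori basis produced by Lemma~\ref{lem:iwahori.unramified.basis} and the constraint $J \cap (I-1) = \varnothing$ built into the definition of $\widetilde{\Theta}_1$.
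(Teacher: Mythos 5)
The first half of your argument is correct and matches the paper: you project $M_\theta$ into the graded quotient $W_{p^I}/\sum_{\alpha\prec p^I}W_\alpha\simeq\det^{p^I}\tensor V_{1,r-2p^I}$ and identify $\pi(M_\theta)$ with the Bardoe--Sin submodule $V_J$ whose irreducible cosocle is $\sigma_\theta$, and you correctly use multiplicity-freeness of $W_{1+p+\cdots+p^{f-1}}$ (Lemma~\ref{lem:unramified.multiplicity.free}) to exclude a second maximal submodule with quotient $\sigma_\theta$.

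However, you explicitly acknowledge that the decisive step --- eliminating a second maximal submodule $N_2$ whose simple quotient is some Serre weight $\tau\not\simeq\sigma_\theta$ coming from $K\subseteq\sum_{\alpha\prec p^I}W_\alpha$ --- remains ``the main obstacle,'' and the sketch you give (comparing $T$-eigencharacters and reading off $\widetilde{B}_2$-relations from Lemma~\ref{lem:iwahori.unramified.basis}) is not carried through to a conclusion. As you note yourself, the $T$-eigencharacter of $f_\theta$ does not by itself single out $\sigma_\theta$, and it is not clear how to force the image of $f_\theta$ in an arbitrary simple quotient to lie on the highest-weight line. This is a genuine gap, not a matter of missing routine detail.

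The paper's proof sidesteps this difficulty by not attempting a direct enumeration of possible simple quotients. Instead it uses multiplicity-freeness to define $N_\theta$ abstractly as the unique submodule of $W_{1+p+\cdots+p^{f-1}}$ with cosocle $\sigma_\theta$ (equivalently, the minimal submodule containing $\sigma_\theta$ as a constituent). Your first step already gives $N_\theta\subseteq M_\theta$. The missing half, corresponding exactly to your obstacle, is the reverse inclusion $M_\theta\subseteq N_\theta$, and the paper establishes it constructively: since $N_\theta$ is cyclic with a $T$-eigenvector generator $h=f_\theta+z$ where (by genericity) $z$ is an $E$-linear combination of the specific basis vectors $f_{(s_J(r-2p^I)+p^{I\setminus I'},\,p^{I'})}$ with $I'\subset I$, one applies weighted sums of diagonal elements as in~\eqref{equ:diagonal.explicit} to $h$ and takes a linear combination to strip away the error terms, concluding that $f_\theta\in N_\theta$ and hence $M_\theta=N_\theta$. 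You would need to supply an argument of this concrete, computational nature to close the gap in your proposal.
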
  
       \begin{proof}
       Let $N_\theta \subseteq W_{1 + p + \cdots + p^{f-1}}$ be the submodule with cosocle $\sigma_\theta$; this is well-defined by Lemma~\ref{lem:unramified.multiplicity.free}.  Recall from Proposition~\ref{pro:filtration} that $W_{p^I} / \sum_{I^\prime \subset I} W_{p^{I^\prime}} \simeq \det^{p^I} \tensor V_{1, r - 2p^I}$.  The image of $M_\theta$ in this quotient is generated by $f_{s_J(r-2p^I)}$.  This is the submodule of $\det^{p^I} \tensor V_{1, r - 2p^I}$ with cosocle $\sigma_\theta$, by~\cite[Theorem~2.7]{BP/12} or Theorem~\ref{thm:bardoe.sin.generic}.  Since $N_\theta$ is the minimal submodule of $W_{1 + p + \cdots + p^{f-1}}$ that contains $\sigma_\theta$ as a Jordan-H\"older constituent, the inclusion $N_\theta \subseteq M_\theta$ must hold.
       
       The module $N_\theta$ is cyclic, and by the previous paragraph it has a generator of the form $h = f_\theta + z$, for $z \in \sum_{I^\prime \subset I} W_{p^{I^\prime}}$.  Without loss of generality we may assume that $h$ is an eigenvector for the action of $T$.  By genericity of $r$, this implies that 
       $$ h = f_\theta + \sum_{I^\prime \subset I} \varepsilon_{I^\prime} f_{(s_J(r-2p^I) + p^{I \setminus I^\prime}, p^{I^\prime})},$$
       for scalars $\varepsilon_{I^\prime} \in E$.  Now by~\eqref{equ:diagonal.explicit} we have
       \begin{multline*}
        - \sum_{d \in k} d^{(q - 1) \dotdiv p^{I \setminus I^\prime}} \left( \begin{array}{cc} 1 & 0 \\ 0 & 1 + [d] \varpi \end{array} \right) h = \\ f_{(s_J(r-2p^I) + p^{I \setminus I^\prime}, p^{I^\prime})} + \sum_{I^\dpr \subset I^\prime} \varepsilon_{I^\dpr \cup (I \setminus I^\prime)} f_{(s_J(r-2p^I) + p^{I \setminus I^\dpr}, p^{I^\dpr})} \in N_\theta.
       \end{multline*}
       Taking a suitable linear combination of these elements, we find that $f_\theta \in N_\theta$ and hence that $M_\theta \subseteq N_\theta$, which proves the claim.
       \end{proof}
       
       \begin{pro} \label{pro:unramified.inclusion}
       Suppose that $r$ is generic.  Let $\theta, \theta^\prime \in \widetilde{\Theta}_1$.  If $\theta^\prime \sqsubseteq \theta$, then $M_{\theta^\prime} \subseteq M_\theta$.  
       \end{pro}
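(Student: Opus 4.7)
The plan is to reduce the general relation $\theta' \sqsubseteq \theta$ to a chain of elementary covering moves and to verify the inclusion of cyclic submodules for each link. Writing $\theta = (J, I)$ and $\theta' = (J', I')$, I factor the inclusion as
\[
(J, I) \leadsto \cdots \leadsto (J \cup ((I \setminus I') - 1), I') \leadsto \cdots \leadsto (J', I'),
\]
where the first block consists of $|I \setminus I'|$ \emph{carry moves} of the form $(K, L) \leadsto (K \cup \{i - 1\}, L \setminus \{i\})$ for some $i \in L$, and the second block consists of \emph{tame moves} $(K, L) \leadsto (K \setminus \{j\}, L)$ for some $j \in K$, successively shrinking the first coordinate down to $J'$. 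The assumption $J' \subseteq J \cup ((I \setminus I') - 1)$ makes the second block well-defined, and a direct check using $(J, I) \in \widetilde{\Theta}_1$ shows that every intermediate pair again lies in $\widetilde{\Theta}_1$. By transitivity of inclusion it therefore suffices to prove $M_{\theta'} \subseteq M_\theta$ for each of the two elementary kinds of move.

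For a tame move $\theta' = (J \setminus \{j\}, I)$, the image of $M_\theta$ in the quotient $W_{p^I}/\sum_{I'' \subsetneq I} W_{p^{I''}} \simeq \det^{p^I} \tensor V_{1, r - 2p^I}$ (Proposition~\ref{pro:filtration}) is the Bardoe--Sin submodule $V_J$ by Theorem~\ref{thm:bardoe.sin.generic}(a). Under genericity, Proposition~\ref{pro:characterize.admissible} makes every subset of $\Z / f\Z$ admissible for $r - 2p^I$, so $V_{J \setminus \{j\}} \subseteq V_J$, and we obtain an element $h = f_{\theta'} + z \in M_\theta$ with $z \in \sum_{I'' \subsetneq I} W_{p^{I''}}$. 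After averaging over $T$ to make $h$ a $T$-eigenvector, I apply the $D_2$-projection formula~\eqref{equ:diagonal.explicit} exactly as in the proof of Proposition~\ref{pro:unramified.irreducible.cosocle}; the multiplicity-freeness of Lemma~\ref{lem:unramified.multiplicity.free} then allows us to peel off $z$ and conclude $f_{\theta'} \in M_\theta$.

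For a carry move $\theta' = (J \cup \{i - 1\}, I \setminus \{i\})$ with $i \in I$, I invoke Lemma~\ref{lem:iwahori.unramified.basis}, whose hypothesis $\supp(s_J(r - 2 p^I)) \cap (I - 1) = \varnothing$ is immediate from $\supp(s_J(r - 2 p^I)) \subseteq J$ together with $(J, I) \in \widetilde{\Theta}_1$. Taking the lemma's parameters to be $A = A' = I$, $A'' = I \setminus \{i\}$, and thus $I'_{\text{lem}} = \varnothing$, produces basis elements of $\langle \widetilde{B}_2 \cdot f_\theta \rangle$ of the shape $f_{(\ell + 2p^i, p^{I \setminus \{i\}})}$ with $\ell \preceq s_J(r - 2p^I)$; a careful choice of $\ell$, again combined with $D_2$-projection and multiplicity-free extraction as in the tame case, delivers $f_{\theta'}$. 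The main technical obstacle lies precisely in this digit-by-digit matching: the recipe $s_K(r - 2p^L)$ depends on both $K$ and $L$ and changes non-trivially when we simultaneously replace $L$ by $L \setminus \{i\}$ and $K$ by $K \cup \{i - 1\}$. The generic bounds $3 \leq r_i \leq p - 3$ keep every relevant digit in $[1, p - 2]$ and prevent unwanted borrows in $r - 2p^L$ or carries in the first coordinate, rendering the required combinatorial identities explicit but delicate --- it is exactly this interplay that the ordering $\sqsubseteq$ of Definition~\ref{def:unramified.basic.notions} encodes.
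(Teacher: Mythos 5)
Your decomposition of $\sqsubseteq$ into two kinds of covering moves and your claim that intermediate pairs stay in $\widetilde{\Theta}_1$ are both correct, and your handling of the tame move $(J \setminus \{j\}, I) \sqsubset (J,I)$, while more computational than the paper's minimality argument, would work. The problem is in the carry move $(J \cup \{i-1\}, I \setminus \{i\}) \sqsubset (J, I)$, and it is a concrete one: your parameter choice in Lemma~\ref{lem:iwahori.unramified.basis} does not reach the target element.

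With $I^\prime_{\mathrm{lem}} = \varnothing$ and $J_{\mathrm{lem}} = J^\prime_{\mathrm{lem}} = I$, $J^\dpr_{\mathrm{lem}} = I \setminus \{i\}$, the lemma yields elements $f_{(\ell + 2 p^i, p^{I \setminus \{i\}})}$ where $\ell \in \mathcal{L}(s_J(r - 2 p^I), \varnothing)$, i.e.\ each digit $\ell_m$ is at most $s_{J,m}(r - 2p^I)$. In particular $\ell_{i-1} = 0$, because $i - 1 \notin J$ (from $J \cap (I-1) = \varnothing$) and hence $s_{J, i-1}(r - 2p^I) = 0$. But by Definition~\ref{def:serre.weight.sjr} the target $s_{J \cup \{i-1\}}(r - 2 p^{I \setminus \{i\}})$ has $(i-1)$-th digit either $(r - 2p^{I \setminus \{i\}})_{i-1}$ or $(r - 2p^{I \setminus \{i\}})_{i-1} + 1$, which under the genericity bound $r_{i-1} \geq 3$ is at least $1$. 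So $f_{\theta^\prime}$ is not among the elements you produce, and no amount of $D_2$-projection recovers it, since the $(i-1)$-th digit is an eigenvalue datum that cannot be created by that projection. The correct choice, which the paper makes, is $I^\prime_{\mathrm{lem}} = \{i\}$ with $J_{\mathrm{lem}} = J^\prime_{\mathrm{lem}} = J^\dpr_{\mathrm{lem}} = I \setminus \{i\}$; this yields $f_{(\ell, p^{I \setminus \{i\}})}$ with $\ell \in \mathcal{L}(s_J(r - 2p^I), \{i\})$, where the $(i-1)$-th digit of $\ell$ is now free to range over $[p-1]$, and a short digit computation (this is where genericity enters) shows the target lies in that set. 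Separately, you omit the case $i \in J$, in which the target's $i$-th digit is one larger than $s_{J,i}(r - 2p^I)$ and still escapes the lemma's output even with the corrected parameters; the paper handles this by replacing the generator $f_\theta$ with $f_{(s_J(r - 2p^I) + p^i, p^I)}$ before applying the lemma. Both of these are precisely the "delicate" digit-matching you deferred; they cannot be waved away, since that is where the proof actually lives.
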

       \begin{proof}
       Let $\theta = (J, I)$.
       The partial order $\sqsubseteq$ is generated by the following two covering relations:
       \begin{itemize}
       \item
       $(J \setminus \{ j \}, I) \sqsubset (J, I)$ for all $j \in J$;
       \item
       $(J \cup \{ i - 1 \}, {I \setminus \{ i \}}) \sqsubset (J,I)$ for all $i \in I$.
       \end{itemize}
       
       If $\theta^\prime \sqsubset \theta$ is a cover of the first kind, then by Theorem~\ref{thm:bardoe.sin.generic} and the proof of Proposition~\ref{pro:unramified.irreducible.cosocle}, the image of $M_\theta$ in the quotient $W_{p^I} / \sum_{I^\prime \subset I} W_{p^{I^\prime}}$ admits the Serre weight $\sigma_{\theta^\prime} = \det^{p^I} \tensor \sigma_{J \setminus \{ j \} }(r - 2p^I)$ as a Jordan-H\"older constituent.  Hence so does $M_\theta$.  This implies $M_{\theta^\prime} \subseteq M_\theta$, since it is immediate from Proposition~\ref{pro:unramified.irreducible.cosocle} that $M_{\theta^\prime}$ is the minimal submodule admitting $\sigma_{\theta^\prime}$ as a Jordan-H\"older constituent.
       
       Let $\theta^\prime \sqsubset \theta$ now be a cover of the second kind, and let $i \in I$.  We see directly from Definition~\ref{def:serre.weight.sjr} that 
       $ s_{J \cup \{ i - 1 \} }(r - 2p^I + 2p^{i}) = s_J (r - 2p^I) + (r_{i-1} + \kappa)p^{i-1} + \delta_{i \in J} p^i$,
       where $\delta_{i \in J} = 1$ if $i \in J$ and $\delta_{i \in J} = 0$ otherwise, while 
       $$ \kappa = \begin{cases} 1 &: i - 2 \not\in J, \, i - 1 \not\in I \\
       0 &: i - 2 \in J, \, i - 1 \not\in I \\
       -1 &: i - 2 \not\in J, \, i - 1 \in I.  \end{cases}
       $$
     Observe that $s_J (r - 2p^I) + (r_{i-1} + \kappa)p^{i-1} \in \mathcal{L}(s_J (r - 2p^I), \{ i \})$ by the genericity of $r$.  Hence if $i \not\in J$, then $f_{\theta^\prime} \in \langle \widetilde{B}_2 \cdot f_\theta \rangle \subseteq M_\theta$ by Lemma~\ref{lem:iwahori.unramified.basis}, where $I^\prime$ in the notation of that lemma is $\{ i \}$, while $J = J^\prime = J^\dpr$ is $I \setminus \{ i \}$.  Thus $M_{\theta^\prime} \subseteq M_\theta$ by Proposition~\ref{pro:unramified.irreducible.cosocle}; in fact, the containment is proper since $M_{\theta^\prime}$ and $M_\theta$ have non-isomorphic cosocles.
     
     If $i \in J$, then we observe that $f_{(s_J(r - 2p^I) + p^i, p^I)}$ generates $M_\theta$ by the same argument as in the proof of Proposition~\ref{pro:unramified.irreducible.cosocle}, but with this element in place of $f_\theta$.  Applying Lemma~\ref{lem:iwahori.unramified.basis} to $f_{(s_J(r - 2p^I) + p^i, p^I)}$, we find as in the previous case that $f_{\theta^\prime} \in M_\theta$.  
       \end{proof}

             \subsection{Relation to work of Breuil and \Paskunas}
             Observe that $(\varnothing, \Z / f\Z)$ is the unique maximal element of $\widetilde{\Theta}_1$.
             Hence, by Proposition~\ref{pro:unramified.inclusion}, the submodule $M_{(\varnothing, {\Z / f\Z})}$ of $V_{2,r}$ generated by $f_{(0, 1 + p + \cdots + p^{f-1})}$ contains the modules $M_{\theta}$ for all $\theta \in \widetilde{\Theta}_1$.  In order to completely describe the submodule structure of $M_{(\varnothing, {\Z / f\Z})}$, it remains to show that these submodules exhaust the submodules of $M_{(\varnothing, {\Z / f\Z})}$ with irreducible cosocle and that the implication of Proposition~\ref{pro:unramified.inclusion} is, in fact, an equivalence.  For this we make use of the results of~\cite{BP/12}, where the module $M_{(\varnothing, {\Z / f\Z})}$ was studied in another guise.  We start by elucidating the dictionary between some concepts of~\cite{BP/12} and those of the present paper.

       Write $\sigma_r$ for the Serre weight $F(r,0) = \sigma_{\varnothing}(r)$, and recall that it can be modeled as the subspace of the space $A[r]$ of \S\ref{sec:bardoe.sin} spanned by the monomials $X^{r-i}Y^i$ for $i \preceq r$.  Set $\Pi = \left( \begin{array}{cc} 0 & 1 \\ \varpi & 0 \end{array} \right)$.  Given a subset $J \subseteq \{ i \in \Z / f\Z : \, r_i > 0 \}$,  Breuil and {\Paskunas} define $\mathrm{Fil}^J \widetilde{R}(\sigma_r)$ to be the $\mathrm{GL}_2(\mathcal{O})$-submodule of the compact induction $\text{c-ind}^{\mathrm{GL}_2(F)}_{F^\times \mathrm{GL}_2(\mathcal{O})} \sigma_r$ generated by the element $\Pi \tensor X^{r - p^J} Y^{p^J}$; note that the notation $[g,v]$ of~\cite{BP/12} corresponds to our $g \tensor v$.
       
     \begin{lemma} \label{lem:bp.filtration}
     Let $J \subseteq \{ i \in \Z / f\Z : \, r_i > 0 \}$.  There is an isomorphism of $\mathrm{GL}_2(\mathcal{O})$-modules $\mathrm{Fil}^J \widetilde{R}(\sigma_r) \simeq W_{p^J}$, where $W_{p^J}$ is viewed as a $\mathrm{GL}_2(\mathcal{O})$-module by inflation via the surjection $\mathrm{GL}_2(\mathcal{O}) \to \mathrm{GL}_2(\mathcal{O} / \mathfrak{m}^2)$.
     \end{lemma}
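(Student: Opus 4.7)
The plan is to exhibit an explicit $\mathrm{GL}_2(\mathcal{O})$-equivariant isomorphism $\phi$ matching the canonical generator $\xi_J := \Pi \tensor X^{r - p^J} Y^{p^J}$ of $\mathrm{Fil}^J \widetilde{R}(\sigma_r)$ with an explicit cyclic generator $v_J \in W_{p^J}$. First I would verify that the action of $\mathrm{GL}_2(\mathcal{O})$ on $\mathrm{Fil}^J \widetilde{R}(\sigma_r)$ factors through $G_2$: for $g \in K(2) := \ker(\mathrm{GL}_2(\mathcal{O}) \to G_2)$, a direct matrix computation using $\Pi^{-1} = \bigl( \begin{smallmatrix} 0 & \varpi^{-1} \\ 1 & 0 \end{smallmatrix} \bigr)$ yields $\Pi^{-1} g \Pi \in K(1) := \ker(\mathrm{GL}_2(\mathcal{O}) \to \mathrm{GL}_2(k))$, which acts trivially on $X^{r-p^J} Y^{p^J}$ since $\sigma_r$ is inflated from $\mathrm{GL}_2(k)$.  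Thus $g \xi_J = \xi_J$, and by normality of $K(2)$ the whole cyclic submodule descends to $G_2$.

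Next, I would compute the $B_2$-character of $\xi_J$ and match it with a target vector. The same conjugation applied to $g = \bigl( \begin{smallmatrix} a & b \\ 0 & d \end{smallmatrix} \bigr) \in B_2$ lifted to $\mathrm{GL}_2(\mathcal{O})$ gives $\Pi^{-1} g \Pi$ with image $\bigl( \begin{smallmatrix} d_0 & 0 \\ 0 & a_0 \end{smallmatrix} \bigr)$ in $\mathrm{GL}_2(k)$, acting on $X^{r-p^J} Y^{p^J}$ by $d_0^{r-p^J} a_0^{p^J} = \chi_r(g) \eta^{p^J}(g)$.  So $\xi_J$ is a $B_2$-eigenvector of character $\chi_r \cdot \eta^{p^J}$.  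By Proposition~\ref{pro:filtration} the quotient $W_{p^J} / \sum_{J' \subsetneq J} W_{p^{J'}} \simeq \det^{p^J} \tensor V_{1, r - 2 p^J}$ contains a unique (up to scalar) $B_2$-eigenvector of that character; pick a lift $v_J \in W_{p^J}$.  Both $\mathrm{Fil}^J \widetilde{R}(\sigma_r)$ and $\langle G_2 \cdot v_J \rangle \subseteq W_{p^J}$ are cyclic quotients of $\mathrm{Ind}_{B_2}^{G_2} (\chi_r \eta^{p^J})$ generated by this eigenvector, so $\xi_J \mapsto v_J$ extends to a $G_2$-equivariant map $\phi$ provided the kernel for $\mathrm{Fil}^J \widetilde{R}(\sigma_r)$ is contained in that for $\langle G_2 \cdot v_J \rangle$; this comparison of relations can be carried out by tracking how the element $w$ and further Iwahori matrices act on both generators.

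Finally, I would show $\phi$ is onto $W_{p^J}$ and then deduce injectivity by a dimension count across the graded pieces of Proposition~\ref{pro:filtration}.  Surjectivity amounts to $v_J$ generating $W_{p^J}$, which I would prove by induction on $|J|$: the base case $J = \varnothing$ reduces via $W_0 \simeq V_{1,r}$ to Theorem~\ref{thm:bardoe.sin.generic}, while for $|J| > 0$ Lemma~\ref{lem:iwahori.unramified.basis} describes the $\widetilde{B}_2$-orbit of $v_J$ as a span of basis vectors, in particular producing elements $f_{(\ell, p^{J'})}$ for each $J' \subsetneq J$, each of which generates $W_{p^{J'}}$ by induction, while the image of $v_J$ in the top graded piece generates $\det^{p^J} \tensor V_{1, r-2p^J}$ again by Bardoe--Sin.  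The main obstacle is this surjectivity step: the remark after Lemma~\ref{lem:upper.triangular.action.on.basis} cautions that in the unramified case the $G_2$-submodule generated by a basis element is generally not a linear span of basis elements, because Witt vector corrections contaminate the action of $U_2$, and the argument relies crucially on the particular shape $j_1 = p^J$ with digits in $\{0,1\}$, which is what makes Lemma~\ref{lem:iwahori.unramified.basis} applicable and keeps these corrections tractable.
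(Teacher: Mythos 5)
Your plan has the right cast of characters (the generator $\Pi \tensor X^{r-p^J} Y^{p^J}$, the eigenvalue $\chi_r \eta^{p^J}$, the filtration of $W_{p^J}$), but the map is built in the wrong direction and at the wrong level of structure, and the two substantive steps you defer are precisely where the content lies.

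First, working with $B_2$-eigenvectors is not enough. You ``pick a lift $v_J \in W_{p^J}$'' of the $B_2$-eigenvector in the top graded piece $W_{p^J}/\sum_{J' \subsetneq J} W_{p^{J'}} \simeq \det^{p^J}\tensor V_{1,r-2p^J}$; but an arbitrary lift is \emph{not} a $B_2$-eigenvector (the relevant $D_2$- and $U_2$-actions mix basis elements across the filtration), so $\langle G_2 \cdot v_J\rangle$ is not obviously a quotient of $\mathrm{Ind}_{B_2}^{G_2}(\chi_r\eta^{p^J})$, and the Frobenius-reciprocity framework you invoke does not apply as stated. To establish existence of an honest $B_2$-eigenvector in $W_{p^J}$ with the right eigenvalue that lifts the generator would itself require an argument, and it is not any of the $f_{\mathbf{j}}$, so the later appeal to Lemma~\ref{lem:iwahori.unramified.basis} (which describes the $\widetilde{B}_2$-orbit of the \emph{basis element} $f_{(0,p^J)}$) does not attach to $v_J$. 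The paper instead works with the entire $2^{|J|}$-dimensional $\widetilde{B}_2$-module $U_{p^J}$: one writes down an explicit $\widetilde{B}_2$-equivariant embedding $U_{p^J} \hookrightarrow \text{c-ind}_{F^\times\mathrm{GL}_2(\mathcal{O})}^{\mathrm{GL}_2(F)}\sigma_r$ via $m_\alpha \mapsto \Pi\tensor(-1)^\alpha X^{r-\alpha}Y^\alpha$ (checked against~\eqref{equ:action.of.g.on.mbeta}), and then Frobenius reciprocity for $\mathrm{Ind}_{\widetilde{B}_2}^{G_2}$ gives a $G_2$-equivariant map $\Phi: W_{p^J} \to \mathrm{Fil}^J\widetilde{R}(\sigma_r)$ whose image is by construction all of $\mathrm{Fil}^J\widetilde{R}(\sigma_r)$. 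Going in this direction makes surjectivity automatic and requires no kernel comparison at all, whereas your direction $\mathrm{Fil}^J\widetilde{R}(\sigma_r) \to W_{p^J}$ has the kernel comparison as its central unproved step --- the passing remark that it ``can be carried out by tracking how $w$ and further Iwahori matrices act'' is, in effect, the entire lemma.

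Second, the final dimension count is not available. To deduce that a surjection $\phi: \mathrm{Fil}^J\widetilde{R}(\sigma_r) \to W_{p^J}$ is injective, you would need an a priori bound $\dim \mathrm{Fil}^J\widetilde{R}(\sigma_r) \leq \dim W_{p^J} = (q+1)2^{|J|}$, and the graded pieces of Proposition~\ref{pro:filtration} tell you about $W_{p^J}$, not about the size of the cyclic submodule of the (infinite-dimensional) compact induction. The paper checks injectivity of its map $\Phi$ directly, by computing $\Phi(f_{(j_0,p^{J'})})$ and $\Phi(f_{(\infty,p^{J'})})$ explicitly as sums of elements $\bigl(\begin{smallmatrix} \varpi & \lambda \\ 0 & 1 \end{smallmatrix}\bigr)\tensor(\cdot)$ and $\bigl(\begin{smallmatrix} 1 & 0 \\ 0 & \varpi \end{smallmatrix}\bigr)\tensor(\cdot)$, and appealing to the well-known linear independence of such elements in the compact induction (as in~\cite[\S3]{Breuil/03}). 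Your preliminary verification that the $K(2)$-action is trivial is correct but becomes a corollary, rather than a prerequisite, once the map is built from the inflated module $W_{p^J}$ onto $\mathrm{Fil}^J\widetilde{R}(\sigma_r)$.
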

     \begin{proof}
     Recall the submodules $U_\beta \subseteq \mathrm{Ind}^{\widetilde{B}_2}_{B_2} \chi_r$ defined in the proof of Lemma~\ref{lem:intermediate.induction.filtration}, for all $\beta \in \Nt$.  For every $\beta \preceq r$ there is an embedding of $\widetilde{B}_2$-modules $U_{\beta} \to \text{c-ind}^{\mathrm{GL}_2(F)}_{F^\times \mathrm{GL}_2(\mathcal{O})} \sigma_r$ given by $m_\alpha \mapsto \Pi \tensor (-1)^\alpha X^{r- \alpha} Y^\alpha$ for all $\alpha \preceq \beta$; indeed, the action of $\widetilde{B}_2$ on the left-hand side is given by~\eqref{equ:action.of.g.on.mbeta}, while a simple calculation provides the action on the right-hand side.  Now set $\beta = p^J$; then the image of this map is the $\widetilde{B}_2$-submodule of $\text{c-ind}^{\mathrm{GL}_2(F)}_{F^\times \mathrm{GL}_2(\mathcal{O})} \sigma_r$ generated by $\Pi \tensor X^{r - p^J} Y^{p^J}$, and by Frobenius reciprocity we get a surjection of $\mathrm{GL}_2(\mathcal{O} / \mathfrak{m}^2)$-modules $\Phi: W_{p^J} = \mathrm{Ind}^{\mathrm{GL}_2(\mathcal{O} / \mathfrak{m}^2)}_{\widetilde{B}_2} U_{p^J} \to \mathrm{Fil}^J \widetilde{R}(\sigma_r)$.
          
          It remains to prove that $\Phi$ is injective.  We show this directly.  Consider the basis of $W_{p^J}$ given in Definition~\ref{def:filtration} and compute, for all $j_0 \in \Nt$ and $J^\prime \subseteq J$, that
          \begin{eqnarray*}
          \Phi ( f_{(j_0, p^{J^\prime})}) & = & \sum_{\lambda \in k} \left( \begin{array}{cc} \varpi & \lambda \\ 0 & 1 \end{array} \right) \tensor (-1)^{p^{J^\prime}} \lambda^{j_0} X^{r - p^{J^\prime}} Y^{p^{J^\prime}} \\
          \Phi (f_{(\infty, p^{J^\prime})}) & = & \sum_{\lambda \in k} \left( \begin{array}{cc} \varpi & \lambda \\ 0 & 1 \end{array} \right) \tensor \lambda^{r - 2p^{J^\prime}} X^{r - p^{J^\prime}} Y^{p^{J^\prime}} + \left( \begin{array}{cc} 1 & 0 \\ 0 & \varpi \end{array} \right) \tensor (-1)^{p^{J^\prime}} X^{p^{J^\prime}} Y^{r - p^{J^\prime}}.
          \end{eqnarray*} 
          The elements on the right-hand side are well-known to be linearly independent in $\text{c-ind}^{\mathrm{GL}_2(F)}_{F^\times \mathrm{GL}_2(\mathcal{O})} \sigma_r$; for instance, cf.~\cite[\S3]{Breuil/03}.
     \end{proof}
     
     \begin{rem}
     Lemma~\ref{lem:bp.filtration} allows us to recover~\cite[Lemma~17.1]{BP/12} as a special case of Proposition~\ref{pro:filtration}.
     \end{rem}
     
      We are now in a position to reinterpret the module $R(\sigma_r)$ of~\cite[Definition~17.9]{BP/12} in terms of the notions studied in the present paper.  
      
      \begin{lem} \label{lem:r.sigma.characterization}
      Suppose that $r_i > 0$ for all $i \in \Z / f\Z$.  The $\mathrm{GL}_2(\mathcal{O})$-modules $R(\sigma_r)$ and $M_{(\varnothing, {\Z / f\Z})}$ are isomorphic.
      \end{lem}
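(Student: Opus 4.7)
The plan is to deduce this identification directly from Lemma~\ref{lem:bp.filtration}. Under the hypothesis $r_i > 0$ for all $i \in \Z/f\Z$, the subset $J = \Z/f\Z$ satisfies the admissibility condition of that lemma, so one obtains an isomorphism of $\mathrm{GL}_2(\mathcal{O})$-modules
$$ \Phi : W_{1 + p + \cdots + p^{f-1}} \xrightarrow{\sim} \mathrm{Fil}^{\Z/f\Z} \widetilde{R}(\sigma_r). $$
The explicit formula for $\Phi$ recorded in the proof of Lemma~\ref{lem:bp.filtration} shows, upon setting $j_0 = 0$ and $J^\prime = \Z/f\Z$, that $\Phi(f_{(0, 1+p+\cdots+p^{f-1})})$ is a nonzero scalar multiple of $\Pi \tensor X^{r - (1+p+\cdots+p^{f-1})} Y^{1 + p + \cdots + p^{f-1}}$.

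Next I would unwind the definition of $R(\sigma_r)$ given in~\cite[Definition~17.9]{BP/12}, which exhibits it as the $\mathrm{GL}_2(\mathcal{O})$-submodule of $\mathrm{Fil}^{\Z/f\Z} \widetilde{R}(\sigma_r)$ (equivalently, of the ambient compact induction) generated by exactly the element $\Pi \tensor X^{r - (1+p+\cdots+p^{f-1})} Y^{1+p+\cdots+p^{f-1}}$, up to a harmless sign. Since $\Phi$ is an isomorphism carrying $f_{(0, 1+p+\cdots+p^{f-1})}$ to this distinguished generator, it restricts to an isomorphism of the cyclic $\mathrm{GL}_2(\mathcal{O})$-submodules they generate, namely $M_{(\varnothing, \Z/f\Z)} \simeq R(\sigma_r)$, which is the claim.

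The only nontrivial step is the bookkeeping needed to match the generator of $R(\sigma_r)$ fixed in~\cite{BP/12} with $\Phi(f_{(0, 1+p+\cdots+p^{f-1})})$; this is a matter of comparing the normalization conventions of the two papers, and the hypothesis $r_i > 0$ is precisely what ensures that this generator is well-defined in the Breuil--\Paskunas\ setup. Once this matching is verified, everything else is formal, and the argument is simply the observation that an isomorphism of ambient modules sends a cyclic submodule to the cyclic submodule generated by the image of the generator.
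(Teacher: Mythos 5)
There is a genuine gap, and it stems from a misremembered form of~\cite[Definition~17.9]{BP/12}. In that reference, $R(\sigma)$ is \emph{not} defined as the cyclic $\mathrm{GL}_2(\mathcal{O})$-module generated by $\Pi\tensor X^{r-p^{\Z/f\Z}}Y^{p^{\Z/f\Z}}$: that element generates all of $\mathrm{Fil}^{\Z/f\Z}\widetilde{R}(\sigma_r)$ by the very definition of $\mathrm{Fil}^J$, and Lemma~\ref{lem:bp.filtration} identifies $\mathrm{Fil}^{\Z/f\Z}\widetilde{R}(\sigma_r)$ with $W_{1+p+\cdots+p^{f-1}}$, whose Jordan-H\"older constituents are $\det^{p^I}\tensor\sigma_J(r-2p^I)$ for \emph{all} pairs $I,J\subseteq\Z/f\Z$. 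Your argument would therefore force $R(\sigma_r)\simeq W_{1+p+\cdots+p^{f-1}}$, which is strictly larger than $M_{(\varnothing,\Z/f\Z)}$; by Theorem~\ref{thm:unramified.submodule.structure} the latter only contains the constituents with $J\cap(I-1)=\varnothing$, so the two cannot be isomorphic. Relatedly, your claim that $\Phi(f_{(0,1+p+\cdots+p^{f-1})})$ is a scalar multiple of $\Pi\tensor X^{r-p^{\Z/f\Z}}Y^{p^{\Z/f\Z}}$ contradicts the explicit formula recorded in the proof of Lemma~\ref{lem:bp.filtration}, which shows it is the nondegenerate sum $\sum_{\lambda\in k}\left(\begin{smallmatrix}\varpi & \lambda \\ 0 & 1\end{smallmatrix}\right)\tensor(-1)^{p^{\Z/f\Z}}X^{r-p^{\Z/f\Z}}Y^{p^{\Z/f\Z}}$, equivalently $(-1)^{p^{\Z/f\Z}}\sum_\lambda\left(\begin{smallmatrix}1&\lambda\\0&1\end{smallmatrix}\right)w$ applied to $\Pi\tensor X^{r-p^{\Z/f\Z}}Y^{p^{\Z/f\Z}}$; this element lies in, but does not generate, $\mathrm{Fil}^{\Z/f\Z}$.

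The definition of $R(\sigma)$ in \cite{BP/12} is instead in terms of \emph{special} subquotients: it is the sum, over the special Jordan-H\"older constituents $\tau$ of $\widetilde{R}(\sigma)$, of the minimal submodules admitting $\tau$ as a constituent. This is why the paper's proof cannot be the formal one-liner you propose: one must first translate the BP speciality condition into a combinatorial one (using Remark~\ref{rem:serre.weight.data}) to see that, under the hypothesis $r_i>0$ for all $i$, the special constituents are precisely $\{\sigma_\theta:\theta\in\widetilde{\Theta}_1\}$; then invoke Proposition~\ref{pro:unramified.irreducible.cosocle} to identify the minimal submodule with cosocle $\sigma_\theta$ as $M_\theta$, so that $R(\sigma_r)\simeq\sum_{\theta\in\widetilde{\Theta}_1}M_\theta$; and finally use Proposition~\ref{pro:unramified.inclusion} together with the fact that $(\varnothing,\Z/f\Z)$ is the unique maximal element of $\widetilde{\Theta}_1$ for $\sqsubseteq$ to collapse the sum to $M_{(\varnothing,\Z/f\Z)}$. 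The hypothesis $r_i>0$ for all $i$ enters in ensuring that $I$ may range over all subsets of $\Z/f\Z$, not merely in making some generator well-defined.
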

     \begin{proof}
     Let $I \subseteq \{ i \in \Z / f\Z : \, r_i > 0 \}$.  A Jordan-H\"older constituent $\det^{p^I} \tensor \sigma_J(r - 2p^I)$ of $W_{p^I} / \sum_{I^\prime \subset I} W_{p^{I^\prime}}$ is special in the sense of~\cite[Definition~17.2]{BP/12}, where we have implicitly applied the isomorphism of Proposition~\ref{pro:filtration} and the identification of Lemma~\ref{lem:bp.filtration}, if $t_{J,i}(r - 2p^I) \in \{ (r - 2p^I)_i, p - 2 - (r - 2p^I)_i \}$ for all $i \in I$.  By Remark~\ref{rem:serre.weight.data} this exactly means that $(I - 1) \cap J = \varnothing$.  Under our hypotheses on $r$, the subset $I$ can be any subset of $\Z / f\Z$, and thus $\det^{p^I} \tensor \sigma_J(r - 2p^I)$ is special if and only if $(J, I) \in \widetilde{\Theta}_1$.  By the definition of $R(\sigma_r)$ and Proposition~\ref{pro:unramified.irreducible.cosocle}, we have $R(\sigma_r) \simeq \sum_{\theta \in \widetilde{\Theta}_1} M_\theta$.  Since $(\varnothing, {\Z / f\Z})$ is the unique maximal element of $\widetilde{\Theta}_1$, by Proposition~\ref{pro:unramified.inclusion} the sum on the right-hand side is just $M_{(\varnothing, {\Z / f\Z})}$.
     \end{proof}
     
     \subsection{Submodule structure}
     Before giving a complete description of the submodule structure of $R(\sigma_r) \simeq M_{(\varnothing, {\Z / f\Z})}$, we state an auxiliary lemma.
     
     \begin{lem} \label{lem:unramified.auxiliary}
     Suppose that $r$ is generic and $M_{(\varnothing, {\Z / f\Z})}$ admits a non-split extension $\mathcal{E}$ of two Serre weights $\sigma_{\theta}$ and $\sigma_{\theta^\prime}$, for $\theta, \theta^\prime \in \widetilde{\Theta}_1$, as a subquotient:
     $ 0 \to \sigma_{\theta^\prime} \to \mathcal{E} \to \sigma_\theta \to 0.$  Then $\theta^\prime \sqsubset \theta$.
     \end{lem}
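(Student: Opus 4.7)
The first step is to upgrade the subquotient hypothesis into a quotient statement: we show that $\mathcal{E}$ is a quotient of $M_\theta$. Writing $\mathcal{E} = N_2/N_1$ for submodules $N_1 \subseteq N_2 \subseteq M := M_{(\varnothing, \Z/f\Z)}$, observe that $M$ is contained in the multiplicity-free module $W_{1+p+\cdots+p^{f-1}}$ of Lemma~\ref{lem:unramified.multiplicity.free}, so $M$ itself is multiplicity-free. A standard argument for such modules shows that the intersection of all submodules of $M$ containing a given Jordan--H\"older constituent $\sigma$ is itself such a submodule and has irreducible cosocle $\sigma$; combined with Proposition~\ref{pro:unramified.irreducible.cosocle}, this identifies $M_\tau$ for $\tau \in \widetilde{\Theta}_1$ as the unique minimal submodule of $M$ with $\sigma_\tau$ as a Jordan--H\"older constituent. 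Since $\sigma_\theta$ is a constituent of $N_2$, we deduce $M_\theta \subseteq N_2$, and the composition $M_\theta \hookrightarrow N_2 \twoheadrightarrow \mathcal{E}$ exhibits $\mathcal{E}$ as a quotient of $M_\theta$, provided its image is nonzero; but it must be, for otherwise $M_\theta \subseteq N_1$ would force a second copy of $\sigma_\theta$ in $M$. Since $\mathcal{E}$ is non-split, its only proper nonzero submodule is $\sigma_{\theta'}$, whose cosocle $\sigma_{\theta'} \neq \sigma_\theta$; whereas any nonzero quotient of $M_\theta$ has cosocle $\sigma_\theta$ by Proposition~\ref{pro:unramified.irreducible.cosocle}. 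Hence $M_\theta \twoheadrightarrow \mathcal{E}$ is surjective, and restricting to the radical yields a surjection $\rad(M_\theta) \twoheadrightarrow \sigma_{\theta'}$; in particular, $\sigma_{\theta'}$ is a Jordan--H\"older constituent of the first radical layer $\rad(M_\theta)/\rad^2(M_\theta)$.

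The second step is to identify the Jordan--H\"older constituents of $\rad(M_\theta)/\rad^2(M_\theta)$ coming from $\widetilde{\Theta}_1$ with the immediate predecessors of $\theta$ under $\sqsubseteq$. Since $M_\theta$ is cyclic with generator $f_\theta = f_{(s_J(r-2p^I),\, p^I)}$, any vector surviving to this layer must arise by applying a generator of $G_2 = \mathrm{GL}_2(\mathcal{O}/\mathfrak{m}^2)$ to $f_\theta$ and reducing modulo $\rad^2(M_\theta)$. Using the explicit formulae of Lemmas~\ref{lem:w.action.on.basis}, \ref{lem:diagonal.action.on.basis}, \ref{lem:lower.triangular.unramified}, and \ref{lem:iwahori.unramified.basis}, together with the admissibility constraint $J \cap (I - 1) = \varnothing$ defining $\widetilde{\Theta}_1$, one verifies that the basis vectors $f_{\theta''}$ with $\theta'' \in \widetilde{\Theta}_1$ that contribute to this image are precisely those indexed by the two families of covers of $\theta$ for $\sqsubseteq$: either $\theta'' = (J \setminus \{j\}, I)$ for some $j \in J$, arising from the tame Bardoe--Sin action within the top layer $W_{p^I}/\sum_{I'' \subsetneq I} W_{p^{I''}}$, or $\theta'' = (J \cup \{i-1\}, I \setminus \{i\})$ for some $i \in I$, arising from the Iwahori action that crosses filtration levels, precisely as in the two cases treated in the proof of Proposition~\ref{pro:unramified.inclusion}.

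The main obstacle is this second step. One must exclude spurious contributions coming from the Witt-vector summation (Lemma~\ref{lem:witt.vector.summation}), which is the essential new feature of the unramified case and has no counterpart in the ramified analogue (contrast Proposition~\ref{pro:adjacent.types}). The mixed monomials produced by such summation could, a priori, yield basis vectors $f_{\theta''}$ outside the two cover families above; the genericity hypothesis $3 \leq r_i \leq p - 3$ of Definition~\ref{def:strong.genericity} is used to separate the exponents and rule out this possibility. Once the identification is established, the conclusion $\theta' \sqsubset \theta$ is immediate, with strict inequality because $\theta \neq \theta'$: otherwise $\sigma_\theta$ would occur twice in $\mathcal{E}$, and hence in $M$, again contradicting multiplicity-freeness.
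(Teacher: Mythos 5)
Your first step is sound and is a genuinely different route from the paper's: the paper never passes through a radical-layer argument, instead invoking the $\mathrm{Ext}^1$ computation for Serre weights of~\cite[Corollary~5.6]{BP/12} as a black box to enumerate the possible pairs $(\theta, \theta')$, translates these conditions into the covers for $\sqsubseteq$, and then uses the irreducible-cosocle property of $M_{\theta'}$ (Proposition~\ref{pro:unramified.irreducible.cosocle}) to rule out the direction $\theta \sqsubset \theta'$. (The paper also treats $f = 1$ separately by writing down the uniserial module explicitly; your argument does not distinguish this case.)

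The problem is your second step, which you correctly identify as the main obstacle but then do not actually carry out. You assert that ``one verifies'' that the Jordan--H\"older constituents of $\mathrm{rad}(M_\theta)/\mathrm{rad}^2(M_\theta)$ coming from $\widetilde{\Theta}_1$ are exactly the covers of $\theta$ for $\sqsubseteq$, but no such verification is given; you then explicitly concede that the Witt-vector cross-terms from Lemma~\ref{lem:witt.vector.summation} are the central difficulty and say only that genericity is ``used to rule out'' spurious contributions, without doing so. Nothing in Lemma~\ref{lem:iwahori.unramified.basis} controls the full $G_2$-action (it only treats the Iwahori $\widetilde{B}_2$), and the passage from the spanning set there to the cosocle of $\mathrm{rad}(M_\theta)$ requires precisely the kind of $\mathrm{Ext}^1$ information that the paper imports from BP/12. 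In effect your step 2 amounts to re-deriving a piece of \cite[Corollary~5.6]{BP/12} by direct calculation; that may well be possible, but as written the proof stops exactly where the work begins, so the argument is incomplete.
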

     \begin{proof}
     If $f = 1$, then it is simple to work out the submodule structure of $M_{(\varnothing, \Z / f\Z)}$ directly.  It is a uniserial module of length three:
     \begin{eqnarray*}
     \mathrm{soc} (M_{(\varnothing, \Z / f\Z)}) & \simeq & \sigma_{\varnothing}(r) = \mathrm{Sym}^r E^2 \\
     \mathrm{rad} (M_{(\varnothing, \Z / f\Z)}) & \simeq & W_0 \simeq \mathrm{Ind}^{\mathrm{GL}_2(\mathbb{F}_p)}_{B_2(\mathbb{F}_p)} \chi_r \\
     M_{(\varnothing, \Z / f\Z)} /  \mathrm{rad} (M_{(\varnothing, \Z / f\Z)}) & \simeq & \det \tensor \sigma_{\varnothing}(r - 2) = \det \tensor \mathrm{Sym}^{r-2} E^2.
     \end{eqnarray*}
     In particular, the claim holds.  So we assume $f > 1$.
     
     If $\mathrm{Ext}^1_{\mathrm{GL}_2(\mathcal{O})}(\sigma_{\theta}, \sigma_{\theta^\prime}) \neq 0$, then the Serre weights $\sigma_\theta$ and $\sigma_{\theta^\prime}$ must satisfy one of five conditions specified by~\cite[Corollary~5.6]{BP/12}.  The conditions (i)(a) and (i)(b), in the labeling of~\cite{BP/12}, amount to the union of the following possibilities, where $I, J \subseteq \Z / f\Z$ satisfy $J \cap (I - 1) = \varnothing$:
     \begin{itemize}
     \item
     $\{ \theta, \theta^\prime \} = \{ (J, I), (J \cup \{ j \}, I) \}$ for some $j \not\in J$;
     \item
     $\{ \theta, \theta^\prime \} = \{ (J, I), (J \cup \{ i - 1 \}, {I \setminus \{ i \}}) \}$ for some $i \in I$.
     \end{itemize}
     The conditions (ii)(a), (ii)(b), and (ii)(c) amount to the following possibilities:
     \begin{itemize}
     \item
     $\{ \theta, \theta^\prime \} = \{ (J, {I}), (J, {I \setminus \{ i \} }) \}$ for some $i \in I$;
     \item
     $\theta = \theta^\prime$.
     \end{itemize}
     The module $M_{(\varnothing, {\Z / f\Z})}$ is multiplicity-free, so $\theta = \theta^\prime$ is impossible; see also~\cite[Proposition~2.10]{Hu/10}.  For the other possibilities, either $\theta^\prime \sqsubset \theta$ or $\theta \sqsubset \theta^\prime$.  If $\theta \sqsubset \theta^\prime$, then $\mathcal{E}$ is a subquotient of $M_{\theta^\prime}$.  But Proposition~\ref{pro:unramified.irreducible.cosocle} implies that $M_{\theta^\prime}$ cannot admit a reducible subquotient containing $\sigma_{\theta^\prime}$ in its socle, giving rise to a contradiction.  Therefore, the relation $\theta^\prime \sqsubset \theta$ necessarily holds.
     \end{proof}

     \begin{thm} \label{thm:unramified.submodule.structure}
     Suppose that $r$ is generic.
     \begin{enumerate}
     \item
     The $\mathrm{GL}_2(\mathcal{O} / \mathfrak{m}^2)$-module $M_{(\varnothing, {\Z / f\Z})}$ is multiplicity-free, and the set of its Jordan-H\"older constituents is
     $$ \mathrm{JH}(M_{(\varnothing, {\Z / f\Z})}) = \{ \sigma_\theta: \, \theta \in \widetilde{\Theta}_1 \}.$$
     \item
     Let $\theta = (J, I) \in \widetilde{\Theta}_1$.  Then $M_\theta$ is the unique submodule of $M_{(\varnothing, {\Z / f\Z})}$ with cosocle $\sigma_\theta$, and its Jordan-H\"older constituents are
     $$ \mathrm{JH}(M_\theta) = \{ \sigma_{\theta^\prime} : \, \theta^\prime \in \widetilde{\Theta}_1, \, \theta^\prime \sqsubseteq \theta \}.$$
     \end{enumerate}
     \end{thm}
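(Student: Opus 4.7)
The plan is to prove part~(2) by induction on $\theta \in \widetilde{\Theta}_1$ with respect to $\sqsubseteq$, and then deduce part~(1) by specializing to the unique maximum $\theta = (\varnothing, \Z/f\Z)$ of $\widetilde{\Theta}_1$; every $\theta^\prime \in \widetilde{\Theta}_1$ satisfies $\theta^\prime \sqsubseteq (\varnothing, \Z/f\Z)$ directly from the definition of $\sqsubseteq$, using that $J^\prime \cap (I^\prime - 1) = \varnothing$ is equivalent to $J^\prime \subseteq (\Z/f\Z \setminus I^\prime) - 1$. Multiplicity-freeness of every $M_\theta$ is automatic from $M_\theta \subseteq M_{(\varnothing, \Z/f\Z)} \subseteq W_{1 + p + \cdots + p^{f-1}}$ together with Lemma~\ref{lem:unramified.multiplicity.free}. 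This has two useful consequences within our ambient multiplicity-free module: the set of Jordan-H\"older constituents of a sum of submodules equals the union of the constituent sets of the summands, and at most one submodule has any prescribed irreducible cosocle.

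For the inductive step, fix $\theta \in \widetilde{\Theta}_1$ and assume part~(2) for every $\theta^\prime \sqsubset \theta$. Set $N_\theta = \sum_{\theta^\prime \sqsubset \theta} M_{\theta^\prime}$, which lies in $M_\theta$ by Proposition~\ref{pro:unramified.inclusion}. The inductive hypothesis together with multiplicity-freeness gives $\mathrm{JH}(N_\theta) = \{\sigma_{\theta^{\dpr}} : \theta^{\dpr} \sqsubset \theta\}$; in particular $\sigma_\theta \not\in \mathrm{JH}(N_\theta)$, so $N_\theta \subsetneq M_\theta$, and since $M_\theta$ has a unique maximal submodule $\rad(M_\theta)$ by Proposition~\ref{pro:unramified.irreducible.cosocle}, we get $N_\theta \subseteq \rad(M_\theta)$. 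The heart of the proof is to establish the reverse inclusion, or equivalently $M_\theta / N_\theta \simeq \sigma_\theta$. If this fails, then $M_\theta/N_\theta$ has length at least two with cosocle $\sigma_\theta$ and non-zero radical $R$; picking any irreducible direct summand $\tau$ of the semisimple quotient $R/\rad(R)$ and lifting a direct complement to a submodule $R^{\dpr} \subsetneq R$, one finds that $(M_\theta/N_\theta)/R^{\dpr}$ has length two, contains $\tau$ as a simple submodule, and has $\sigma_\theta$ as cosocle. Uniqueness of the maximal submodule is inherited by this quotient and forces the extension $0 \to \tau \to (M_\theta/N_\theta)/R^{\dpr} \to \sigma_\theta \to 0$ to be non-split.

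This puts us in a position to apply Lemma~\ref{lem:unramified.auxiliary} to conclude that $\tau = \sigma_{\theta^\prime}$ for some $\theta^\prime \sqsubset \theta$, except that the lemma as stated assumes \emph{a priori} that $\tau$ belongs to the distinguished family $\{\sigma_{\theta^\prime}: \theta^\prime \in \widetilde{\Theta}_1\}$. The required strengthening is that every Serre weight $\tau$ with $\mathrm{Ext}^1_{\mathrm{GL}_2(\mathcal{O})}(\sigma_\theta, \tau) \neq 0$ lies in this family; this follows by direct inspection of the five cases of \cite[Corollary~5.6]{BP/12} under the genericity hypothesis on $r$, and is the main obstacle of the argument. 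Granting it, $\sigma_{\theta^\prime}$ is a Jordan-H\"older constituent of both $N_\theta$ (via $M_{\theta^\prime} \subseteq N_\theta$) and $M_\theta/N_\theta$, contradicting multiplicity-freeness. Hence $M_\theta/N_\theta \simeq \sigma_\theta$, closing the induction and yielding $\mathrm{JH}(M_\theta) = \{\sigma_{\theta^\prime} : \theta^\prime \sqsubseteq \theta\}$. Uniqueness of $M_\theta$ as a submodule of $M_{(\varnothing, \Z/f\Z)}$ with cosocle $\sigma_\theta$ is immediate from multiplicity-freeness, and specializing to $\theta = (\varnothing, \Z/f\Z)$ proves part~(1).
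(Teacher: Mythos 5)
Your inductive strategy inverts the paper's order of proof: the paper first establishes part~(1) by invoking~\cite[Lemma~17.8]{BP/12}, which shows that $R(\sigma_r) \simeq M_{(\varnothing, \Z/f\Z)}$ has \emph{only} the special constituents $\{\sigma_\theta : \theta \in \widetilde{\Theta}_1\}$, and then deduces part~(2) from Lemma~\ref{lem:unramified.auxiliary}. You try to get part~(2) first by induction and read off part~(1) as a special case, and you correctly identify where this strategy is strained: when you take the non-split length-two quotient $0 \to \tau \to (M_\theta/N_\theta)/R^{\dpr} \to \sigma_\theta \to 0$, you have no grounds to assume $\tau$ lies in the distinguished family, so Lemma~\ref{lem:unramified.auxiliary} does not apply as stated.

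The trouble is that the ``strengthening'' you propose to fill this gap is false. Take $\theta = (\varnothing, \Z/f\Z)$, so that $\sigma_\theta = \det^{1+p+\cdots+p^{f-1}} \tensor \sigma_\varnothing(r - 2(1+\cdots+p^{f-1}))$, and consider $\tau = \det^{1+p+\cdots+p^{f-1}} \tensor \sigma_{\{0\}}(r - 2(1 + \cdots + p^{f-1}))$. Since $\sigma_\varnothing(s)$ and $\sigma_{\{0\}}(s)$ are adjacent Jordan--H\"older constituents of the multiplicity-free tame principal series $V_{1,s}$ (for $s = r - 2(1+\cdots+p^{f-1})$, generic), they admit a non-split extension, whence $\mathrm{Ext}^1_{\mathrm{GL}_2(\mathcal{O})}(\sigma_\theta, \tau) \neq 0$. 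Yet $(\{0\}, \Z/f\Z) \notin \widetilde{\Theta}_1$, because $\{0\} \cap (\Z/f\Z - 1) = \{0\} \neq \varnothing$, so $\tau$ is not of the form $\sigma_{\theta'}$ for any $\theta' \in \widetilde{\Theta}_1$. Indeed, inspecting the first case listed in the proof of Lemma~\ref{lem:unramified.auxiliary} already reveals the issue: the configuration $\{(J,I), (J \cup \{j\}, I)\}$ leaves the family $\widetilde{\Theta}_1$ whenever $j \in I-1$, and nothing in~\cite[Corollary~5.6]{BP/12} forbids that case. What must actually be ruled out is that such a non-special $\tau$ occurs as a constituent of $M_\theta$ in the first place, and that is precisely the content of~\cite[Lemma~17.8]{BP/12} — a substantive computation that your argument cannot replace with ``direct inspection of the five cases.'' To close the gap you would need to either cite that lemma (as the paper does) or reprove it.
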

     \begin{proof}
     If $\theta = (J, I) \in \widetilde{\Theta}_1$, then $\sigma_\theta$ is a constituent of $M_\theta$ by Proposition~\ref{pro:unramified.irreducible.cosocle} and hence of $M_{(\varnothing, {\Z / f\Z})}$ by Proposition~\ref{pro:unramified.inclusion}.  To prove the first statement, we must show that $M_{(\varnothing, {\Z / f\Z})}$ has no other Jordan-H\"older constituents.   This is the content of~\cite[Lemma~17.8]{BP/12}; recall our characterization of special subquotients in the course of the proof of Lemma~\ref{lem:r.sigma.characterization}.
     
     Now consider the second part of the claim.  Again by Proposition~\ref{pro:unramified.inclusion} we know that all the Serre weights contained in the right-hand side are indeed Jordan-H\"older constituents of $M_\theta$.  If our claim were false, there would exist $\theta^\prime, \theta^\dpr \in \widetilde{\Theta}_1$ such that $\theta^\dpr \not\sqsubseteq \theta^\prime$ and yet $M_\theta$ admits as a subquotient a non-split extension $\mathcal{E}$ of the form $0 \to \sigma_{\theta^\dpr} \to \mathcal{E} \to \sigma_{\theta^\prime} \to 0$, contradicting Lemma~\ref{lem:unramified.auxiliary}.
     \end{proof}
     
     We immediately deduce an analogue of Proposition~\ref{pro:adjacent.types}.
     \begin{cor} \label{cor:unramified.extension}
     Suppose that $r$ is generic and $\theta^\prime \sqsubset \theta$ are adjacent elements of $\widetilde{\Theta}_1$, with respect to the partial order $\sqsubseteq$.  Then a non-split extension of $\sigma_\theta$ by $\sigma_{\theta^\prime}$ arises as a subquotient of $M_{(\varnothing, \Z / f\Z)}$.
     \end{cor}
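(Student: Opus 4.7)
The plan is to construct the extension explicitly as a subquotient of $M_\theta$ itself, and then deduce non-splitness from the uniqueness of the cosocle of $M_\theta$ given by Proposition~\ref{pro:unramified.irreducible.cosocle}. Concretely, I would define the submodule
$$ N = \sum_{\theta^{\prime\prime} \in \widetilde{\Theta}_1 \atop \theta^{\prime\prime} \sqsubset \theta, \, \theta^{\prime\prime} \neq \theta^\prime} M_{\theta^{\prime\prime}} \subseteq M_\theta, $$
and show that $M_\theta / N$ is the desired length-two extension $0 \to \sigma_{\theta^\prime} \to M_\theta/N \to \sigma_\theta \to 0$.

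For the Jordan-H\"older computation, I would invoke Theorem~\ref{thm:unramified.submodule.structure}. Part (2) of that theorem gives the JH constituents of $M_\theta$ and of each $M_{\theta^{\prime\prime}}$ appearing in the sum, while part (1) ensures that $M_{(\varnothing, \Z / f\Z)}$ is multiplicity-free, so multisets of constituents collapse to sets. Plainly $\sigma_\theta \not\in \mathrm{JH}(N)$ since $\theta \not\sqsubseteq \theta^{\prime\prime}$ for any $\theta^{\prime\prime} \sqsubset \theta$. The key point is that $\sigma_{\theta^\prime} \not\in \mathrm{JH}(N)$: if $\theta^\prime \sqsubseteq \theta^{\prime\prime}$ for some $\theta^{\prime\prime} \sqsubset \theta$ with $\theta^{\prime\prime} \neq \theta^\prime$, then the chain $\theta^\prime \sqsubsetneq \theta^{\prime\prime} \sqsubsetneq \theta$ contradicts the adjacency of $\theta^\prime$ and $\theta$. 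Hence $\mathrm{JH}(M_\theta / N) = \{\sigma_\theta, \sigma_{\theta^\prime}\}$; since $\theta^\prime \sqsubset \theta$ gives $M_{\theta^\prime} \subseteq M_\theta$ by Proposition~\ref{pro:unramified.inclusion}, the image of $M_{\theta^\prime}$ in $M_\theta/N$ is a copy of $\sigma_{\theta^\prime}$ sitting as a submodule, so $M_\theta/N$ fits into a short exact sequence of the desired form.

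For non-splitness, I would argue that the cosocle of $M_\theta$ is $\sigma_\theta$ by Proposition~\ref{pro:unramified.irreducible.cosocle}, and that the cosocle of any non-zero quotient of $M_\theta$ is a non-zero quotient of $\sigma_\theta$, hence equal to $\sigma_\theta$. If the extension $0 \to \sigma_{\theta^\prime} \to M_\theta/N \to \sigma_\theta \to 0$ split, then $M_\theta/N \simeq \sigma_{\theta^\prime} \oplus \sigma_\theta$ would have a reducible cosocle, and in particular would admit $\sigma_{\theta^\prime}$ as an irreducible quotient; since $\sigma_\theta \not\simeq \sigma_{\theta^\prime}$ by the multiplicity-freeness noted above, this is a contradiction.

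There is no serious obstacle to overcome: the argument is purely formal given Theorem~\ref{thm:unramified.submodule.structure}, and all the detailed representation-theoretic and combinatorial work has already been carried out. The one point requiring attention is the verification that $\sigma_{\theta^\prime} \not\in \mathrm{JH}(N)$, where the adjacency hypothesis enters in an essential way.
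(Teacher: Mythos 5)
Your proof is correct and takes essentially the route the paper intends: the corollary is stated without a separate proof, the subsequent remark indicating that it is a formal consequence of Propositions~\ref{pro:unramified.irreducible.cosocle} and~\ref{pro:unramified.inclusion} together with the submodule structure already established, which is exactly what you invoke via Theorem~\ref{thm:unramified.submodule.structure}. Your construction of $N$, the use of multiplicity-freeness and adjacency to pin down $\mathrm{JH}(M_\theta/N)=\{\sigma_\theta,\sigma_{\theta'}\}$, and the non-splitness argument from the irreducibility of $\mathrm{cosoc}(M_\theta)$ all match the intended deduction.
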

     
      \begin{rem}
     The ``extension lemma''~\cite[Lemma~18.4]{BP/12} is a crucial ingredient in the main results of~\cite{BP/12}.  Corollary~\ref{cor:unramified.extension} should be viewed as a strengthening of this lemma, specifying which of the two non-split extensions occurs in $R(\sigma)$.  In fact, the proof, which is less intricate than the argument in~\cite{BP/12}, involves only Propositions~\ref{pro:unramified.irreducible.cosocle} and~\ref{pro:unramified.inclusion} and a translation of conditions on Serre weights into conditions on elements of $\widetilde{\Theta}_1$ as in the proof of Lemma~\ref{lem:unramified.auxiliary}.
          \end{rem}

       \section{Proof of Proposition~\ref{pro:equality.of.orders}} \label{sec:equality.of.orders}
       In this section we prove a closed-form description of the partial order $\leq_r$ on the set $\widetilde{\Theta}$ of types of the elements of the basis $\mathcal{B}_2$ of $V_{2,r}$, for ramified extensions $F / \Q_p$.  This partial order was originally introduced in Definition~\ref{def:covering.relations.ramified} in terms of generating relations.  In this section we refer to any pair $(I, \gamma) \in \mathcal{P}(\Z / f\Z) \times \Nt$ as a type, and the notion of $r$-admissible types will distinguish the pairs arising from equivalence classes in $\widetilde{\Theta}$ by the correspondence $\Upsilon$ of Proposition~\ref{cor:types.as.pairs}; see Definition~\ref{def:admissible.type} and Remark~\ref{rem:admissible.types} below.
       
       \begin{dfn}
       Let $(I^\prime, \gamma^\prime)$ and $(I, \gamma)$ be two types.  We say that $(I^\prime, \gamma^\prime) \preceq_r (I, \gamma)$ if the following conditions are satisfied:
       \begin{itemize}
\item
$\gamma^\prime \preceq \gamma$;
\item
$I^\prime \subseteq I \cup I(r - 2\gamma, 2(\gamma \dotdiv \gamma^\prime)) \cup I(\gamma \dotdiv \gamma^\prime, \gamma \dotdiv \gamma^\prime)$.
\end{itemize}
       \end{dfn}
       \begin{lem}
       The relation $\preceq_r$ is a partial order.
       \end{lem}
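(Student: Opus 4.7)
The plan is to verify the three partial-order axioms in turn; reflexivity and antisymmetry are immediate, while transitivity is the substantive case. For reflexivity, taking $\gamma = \gamma^\prime$ makes $\gamma \dotdiv \gamma^\prime = 0$, so both $I(r - 2\gamma, 0)$ and $I(0, 0)$ vanish, reducing the inclusion condition to $I \subseteq I$. Antisymmetry is then analogous: $\gamma^\prime \preceq \gamma$ and $\gamma \preceq \gamma^\prime$ force $\gamma = \gamma^\prime$, whereupon the inclusion conditions collapse to $I^\prime \subseteq I$ and $I \subseteq I^\prime$.

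For transitivity, suppose $(I^\dpr, \gamma^\dpr) \preceq_r (I^\prime, \gamma^\prime) \preceq_r (I, \gamma)$, and set $a = \gamma \dotdiv \gamma^\prime$ and $b = \gamma^\prime \dotdiv \gamma^\dpr$. The first step will be a digit-level observation: writing $\gamma = \gamma^\prime + a$ and $\gamma^\prime = \gamma^\dpr + b$ (both carry-free), one gets $\gamma^\dpr_i + a_i + b_i = \gamma_i \leq p - 1$ in every column. In particular $a_i + b_i \leq p - 1$, so $I(a, b) = \varnothing$ and $a + b = \gamma \dotdiv \gamma^\dpr$. Combining the two hypotheses with $r - 2\gamma^\prime = r - 2\gamma + 2a$, one obtains
\[ I^\dpr \subseteq I \cup I(r - 2\gamma, 2a) \cup I(r - 2\gamma + 2a, 2b) \cup I(a, a) \cup I(b, b). \]

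The remaining task is to absorb the last four carry sets into $I(r - 2\gamma, 2(a+b)) \cup I(a+b, a+b)$. A single application of Lemma~\ref{cor:carries.triple.sum} with operands $(2a, 2b, r - 2\gamma)$ (using the symmetry $I(X, Y) = I(Y, X)$) rewrites $I(r - 2\gamma, 2a) \cup I(r - 2\gamma + 2a, 2b)$ as $I(2a, 2b) \cup I(r - 2\gamma, 2(a+b))$, disposing of the two $r$-dependent pieces. For the remaining $I(a, a) \cup I(b, b) \cup I(2a, 2b)$, the idea is to invoke Corollary~\ref{lem:carries.urlemma} for the four-term sum $a + b + a + b$: comparing the trees that cluster as $((a + b) + (a + b))$ versus $((a + a) + (b + b))$ yields the multiset identity $I(a, b) \sqcup I(a, b) \sqcup I(a + b, a + b) = I(a, a) \sqcup I(b, b) \sqcup I(2a, 2b)$. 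Combined with $I(a, b) = \varnothing$, this forces $I(a, a) \cup I(b, b) \cup I(2a, 2b) = I(a + b, a + b)$ as sets, which closes the argument.

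The main obstacle, or rather the key insight, is recognising that the chain $\gamma^\dpr \preceq \gamma^\prime \preceq \gamma$ forces $I(a, b) = \varnothing$. Without this, the multiset identity from Corollary~\ref{lem:carries.urlemma} yields only a multiset equation, and the desired set inclusion fails for arbitrary $a, b$: a carry at position $i$ in the sum $2a + 2b$ need not appear in $(a+b) + (a+b)$. Once the vanishing of $I(a,b)$ is in hand, the rest is essentially bookkeeping with the triple-sum identity.
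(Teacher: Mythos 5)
Your proof is correct and essentially matches the paper's: both hinge on the observation that $\gamma^{\prime\prime} \preceq \gamma^\prime \preceq \gamma$ forces $I(\gamma \dotdiv \gamma^\prime, \gamma^\prime \dotdiv \gamma^{\prime\prime}) = \varnothing$, together with the tree-invariance of carry-set multisets (Corollary~\ref{lem:carries.urlemma}). The only difference is organizational: the paper performs a single five-leaf tree comparison with $(\varepsilon_1,\dots,\varepsilon_5) = (r-2\gamma, a, a, b, b)$, whereas you split the same computation into one application of Lemma~\ref{cor:carries.triple.sum} and one four-leaf tree comparison; these are trivially interchangeable.
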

       \begin{proof}
       Observe that $(I^\prime, \gamma) \preceq_r (I, \gamma)$ if and only if $I^\prime \subseteq I$.  It remains to show that the relation $\preceq_r$ is transitive.  Suppose $(
I^{\prime\prime}, \gamma^{\prime \prime})\preceq_r (I^\prime, \gamma^\prime)\preceq_r (I, \gamma)$.  Then clearly $\gamma^{\prime\prime}\preceq\gamma$.  By our assumptions,
$$ I^{\prime \prime} \subseteq I \cup I(r-2\gamma^\prime,2(\gamma^\prime \dotdiv \gamma^{\prime\prime}))\cup I(\gamma^\prime \dotdiv \gamma^{\prime\prime},\gamma^\prime \dotdiv \gamma^{\prime\prime})\cup I(r-2\gamma,2(\gamma \dotdiv \gamma^\prime))\cup I(\gamma \dotdiv \gamma^\prime,\gamma \dotdiv \gamma^\prime),$$
so it suffices to prove that
\begin{multline*}
I(r-2\gamma^\prime,2(\gamma^\prime \dotdiv \gamma^{\prime\prime}))\cup I(\gamma^\prime \dotdiv \gamma^{\prime\prime},\gamma^\prime \dotdiv \gamma^{\prime\prime})\cup I(r-2\gamma,2(\gamma \dotdiv \gamma^\prime))\cup I(\gamma \dotdiv \gamma^\prime,\gamma \dotdiv \gamma^\prime) \subseteq \\
I(r-2\gamma,2(\gamma \dotdiv \gamma^{\prime\prime})) \cup I(\gamma \dotdiv \gamma^{\prime\prime},\gamma \dotdiv \gamma^{\prime\prime}).
\end{multline*}
In fact, this is an equality.  Set
$(\varepsilon_1, \dots, \varepsilon_5) = (r - 2 \gamma, \gamma \dotdiv \gamma^\prime, \gamma \dotdiv \gamma^\prime, \gamma^\prime \dotdiv \gamma^{\prime \prime}, \gamma^\prime \dotdiv \gamma^{\prime \prime})$.  Then we indeed obtain
\begin{multline*}
I(r-2\gamma^\prime,2(\gamma^\prime \dotdiv \gamma^{\prime\prime}))\cup I(\gamma^\prime \dotdiv \gamma^{\prime\prime},\gamma^\prime \dotdiv \gamma^{\prime\prime})\cup I(r-2\gamma,2(\gamma \dotdiv \gamma^\prime))\cup I(\gamma \dotdiv \gamma^\prime,\gamma \dotdiv \gamma^\prime) = \\ 
I(\varepsilon_2, \varepsilon_3) \cup I(\varepsilon_4, \varepsilon_5) \cup I(\varepsilon_1, \varepsilon_2 + \varepsilon_3) \cup I(\varepsilon_1 + \varepsilon_2 + \varepsilon_3, \varepsilon_4 + \varepsilon_5) = \\
I(\varepsilon_2, \varepsilon_4) \cup I(\varepsilon_3, \varepsilon_5) \cup I(\varepsilon_2 + \varepsilon_4, \varepsilon_3 + \varepsilon_5) \cup I(\varepsilon_1, \varepsilon_2 + \varepsilon_3 + \varepsilon_4 + \varepsilon_5) = \\
I(\varepsilon_2 + \varepsilon_4, \varepsilon_3 + \varepsilon_5) \cup I(\varepsilon_1, \varepsilon_2 + \varepsilon_3 + \varepsilon_4 + \varepsilon_5) = I(r-2\gamma,2(\gamma-\gamma^{\prime\prime})) \cup I(\gamma \dotdiv \gamma^{\prime\prime},\gamma \dotdiv \gamma^{\prime\prime}),
\end{multline*}
where the second equality is an application of Lemma~\ref{lem:carries.urlemma} and the third holds because $I(\varepsilon_2, \varepsilon_4) = I(\varepsilon_3, \varepsilon_5) = I(\gamma \dotdiv \gamma^{\prime}, \gamma^\prime \dotdiv \gamma^{\prime \prime}) = \varnothing$ since $\gamma^{\prime \prime} \preceq \gamma^\prime \preceq \gamma$.
\end{proof}

Our aim now is to show that each of the partial orders $\preceq_r$ and $\leq_r$ refines the other.

\begin{lem} \label{lem:one.refinement.partial.orders}
The partial order $\preceq_r$ is a refinement of $\leq_r$.
\end{lem}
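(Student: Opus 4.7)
The plan is to exploit the fact that $\leq_r$ is, by construction, the reflexive--transitive closure of the seven families of relations enumerated in Definition~\ref{def:covering.relations.ramified}. Since $\preceq_r$ has already been shown to be a partial order (hence transitive and reflexive), it suffices to verify that for each such generating relation $(j_0^\prime, j_1^\prime) \leq_r (j_0, j_1)$, the pair $\Upsilon([(j_0^\prime, j_1^\prime)])$ is $\preceq_r$-below $\Upsilon([(j_0, j_1)])$. This reduces matters to a finite case analysis, and every required inclusion of carry sets should fall out of the triple-sum identity in Lemma~\ref{cor:carries.triple.sum} and its multi-addition extension in Corollary~\ref{lem:carries.urlemma}.

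Several of the cases are immediate. The fourth generating relation is an equivalence by Remark~\ref{rem:equivalence}, so both types agree under $\Upsilon$; the seventh identifies $(0, j_1)$ and $(\infty, j_1)$, both of which correspond to $(\varnothing, j_1)$. The sixth relation $((r-2j_1) \dotdiv p^m, j_1) \leq_r (\infty, j_1)$ is likewise trivial, since adding $p^m$ back to $(r-2j_1) \dotdiv p^m$ produces no carries, so the left-hand carry set is also $\varnothing$. Degenerate configurations such as $r - 2\gamma = 0$ are dispatched by Remark~\ref{rem:order.is.inclusion}, which in that case collapses the relevant fibers of $\Upsilon$ to $\{(\varnothing, \gamma),(\Z / f \Z, \gamma)\}$.

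For the genuinely non-trivial cases (relations one, two, three, and five) I would set up the two pairs $(I, \gamma)$ and $(I^\prime, \gamma^\prime)$ explicitly in terms of $j_0$, $j_1$, and $m$, and then verify the inclusion $I^\prime \subseteq I \cup I(r - 2\gamma, 2(\gamma \dotdiv \gamma^\prime)) \cup I(\gamma \dotdiv \gamma^\prime, \gamma \dotdiv \gamma^\prime)$ by comparing the carry multisets produced by different binary trees computing the same sum in $\Nt$. Writing $\alpha = j_0$, $\beta = r - 2j_1 - j_0$, and $s = r - 2j_1 - j_0$, the first relation follows from equating the trees $(j_0 \dotdiv p^m) + p^m + s$ and $(j_0 \dotdiv p^m) + (s + p^m)$ via Corollary~\ref{lem:carries.urlemma}, using that $I(j_0 \dotdiv p^m, p^m) = \varnothing$. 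The second and fifth relations reduce to a single instance of Lemma~\ref{cor:carries.triple.sum}. The third relation, which is the most intricate, is obtained by comparing the trees $(\alpha + p^m) + (\beta + p^m)$, $((\alpha + \beta) + p^m) + p^m$, and $(\alpha + \beta) + (p^m + p^m)$; since $p$ is odd we have $I(p^m, p^m) = \varnothing$, and combining these three multiset equalities gives $I(\alpha + p^m, \beta + p^m) \subseteq I(\alpha, \beta) \cup I(\alpha + \beta, 2p^m)$, which is exactly what is required.

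The main obstacle is bookkeeping rather than any genuine mathematical difficulty: one must be careful with the two subtraction operations $-$ and $\dotdiv$ on $\Nt$, with the degenerate case $r - 2j_1 = 0$, and with repeatedly invoking the identity $I(p^m, p^m) = \varnothing$ (valid precisely because $p$ is odd) to discard unwanted carry sets. Once the correct instances of Corollary~\ref{lem:carries.urlemma} are isolated, each generating relation yields its required inclusion in a few lines.
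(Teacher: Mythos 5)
Your proposal is correct and follows essentially the same route as the paper's proof: reduce to the generating relations of Definition~\ref{def:covering.relations.ramified} and verify each via the carry-set identities of Lemma~\ref{cor:carries.triple.sum} and Corollary~\ref{lem:carries.urlemma}. The only divergence is that for the first generating relation (second components equal) the paper appeals to the module-theoretic fact that $\leq_r$ on a fixed fiber is set inclusion, while you give a direct carry-set computation using $I(j_0 \dotdiv p^m, p^m) = \varnothing$; both are valid, and the remaining cases match the paper almost line for line.
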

\begin{proof}
We must show that if $(I^\prime, \gamma^\prime) \leq_r (I, \gamma)$ by one of the generating relations of Definition~\ref{def:covering.relations.ramified}, then $(I^\prime, \gamma^\prime) \preceq_r (I, \gamma)$.  Note that the relation $(I^\prime, \gamma) \leq_r (I, \gamma)$ is equivalent, by definition, to the inclusion $V_{(I^\prime, \gamma)} \subseteq V_{(I, \gamma)}$ of submodules of $V_{2,r}$.  As in the proof of Proposition~\ref{cor:types.as.pairs}, we see using Proposition~\ref{pro:filtration} and the results of~\cite{BS/00} that this inclusion of submodules is equivalent to the inclusion $I^\prime \subseteq I$.  Thus we see that $(I^\prime, \gamma) \leq_r (I, \gamma)$ if and only if $I^\prime \subseteq I$, and it is evident from the definition of $\preceq_r$ that this is equivalent to $(I^\prime, \gamma) \preceq_r (I, \gamma)$.  This implies our claim for the first, fourth, sixth, and seventh generating relations of Definition~\ref{def:covering.relations.ramified}.

The second generating relation states that $[(j_0, j_1 \dotdiv p^m)] \leq_r [(j_0, j_1)]$ if $p^m \preceq j_1$.  By the map $\Upsilon$ of Proposition~\ref{cor:types.as.pairs}, this is equivalent to $(I(j_0, r - 2j_1 - j_0 + 2 p^m), j_1 \dotdiv p^m) \leq_r (I(j_0, r - 2j_1 - j_0), j_1)$.  By Corollary~\ref{lem:carries.urlemma} we have
\begin{multline*}
I(j_0, r - 2j_1 - j_0 + 2 p^m) \subseteq I(j_0, r - 2j_1 - j_0 + 2 p^m) \cup I(r - 2j_1 - j_0, 2p^m) = \\
I(j_0, r - 2j_1 - j_0) \cup I(r - 2j_1, 2p^m),
\end{multline*}
and this implies $(I(j_0, r - 2j_1 - j_0 + 2 p^m), j_1 \dotdiv p^m) \preceq_r (I(j_0, r - 2j_1 - j_0), j_1)$ by the definition of $\preceq_r$.

The third generating relation is that $[(j_0 + p^m, j_1 \dotdiv p^m)] \leq_r [(j_0, j_1)]$ if $p^m \preceq j_1$, or, equivalently, that $(I(j_0 + p^m, r - 2j_1 - j_0 + p^m), j_1 \dotdiv p^m) \leq_r (I(j_0, r - 2j_1 - j_0), j_1)$.  Now, by Corollary~\ref{lem:carries.urlemma} we have
\begin{multline*}
 I(j_0 + p^m, r - 2j_1 + p^m) \subseteq I(j_0 + p^m, r - 2j_1 + p^m) \cup I(j_0, p^m) \cup I(r - 2j_1 - j_0, p^m) = \\
 I(r - 2j_1 - j_0, j_0) \cup I(r - 2j_1, 2 p^m) \cup I(p^m, p^m),
 \end{multline*}
 which is equivalent to $[(j_0 + p^m, j_1 \dotdiv p^m)] \preceq_r [(j_0, j_1)]$.

It remains to treat the fifth generating relation.  Abusing notation, we write the correspondence of Proposition~\ref{cor:types.as.pairs} as an equality.  Then it is clear from the definition of $\preceq_r$ that if $p^m \preceq j_1$, then
$$
[(r-2j_1, j_1 \dotdiv p^m)] = (I(r-2j_1, 2p^m), j_1 \dotdiv p^m) \preceq_r (\varnothing , j_1) = [(0,j_1)] = [(\infty, j_1)],
$$
which is exactly what we need.  We have now finished checking the generating relations of Definition~\ref{def:covering.relations.ramified} and can conclude that $\preceq_r$ refines $\leq_r$.
\end{proof}

The proof of the opposite inclusion of relations, namely that $\leq_r$ refines $\preceq_r$, is substantially more involved than that of Lemma~\ref{lem:one.refinement.partial.orders}.  The primary reason for this is that the correspondence $\Upsilon$ of Proposition~\ref{cor:types.as.pairs} need not be surjective.  The relation $\preceq_r$ is defined in terms of pairs $(I, \gamma)$, but when we work with such pairs, we must take care to remain inside the image of $\Upsilon$, so that we can translate to the equivalence classes of pairs $(j_0, j_1) \in \Theta$ in terms of which $\leq_r$ is defined.

\begin{dfn} \label{def:admissible.type}
A type $(I, \gamma) \in \mathcal{P}(\Z / f\Z) \times \Nt$ is called {\emph{$r$-admissible}} if $I$ is $(r-2\gamma)$-admissible in the sense of Definition~\ref{def:admissible.set}.
\end{dfn}

\begin{rem} \label{rem:admissible.types}
Equivalently, a type $(I, \gamma)$ is $r$-admissible if and only if it lies in the image of the correspondence $\Upsilon$ of Proposition~\ref{cor:types.as.pairs}, i.e. if there exists $(j_0, j_1) \in \Nt^2$ such that $(I, \gamma) = (I(j_0, r - 2j_1 - j_0), j_1)$.
\end{rem}

Denote the set of $r$-admissible types by $\mathcal{H}_r \subseteq \mathcal{P}(\Z / f\Z) \times \Nt$.  The restriction of $\preceq_r$ gives a partial order on $\mathcal{H}_r$.

To prove that $\leq_r$ refines $\preceq_r$, we need to show that if $(I^\prime, \gamma^\prime) \preceq_r (I, \gamma)$ are two $r$-admissible types, then they are connected in the poset $\mathcal{H}_r$ by a path of a specific form compatible with the generating relations of $\leq_r$.  The next claim is due to Bardoe and Sin~\cite[Corollary~4.1]{BS/00}; we reprove it here in our terminology.

\begin{lemma} \label{lem:bardoe.sin.chains}
Let $\gamma \in \Nt$ and let $(I^\prime, \gamma) \prec_r (I, \gamma)$ be two $r$-admissible types.  Then there exists $j \in I \setminus I^\prime$ such that $( I^\prime \cup \{ j \}, \gamma) \in \mathcal{H}_r$.
\end{lemma}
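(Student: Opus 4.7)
My plan is to set $\gamma' = r - 2\gamma$ and $D = I \setminus I' \neq \varnothing$, and to find $j \in D$ such that $I' \cup \{j\}$ is $\gamma'$-admissible. I will restrict to the generic case $\gamma' \notin \{0, q-1\}$ (the only one relevant for the paper's applications of this lemma, e.g.~in Lemma~\ref{lem:paths}), so that Proposition~\ref{pro:characterize.admissible} characterises admissibility of $J$ by the two local digit conditions (a) $\gamma'_i = 0$ and $i - 1 \in J \Rightarrow i \in J$, and (b) $\gamma'_i = p - 1$ and $i \in J \Rightarrow i - 1 \in J$.

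The first step will be a reduction of the target condition to something purely combinatorial. Because $I'$ already satisfies (a)--(b), admissibility of $I' \cup \{j\}$ can only fail at the indices $j$ and $j + 1$; and the admissibility of $I$ will place $j \pm 1$ in $I = I' \sqcup D$ in precisely the cases where (a) or (b) at $I' \cup \{j\}$ would demand them. This will let me rephrase ``$j$ is good'' as the statement that neither $(\gamma'_{j+1} = 0$ and $j + 1 \in D)$ nor $(\gamma'_j = p-1$ and $j - 1 \in D)$ holds. I will then argue by contradiction, assuming every $j \in D$ is bad, and split on whether $D = \Z/f\Z$ or $D \subsetneq \Z/f\Z$.

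In the former case, writing $A = \{i : \gamma'_i = p - 1\}$ and $B = \{i : \gamma'_i = 0\}$, the badness assumption translates to $A \cup (B - 1) = \Z / f\Z$, and propagation along the cycle (using $A \cap B = \varnothing$) will force either $B = \Z/f\Z$ or $A = \Z/f\Z$, contradicting $\gamma' \notin \{0, q-1\}$. In the latter case, I will decompose $D$ into its maximal cyclic arcs: a singleton arc $\{a\}$ yields a good $j = a$ immediately (since $a \pm 1 \notin D$); an arc $[a,b]$ of length $\geq 2$ has both endpoints bad only if $\gamma'_{a+1} = 0$ and $\gamma'_b = p - 1$ simultaneously, after which an inductive interior chain argument will propagate $\gamma'_{a+1} = \gamma'_{a+2} = \cdots = \gamma'_b = 0$ (using $0 \neq p - 1$), contradicting $\gamma'_b = p-1$; the length-$2$ case degenerates directly into the impossibility $0 = p - 1$.

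The main technical point is the reduction in the first step — in particular the clean role played by admissibility of $I$ (as opposed to $I'$), which is what promotes the na\"ive conditions on $I' \cup \{j\}$ into a question purely about membership in $D$. Once that reduction is in place, the rest is a routine combinatorial case analysis on the cyclic structure of $D$, with the arc decomposition as the organising principle.
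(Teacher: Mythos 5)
Your reduction of admissibility of $I' \cup \{j\}$ to the two local conditions on $D = I \setminus I'$, and the subsequent propagation argument exploiting $0 \neq p-1$, are exactly the mechanism in the paper's proof of this lemma, so the approaches are essentially the same. The paper's version avoids the $D = \Z/f\Z$ versus arc-decomposition case split: it simply picks any $j \in D$, notes that ``badness'' means $j$ satisfies one of the two alternatives (say $\gamma'_{j+1} = 0$ and $j+1 \in D$), and then propagates that single alternative around the entire cycle $\Z/f\Z$ --- at each step the forced digit value rules out the competing alternative at the next index, so the chain cannot terminate --- concluding that every digit of $r-2\gamma$ is $0$, or symmetrically that every digit is $p-1$, contradicting $r-2\gamma \notin \{0, q-1\}$. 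Your arc decomposition reaches the same contradiction with a bit more bookkeeping, since the endpoint conditions at the boundary of each arc (a position adjacent to $\Z/f\Z \setminus D$) are exactly what the global propagation would deliver for free once it wraps around.
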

\begin{proof}
We may assume $r - 2 \gamma \neq q - 1$, since otherwise the claim is vacuously true.  Observe that $I^\prime \subset I$.  If $j \in I \setminus I^\prime$ and $(I^\prime \cup \{ j \}, \gamma) \not\in \mathcal{H}_r$ then it is immediate from Definition~\ref{def:admissible.type} that one of the following two options must hold:
\begin{itemize}
\item $(r - 2 \gamma)_{j+1} = 0$ and $j + 1 \not\in I^\prime$;
\item $(r - 2 \gamma)_{j} = p - 1$ and $j - 1 \not\in I^\prime$.
\end{itemize}
Since $(I,\gamma) \in \mathcal{H}_r$ by assumption, it follows that if our claim is false, then for all $j \in I \setminus I^\prime$ one of the following two alternatives must hold:
\begin{itemize}
\item $(r - 2 \gamma)_{j+1} = 0$ and $j + 1 \in I \setminus I^\prime$;
\item $(r - 2 \gamma)_j = p - 1$ and $j - 1 \in I \setminus I^\prime$.
\end{itemize}
If $j \in I \setminus I^\prime$ satisfies the first alternative, then $j + 1 \in I \setminus I^\prime$ necessarily also satisfies the same condition.  Hence $(r - 2 \gamma)_j = 0$ for all $j \in \Z / f\Z$.  If $j \in I \setminus I^\prime$ satisfies the second alternative, then $(r - 2 \gamma)_j = p - 1$ for all $j \in \Z / f\Z$ by a similar argument.  In either case, this contradicts the hypothesis $r - 2 \gamma \neq q - 1$.
\end{proof}

\begin{lemma} \label{lem:ramified.chain.jump}
Suppose that $(I^\prime, \gamma^\prime) \preceq_r (I, \gamma)$ are two $r$-admissible types.  Then there exist $j \in \Z / f\Z$ such that $\gamma^\prime_j < \gamma_j$ and a subset $I^\dpr \subseteq \Z / f\Z$ such that $(I^\dpr, \gamma^\prime + p^j) \in \mathcal{H}_r$ and
\begin{equation} \label{equ:triple.containment}
(I^\prime, \gamma^\prime) \preceq_r (I^\dpr, \gamma^\prime + p^j) \preceq_r (I, \gamma).
\end{equation}
Moreover, $I^\dpr$ may be taken to be the set $I^\prime \setminus I(r - 2 \gamma^\prime - 2 p^j, 2 p^j)$, except when $(r - 2 \gamma^\prime)_j = 2$ and $j - 1 \in I^\prime$ and $j \not\in I^\prime$ for all $j$ such that $\gamma^\prime_j < \gamma_j$.  In this exceptional case, we may take $I^\dpr = I^\prime \cup \{j, j+1, \dots, j+ \ell \}$, where $\ell \geq 0$ is maximal such that $(r - 2 \gamma^\prime - 2p^j)_{j + i} = 0$ and $j + i \not\in I^\prime$ for all $i \in [\ell]_0$.
\end{lemma}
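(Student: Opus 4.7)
The plan is to construct $I''$ explicitly depending on the digit configuration at $j$ and then verify the three conditions: $r$-admissibility of $(I'', \gamma' + p^j)$, the left $\preceq_r$-inclusion $(I', \gamma') \preceq_r (I'', \gamma' + p^j)$, and the right $\preceq_r$-inclusion $(I'', \gamma' + p^j) \preceq_r (I, \gamma)$. Since $\gamma' \prec \gamma$, the set $\{j \in \Z/f\Z : \gamma'_j < \gamma_j\}$ is non-empty. I first attempt to choose $j$ in this set for which at least one of the conditions $(r - 2\gamma')_j = 2$, $j - 1 \in I'$, $j \not\in I'$ fails (the \emph{generic case}); if no such $j$ exists, every admissible choice satisfies all three conditions and we are in the \emph{exceptional case}.

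In the generic case, define $I'' := I' \setminus I(r - 2\gamma' - 2p^j, 2p^j)$. The left inclusion reduces, using $I(p^j, p^j) = \varnothing$ (since $p \geq 3$), to $I' \subseteq I'' \cup I(r - 2\gamma' - 2p^j, 2p^j)$, which holds by construction; and $\gamma' \prec \gamma' + p^j$ is legitimate because $\gamma'_j < \gamma_j \leq p - 1$. For the right inclusion, set $\delta = \gamma \dotdiv \gamma'$ and $\delta' = \delta \dotdiv p^j$. Applying Corollary~\ref{lem:carries.urlemma} to the triple $(r - 2\gamma, 2\delta', 2p^j)$ produces the multiset identity
\[ I(r - 2\gamma, 2\delta) + I(2\delta', 2p^j) = I(r - 2\gamma, 2\delta') + I(r - 2\gamma' - 2p^j, 2p^j), \]
and a parallel application using $\delta = \delta' + p^j$ decomposes $I(\delta, \delta)$. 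Together these translate the hypothesis $I' \subseteq I \cup I(r - 2\gamma, 2\delta) \cup I(\delta, \delta)$ into the desired containment $I'' \subseteq I \cup I(r - 2\gamma, 2\delta') \cup I(\delta', \delta')$ after subtracting $I(r - 2\gamma' - 2p^j, 2p^j)$ from both sides. Finally, $r$-admissibility of $(I'', \gamma' + p^j)$ is checked via Proposition~\ref{pro:characterize.admissible}: the digit patterns of $r - 2\gamma' - 2p^j$ and $r - 2\gamma'$ agree at every position $i \neq j$ (allowing for the local borrow analysis when $(r - 2\gamma')_j \in \{0,1\}$, which is absorbed into the chosen carry set), so the admissibility conditions are inherited from $(I', \gamma')$; at position $j$, the only failure mode is exactly the exceptional triple, which has been excluded.

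In the exceptional case, take $I'' := I' \cup \{j, j+1, \ldots, j + \ell\}$ with $\ell$ as in the statement. Here $I' \subseteq I''$ makes the left inclusion immediate (again using $I(p^j, p^j) = \varnothing$), and admissibility of $(I'', \gamma' + p^j)$ is automatic by maximality of $\ell$: each newly added index $j + i$ has $(r - 2\gamma' - 2p^j)_{j+i} = 0$ and $j + i - 1 \in I''$ by induction on $i$, while at position $j + \ell + 1$ either the digit of $r - 2\gamma' - 2p^j$ is non-zero or the index already belongs to $I'$, so no further additions are required. The main obstacle of the whole argument is the right inclusion in this case: one must show that each new index $j + i$ belongs to $I \cup I(r - 2\gamma, 2\delta') \cup I(\delta', \delta')$. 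The exceptional digit constraints imply that $(r - 2\gamma')_{j+i} \in \{0, 2\}$, $j + i \not\in I'$ and $j + i - 1 \in I'$, and the hypothesis that every relevant index triggers the exceptional configuration forces a large enough value of $\delta'_{j+i}$ (after using $\gamma'_{j+i} < \gamma_{j+i}$) to produce a carry in the sum computing $r - 2\gamma + 2\delta'$ at column $j + i$; this yields $j + i \in I(r - 2\gamma, 2\delta')$, completing the verification.
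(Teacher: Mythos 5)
Your generic case (non-exceptional choice of $j$) essentially tracks the paper's argument: the set $I'' = I' \setminus I(r-2\gamma'-2p^j, 2p^j)$ is shown to satisfy the two $\preceq_r$-inclusions via the carry-set identity~\eqref{equ:insert.carry.relation}, and admissibility fails only in the exceptional configuration. That part is sound, even if the admissibility verification is a bit telegraphic.

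The exceptional case is where the argument breaks. You claim that, for the newly added indices $j+i$ with $i \in [\ell]$, the exceptional hypothesis forces $\gamma'_{j+i} < \gamma_{j+i}$ and hence a ``large enough value of $\delta'_{j+i}$'' that produces a carry, putting $j+i \in I(r-2\gamma, 2\delta')$. Both claims are false. For $i \in [\ell]$ we have $(r-2\gamma')_{j+i}=0 \neq 2$, so by the exceptional hypothesis (``$(r-2\gamma')_j=2$ whenever $\gamma'_j < \gamma_j$'') the index $j+i$ \emph{cannot} satisfy $\gamma'_{j+i} < \gamma_{j+i}$; one therefore has $\gamma'_{j+i}=\gamma_{j+i}$ and $\delta'_{j+i}=0$. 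Consequently the only way a carry can appear at column $j+i$ in $(r-2\gamma)+2\delta'$ is by propagation from below, and if no carry enters at column $j$ this chain never gets started. The actual mechanism needed to prove~\eqref{equ:insert.carry.condition} is different: one must argue that whenever $j+i$ fails to be in $I(r-2\gamma, 2\delta')$ (and is not in $I(\delta',\delta')$), then $(r-2\gamma)_{j+i}=0$, and then invoke $j+i-1\in I$ together with $r$-admissibility of $(I,\gamma)$ (via Proposition~\ref{pro:characterize.admissible}) to deduce $j+i\in I$, proceeding by induction on $i$. That is, the new indices may land in $I$ itself rather than in the carry set $I(r-2\gamma, 2\delta')$; your proof never reaches $I$ as a destination and so has a genuine gap.
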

\begin{proof}
For any $j$ such that $\gamma^\prime_j < \gamma_j$, observe that~\eqref{equ:triple.containment} amounts to the following three conditions:
\begin{enumerate}[label=(\alph*)]
\item $I^\prime \subseteq I \cup I(r - 2 \gamma, 2(\gamma \dotdiv \gamma^\prime)) \cup I(\gamma \dotdiv \gamma^\prime, \gamma \dotdiv \gamma^\prime)$;
\item $I^\prime \subseteq I^\dpr \cup I(r - 2\gamma^\prime - 2 p^j, 2 p^j)$;
\item $I^\dpr \subseteq I \cup I(r - 2 \gamma, 2 (\gamma \dotdiv \gamma^\prime \dotdiv p^j)) \cup I(\gamma \dotdiv \gamma^\prime \dotdiv p^j, \gamma \dotdiv \gamma^\prime \dotdiv p^j)$.
\end{enumerate}
By assumption, we have $\gamma^\prime \preceq \gamma \dotdiv p^j$, and hence $I(\gamma \dotdiv \gamma^\prime \dotdiv p^j, p^j) = \varnothing$.  Moreover, since $p > 2$, we have $I(p^j, p^j) = \varnothing$.
It follows from Lemma~\ref{lem:carries.urlemma} that
\begin{multline} \label{equ:insert.carry.relation}
I(\gamma \dotdiv \gamma^\prime \dotdiv p^j, \gamma \dotdiv \gamma^\prime \dotdiv p^j) \cup I(r - 2\gamma, 2(\gamma \dotdiv \gamma^\prime \dotdiv p^j)) \cup I(r - 2 \gamma^\prime - 2 p^j, 2 p^j) = \\ I(\gamma \dotdiv \gamma^\prime, \gamma \dotdiv \gamma^\prime) \cup I(r - 2\gamma, 2 (\gamma \dotdiv \gamma^\prime))
\end{multline}
and hence that the set $I^\dpr = I^\prime \setminus I(r - 2 \gamma^\prime - 2 p^j, 2 p^j)$ satisfies the three conditions above.
We now consider several cases.  

Suppose first that $I(r - 2 \gamma^\prime - 2 p^j, 2 p^j) \neq \varnothing$.  In this case, we have $(r - 2 \gamma^\prime - 2 p^j)_j \geq p - 2$.  Let $\ell \in [f-1]_0$ be maximal such that $(r - 2 \gamma^\prime - 2 p^j)_{j+i} = p - 1$ for all $i \in [\ell]$; recall that $[\ell] = \varnothing$ when $\ell = 0$.  In particular, if $r - 2 \gamma^\prime - 2 p^j = q - 1$ then $\ell = f-1$.  We then have $I(r - 2 \gamma^\prime - 2 p^j, 2 p^j) = \{ j + i : i \in [\ell]_0 \}$.  Set $I^\dpr = I^\prime \setminus I(r - 2 \gamma^\prime - 2 p^j, 2 p^j)$ and check that the type $(I^\dpr, \gamma^\prime + p^j)$ is $r$-admissible since $(I^\prime, \gamma^\prime)$ is.  Since we already know that $(I^\dpr, \gamma^\prime + p^j)$ satisfies~\eqref{equ:triple.containment}, we have established our claim in this case.

Now suppose that $I(r - 2\gamma^\prime - 2 p^j, 2 p^j) = \varnothing$.  Then the digits of $r-2 \gamma^\prime$ are the same as those of $r - 2 \gamma^\prime - 2 p^j$, except for $(r - 2 \gamma^\prime - 2 p^j)_j = (r - 2 \gamma^\prime)_j - 2$.  If we set $I^\dpr = I^\prime \setminus I(r - 2 \gamma^\prime - 2 p^j, 2 p^j) = I^\prime$, then the $r$-admissibility of $(I^\prime, \gamma^\prime)$ implies that of $(I^\prime, \gamma^\prime + p^j)$, except in the case where $(r - 2 \gamma^\prime - 2 p^j)_j = 0$ and $j \not\in I^\prime$ but $j - 1 \in I^\prime$.  

So suppose that this problematic situation holds for all $j$ such that $\gamma^\prime_j < \gamma_j$, and fix one such $j$.  Let $\ell \geq 0$ be maximal such that $(r - 2 \gamma^\prime - 2 p^j)_{j+i} = 0$ and $j + i \not\in I^\prime$ for all $i \in [\ell]_0$.  Note that this condition implies $\gamma^\prime_{j+i} = \gamma_{j+i}$ for all $i \in [\ell]$, and also that $\gamma^\prime_{j-1} = \gamma_{j-1}$ and hence $j - 1 \not\in I(\gamma \dotdiv \gamma^\prime \dotdiv p^j, \gamma \dotdiv \gamma^\prime \dotdiv p^j)$.  Set $I^\dpr = I^\prime \cup \{ j + i : i \in [\ell]_0 \}$ and observe that $(I^\dpr, \gamma^\prime + p^j)$ is $r$-admissible.
To complete the proof of this lemma, it remains to show that $I^\dpr$ satisfies the condition~(c) above.  By~\eqref{equ:insert.carry.relation} and the condition~(a), it suffices to show that 
\begin{equation} \label{equ:insert.carry.condition}
\{ j + i : i \in [\ell]_0 \} \subseteq I \cup I(r - 2\gamma, 2(\gamma \dotdiv \gamma^\prime \dotdiv p^j)) \cup I(\gamma \dotdiv \gamma^\prime \dotdiv p^j, \gamma \dotdiv \gamma^\prime \dotdiv p^j).
\end{equation}
If $j \in I(r - 2 \gamma, 2(\gamma \dotdiv \gamma^\prime \dotdiv p^j))$, then $\{ j + i : i \in [\ell]_0 \} \subseteq I(r - 2 \gamma, 2(\gamma \dotdiv \gamma^\prime \dotdiv p^j))$ by Remark~\ref{rem:admissible.types} and the $r$-admissibility of $(I^\dpr, \gamma^\prime + p^j)$, and we are done.
So suppose $j \not\in I(r - 2 \gamma, 2(\gamma \dotdiv \gamma^\prime \dotdiv p^j))$.  Then necessarily $(r-2 \gamma)_j = (2(\gamma \dotdiv \gamma^\prime \dotdiv p^j))_j = 0$.  Applying Remark~\ref{rem:admissible.types} again, we may conclude that $j - 1 \not\in I(r - 2 \gamma, 2(\gamma \dotdiv \gamma^\prime \dotdiv p^j))$, and we already know $j - 1 \not\in I(\gamma \dotdiv \gamma^\prime \dotdiv p^j, \gamma \dotdiv \gamma^\prime \dotdiv p^j)$.  But $j - 1 \in I^\prime$ by assumption, which forces $j - 1 \in I$ by~\eqref{equ:insert.carry.relation} and the condition~(a).  Since $(r - 2\gamma)_j = 0$ we obtain $j \in I$ by Remark~\ref{rem:admissible.types}.  We have thus established that $j$ is contained in the right-hand side of~\eqref{equ:insert.carry.condition}.  

Now we iterate this argument.  Suppose that $i \in [\ell]$ and it is known that $j + i^\prime \in I$ for all $i^\prime \in [i-1]_0$.  If $j + i \in I(r - 2 \gamma, 2(\gamma \dotdiv \gamma^\prime \dotdiv p^j))$, then $j + m \in I(r - 2 \gamma, 2(\gamma \dotdiv \gamma^\prime \dotdiv p^j))$ for all $i \leq m \leq \ell$, and we are done.  If not, then necessarily $(r - 2\gamma)_{j+i} = 0$.  Hence $j + i \in I$ by the $r$-admissibility of the type $(\gamma, I)$, and we may apply this argument again for $i + 1$.  After at most $\ell$ iterations, we complete the proof of the remaining case of the lemma.
\end{proof}

\begin{cor} \label{cor:ramified.chain.jump}
Let $(I^\prime, \gamma^\prime) \prec_r (I, \gamma)$ be two $r$-admissible types.  Then there exists a finite sequence
$$ (I_0, \gamma_0) \prec_r (I_1, \gamma_1) \prec_r \cdots \prec_r (I_s, \gamma_s)$$
of $r$-admissible types such that for each $i \in [s]$ one of the following statements holds:
\begin{enumerate}[label=(\alph*)]
\item $\gamma_{i} = \gamma_{i-1}$ and $I_i = I_{i-1} \cup \{ j \}$ for some $j \not\in I_{i-1}$, or
\item $\gamma_i = \gamma_{i - 1} + p^j$ for some $j \in [f-1]_0$ and $I_{i - 1} = I_i \cup I(r - 2 \gamma_{i}, 2 p^j)$.
\end{enumerate}
\end{cor}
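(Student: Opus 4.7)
The plan is to induct on the measure $N(\gamma^\prime, \gamma) := \sum_{i = 0}^{f-1} (\gamma_i - \gamma_i^\prime)$, the total digit increment needed to pass from $\gamma^\prime$ to $\gamma$ (which is finite and non-negative since $\gamma^\prime \preceq \gamma$). In the base case $N = 0$ I have $\gamma = \gamma^\prime$ and $I^\prime \subsetneq I$, and iterating Lemma~\ref{lem:bardoe.sin.chains} produces a strictly increasing sequence of $r$-admissible types $(I^\prime, \gamma) = (I_0, \gamma) \prec_r \cdots \prec_r (I_s, \gamma) = (I, \gamma)$ in which each $I_i$ is obtained from $I_{i-1}$ by adjoining a single index; this is a chain of type~(a) moves.

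For the inductive step $N \geq 1$, I invoke Lemma~\ref{lem:ramified.chain.jump} to produce an index $j$ with $\gamma_j^\prime < \gamma_j$ and an $r$-admissible intermediate type $(I^\dpr, \gamma^\prime + p^j)$ satisfying $(I^\prime, \gamma^\prime) \preceq_r (I^\dpr, \gamma^\prime + p^j) \preceq_r (I, \gamma)$. Since $\gamma_j^\prime < \gamma_j \leq p - 1$, adding $p^j$ to $\gamma^\prime$ incurs no carry and $N(\gamma^\prime + p^j, \gamma) = N(\gamma^\prime, \gamma) - 1$; the inductive hypothesis then supplies a chain of the required form from $(I^\dpr, \gamma^\prime + p^j)$ to $(I, \gamma)$. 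It only remains to bridge $(I^\prime, \gamma^\prime)$ and $(I^\dpr, \gamma^\prime + p^j)$ by moves of types~(a) and~(b).

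In the non-exceptional case of Lemma~\ref{lem:ramified.chain.jump}, $I^\dpr = I^\prime \setminus I(r - 2(\gamma^\prime + p^j), 2 p^j)$. I will first adjoin to $I^\prime$ the missing elements of the carry set $I(r - 2(\gamma^\prime + p^j), 2 p^j) \setminus I^\prime$ via a sequence of type~(a) moves. Because this carry set is a consecutive block of indices starting at $j$, inserting from the top index of the block downward keeps each intermediate index set $r$-admissible by Proposition~\ref{pro:characterize.admissible}. After reaching $(I^\prime \cup I(r - 2(\gamma^\prime + p^j), 2 p^j), \gamma^\prime)$, one type~(b) step at index $j$ with carry set $I(r - 2(\gamma^\prime + p^j), 2 p^j)$ lands at $(I^\dpr, \gamma^\prime + p^j)$. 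In the exceptional case, $I^\dpr = I^\prime \cup \{j, j+1, \dots, j+\ell\}$ and the relevant carry set is empty; here I insert the indices $j + \ell, j + \ell - 1, \dots, j + 1, j$ in this descending order via type~(a) moves and finish with a type~(b) step at $j$ with empty carry set.

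The main obstacle will be the $r$-admissibility bookkeeping at every intermediate stage, particularly in the exceptional case. The naive factorization $(I^\prime, \gamma^\prime) \to (I^\prime, \gamma^\prime + p^j) \to (I^\dpr, \gamma^\prime + p^j)$ fails because the middle type is not $r$-admissible: at position $j$ the digit of $r - 2(\gamma^\prime + p^j)$ is $0$ while $j - 1 \in I^\prime$ and $j \not\in I^\prime$. The descending-order recipe above sidesteps this by guaranteeing that each newly inserted index never triggers either constraint of Proposition~\ref{pro:characterize.admissible}, using the properties $(r - 2\gamma^\prime)_{j+i} = 0$ for $i \in [\ell]$ and $j + i \not\in I^\prime$ for $i \in [\ell]_0$ built into the exceptional case of Lemma~\ref{lem:ramified.chain.jump}.
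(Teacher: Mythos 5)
Your proposal follows the same skeleton as the paper's proof: induct on the distance from $\gamma^\prime$ to $\gamma$, peel off one $p^j$-step using Lemma~\ref{lem:ramified.chain.jump}, and handle the fixed-$\gamma$ steps by iterating Lemma~\ref{lem:bardoe.sin.chains}. (Your integer measure $\sum_i(\gamma_i-\gamma_i^\prime)$ is a linearisation of the paper's well-founded induction on $\gamma \dotdiv \gamma^\prime$ with respect to $\preceq$; both are fine.) But there is one point where you are being more careful than the paper, and that extra care is in fact needed. In the non-exceptional case the paper asserts that the single step $(I^\prime, \gamma^\prime) \to (I^\dpr, \gamma^\prime + p^j)$ with $I^\dpr = I^\prime \setminus I(r - 2\gamma^\prime - 2p^j, 2p^j)$ already satisfies condition~(b). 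Condition~(b) requires $I^\prime = I^\dpr \cup I(r - 2\gamma^\prime - 2p^j, 2p^j)$, which for this $I^\dpr$ is equivalent to $I(r - 2\gamma^\prime - 2p^j, 2p^j) \subseteq I^\prime$ — and that containment can fail. A minimal instance: $f = 1$, $p = 5$, $r = 1$, $\gamma^\prime = 0$, $\gamma = 1$, $I^\prime = I = \varnothing$, $j = 0$. Here $I(r - 2\gamma^\prime - 2p^j, 2p^j) = I(3,2) = \{0\} \not\subseteq \varnothing$, so $(\varnothing,0)\to(\varnothing,1)$ is not a legal (b)-step; one genuinely needs your preliminary (a)-move to $(\{0\},0)$ before the (b)-step to $(\varnothing,1)$. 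So your refinement $(I^\prime,\gamma^\prime) \to (I^\prime \cup I(\cdots), \gamma^\prime) \to (I^\dpr, \gamma^\prime+p^j)$ is not merely a matter of bookkeeping taste: it is a necessary repair in the non-exceptional branch.

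Two remarks on the parts of your sketch you flagged as delicate. In the non-exceptional case with non-empty carry set, the $(r-2\gamma^\prime)$-admissibility of $I^\prime$ already forces $I^\prime \cap \{j,\dots,j+\ell\}$ to be a terminal segment of the block, because $(r-2\gamma^\prime)_{j+i}=0$ for $i\in[\ell]$; so the elements to be inserted form an initial segment and your descending insertion never creates a violation of Proposition~\ref{pro:characterize.admissible}. It would be cleaner to record this fact rather than leave it implicit. Second, you can avoid the bespoke descending-order recipe altogether in both branches: once you have checked (as the paper does in the exceptional case, and as the above observation gives you in the non-exceptional one) that the target set $(I^\prime \cup I(r-2\gamma^\prime-2p^j,2p^j), \gamma^\prime)$, resp.\ $(I^\dpr,\gamma^\prime)$, is an $r$-admissible type, Lemma~\ref{lem:bardoe.sin.chains} already hands you the intermediate (a)-chain. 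With either formulation the argument is complete.
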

\begin{proof}
We argue by induction on $\gamma \dotdiv \gamma^\prime$, with respect to the partial order $\preceq$ on $\Nt$.  If $\gamma \dotdiv \gamma^\prime = 0$, then our claims follows easily from Lemma~\ref{lem:bardoe.sin.chains}.  So assume that $\gamma \dotdiv \gamma^\prime > 0$ and obtain a sequence $(I^\prime, \gamma^\prime) \preceq_r (I^\dpr, \gamma^\prime + p^j) \preceq_r (I, \gamma)$ of $r$-admissible types as in Lemma~\ref{lem:ramified.chain.jump}.  If we may take $I^\dpr = I^\prime \setminus I(r - 2 \gamma^\prime - 2 p^j, 2 p^j)$, then the segment $(I^\prime, \gamma^\prime) \preceq_r (I^\dpr, \gamma^\prime + p^j)$ satisfies~(b), whereas the segment $(I^\dpr, \gamma^\prime + p^j) \preceq_r (I, \gamma)$ may be refined to a sequence with the desired properties by induction.  

In the remaining exceptional case of Lemma~\ref{lem:ramified.chain.jump}, we have $I^\dpr = I^\prime \cup \{ j, j+1, \dots, j+ \ell \}$ for a suitable $\ell \geq 0$.  In this case either $j + \ell + 1 \in I^\prime$ or $(r - 2 \gamma^\prime)_{j + \ell + 1} > 0$, so it is easy to check that $(I^\dpr, \gamma^\prime)$ is an $r$-admissible type.
Thus we obtain the refinement $(I^\prime, \gamma^\prime) \preceq_r (I^\dpr, \gamma^\prime) \preceq_r (I^\dpr,\gamma^\prime + p^j) \preceq_r (I, \gamma)$.  Since $I(r - 2(\gamma^\prime + p^j), 2 p^j) = \varnothing$ in this case, the second jump in this refinement satisfies~(b), whereas the first and third jumps may be refined to sequences with the desired properties by induction.
\end{proof}

The next lemma, whose proof is tedious but completely elementary, relates the refinements of Corollary~\ref{cor:ramified.chain.jump} with the types of specific basis elements of $V_{2,r}$.  This will finally allow us to relate our conclusions to the partial order $\leq_r$.

\begin{lemma} \label{lem:induction.step.jumps}
Let $(I^\prime, \gamma^\prime) \prec_r (I, \gamma)$ be two $r$-admissible types such that $\gamma^\prime \prec \gamma$.  Let $\alpha, \beta \in \Nt$ satisfy $I = I(\alpha, \beta)$.  Then there exists $j \in \Z / f\Z$ such that at least one of the following statements holds:
\begin{itemize}
\item
$(I(\alpha + 2 p^j, \beta), \gamma \dotdiv p^j) \in \mathcal{H}_r$ and $(I^\prime, \gamma^\prime) \preceq_r ( I(\alpha + 2 p^j, \beta), \gamma \dotdiv p^j)$.
\item
$(I(\alpha + p^j, \beta + p^j),\gamma \dotdiv p^j) \in \mathcal{H}_r$ and $(I^\prime, \gamma^\prime) \preceq_r (I(\alpha + p^j, \beta + p^j), \gamma \dotdiv p^j)$.
\item
$(I(\alpha, \beta + 2p^j), \gamma \dotdiv p^j) \in \mathcal{H}_r$ and $(I^\prime, \gamma^\prime) \preceq_r (I(\alpha, \beta + 2 p^j), \gamma \dotdiv p^j)$.
\end{itemize}
\end{lemma}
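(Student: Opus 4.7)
The plan is to combine the chain of $r$-admissible types from Corollary~\ref{cor:ramified.chain.jump} with the multiset identity for carry sets from Corollary~\ref{lem:carries.urlemma}, and then perform a digit-wise case analysis to select the correct one of the three alternatives.

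First, I would apply Corollary~\ref{cor:ramified.chain.jump} to obtain a sequence
$(I^\prime,\gamma^\prime) = (I_0,\gamma_0) \prec_r (I_1,\gamma_1) \prec_r \dots \prec_r (I_s,\gamma_s) = (I,\gamma)$
whose every step is of type (a) or (b). Since $\gamma^\prime \prec \gamma$, some step is of type (b); let $t$ be the largest index with $\gamma_{t-1} \ne \gamma_t$, so that $\gamma_{t-1} = \gamma \dotdiv p^j$ for an exponent $j$ that will be the chosen $j$ of the lemma. All later steps are of type (a), so $I_t \subseteq I$ and therefore $I_{t-1} \subseteq I \cup I(r-2\gamma,2p^j) = I(\alpha,\beta) \cup I(\alpha+\beta,2p^j)$.

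Next, I would apply Corollary~\ref{lem:carries.urlemma} to the multiset $\{\alpha,\beta,p^j,p^j\}$, using $I(p^j,p^j) = \varnothing$ (valid since $p > 2$), to obtain three dual decompositions
\begin{align*}
I(\alpha,\beta) \cup I(\alpha+\beta,2p^j)
&= I(\alpha,2p^j) \cup I(\alpha+2p^j,\beta) \\
&= I(\alpha,p^j) \cup I(\beta,p^j) \cup I(\alpha+p^j,\beta+p^j) \\
&= I(\beta,2p^j) \cup I(\alpha,\beta+2p^j),
\end{align*}
corresponding to cases~1, 2, 3 of the lemma. The $r$-admissibility of each candidate type $(I_{\mathrm{new}},\gamma \dotdiv p^j)$ is automatic, since $I(x,y)$ is $(x+y)$-admissible and $x+y = r - 2(\gamma \dotdiv p^j)$ in all three cases.

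The remaining task is to show that for at least one of these three decompositions, the ``extra'' set --- namely $I(\alpha,2p^j)$, $I(\alpha,p^j) \cup I(\beta,p^j)$, or $I(\beta,2p^j)$ --- is contained in $I_{\mathrm{new}} \cup I(r - 2(\gamma \dotdiv p^j),2\delta^\prime) \cup I(\delta^\prime,\delta^\prime)$, where $\delta^\prime = \gamma \dotdiv p^j \dotdiv \gamma^\prime$. Combined with the hypothesis $(I^\prime,\gamma^\prime) \preceq_r (I_{t-1}, \gamma \dotdiv p^j)$ delivered by the chain, this yields the desired preorder relation. The main obstacle is this absorption step: if $\alpha_j \le p-3$ the extra in case~1 vanishes and we are done, and symmetrically for $\beta_j \le p-3$ in case~3; the delicate situation is when $\alpha_j,\beta_j \ge p-2$, where carries in $I(\alpha,2p^j)$ and $I(\beta,2p^j)$ propagate along runs of $(p-1)$-digits, and one must appeal to the digit-wise criterion in Proposition~\ref{pro:characterize.admissible} for $r$-admissible sets (applied to $(I,\gamma)$) together with the identity $\alpha + \beta = r - 2\gamma$ to check that such runs fall into $I(r - 2(\gamma \dotdiv p^j),2\delta^\prime) \cup I(\delta^\prime,\delta^\prime)$, so that case~2 (or a judicious choice among the three) closes the argument.
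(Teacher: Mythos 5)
Your proposal follows the paper's opening moves closely: invoke Corollary~\ref{cor:ramified.chain.jump} to find an intermediate $r$-admissible type $(I_{t-1}, \gamma \dotdiv p^j)$ wedged between $(I', \gamma')$ and $(I, \gamma)$, observe that $I_{t-1} \subseteq I \cup I(\alpha+\beta, 2p^j)$, and then write this set via Corollary~\ref{lem:carries.urlemma} in three ways using the multiset identity for carries (noting $I(p^j,p^j)=\varnothing$). You also correctly observe that the three candidate types are automatically $r$-admissible since $I(x,y)$ is $(x+y)$-admissible.

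The gap is in your final ``absorption'' step, and it is a genuine one. You propose to show that the extra set --- $I(\alpha, 2p^j)$, $I(\alpha, p^j) \cup I(\beta, p^j)$, or $I(\beta, 2p^j)$ --- lies in $I_{\mathrm{new}} \cup I(r-2(\gamma\dotdiv p^j), 2\delta') \cup I(\delta',\delta')$. But this can fail. Take $f=2$, $p=5$, $r=9=(4,1)_p$, $\gamma = 1$, $\alpha=3$, $\beta=4$, so $r-2\gamma = 7$ and $I = I(3,4)=\{0\}$, with $j=0$; here one checks $I(\alpha+\beta,2p^j)=\varnothing$ and all three candidate sets $I_{\mathrm{new}}$ equal $\varnothing$, while all three extras equal $\{0\}$. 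Taking $(I',\gamma')=(\varnothing,0) \prec_r (\{0\},1)$ forces $\delta'=0$, so the $\delta'$-dependent sets are empty and your containment $\{0\}\subseteq\varnothing$ is false --- yet the lemma holds, since $(\varnothing,0)\preceq_r(\varnothing,0)$. The point you are missing is that you never exploit the $r$-admissibility of the intermediate type $(I_{t-1},\gamma\dotdiv p^j)$ itself (you apply the admissibility criterion only to $(I,\gamma)$). The paper's argument instead shows, by a case analysis on $\alpha_j,\beta_j$, that one of the three candidates is the \emph{largest} $(r-2(\gamma\dotdiv p^j))$-admissible subset of $I \cup I(\alpha+\beta,2p^j)$; then since $I_{t-1}$ is an $(r-2(\gamma\dotdiv p^j))$-admissible subset of the same set, one gets $I_{t-1}\subseteq I_{\mathrm{new}}$ outright, and the $\preceq_r$ relation follows by transitivity with no appeal to $\delta'$ at all. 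To repair your argument, replace the absorption step by a proof that $I_{\mathrm{new}}$ is maximal admissible, which is where the digit case analysis (the cases $\alpha_j,\beta_j < p-2$; $\alpha_j=\beta_j=p-2$; $\{\alpha_j,\beta_j\}=\{p-1,p-2\}$ with $j-1\notin I$ or $j-1\in I$; $\alpha_j=\beta_j=p-1$) actually enters.
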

\begin{proof}
By Corollary~\ref{cor:ramified.chain.jump} there exists $j \in \Z / f\Z$ and an $r$-admissible type $(I^\dpr, \gamma \dotdiv p^j)$ such that $(I^\prime, \gamma^\prime) \preceq_r (I^\dpr, \gamma \dotdiv p^j) \preceq_r (I, \gamma)$.  Observe that the second relation amounts to $I^\dpr \subseteq I \cup I(\alpha + \beta, 2 p^j)$.  It suffices to show that at least one of the three sets $I(\alpha + 2 p^j, \beta)$, $I(\alpha + p^j, \beta + p^j)$, $I(\alpha, \beta + 2 p^j)$ is the largest subset $I^\dpr$ of $I \cup I(\alpha + \beta, 2 p^j)$ such that $(I^\dpr, \gamma \dotdiv p^j)$ is $r$-admissible.  We consider several cases, which exhaust all possibilities.

{\emph{Case 1: $\alpha_j < p - 2$ or $\beta_j < p - 2$}}.  If $\alpha_j < p - 2$, then $I(\alpha, 2 p^j) = \varnothing$.  Hence it follows from Lemma~\ref{cor:carries.triple.sum} that 
\begin{equation} \label{equ:case1}
I(\alpha + 2 p^j, \beta) = I(\alpha, \beta) \cup I(\alpha + \beta, 2 p^j).
\end{equation}  
In particular, $(I \cup I(\alpha + \beta, 2 p^j), \gamma \dotdiv p^j) = ( I(\alpha + 2 p^j, \beta), \gamma \dotdiv p^j)$ is itself an $r$-admissible type.  The case $\beta_j < p - 2$ is analogous.

{\emph{Case 2: $\alpha_j = \beta_j = p - 2$}}.  In this case $I(\alpha, p^j) = I(\beta, p^j) = \varnothing$.  By Corollary~\ref{lem:carries.urlemma} we may conclude that $I(\alpha, \beta) \cup I(\alpha + \beta, 2 p^j) = I(\alpha + p^j, \beta + p^j)$.  Hence $(I \cup I(\alpha + \beta, 2 p^j), \gamma \dotdiv p^j) = (I(\alpha + p^j, \beta + p^j), \gamma \dotdiv p^j)$ is an $r$-admissible type.

{\emph{Case 3: $\{ \alpha_j, \beta_j \} = \{ p - 1, p - 2 \}$ and $j - 1 \not\in I$}}.  Observe that $I(\alpha + \beta, 2 p^j) = \varnothing$, so~\eqref{equ:case1} implies that $I \cup I(\alpha + \beta, 2 p^j) = I$.  However, since $(\alpha + \beta + 2p^j)_j = p - 1$, the type $(I, \gamma \dotdiv p^j)$ fails to be admissible if $j - 1 \not\in I$ but $j \in I$.  In this case, it is clear that the largest subset $I^\dpr \subseteq I$ such that $(I^\dpr, \gamma \dotdiv p^j)$ is $r$-admissible is $I^\dpr = I \setminus \{ j + i : i \in [\ell]_0 \}$, where $\ell \in [f-1]_0$ is maximal such that $(\alpha + \beta + 2 p^j)_{j + i} = p - 1$ and $j + i \in I$ for all $i \in [\ell]_0$.  Observe that $\alpha_{j + i} = \beta_{j + i} = p - 1$ for all $i \in [\ell]$.  If $\alpha_{j + \ell + 1} = \beta_{j + \ell + 1} = p - 1$, then the maximality of $\ell$ forces $j + \ell + 1 \not\in I = I(\alpha, \beta)$, which is absurd.  If $\alpha_{j + \ell + 1} < p - 1$, then $I(\alpha, 2 p^j) = \{ j + i : i \in [\ell]_0 \}$.  By Lemma~\ref{cor:carries.triple.sum} we have $I = I \cup I(\alpha + \beta, 2 p^j) = I(\alpha, 2 p^j) \cup I(\alpha + 2 p^j, \beta)$.  Hence $I^\dpr \subseteq I(\alpha + 2 p^j, \beta)$.  But the type $(I(\alpha + 2 p^j, \beta), \gamma \dotdiv p^j)$ is manifestly $r$-admissible, so $I^\dpr = I(\alpha + 2 p^j, \beta)$ by the maximality of $I^\dpr$.
Similarly, if $\beta_{j + \ell + 1} < p - 1$ then $I^\dpr = I(\alpha, \beta + 2 p^j)$.

{\emph{Case 4: $\alpha_j = \beta_j = p - 1$ or ($\{ \alpha_j, \beta_j \} = \{ p - 1, p - 2 \}$ and $j - 1 \in I$)}}.  
Let $\ell \in [f-1]_0$ be maximal such that $\alpha_{j+i} = p - 1$ for all $i \in [\ell]$.  We may also assume that $\beta_{j + i} = p - 1$ for all $i \in [\ell]$; otherwise, we proceed analogously but switch the roles of $\alpha$ and $\beta$.  Note that $I(\alpha, 2 p^j) = \{ j + i : i \in [\ell]_0 \}$ and that $\{ j + i : i \in [\ell] \} \subseteq I(\alpha + 2 p^j, \beta)$.  

If $\alpha_j = p - 1$, then $j \in I(\alpha + 2 p^j, \beta)$.
Hence $I \cup I(\alpha + \beta, 2 p^j) = I(\alpha, 2 p^j) \cup I(\alpha + 2 p^j, \beta) = I(\alpha + 2 p^j, \beta)$, where the first equality comes from Lemma~\ref{cor:carries.triple.sum}.  

It remains only to treat the case $(\alpha_{j}, \beta_j) = (p-2, p-1)$ with $j - 1 \in I$.  Suppose first that $j - 1 \in I(\alpha, 2 p^j)$.  Then $\ell = f - 1$, hence $\alpha = q - 1 - p^j$ and $\beta = q - 1$, so that $I(\alpha, 2 p^j) = I(\alpha + 2p^j, \beta) = I(p^j, q - 1) = \Z / f\Z$.  Now assume $j - 1 \not\in I(\alpha, 2 p^j)$.  By assumption $j - 1 \in I \subseteq I(\alpha, 2 p^j) \cup I(\alpha + 2 p^j, \beta)$, so necessarily $j - 1 \in I(\alpha + 2 p^j, \beta)$.  This implies that $(\alpha + 2 p^j + \beta)_j = 0$, whence $j \in I(\alpha + 2 p^j, \beta)$ by Remark~\ref{rem:admissible.types}.  In either case, we have again shown that $I \cup I(\alpha + \beta, 2 p^j) = I(\alpha + 2 p^j, \beta)$.
\end{proof}

\begin{cor} \label{cor:other.direction.partial.orders}
The partial order $\leq_r$ is a refinement of $\preceq_r$.
\end{cor}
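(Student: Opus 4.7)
The plan is to prove the implication $(I^\prime, \gamma^\prime) \preceq_r (I, \gamma) \implies (I^\prime, \gamma^\prime) \leq_r (I, \gamma)$ by induction on $\gamma \dotdiv \gamma^\prime$ with respect to the partial order $\preceq$ on $\Nt$. The base case $\gamma^\prime = \gamma$ is immediate, since in this situation the hypothesis collapses to $I^\prime \subseteq I$, which was already observed in the proof of Lemma~\ref{lem:one.refinement.partial.orders} to be equivalent to $(I^\prime, \gamma) \leq_r (I, \gamma)$ via the Bardoe--Sin analysis of the tame principal series (one may also chain individual $\leq_r$-steps using Lemma~\ref{lem:bardoe.sin.chains} and the first generating relation of Definition~\ref{def:covering.relations.ramified}).

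For the inductive step, assume $\gamma^\prime \prec \gamma$ and pick a decomposition $r - 2\gamma = \alpha + \beta$ with $I(\alpha, \beta) = I$, which is legitimate by $r$-admissibility of $(I, \gamma)$. Lemma~\ref{lem:induction.step.jumps} then supplies an index $j \in \Z / f\Z$ and an $r$-admissible intermediate type $(I^\dpr, \gamma \dotdiv p^j)$ with $(I^\prime, \gamma^\prime) \preceq_r (I^\dpr, \gamma \dotdiv p^j)$, where $I^\dpr$ equals one of $I(\alpha + 2p^j, \beta)$, $I(\alpha + p^j, \beta + p^j)$, or $I(\alpha, \beta + 2p^j)$. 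A direct substitution into the formula $\Upsilon([(j_0, j_1)]) = (I(j_0, r - 2j_1 - j_0), j_1)$ of Proposition~\ref{cor:types.as.pairs} identifies these three candidates as the types of $(\alpha + 2p^j, \gamma \dotdiv p^j)$, $(\alpha + p^j, \gamma \dotdiv p^j)$, and $(\alpha, \gamma \dotdiv p^j)$ respectively. The second and third generating relations then give at once $(\alpha, \gamma \dotdiv p^j) \leq_r (\alpha, \gamma)$ and $(\alpha + p^j, \gamma \dotdiv p^j) \leq_r (\alpha, \gamma)$, while Remark~\ref{rem:equivalence}\eqref{item:lower.triangular.no.problem} takes care of $(\alpha + 2p^j, \gamma \dotdiv p^j) \leq_r (\alpha, \gamma)$ provided $\alpha \neq 0$. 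Combining any of these with the inductive hypothesis applied to $(I^\prime, \gamma^\prime) \preceq_r (I^\dpr, \gamma \dotdiv p^j)$ and using transitivity closes the induction, and together with Lemma~\ref{lem:one.refinement.partial.orders} this proves Proposition~\ref{pro:equality.of.orders}.

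The one step I expect to be slightly delicate is the first case when $\alpha = 0$, because Remark~\ref{rem:equivalence}\eqref{item:lower.triangular.no.problem} depends on the fourth generating relation, which requires $j_0 \notin \{0, \infty\}$. Here the workaround is that when $\alpha = 0$ we have $\beta = r - 2\gamma$ and $I = \varnothing$, and the carry set $I(2p^j, r - 2\gamma)$ coincides (by symmetry) with $I(r - 2\gamma, 2p^j)$, which is the first component of the type of $(r - 2\gamma, \gamma \dotdiv p^j)$. The fifth generating relation then yields $(r - 2\gamma, \gamma \dotdiv p^j) \leq_r (\infty, \gamma)$, and the seventh relation identifies $(\infty, \gamma)$ with $(0, \gamma)$, whose type is precisely $(\varnothing, \gamma) = (I, \gamma)$, so the conclusion persists.
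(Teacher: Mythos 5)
Your overall strategy is essentially the paper's: use Lemma~\ref{lem:induction.step.jumps} to produce an atomic $\gamma$-decreasing jump, read off which of the three carry sets it produces, and match each against generating relations of $\leq_r$; you make the implicit recursion of the paper ("repeatedly applying") into an explicit induction on $\gamma \dotdiv \gamma^\prime$, which is perfectly sound, and the base case, the second case, and the third case are handled exactly as in the paper. The divergence and the gap are both in the first case of the trichotomy. You identify $(I(\alpha + 2p^j, \beta), \gamma \dotdiv p^j)$ with $[(\alpha + 2p^j, \gamma \dotdiv p^j)]$ and then invoke Remark~\ref{rem:equivalence}\eqref{item:lower.triangular.no.problem}. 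That remark chains the fourth, second, and \emph{fourth} relations: the first application of the fourth relation requires the starting index $\alpha \notin \{0, \infty\}$, which you note, but the \emph{second} application is to $(r - 2\gamma - \alpha, \gamma \dotdiv p^j) = (\beta, \gamma \dotdiv p^j)$ and therefore additionally requires $\beta = r - 2\gamma - \alpha \neq 0$. You only patch the subcase $\alpha = 0$; the subcase $\beta = 0$, i.e.\ $\alpha = r - 2\gamma$ (which makes $I = \varnothing$ and forces $I(\alpha + 2p^j, \beta) = \varnothing$), is left unaddressed. The fix is trivial (there $(\varnothing, \gamma \dotdiv p^j) \leq_r (\varnothing, \gamma)$ follows from the second relation applied to $(0, \gamma)$), but as written the argument does not cover it.

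It is worth noting that the paper avoids this entirely by a cleaner identification: by symmetry of carry sets, $(I(\alpha + 2p^j, \beta), \gamma \dotdiv p^j)$ is the type of $[(\beta, \gamma \dotdiv p^j)]$, and the second generating relation then gives $[(\beta, \gamma \dotdiv p^j)] \leq_r [(\beta, \gamma)] = (I(\alpha, \beta), \gamma)$ with no constraint on $\alpha$ or $\beta$. Your own workaround for $\alpha = 0$ already uses exactly this symmetry; applied uniformly it would dispense with the appeal to Remark~\ref{rem:equivalence}\eqref{item:lower.triangular.no.problem} and remove both edge cases at once.
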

\begin{proof}
Suppose that $(I^\prime, \gamma^\prime) \preceq_r (I^\dpr, \gamma^\dpr)$ are $r$-admissible types.  
Repeatedly applying Lemma~\ref{lem:induction.step.jumps}, 
we may refine $(I^\prime, \gamma^\prime) \preceq_r (I, \gamma)$ to a sequence of steps, in which each step is of one of four sorts: either it is of the form $(I^\prime, \gamma) \preceq_r (I, \gamma)$ for some $I^\prime \subseteq I$, or there exist $\alpha, \beta, \gamma \in \Nt$ such that $\alpha + \beta = r - 2\gamma$ and one of the following three possibilities holds:
\begin{itemize}
\item $(I(\alpha + 2p^j, \beta), \gamma \dotdiv p^j) \prec_r (I(\alpha, \beta), \gamma)$, for $p^j \preceq \gamma$;
\item $(I(\alpha + p^j, \beta + p^j), \gamma \dotdiv p^j) \prec_r (I(\alpha, \beta), \gamma)$, for $p^j \preceq \gamma$;
\item $(I(\alpha, \beta + 2p^j), \gamma \dotdiv p^j) \prec_r (I(\alpha, \beta), \gamma)$, for $p^j \preceq \gamma$.
\end{itemize}
  
We already showed at the beginning of the proof of Lemma~\ref{lem:one.refinement.partial.orders} that $(I^\prime, \gamma) \preceq_r (I, \gamma)$ is equivalent to $(I^\prime, \gamma) \leq_r (I, \gamma)$.    

In the first case of the trichotomy, we have
\begin{multline*}
 (I(\alpha + 2p^j, \beta), \gamma \dotdiv p^j) = (I(\beta, r - 2\gamma - \beta + 2p^j), \gamma \dotdiv p^j) = [(\beta, \gamma \dotdiv p^j)] \leq_r [(\beta, \gamma)] = \\ (I(\beta, r - 2\gamma - \beta), \gamma) = (I(\alpha, \beta), \gamma)
 \end{multline*}
by the second generating relation of Definition~\ref{def:covering.relations.ramified}.  The third case is the same, with the roles of $\alpha$ and $\beta$ switched.

In the remaining second case, we have 
$$
(I(\alpha+ p^j, \beta + p^j), \gamma \dotdiv p^j) = [(\alpha + p^j, \gamma \dotdiv p^j)] \leq_r [(\alpha, \gamma)] = (I(\alpha, \beta), \gamma)
$$
by the third generating relation of $\leq_r$.  Since each step of the refinement of $(I^\prime, \gamma^\prime) \preceq_r (I^\dpr, \gamma^\dpr)$ is compatible with $\leq_r$, we conclude that $(I^\prime, \gamma^\prime) \leq_r (I^\dpr, \gamma^\dpr)$.
\end{proof}

Together, Lemma~\ref{lem:one.refinement.partial.orders} and Corollary~\ref{cor:other.direction.partial.orders} imply
Proposition~\ref{pro:equality.of.orders}.

\bibliographystyle{amsplain}
\providecommand{\bysame}{\leavevmode\hbox to3em{\hrulefill}\thinspace}
\providecommand{\MR}{\relax\ifhmode\unskip\space\fi MR }

\providecommand{\href}[2]{#2}

\end{document}